\begin{document}

\externalcitedocument{Lit}

\pagenumbering{arabic}

 \setlength{\parskip}{7pt}

\def\LU{{\mathcal L\mathcal U}}
 \def\dach{\!\widehat{\phantom{G}}}
\def\Aut{\operatorname{Aut}}
\def\Int{\operatorname{Int}}
\def\GL{\operatorname{GL}}  
\def\QQ{{\mathbb{Q}}}
\def\Prim{\operatorname{Prim}}
\def\tr{\operatorname{tr}}
\def\Inn{\operatorname{Inn}} \def\Ad{\operatorname{Ad}}
\def\id{\operatorname{id}} \def\supp{\operatorname{supp}}
\def\csp{\overline{\operatorname{span}}}
\def\Ind{\operatorname{Ind}} \def\nd{\mathrm{es}}
\def\triv{\mathrm{triv}} 
\def\eps{\epsilon} 
\newcommand\veps{\varepsilon}
\def\K{{\mathcal K}}
\def\L{\mathcal L} \def\R{\mathcal R} \def\L{\mathcal L}
\def\C{\mathcal C} \def\E{\mathcal E} \def\Q{\mathcal Q}
\def\F{\mathcal F} \def\B{\mathcal B}
\def\H{\mathcal H}
\def\PU{\mathcal PU}
\def\M{\mathcal M}  \def\V{\mathcal V}

\def\EE{\mathbb E} \def\DD{\mathbb
    D} \def\I{{\mathcal I}} \def\U{{\mathcal U}} \def\UM{{\mathcal
      U}M} \def\ZUM{{\mathcal Z}UM} 
\newcommand{\ZM}{\mathcal{ZM}}
\def\KK{{KK}} \def\RK{\operatorname{RK}}
\def\RKK{\operatorname{RKK}} \def\ind{\operatorname{ind}}
\def\res{\operatorname{res}} \def\inf{\operatorname{inf}}
\def\ker{\operatorname{ker}} \def\infl{\operatorname{inf}}
\def\pt{\operatorname{pt}} 
\newcommand{\comp}{\operatorname{comp}}
\def\infl{\operatorname{infl}}
\def\IND{\operatorname{IND}}
\def\Bott{\operatorname{Bott}}
\def\Mor{\operatorname{Mor}}
\def\Rep{\operatorname{Rep}}
\def\Incr{\operatorname{Incr}}
\def\sp{\operatorname{span}}
\def\Hom{\operatorname{Hom}}
\def\LH{\operatorname{LH}}
\def\k{\operatorname{K}}
\def\EEG{\underline{\underline{\mathcal{E}G}}} 
\def\EG{\underline{EG}}
\def\EH{\underline{EH}}
\def\EGT{\underline{E\tilde{G}}}
\def\EGN{\underline{E G/N}} 
\renewcommand{\top}{\operatorname{top}}
\newcommand{\A}{\mathcal{A}} 
\newcommand{\D}{\mathcal{D}} 
\def\sm{\backslash} \def\Ind{\operatorname{Ind}}
\def\TT{\mathbb T} \def\ZZ{\mathbb Z} \def\CC{\mathbb C}
\def\FF{\mathbb F}
\def\RR{\mathbb R} \def\NN{\mathbb N} \def\om{\omega}
\def\ClmV{C_{\!\!\!{}_{-V}}} \def\ClV{C_{\!{}_{V}}}
\newcommand{\rk}{\rangle}
\newcommand{\lk}{\langle}
\newcommand{\bmtr}{\left(\begin{matrix}}
\newcommand{\emtr}{\end{matrix}\right)}
\newcommand*{\into}{\hookrightarrow}
\newcommand{\op}{\operatorname{op}}
\newcommand*{\Corr}{\mathfrak{Corr}}% category of $C^*$-algebras with homomorphisms
\newcommand{\Cl}{\operatorname{\it{Cl}}}
\newcommand{\sign}{\operatorname{sign}}
\newcommand{\diag}{\operatorname{diag}}
\newcommand{\indx}{\operatorname{index}}
\newcommand{\one}{\mathrm 1}
\newcommand{\Homeo}{\operatorname{Homeo}}
\newcommand{\SL}{\operatorname{SL}}
\newcommand{\SO}{\operatorname{SO}}
\newcommand{\Ort}{\operatorname{O}}
\newcommand{\inte}{\operatorname{int}}
\newcommand{\spn}{\operatorname{span}}
\newcommand{\cspn}{\overline{\operatorname{span}}}
\newcommand{\Spec}{\operatorname{Spec}}
\newcommand{\Om}{\Omega}
\newcommand*{\onto}{\twoheadrightarrow}

\newcommand{\ddS}{\stackrel{\scriptscriptstyle{o}}{S}}
\renewcommand{\oplus}{\bigoplus}

\newtheorem{theorem}{Theorem}[section] \newtheorem{corollary}[theorem]{Corollary}
\newtheorem{lemma}[theorem]{Lemma} \newtheorem{proposition}[theorem]{Proposition}
\newtheorem{lemdef}[theorem]{Lemma and Definition}

\theoremstyle{definition} \newtheorem{definition}[theorem]{Definition}
\theoremstyle{definition} \newtheorem{notation}[theorem]{Notations}

\theoremstyle{remark} \newtheorem{remark}[theorem]{\bf Remark}
\newtheorem{example}[theorem]{\bf Example}
\newtheorem{exercise}[theorem]{\bf Exercise}

\numberwithin{equation}{section} \emergencystretch 25pt

\renewcommand{\theenumi}{\roman{enumi}}
\renewcommand{\labelenumi}{(\theenumi)}

\title[Bivariant $KK$-Theory and the Baum-Connes conjecure]{Bivariant $KK$-Theory and the Baum-Connes conjecure}
\author{Siegfried Echterhoff}
\address{Mathematisches Institut\\
 Westf\"alische Wilhelms-Universit\"at M\"un\-ster\\
 Einsteinstr.\ 62\\
 48149 M\"unster\\
 Germany}
 
\begin{abstract}
This is a survey on Kasparov's bivariant $KK$-theory in connection with the 
Baum-Connes conjecture on the $K$-theory of crossed products $A\rtimes_rG$ by actions 
of a locally compact group $G$  on a C*-algebra $A$. In particular we shall discuss
Kasparov's Dirac dual-Dirac method as well as the 
permanence properties of the conjecture and the 
``Going-Down principle'' for the left hand side of the conjecture, which often allows
to reduce $K$-theory computations for $A\rtimes_rG$
to computations for  crossed products by  compact subgroups of $G$. 
We give several applications for this principle including a discussion of a method 
developed by Cuntz,  Li and the author in \cite{CEL2} for explicit 
computations of the $K$-theory groups of crossed products for certain group actions  
on totally disconnected spaces. This provides an important tool for the computation 
of $K$-theory groups of semi-group C*-algebras.
\end{abstract}

\maketitle
\section{Introduction}\footnote{The content of this note will appear 
as Chapter 3 of the book ``$K$-theory for group C*-algebras and semigroup C*-algebras'' 
which will appear in the Oberwolfach-seminar series of the Birkhäuser publishing company.
The research for this paper has been supported by the DFG through CRC 878 Groups, Geometry \& Actions}

The extension of  $K$-theory from topological spaces to operator algebras provides the 
most powerful tool for the study of $C^*$-algebras. On one side there now exist far reaching classification results 
in which certain classes of $C^*$-algebras can be classified by their $K$-theoretic data.
This started with the early work of  Elliott \cite{Elliott-AF} on the classification of 
$AF$-algebras -- inductive limits of finite dimensional $C^*$-algebras. It went on with the 
classification of simple, separable, nuclear, purely infinite $C^*$-algebras by Kirchberg and Phillips \cites{KP, Phillips}.
In  present time, due to the work of many authors (e.g., see \cite{Wi} for a survey on the most recent developments) the
 classification program covers a very large class of nuclear algebras.
 
 On the other hand, the $K$-theory groups of group algebras $C^*(G)$ and $C_r^*(G)$ 
 serve as recipients of indices of $G$-invariant elliptic operators and the study of such 
 indices has an important  impact in modern topology and geometry.
To get a rough idea, the Baum-Connes conjecture  implies that every element in the 
$K$-theory groups of the reduced $C^*$-group algebra $C_r^*(G)$ of a locally compact group 
$G$ appears as such index of some generalised $G$-invariant elliptic operator. 
To be more precise, these generalised elliptic operators form the cycles of the $G$-equivariant 
$K$-homology (with $G$-compact supports) $K_*^G(\underline{EG})$, in which $\underline{EG}$ 
is a certain classifying space for proper actions of $G$ (often realised as a $G$-manifold)
and the index map
$$\mu_G: K_*^G(\underline{EG})\to K_*(C_r^*(G))$$
 is then a well defined group homomorphism. It is called the {\em assembly map} for $G$. 
The {\em Baum-Connes conjecture (with trivial coefficients)} asserts that
the assembly map is an isomorphism  for all $G$.

The construction of the assembly map naturally extends to crossed-products and provides a
 map
$$\mu_{(G,A)}: K_*^G(\underline{EG}, A)\to K_*(A\rtimes_rG).$$
The {\em Baum-Connes conjecture with coefficients} asserts that this more general 
assembly map should be an isomorphism as well.
Although  this general version of the conjecture is now 
known to be false in general (e.g. see \cite{HLS}), it is known to be true for a large 
class of groups, including the class of all amenable groups, and it appears to be 
an extremely useful tool for the computation of $K$-theory groups in several important 
applications. 

In this chapter we want to give a concise introduction to the Baum-Connes conjecture and 
to some of the applications which allow the explicit computation of  $K$-theory groups 
with the help of the conjecture. We start with a very short reminder of the basic properties
of $C^*$-algebra $K$-theory
 before we give an introduction of 
Kasparov's bivariant  $K$-theory functor which assigns to each pair 
of $G$-$C^*$-algebras $A,B$ a pair of abelian groups $KK_*^G(A,B)$, $*=0,1$.
Kasparov's theory is not only fundamental for the definition of the 
groups $K_*^G(\EG)$ and $K_*^G(\EG, A)$ and the 
construction of the assembly map, but it also provides the most powerful tools
for proving the Baum-Connes conjecture for certain classes of groups.
In this chapter we will restrict ourselves to Kasparov's picture of $KK$-theory
and we will not touch on other descriptions or variants 
like the Cuntz picture of $KK$-theory 
or  $E$-theory as introduced by Connes and Higson. We refer to 
\cite{Bla86} for a treatment of these and their connections to Kasparov's theory.
%
%After this introduction this Chapter starts in \S \ref{sec-Ktheory}
%with a brief reminder of the basic features of $K$-theory for 
%$C^*$-algebras before we give in \S \ref{sec-KK} the introduction to Kasparov's bivariant $K$-theory.

As part of our introduction to $KK$-theory we will give a detailed and a fairly elementary 
proof of Kasparov's Bott-periodicity theorem in one dimension by constructing 
Dirac and dual Dirac elements which 
implement a $KK$-equivalence between $C_0(\RR)$ and the first complex Clifford algebra 
$\Cl_1$. We shall later use these computations to give a complete proof of the 
Baum-Connes conjecture for $\RR$ and $\ZZ$ with the help of Kasparov's Dirac-dual Dirac method.
This method is the most powerful tool for proving the conjecture and  has been successfully 
applied to a very large class of groups including all amenable groups. 
 As corollaries of our proof of the conjecture for $\RR$ and $\ZZ$, we shall also 
present proofs of Connes's Thom isomorphism for crossed products by $\RR$ 
and the Pimsner-Voiculescu six-term exact sequence for crossed products by $\ZZ$.

In the last part of this chapter we shall present the ``Going-Down'' principle 
which roughly says the following: Suppose $G$ satisfies the Baum-Connes conjecture with coefficients.
Then any $G$-equivariant $*$-homomorphism (or $KK$-class) 
between two  $G$-algebras $A$ and $B$ which induces isomorphisms between the $K$-theory groups
of  $A\rtimes K$ and $B\rtimes K$ for {\bf all} compact subgroups $K$ of $G$ also
 induces an isomorphism between the $K$-groups of $A\rtimes_rG$ and $B\rtimes_rG$.
We shall give a complete proof of this principle if $G$ is discrete and we present a number
of applications of this result. In particular, as one application we shall present a theorem about possible 
explicit computations of the $K$-theory of crossed products $C_0(\Om)\rtimes_rG$
in which a discrete group $G$ acts on a totally disconnected space $\Om$ with
some additional ``good'' properties which we shall explain in detail.
This result is basic for the $K$-theory computations of the reduced 
semi-group $C^*$-algebras as presented in  \cite{SgpC} and \cite{AlgAct}.

There are many other surveys on the Baum-Connes conjecture which look at the conjecture 
from quite different angles. 
The reader should definitely have a look at the paper \cite{BCH} of Baum, Connes, and Higson,
where a  broad discussion of  various applications of the conjecture is given. 
The survey \cite{ValBC} by Alain Valette restricts itself to a discussion of the Baum-Connes conjecture for 
discrete groups, but also provides a good discussion of applications to other important conjectures.
The survey \cite{MV} by Mislin discusses the conjecture from the topologist's point of view,
where the left hand side (the topological $K$-theory of $G$) is defined in terms of the 
Bredon cohomology -- a picture of the Baum-Connes conjecture 
 first given by  Davis and L{\"u}ck \cite{DL}. We also want to mention the 
paper \cite{HG} of Higson and Guentner, which gives an introduction of 
the Baum-Connes conjecture based on $E$-theory. Last but not least, we suggest to
the interested reader to study the book \cite{HR} by Higson and Roe, where many of the 
relevant techniques for producing important  $KK$-classes by elliptic operators (as the Dirac-class in 
$K$-homology) are treated in a very nice way.

Throughout this chapter we assume that the reader is familiar with the basics on $C^*$-algebras, the
basic constructions and properties of full and reduced crossed products and 
 the notion of Morita equivalence and Hilbert $C^*$-modules.
A detailed introduction to these topics is given in the first six sections of 
\cite{CroPro}.

The author of this chapter likes to thank Heath Emerson and Michael Joachim
for helpful discussions on some of the topics in this chapter.

\section{Operator $K$-Theory}\label{sec-Ktheory}
In this section we want to give a very brief overview of the definition and some basic properties 
of the $K$-theory groups of $C^*$-algebras. We urge the reader to have a look 
at one of the standard books on operator $K$-theory (e.g., \cites{Bla86, RL, WO}) for more detailed 
expositions of this theory.

Let us fix some notation: If $A$ is a $C^*$-algebra, we denote by $M_n(A)$  
the $C^*$-algebra of all $n\times n$-matrices 
over $A$ and by $A[0,1]$ the $C^*$-algebra of continuous functions $f:[0,1]\to A$. 
Moreover, we denote by $A^1=A\oplus \CC1$  the 
unique $C^*$-algebra with underlying vector space $A\oplus \CC1$ and with multiplication and involution 
given by 
$$(a+\lambda 1)(b+\mu1)=ab+\lambda b+\mu a+\lambda\mu 1\quad\text{and}\quad (a+\lambda 1)^*=a^*+\bar{\lambda}1,$$
for $a+\lambda 1, b+\mu 1\in A+\CC1$. Let $\epsilon:A_1\to\CC; \epsilon(a+\lambda 1)=\lambda$. 
We call $A^1$ the {\em unitisation} of $A$ (even if $A$ already has a unit).  
We write
$$M_\infty(A):=\cup_{n\in \NN}M_n(A)$$ 
where we regard $M_n(A)$ as a subalgebra of $M_{n+1}(A)$ via $T\mapsto\bmtr T&0\\0&0\emtr$.
We denote by $\mathcal P(A)$  the set of projections $p\in M_\infty(A)$, i.e., $p=p^*=p^2$.

\begin{definition}\label{def-equivKK}
Let $A$ be a unital $C^*$-algebra and let $p,q\in \mathcal P(A)$. Then $p,q$ are called
\begin{itemize}
\item {\em Murray-von Neumann equivalent} (denoted $p\sim q$) if there exist $x,y\in M_\infty(A)$ 
such that $p=xy$ and $q=yx$.
\item {\em unitarily equivalent} (denoted $p\sim_u q$) if there exists some $n\in \NN$ and a unitary $u\in U(M_n(A))$
such that $p,q\in M_n(A)$ and $q=upu^*$ in $M_n(A)$.
\item {\em homotopic} (denoted $p\sim_hq$), if there exists a projection $r\in \mathcal P(A[0,1])$ such that 
$p=r(0)$ and $q=r(1)$.
\end{itemize}
\end{definition}

All three equivalence relations coincide on $\mathcal P(A)$ (but not on the level of  $M_n(A)$ 
for fixed $n\in \NN$). 
If  $A$ is unital and 
if $p,q\in M_n(A)$, then $\bmtr p&0\\0&q\emtr$ and $\bmtr q&0\\0&p\emtr$ are Murray-von Neumann equivalent in $M_{2n}(A)$
with $x=\bmtr 0&p\\q&0\emtr$ and $y=\bmtr 0&q\\p&0\emtr$.
This allows us to define an
abelian semigroup structure on $\mathcal P(A)/\!\!\sim$ with addition given by
$[p]+[q]=\left[\bmtr p&0\\0&q\emtr\right]$. For every unital $C^*$-algebra $A$, we define 
$K_0(A)$ as the Grothendieck group of the semigroup $\mathcal P/\!\!\sim$, that 
is 
$$K_0(A)= \left\{\big[[p]-[q]\big]: [p],[q]\in \mathcal P(A)/\!\!\sim\right\}$$
where we write 
$\big[[p]-[q]\big]=\big[[p']-[q']\big]$ if and only if there exists $h\in \mathcal P(A)$ such that 
$$[p]+[q']+[h]=[p']+[q]+[h]\;\text{in}\;\mathcal P(A)/\!\!\sim.$$ 
\begin{example}\label{ex-CC} 
If $A=\CC$, then two projections $p,q\in \mathcal P(\CC)$ are 
homotopic, if and only if they have the same rank. It follows from this that 
$\mathcal P(\CC)/\!\!\sim\cong \NN$ as semigroup and hence we get $K_0(\CC)\cong \ZZ$.
\end{example}

If $\Phi: A\to B$ is a unital $*$-homomorphism between the unital $C^*$-algebras $A$ and $B$, 
then there exists a unique group homomorphism $\Phi_0:K_0(A)\to K_0(B)$ such that 
$\Phi_0([p])=[\Phi(p)]$. 

 We then define
$$K_0(A):=\ker\big(K_0(A^1)\stackrel{\epsilon_0}{\to} K_0(\CC)\cong \ZZ\big)$$
for any $C^*$-algebra $A$.
If $A$ is unital, then $A^1\cong A\oplus \CC$ as a direct sum of the $C^*$-algebras $A$ and $\CC$ (the isomorphism 
is given by $a+\lambda 1\mapsto (a-\lambda 1_A, \lambda)$) and it is not difficult to check that in this case both 
definitions of $K_0(A)$ coincide. 
Any $*$-homomorphism 
 $\Phi:A\to B$  extends to a unital $*$-homomorphism $\Phi^1:A^1\to B^1; \Phi^1(a+\lambda 1)=\Phi(a)+\lambda 1$
and the resulting map $\Phi^1_0:K_0(A^1)\to K_0(B^1)$ factors through a well defined homomorphism 
$\Phi_0:K_0(A)\to K_0(B)$. 

For the construction of $K_1(A)$ let $U_n(A^1)$ denote the group of unitary elements of  $M_n(A^1)$. 
We embed $U_n(A^1)$ into $U_{n+1}(A^1)$ via $U\mapsto \bmtr U&0\\0&1\emtr$, and we define 
$U_\infty(A^1)=\cup_{n\in\NN} U_n(A)$. Let $U_\infty(A^1)_0$ denote the path-connected component of $U_\infty(A^1)$, where 
we say that two unitaries can be joined by a path in $U_\infty(A^1)$ if and only if they can be joined by a continuous path
in $U_n(A^1)$ for some $n\in \NN$. 
For $u,v\in U_n(A^1)$, one can  check that
\begin{equation}\label{eq-unitaryhomotop}
\bmtr uv&0\\0&1\emtr\sim_h \bmtr u&0\\0&v\emtr\sim_h\bmtr v&0\\0&u\emtr
\end{equation}
in $U_{2n}(A^1)\subseteq U_{\infty}(A^1)$. 
Therefore, if we define
$$K_1(A):=U_\infty(A^1)/U_\infty(A^1)_0$$
with addition given by 
$$[u]+[v]=\left[\bmtr u&0\\0&v\emtr\right],$$
(which by (\ref{eq-unitaryhomotop}) is equal to $[uv]$)
we see that $K_1(A)$ is an abelian group. As for $K_0$, for unital $A$ we can alternatively construct $K_1(A)$ 
without passing to the unitization $A^1$ as $K_1(A)=U_\infty(A)/U_\infty(A)_0$. 

\begin{example}
Since $U_n(\CC)$ is path connected for all $n\in \NN$, we have $K_1(\CC)=\{0\}$.
\end{example}

If $\Phi:A\to B$ is a $*$-homomorphism, there is a well defined group homomorphism
$$\Phi_1:K_1(A)\to K_1(B); \Phi_1([u])=\big[\Phi^1(u)\big],$$
where, as before, $\Phi^1:A^1\to B^1$ denotes the unique unital extension of $\Phi$ to $A^1$.

\begin{proposition} The assignments $A\mapsto K_0(A), K_1(A)$ are homotopy invariant 
 covariant functors from the category of 
$C^*$-algebras to the category of abelian groups.
\end{proposition}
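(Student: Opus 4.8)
The plan is to verify two things for each of the functors $K_0$ and $K_1$: first, that they are genuinely functorial (they respect identities and composition of $*$-homomorphisms), and second, that they are homotopy invariant in the appropriate sense (homotopic $*$-homomorphisms induce the same map on $K$-theory). Functoriality is essentially bookkeeping. For the unital case of $K_0$, given $*$-homomorphisms $\Phi:A\to B$ and $\Psi:B\to C$, one checks on the level of projections that $(\Psi\circ\Phi)(p)=\Psi(\Phi(p))$ so that $(\Psi\circ\Phi)_0=\Psi_0\circ\Phi_0$ on the generators $[p]$ of $K_0(A)$, and since these generate the Grothendieck group the identity extends to all of $K_0(A)$; that $(\id_A)_0=\id_{K_0(A)}$ is immediate. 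For the nonunital definition, one uses that unitization $A\mapsto A^1$ is itself functorial (sending $\Phi$ to its unital extension $\Phi^1$) and compatible with the augmentation $\epsilon$, so that the induced map on the kernel $K_0(A)=\ker(\epsilon_0)$ inherits functoriality from the unital case. The argument for $K_1$ is parallel: one checks $(\Psi\circ\Phi)^1=\Psi^1\circ\Phi^1$, hence the induced maps on unitary groups $U_\infty((-)^1)$ compose correctly, and these descend to the quotients defining $K_1$.

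For homotopy invariance, I would first make precise what homotopy of $*$-homomorphisms means: $\Phi_0,\Phi_1:A\to B$ are homotopic if there is a $*$-homomorphism $\Phi:A\to B[0,1]$ with $\ev_0\circ\Phi=\Phi_0$ and $\ev_1\circ\Phi=\Phi_1$, where $\ev_t:B[0,1]\to B$ is evaluation at $t$. The key reduction is that it suffices to show the two evaluation maps $\ev_0,\ev_1:B[0,1]\to B$ induce the same homomorphism on $K$-theory, since then $(\Phi_i)_* = (\ev_i)_*\circ\Phi_*$ gives $(\Phi_0)_*=(\Phi_1)_*$. For $K_0$ this follows because a projection $r\in\mathcal P(B[0,1])$ is exactly a homotopy of projections in $\mathcal P(B)$ between $r(0)=\ev_0(r)$ and $r(1)=\ev_1(r)$, and homotopy of projections implies Murray--von Neumann equivalence (as recorded after Definition~\ref{def-equivKK}), so $[\ev_0(r)]=[\ev_1(r)]$ in $\mathcal P(B)/\!\!\sim$; passing to the Grothendieck group and then to kernels of augmentations gives $(\ev_0)_0=(\ev_1)_0$ on $K_0$. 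For $K_1$ one argues similarly: a unitary in $U_\infty((B[0,1])^1)$ restricts to a continuous path of unitaries in $U_\infty(B^1)$ joining $\ev_0(u)$ to $\ev_1(u)$, so these represent the same class in $K_1(B)=U_\infty(B^1)/U_\infty(B^1)_0$.

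The step requiring the most care is making sure the passage between the ``with unitization'' and ``without unitization'' descriptions of $K_0$ and $K_1$ is handled cleanly, and that all constructions are compatible with taking matrix amplifications $M_n(-)$ and the colimit $M_\infty(-)$. None of this is deep, but it is where one can easily be sloppy: for instance one must note that $(B[0,1])^1$ and $B^1[0,1]$ are not literally equal but that the relevant maps still match up, or alternatively phrase everything in terms of $M_\infty$ of the relevant algebras and use that $M_\infty(-)$ commutes with the constructions in sight. The honest upshot is that once the definitions are unwound, homotopy invariance is a direct consequence of the single fact that homotopy of projections (resp.\ a path of unitaries) yields equality of classes, already implicit in the definitions of $K_0$ and $K_1$, and functoriality is the observation that every construction used --- unitization, matrix amplification, passing to Grothendieck groups, passing to connected components --- is itself functorial.
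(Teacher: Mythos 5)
Your proposal is correct and follows the same route as the paper, which proves homotopy invariance exactly by observing that it is "a direct consequence of the fact that $\sim$ coincides with $\sim_h$ on $\mathcal P(B^1)$" (and the analogous statement for unitaries), with functoriality being routine bookkeeping through the unitization. Your write-up simply spells out the reduction to the two evaluation maps $B[0,1]\to B$ and the unitization compatibility that the paper leaves implicit.
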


Homotopy invariance means that if $\Phi,\Psi:A\to B$ are two homotopic $*$-homomorphisms, 
then $\Phi_*=\Psi_*:K_*(A)\to K_*(B)$, $*=0,1$. Here a homotopy between $\Phi$ and $\Psi$ is a 
$*$-homomorphism $\Theta:A\to B[0,1]$ such that $\Phi=\epsilon_0\circ \Theta$ and 
$\Psi=\epsilon_1\circ \Theta$, where for all $t\in [0,1]$, $\epsilon_t: B[0,1]\to B$ denotes evaluation at $t$.
Of course, the homotopy invariance is a direct consequence of the fact that ``$\sim$'' coincides with 
``$\sim_h$'' on $\mathcal P(B^1)$. 

Recall that a $C^*$-algebra is called {\em contractible}, if  the identity $\id:A\to A$ is homotopic to the zero map $0:A\to A$.
As an example, let $A$ be any $C^*$-algebra, then $A(0,1]:=\{a\in A[0,1]:  a(0)=0\}$ is contractible.
A homotopy between $\id:A(0,1]\to A(0,1]$ and $0$ is given by the path of $*$-homomorphism
$\Phi_t:A(0,1]\to A(0,1]; \big(\Phi_t(a)\big)(s)=a(ts)$. 

\begin{corollary} Suppose that $A$ is a contractible $C^*$-algebra. Then $K_0(A)=\{0\}=K_1(A)$.
\end{corollary}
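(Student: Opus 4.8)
The plan is to deduce everything from the homotopy invariance of $K_*$ stated in the preceding proposition, together with the observation that the zero $*$-homomorphism induces the zero map on $K$-theory. Since $A$ is contractible, the identity $\id\colon A\to A$ and the zero homomorphism $0\colon A\to A$ are homotopic, so homotopy invariance gives $\id_{K_*(A)} = (\id)_* = (0)_*\colon K_*(A)\to K_*(A)$ for $*=0,1$. Hence it suffices to show that $(0)_*$ is the zero homomorphism: once we know this, $\id_{K_*(A)}=0$, which forces $K_*(A)=\{0\}$.

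To see that the zero homomorphism induces $0$ on $K$-theory, I would factor it through the zero $C^*$-algebra: $0\colon A\to A$ is the composite $A\to\{0\}\to A$ of the unique $*$-homomorphism onto $\{0\}$ followed by the inclusion. Now $\{0\}^1=\CC$, and since $\epsilon\colon\CC\to\CC$ is the identity we get $K_0(\{0\})=\ker\big(\id\colon K_0(\CC)\to K_0(\CC)\big)=\{0\}$, while $K_1(\{0\})=U_\infty(\CC)/U_\infty(\CC)_0=\{0\}$ because each $U_n(\CC)$ is path connected (this is exactly the computation $K_1(\CC)=\{0\}$ from the example above). By functoriality $(0)_*$ then factors through $K_*(\{0\})=\{0\}$, so it is the zero map, and we are done.

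Alternatively, and more concretely, one can verify $(0)_*=0$ directly from the definitions. The unital extension is $0^1\colon A^1\to A^1$, $a+\lambda 1\mapsto\lambda 1$, i.e.\ the composite $A^1\xrightarrow{\epsilon}\CC\xrightarrow{\lambda\mapsto\lambda1}A^1$. For $K_0$, an element of $K_0(A)\subseteq K_0(A^1)$ lies in $\ker\epsilon_0$ by the very definition of $K_0(A)$, so applying the induced map $\epsilon_0$ first annihilates it; thus $(0)_0=0$. For $K_1$, given $u\in U_n(A^1)$, every entry of $0^1(u)$ lies in $\CC1$, so $0^1(u)$ lies in the image of $U_n(\CC)$ under the inclusion $\CC\into A^1$; since $U_n(\CC)$ is path connected, this image is contained in $U_n(A^1)_0$, so $[0^1(u)]=0$ and $(0)_1=0$.

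There is essentially no real obstacle here; the only point requiring a little care is pinning down that $(0)_*=0$, and it is worth noting \emph{why} this holds: for $K_1$ it is the path-connectedness of $U(\CC)$, while for $K_0$ it is precisely the definition of $K_0(A)$ as a kernel that is used. The contractibility of $A$ then does the rest through homotopy invariance.
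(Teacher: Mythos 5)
Your proof is correct and follows the route the paper intends: the corollary is stated as an immediate consequence of the homotopy invariance proposition, via $\id_{K_*(A)}=(\id)_*=(0)_*=0$. Your careful verification that the zero homomorphism induces the zero map (by factoring through $K_*(\{0\})=\{0\}$, or directly from the definitions of $K_0$ as a kernel and of $K_1$ via path-connectedness of $U_n(\CC)$) fills in exactly the detail the paper leaves implicit.
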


If $0\to I\stackrel{\iota}{\to} A\stackrel{q}{\to} B\to 0$ is a short exact sequence of $C^*$-algebras, then functoriality of $K_0$ and $K_1$ gives
two sequences
\begin{equation}\label{eq-Kseq}
K_0(I) \stackrel{\iota_0}{\to}K_0(A)\stackrel{q_0}{\to} K_0(B)\quad\text{and}\quad
K_1(I) \stackrel{\iota_1}{\to}K_1(A)\stackrel{q_1}{\to} K_1(B)
\end{equation}
which both can be shown to be exact in the middle. If there exists a splitting homomorphism $s:B\to A$ for the quotient map $q$,
it induces a splitting homomorphism $s_*: K_*(B)\to K_*(A)$, $*=0,1$, and  in this case the groups $K_0(A)$ and $K_1(A)$ decompose as direct 
sums 
$$K_0(A)=K_0(I)\oplus K_0(B)\quad\text{and}\quad K_1(A)=K_1(I)\oplus K_1(B).$$
In particular, in this special case the sequences in (\ref{eq-Kseq}) become short exact sequences of abelian groups. 

{\bf Six-term exact sequence:} In general, the sequences in (\ref{eq-Kseq}) can be joined into a six-term exact sequence
$$
\begin{CD}
K_0(I) @>\iota_0>> K_0(A) @>q_0>> K_0(B)\\
@A\partial AA    @.     @VV \exp V\\
K_1(B)  @<< q_1<   K_1(A)  @<< \iota_1 < K_1(I)
\end{CD}
$$
which serves as a very important tool for explicit computations as well as for proving theorems on $K$-theory.
We refer to \cites{Bla86, RL} for a precise description of the boundary maps $\exp$ and $\partial$.
Note that the six-term sequence is natural in the sense that if we have a morphism between two 
short exact sequences, i.e., we have a commutative diagram
$$
\begin{CD}
0 @>>> I @>>> A @>>>B @>>>0\\
@. @V\varphi VV  @V\psi VV @VV\theta V   @.\\
0 @>>> J @>>> C @>>>D @>>>0
\end{CD}
$$
in which the horizontal lines are exact sequences of $C^*$-algebras, then we have corresponding 
commutative diagrams
$$
\begin{CD}
@>>> K_i(A) @>>>K_i(B) @>>> K_{i+1}(I) @>>>K_{i+1}(A) @>>>\\
@. @V\psi_i VV @V \theta_i VV @VV \varphi_{i+1} V @VV\psi_{i+1}V  \\
@>>> K_i(C) @>>>K_i(D) @>>> K_{i+1}(J) @>>>K_{i+1}(C) @>>>\\
\end{CD}
$$
Aside of theoretical importance, this fact can often be used quite effectively for 
explicit computations of the boundary maps in the six-term sequence.

If we apply the six-term sequence to the short exact sequence
$$0\to C_0(0,1)\otimes A\stackrel{\iota}{\to} A(0,1]\stackrel{q}{\to} A\to 0$$
in which the quotient map $q: A(0,1]\to A$ is given by evaluation at $1$, we get the six-term sequence
$$
\begin{CD}
K_0(C_0(0,1)\otimes A) @>\iota_0>>0 @>q_0>> K_0(A)\\
@A\partial AA    @.     @VV \exp V\\
K_1(A)  @<< q_1<  0 @<< \iota_1 < K_1(C_0(0,1)\otimes A)
\end{CD}
$$
which shows that the connecting maps $\exp: K_0(A)\to K_1(C_0(0,1)\otimes A)$ and 
$\partial : K_1(A)\to K_0(C_0(0,1)\otimes A) $ are isomorphisms which are natural in $A$. 
Hence, by identifying $(0,1)$ with $\RR$
and $C_0(\RR)\otimes C_0(\RR)$ with $C_0(\RR^2)$, we can deduce 
the following important results from the above six-term sequence

\begin{theorem}[Bott-periodicity]\label{thm-BottK}
For each $C^*$-algebra $A$ there  are natural isomorphisms 
$$K_1(A)\cong K_0(C_0(\RR)\otimes A)\quad \text{and}\quad K_1(C_0(\RR)\otimes A)\cong K_0(A).$$
Moreover, if we apply the first isomorphism to $B=C_0(\RR)\otimes A$, we obtain isomorphisms
$$K_0(A)\cong K_1(C_0(\RR)\otimes A)\cong K_0( C_0(\RR^2)\otimes A).$$
\end{theorem}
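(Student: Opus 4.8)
The plan is to read both isomorphisms directly off the six-term exact sequence, exactly as in the computation preceding the statement, and then to record naturality and iterate once. First I would fix a $C^*$-algebra $A$ and apply the six-term exact sequence to the cone-suspension extension
$$0\to C_0(0,1)\otimes A\stackrel{\iota}{\to} A(0,1]\stackrel{q}{\to} A\to 0,$$
with $q$ evaluation at $1$. The essential observation is that $A(0,1]$ is contractible --- the path $\Phi_t\colon A(0,1]\to A(0,1]$, $(\Phi_t(a))(s)=a(ts)$, is a homotopy from $\id$ to $0$ --- so by the Corollary on contractible algebras $K_0(A(0,1])=K_1(A(0,1])=\{0\}$. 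Hence the six-term sequence degenerates: the two middle groups vanish, and exactness forces the connecting maps
$$\exp\colon K_0(A)\xrightarrow{\ \cong\ } K_1(C_0(0,1)\otimes A),\qquad \partial\colon K_1(A)\xrightarrow{\ \cong\ } K_0(C_0(0,1)\otimes A)$$
to be isomorphisms. Choosing a homeomorphism $(0,1)\cong\RR$ identifies $C_0(0,1)\otimes A$ with $C_0(\RR)\otimes A$, turning $\partial$ into the first asserted isomorphism $K_1(A)\cong K_0(C_0(\RR)\otimes A)$ and $\exp$ into the second, $K_1(C_0(\RR)\otimes A)\cong K_0(A)$.

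Next I would check that these isomorphisms are natural in $A$. A $*$-homomorphism $\varphi\colon A\to B$ induces a morphism of cone-suspension extensions --- the maps $\id_{C_0(0,1)}\otimes\varphi$ on the ideals, $a\mapsto\varphi\circ a$ on the cones $A(0,1]\to B(0,1]$, and $\varphi$ on the quotients --- and the stated naturality of the six-term sequence gives the commuting squares relating $\exp$ and $\partial$ for $A$ and for $B$. Since the assignments $A\mapsto A(0,1]$ and $A\mapsto C_0(0,1)\otimes A$ are functorial, this shows that the suspension isomorphisms assemble into natural transformations of functors.

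Finally, for the last assertion I would apply the first isomorphism with $A$ replaced by $B=C_0(\RR)\otimes A$, obtaining $K_1(C_0(\RR)\otimes A)\cong K_0\big(C_0(\RR)\otimes C_0(\RR)\otimes A\big)$; identifying $C_0(\RR)\otimes C_0(\RR)$ with $C_0(\RR^2)$ and combining with $K_0(A)\cong K_1(C_0(\RR)\otimes A)$ gives the chain $K_0(A)\cong K_1(C_0(\RR)\otimes A)\cong K_0(C_0(\RR^2)\otimes A)$. Since the genuinely substantial work --- building the natural six-term exact sequence --- has already been assumed, there is no real obstacle here; the one point demanding a little care is the bookkeeping in the naturality step, namely verifying that a $*$-homomorphism does give a genuine morphism of short exact sequences to which the naturality of the six-term sequence applies.
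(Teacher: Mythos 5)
Your proposal is correct and follows essentially the same route as the paper: the text preceding the theorem derives both isomorphisms from the six-term sequence applied to $0\to C_0(0,1)\otimes A\to A(0,1]\to A\to 0$, using contractibility of the cone, and then identifies $(0,1)$ with $\RR$ and iterates. The paper likewise acknowledges the logical caveat you implicitly handle by assuming the six-term sequence — namely that its standard proof already uses Bott periodicity, so the results are presented in the ``wrong order.''
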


We should note that the proof of the six-term sequence usually uses the 
Bott-periodicity theorem, so
we do present the results in the wrong order. In any case, 
the proofs of the six-term sequence and of the Bott-periodicity theorem are quite deep and we refer to 
the standard literature on $K$-theory (e.g., \cite{Bla86}) for the details.
We shall later provide a proof of Bott-periodicity in $KK$-theory, which does imply Theorem \ref{thm-BottK}.
We close this short section with two more important features of $K$-theory:

 {\bf Continuity:} 
  If  $A=\lim_{i}A_i$ is the inductive limit of an inductive system 
$\{A_i, \Phi_{ij}\}$ of $C^*$-algebras, then 
$$K_*(A)=\lim_i K_*(A_i),\quad *=0,1.$$
 {\bf Morita invariance:} If $e\in M(n,\CC)$ is any rank-one projection, then $\Phi_e:A\to M_n(A); \Phi_e(a)=e\otimes a$
induces an isomorphism between $K_*(A)$ and $K_*(M_n(A))$, $*=0,1$. More generally, if $e$ is any rank-one projection in $\K=\K(\ell^2(\NN))$,
then  the homomorphism $a\mapsto e\otimes a$ induces isomorphisms $K_*(A)\cong K_*(\K\otimes A)$, $*=0,1$. 
Since, by \cite{BGR} two $\sigma$-unital C*-algebras $A$ and $B$ are stably isomorphic if and only if they are Morita equivalent, 
it follows that Morita equivalent $\sigma$-unital C*-algebras have isomorphic $K$-theory groups.

\section{Kasparov's equivariant $KK$-theory}\label{sec-KK}

We now come to Kasparov's construction of the $G$-equivariant bivariant $K$-theory, which in 
some sense is built on the correspondence category as described in Section 1.5. 
Readers who are not familiar with the notion of Hilbert modules and correspondences are advised to 
read that section before going on here.
Since some of the constructions require that the $C^*$-algebras are  separable and that 
Hilbert $B$-modules $\mathcal E$ are countably generated (which means that there is a countable subset $\mathcal C\subseteq \mathcal E$ such that
$\mathcal C\cdot B$ is dense in $\mathcal E$), we shall from now on assume that 
these conditions will hold throughout, except for the multiplier algebras of separable $C^*$-algebras and the 
algebras of adjointable operators on a countably generated Hilbert module. We refer to \cite{Bla86} or Kasparov's original paper 
\cite{K3} for a more detailed account on where these conditions can be relaxed.

\subsection{Graded $C^*$-algebras and Hilbert modules.} 
We write $\ZZ_2$ for the group with two elements.
A $\ZZ_2$ grading of a $G$-$C^*$-algebra $(A,\alpha)$ is given by an action 
$\epsilon_A:\ZZ_2\to \Aut(A)$ which commutes with $\alpha$.
We then might consider $A$ as a $G\times \ZZ_2$-$C^*$-algebra with action $\alpha\times\eps_A$ and a 
graded $G$-equivariant correspondence between the graded $G$-$C^*$-algebras 
$(A,\alpha\times \eps_A)$ and $(B,\beta\times\eps_B)$ is just a $G\times\ZZ_2$-equivariant correspondence 
$(\E, u\times \eps_\E, \Phi)$ between these algebras.

Moreover, if $\eps_A$ is a grading of $A$, we write $A_0:=\{a\in A: \eps_A(a)=a\}$ and $A_1=\{a\in A: \eps_A(a)=-a\}$,
for the eigenspaces of the eigenvalues $1$ and $-1$ for $\eps_A$, and similar for gradings on Hilbert modules.
The elements in $A_0$ and $A_1$ are called the {\em homogeneous elements} of $A$. 
We write $\deg(a)=0$ if $a\in A_0$ and $\deg(a)=1$ if $a\in A_1$. $\deg(a)$ is called the {\em degree} of the 
homogeneous element $a$. 
Note that $A_0$ is a $C^*$-subalgebra of $A$ and every element $a\in A$ has a unique decomposition $a=a_0+a_1$ 
with $a_0\in A_0, a_1\in A_1$. 
If $A$ is a $\ZZ_2$ graded $C^*$-algebra, the graded commutator $[a,b]$ is defined as 
$$[a,b]=ab-(-1)^{\deg(a)\deg(b)} ba$$
for homogeneous elements $a,b\in A$ and it is defined on all of $A$ by bilinear continuation. 

If $A$ and $B$ are two graded $C^*$-algebras, we  define the {\em graded algebraic tensor product} $A\odot_{gr}B$
as the usual algebraic tensor product with graded multiplication and involution given on elementary tensors of homogeneous elements by
\begin{align*}
(a_1\otimes b_1)\cdot(a_2\otimes b_2)&=(-1)^{\deg(b_1)\deg(a_2)}(a_1a_2\otimes b_1b_2)\\
(a\otimes b)^*&=(-1)^{\deg(a)\deg(b)}(a^*\otimes b^*).
\end{align*}
In what follows we write $A\hat{\otimes}B$ for the minimal (or spatial) completion of $A\odot_{gr}B$. We refer to 
\cite{Bla86}*{14.4} for more details of this construction.

\begin{example}\label{ex-grading} 
{\bf (a)} For any $C^*$-algebra $A$ there is a grading on $M_2(A)$ given by conjugation with the symmetry $J=\bmtr 1&0\\0&-1\emtr$.
This grading is called the  {\em standard even} grading on $M_2(A)$. We then have 
$$M_2(A)_0=\bmtr A&0\\0&A\emtr\quad\text{and}\quad M_2(A)_1=\bmtr 0&A\\A&0\emtr.$$
{\bf (b)} If $A$ is a $C^*$-algebra, then the direct sum $A\oplus A$ carries a grading given by $(a,b)\mapsto (b,a)$, which 
is called the {\em standard odd} grading. We then have
$$\left(A\oplus A\right)_0=\{(a,a): a\in A\}\quad\text{and}\quad \left(A\oplus A\right)_1=\{(a,-a):a\in A\}.$$
{\bf (c)} Examples of nontrivially graded $C^*$-algebras which play an important r\^ole in the theory are the 
Clifford algebras $\Cl(V, q)$ where $q:V\times V\to\RR$ is a (possibly degenerate) symmetric  bilinear form on a finite dimensional real vector space 
$V$. $\Cl(V,q)$ is defined as the universal $C^*$-algebra generated by the elements $v\in V$ subject to the relations 
$$v^2=q(v,v)1\quad \forall v\in V$$
and  such that the embedding $\iota: V \into \Cl(V,q)$ is $\RR$-linear.
Using the equation $(v+v')^2-(v-v')^2=2(vv'+v'v)$  we obtain the relations
$$ vv'+v'v=2q(v, v') 1\quad \forall v,v'\in V.$$
%In particular, if $q=\langle\cdot,\cdot\rangle$ is an inner product on $V$ 
%and $v_1,\ldots, v_n$ is an orthonormal basis, we get the relations
%$v_iv_j=-v_jv_i$ for $1\leq i<j\leq n$. 
If $\dim(V)=n$, then $\dim(\Cl(V,q))=2^n$. 
The grading on $\Cl(V,q)$ is given as follows: the linear span of all products of the form 
$v_1v_2\cdots v_m$ with $m=2k$ even is  the set of homogeneous elements of degree $0$ 
and the linear span of all such products with $m=2k-1$ odd is the set of homogeneous elements of degree $1$.

For all $n\in \NN_0$ we  write $\Cl_n$ for $\Cl(\RR^n, \langle\cdot,\cdot\rangle)$ where $\langle\cdot,\cdot\rangle$ denotes the
  standard inner product on $\RR^n$. 
Then  $\Cl_0\cong \CC$ and 
$\Cl_1=\CC1+  \CC e_1\cong \CC\oplus \CC$ with  the standard odd grading (sending $\lambda 1+\mu e_1$ to $(\lambda+\mu, \lambda-\mu)\in \CC^2$).
If $n=2$ and if $\{e_1,e_2\}$ is the standard orthonormal basis of $\RR^2$, then 
there is an isomorphism of $\Cl_2\cong M_2(\CC)$,  equipped with the standard even grading, given 
by sending the generator $e_1$ to $\bmtr 0&1\\1&0\emtr$ and $e_2$ to $i\bmtr 0&-1\\1&0\emtr$.

In general we have the formula $\Cl_n\hat\otimes\Cl_m\cong \Cl_{n+m}$ as graded $C^*$-algebras, where 
we use the {\em graded tensor product} and the diagonal grading on the left hand side of this equation.
Note that the isomorphism is given on the generators $\{v: v\in \RR^n\}$ and $\{w: w\in \RR^m\}$ 
by sending $v\otimes w$ to $(v,0)\cdot (0,w)\in \Cl_{n+m}$.
In particular, $\Cl_n$ can be constructed as the $n$th graded tensor product of $\Cl_1$ with itself.

Note that for any $C^*$-Algebra $A$, it is an easy exercise to show that the 
 graded tensor product $\left(A\oplus A\right)\hat\otimes \Cl_1$, where $A\oplus A$ is equipped 
with the standard odd grading, is isomorphic to $M_2(A)$ equipped with the standard even grading. 
As a consequence, it follows that
$$\Cl_{2n}\cong M_{2^n}(\CC)\quad\text{and}\quad \Cl_{2n+1}\cong  M_{2^n}(\CC)\oplus  M_{2^n}(\CC)$$
where the grading in the even case is given by conjugation with a symmetry $J\in M_{2^n}(\CC)$ (i.e., an isometry with $J^2=1$) 
and  the standard odd grading  in the odd case (e.g., see 
\cite{Bla86}*{\S 14.4} for more details).
\end{example}

Note that the grading $\eps_\E$ of a Hilbert $B$-module induces a  grading $\Ad\eps_\E$ 
on $\L_B(\E)$ and $\K(\E)$ in a canonical way and the  morphism $\Phi:A\to \L_B(\E)$ in a $\ZZ_2$-graded correspondence 
has to be equivariant for the given grading on $A$ and this grading on $\L_B(\E)$. 
In what follows we want to suppress the grading in our notation and just keep in mind that everything in sight  will 
be $\ZZ_2$ graded. In most cases (except for Clifford algebras), we shall consider the trivial grading $\eps_A=\id_A$ for our $C^*$-algebras $A$, 
but we shall usually have non-trivial gradings on our Hilbert modules. 

\subsection{Kasparov's bivariant $K$-groups.}\label{sec-KKgroup}

In this section we are going to give the definition of Kasparovs's $G$-equivariant bivariant $K$-groups.
We refer to \cite{K3} for the details (but see also \cites{Bla86, Skand}).
We start with the definition of the underlying $KK$-cycles:

\begin{definition}\label{def-KK-cycle} Suppose that $(A,\alpha)$ and $(B,\beta)$ are $\ZZ_2$-graded $G$-$C^*$-algebras.
A {\em $G$-equivariant $A$-$B$ Kasparov cycle} is a quadruple $(\E, u,\Phi, T)$ in which $(\E, u,\Phi)$ is a 
$\ZZ_2$-graded $(A,\alpha)$-$(B,\beta)$ correspondence and $T\in \L_B(\E)$ is a homogeneous element with $\deg(T)=1$ such that
\begin{enumerate}
\item $g\mapsto \Ad u_g(\Phi(a)T); G\to \L_B(\E)$ is continuous for all $a\in A$;
\item for all $a\in A$ and $g\in G$ we have 
$$(T-T^*)\Phi(a), \; (T^2-1)\Phi(a), \; (\Ad u_g(T)-T)\Phi(a), \; [T,\Phi(a)]\in \K(\E)$$
\end{enumerate} 
(where $[\cdot,\cdot]$ denotes the graded commutator).
Two Kasparov cycles $(\E, u,\Phi, T)$ and $(\E', u',\Phi', T')$ are called {\em isomorphic}, if there exists an isomorphism 
$W:\E\to\E'$ of the correspondences $(\E, u,\Phi)$ and $(\E', u',\Phi')$ such that $T' = W\circ T\circ W^{-1}$. 
A Kasparov cycle is called to be {\em degenerate} if 
$$(T-T^*)\Phi(a), \; (T^2-1)\Phi(a),\;  (\Ad u_g(T)-T)\Phi(a), \; [T,\Phi(a)]=0$$
for all $a\in A$ and $g\in G$.
We write $\EE_G(A,B)$ for the set of isomorphism classes of all $G$-equivariant $A$-$B$ Kasparov cycles and we 
write $\DD_G(A,B)$ for the equivalence classes of degenerate Kasparov cycles.
\end{definition}

\begin{example} Every $G$-equivariant $*$-homomorphism $\Phi:A\to B$ determines a 
$G$-equivariant $A$-$B$ Kasparov cycle $(B, \beta, \Phi, 0)$, where $B$ is considered as a 
Hilbert $B$-module in the obvious way. More generally, if $(\E, u,\Phi)$ is any 
$\ZZ_2$ graded $(A,\alpha)$-$(B,\beta)$ correspondence such that $\Phi(A)\subseteq \K(\E)$
(i.e., $(\E, u,\Phi)$  is a morphism in  the compact correspondence category in the sense 
of \cite{CroPro}*{Definition 2.5.7}, then it is an easy 
exercise to check that $(\E, u,\Phi, 0)$
is a $G$-equivariant $A$-$B$ Kasparov cycle as well. 
Note that the condition $(T^2-1)\Phi(a)\in \K(\E)$ for a Kasparov cycle implies that 
conversely, if $(\E, u,\Phi, 0)$ is a Kasparov cycle, then $\Phi(A)\subseteq \K(\E)$.
A special situation of the above is the case in which $A=B$ and $\Phi=\id_B$ which 
gives us the 
Kasparov cycle
$(B, \beta,\id_B, 0)$. It will play an important role when looking at Kasparov products below.
\end{example}

In what follows, if $(B,\beta)$ is a $\ZZ_2$-graded $G$-$C^*$-algebra, then we denote by $B[0,1]$ the algebra $C([0,1],B)$ 
with point-wise $G$-action and grading. Suppose now that $(\E,u,\Phi, T)$ is a $G$-equivariant 
 $A$-$B[0,1]$ Kasparov cycle. For each $t\in [0,1]$ let $\delta_t:B[0,1]\to B; \delta_t(f)=f(t)$ be evaluation at $t$. 
 Then we obtain a $G$-equivariant $A$-$B$ Kasparov cycle 
 $(\E_t, u_t,\Phi_t, T_t)$ by putting
 $$\E_t=\E\otimes_{B[0,1],\delta_t}B,\; u_t=u\otimes \beta, \;\Phi_t=\Phi\otimes 1,\; \text{and}\; T_t=T\otimes 1.$$
Alternatively, we could define a $B$-valued inner product on $\E$ by 
$$\lk e,f\rk_B:=\lk e,f\rk_{B[0,1]}(t)$$
which factors through $\E_t:= \E/(\E\cdot \ker\delta_t)$. Then  $u, \Phi, T$ factor uniquely through some action 
$u_t$ of $G$, a $*$-homomorphism  $\Phi_t:A\to \L_B(\E_t)$ and an operator $T_t\in \L_B(\E_t)$
such that  $(\E_t, u_t,\Phi_t, T_t)$ is a $G$-equivariant $A$-$B$ Kasparov cycle.  It is isomorphic to the one
constructed above. We call
 $(\E_t, u_t,\Phi_t, T_t)$ the {\em evaluation} of $(\E,u,\Phi, T)$ at $t\in [0,1]$.

\begin{definition}[Homotopy] Two Kasparov cycles $(\E_0,u_0,\Phi_0,T_0)$ and $(\E_1,u_1,\Phi_1,T_1)$ in 
$\EE_G(A,B)$ are said to be {\em homotopic} if there exists a $G$-equivariant 
$A$-$B[0,1]$ Kasparov cycle $(\E,u,\Phi, T)$  such that 
 $(\E_0,u_0,\Phi_0,T_0)$ is isomorphic to the evaluation of  $(\E,u,\Phi,T)$ at $0$ and 
 $(\E_1,u_1,\Phi_1,T_1)$ is isomorphic to the evaluation of  $(\E,u,\Phi,T)$ at $1$.
 We then write $(\E_0,u_0,\Phi_0,T_0)\sim_h (\E_1,u_1,\Phi_1,T_1)$. 
 We  define 
 $$KK^G(A,B):=\EE_G(A,B)/\sim_h.$$
 \end{definition}

\begin{remark}\label{rem-degenerate}
Every degenerate Kasparov cycle $(\E,u,\Phi,T)$ is homotopic to the zero-cycle 
$(0,0,0,0)$. To see this consider the quadruple $(\E\otimes C_0([0,1)), u\otimes\id, \Phi\otimes 1, T\otimes 1)$
where we view $\E\otimes C_0([0,1))\cong C_0([0,1),\E)$ as a $B[0,1]$-Hilbert module in the obvious way.
It follows from degeneracy of $(\E,u,\Phi,T)$ that $(\E\otimes C_0([0,1)), u\otimes\id, \Phi\otimes 1, T\otimes 1)$ is an 
$A$-$B[0,1]$ Kasparov cycle and it is straightforward to check that its evaluation at $0$ coincides with 
$(\E,u,\Phi,T)$ while its evaluation at $1$ is the zero-cycle.
\end{remark}

\begin{remark}\label{rem-homotopies}
A special kind of homotopies are the {\em operator homotopies} which are defined as follows:
Assume that $(\E,u,\Phi,T_0)$ and $(\E,u,\Phi,T_1)$ are two $A$-$B$ Kasparov cycles such that 
the underlying correspondence $(\E,u,\Phi)$ coincides for both cycles. 
An operator homotopy between $(\E,u,\Phi,T_0)$ and $(\E,u,\Phi,T_1)$ is a family 
of $A$-$B$ Kasparov cycles $(\E,u,\Phi,T_t)$, $t\in [0,1]$, such that the path of operators 
$(T_t)_{t\in [0,1]}$ is norm continuous and connects $T_0$ with $T_1$.  Such operator homotopy determines 
a homotopy between  $(\E,u,\Phi,T_0)$ and $(\E,u,\Phi,T_1)$ in which the
$A$-$B[0,1]$ Kasparov cycle  is given by $(\E\otimes C[0,1], u\otimes \id,  \Phi\otimes 1, \tilde{T})$
with $\big(\tilde{T}(e)\big)(t)=T_te(t)$ for $e\in\E\otimes C[0,1]=C([0,1],\E)$.
\end{remark}

\begin{example}\label{rem-compact-pertubation}
Assume that $(\E,u,\Phi,T_0)$ and $(\E,u,\Phi,T_1)$ are two $A$-$B$ Kasparov cycles. We then say that 
$T_1$ is a {\em compact perturbation} of $T_0$ if $(T_1-T_0)\Phi(a)\in \mathcal \K(\E)$ for all $a\in A$. In this case, the 
path $T_t=(1-t)T_0+tT_1$ gives an operator homotopy between $T_0$ and $T_1$, so both Kasparov cycles 
are homotopic. 
\end{example}

The following observation is due to Skandalis (see \cite{Skand}):

\begin{proposition}\label{prop-homotopies}
The equivalence relation $\sim_h$ on  $\EE_G(A,B)$ coincides with the equivalence relation generated by 
operator homotopy together with adding degenerate Kasparov cycles. 
%More precisely: two $A$-$B$ Kasparov cycles $(\E_0,u_0,\Phi_0,T_0)$ and $(\E_1,u_1,\Phi_1,T_1)$
%are homotopic if and only if there exist degenerate $A$-$B$ Kasparov cycles 
%$(\F_1, v_1, \Psi_1, S_1)$ and $(\F_2, v_2, \Psi_2, S_2)$ such that
%$(\E_1\oplus \F_1, u_1\oplus v_1, \Phi_1\oplus \Psi_1, T_1\oplus S_1)$ is operator homotopic to
%$(\E_2\oplus \F_2, u_2\oplus v_2, \Phi_2\oplus \Psi_2, T_2\oplus S_2)$. 
\end{proposition}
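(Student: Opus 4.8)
The plan is to prove the two inclusions of equivalence relations separately; write $\approx$ for the equivalence relation on $\EE_G(A,B)$ generated by operator homotopy together with adjoining degenerate cycles. For $\approx\,\subseteq\,\sim_h$ I would first check that $\sim_h$ really is an equivalence relation on $\EE_G(A,B)$: reflexivity comes from the constant homotopy $(\E\otimes C[0,1],u\otimes\id,\Phi\otimes 1,T\otimes 1)$, symmetry from pulling a $B[0,1]$-cycle back along the flip $t\mapsto 1-t$ of $B[0,1]$, and transitivity from concatenating two $B[0,1]$-cycles along a chosen isomorphism of their middle evaluations and reparametrising $[0,2]\cong[0,1]$. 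Granting this, each generating move of $\approx$ lands inside $\sim_h$: operator homotopies are homotopies by Remark \ref{rem-homotopies}, while for a degenerate cycle $d$ one has $x\sim_h x\oplus 0$ (adjoining the zero cycle changes nothing up to isomorphism) and $x\oplus 0\sim_h x\oplus d$ by taking the direct sum of the constant homotopy at $x$ with the homotopy $0\sim_h d$ supplied by Remark \ref{rem-degenerate}. Hence $\sim_h$ contains $\approx$.

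The content is the reverse inclusion $\sim_h\,\subseteq\,\approx$. Let $(\E,u,\Phi,T)\in\EE_G(A,B[0,1])$ be a homotopy whose evaluations at $0$ and $1$ are isomorphic to $x_0$ and $x_1$; I must produce a finite chain of operator homotopies and degenerate additions from $x_0$ to $x_1$. First I would normalise: replacing $T$ by $\tfrac12(T+T^{*})$ is a compact perturbation by condition (ii) of Definition \ref{def-KK-cycle}, hence an operator homotopy by Example \ref{rem-compact-pertubation}, and it is carried out simultaneously on the $B[0,1]$-cycle and on all of its evaluations, so one may assume $T=T^{*}$. Next I would straighten the underlying module: Kasparov's (equivariant) stabilisation theorem gives $\E\oplus\H_{B[0,1]}\cong\H_{B[0,1]}$, where $\H_{B[0,1]}$ is the standard graded $G$-equivariant Hilbert $B[0,1]$-module, so after adjoining to the cycle the degenerate $B[0,1]$-cycle with module $\H_{B[0,1]}$ and zero left action — and correspondingly adjoining one and the same degenerate $B$-cycle to each $x_i$ — one may assume the module is $\H_{B[0,1]}$, whose fibre at every $t$ is the standard module $\H_B$. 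Then $\Phi$ and $T$ become bounded, strictly continuous paths $t\mapsto\Phi_t$ and $t\mapsto T_t$ of operators on $\H_B$, and up to the adjoined degenerate cycle $x_i=(\H_B,u_i,\Phi_i,T_i)$.

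What remains is to link $(\H_B,u_0,\Phi_0,T_0)$ to $(\H_B,u_1,\Phi_1,T_1)$ inside $\approx$, and this is where I expect the real difficulty to lie. The intended mechanism is to choose a partition $0=s_0<\dots<s_n=1$ of $[0,1]$ fine enough that, on each subinterval, the compactness data of Definition \ref{def-KK-cycle} — the elements $(T_t^{2}-1)\Phi_t(a)$, $[T_t,\Phi_t(a)]$ and $(\Ad u_g(T_t)-T_t)\Phi_t(a)$ — are controlled well enough that the straight-line interpolations of $T$, and where legitimate of $\Phi$, between consecutive partition points still define Kasparov cycles, giving operator homotopies between the cycles at $s_j$ and $s_{j+1}$; composing these finitely many operator homotopies (finitely many because $[0,1]$ is compact) links $x_0$ to $x_1$ in $\approx$. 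The main obstacle is precisely the gap between the strict continuity of $t\mapsto T_t$ and $t\mapsto\Phi_t$ that one gets for free on $C([0,1],\H_B)$ and the norm continuity — with a fixed correspondence — that the definition of operator homotopy demands: compactness of an element such as $(T^{2}-1)\Phi(a)$ is not stable under small perturbations of $\Phi$, so the partition argument has to be run on the genuine compactness data rather than on $T$ and $\Phi$ naively, and this is the technical heart of the proposition. A subsidiary point, which causes no trouble here, is that the degenerate cycles adjoined to the two endpoints in the straightening step are literally the same $B$-cycle — both are evaluations of one degenerate $B[0,1]$-cycle — so the chain leaves nothing uncancelled at its ends; had one only straightened the module up to isomorphism of correspondences, one would additionally invoke the standard ``rotation-matrix'' trick to see that isomorphic cycles are $\approx$-equivalent.
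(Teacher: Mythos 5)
Your easy inclusion is fine: $\sim_h$ is an equivalence relation, operator homotopies are homotopies by Remark \ref{rem-homotopies}, and $x\sim_h x\oplus d$ for degenerate $d$ via Remark \ref{rem-degenerate}; likewise the normalisations (making $T$ self-adjoint by a compact perturbation, stabilising the module by adjoining the degenerate cycle $(\H_{B[0,1]},0,0)$) are legitimate. But the hard inclusion $\sim_h\subseteq\approx$ is exactly where your argument stops being a proof, and the subdivision strategy you propose cannot be repaired. First, an operator homotopy by definition keeps the correspondence $(\E,u,\Phi)$ \emph{fixed}; since $\Phi_{s_j}\neq\Phi_{s_{j+1}}$ in general (and a convex combination of $*$-homomorphisms is not a $*$-homomorphism), consecutive cycles in your partition cannot be joined by an operator homotopy at all, so each subinterval presents the full original problem rather than a smaller one. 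Second, even with $\Phi$ constant, strict continuity of $t\mapsto T_t$ gives no control in norm: the straight-line path from $T_{s_j}$ to $T_{s_{j+1}}$ consists of Kasparov cycles essentially only when $T_{s_{j+1}}-T_{s_j}$ is a compact perturbation, which is precisely what fails in general no matter how fine the partition. A telling symptom is that your argument never uses separability of $A$, whereas the proposition genuinely requires it.

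The actual proof --- which the paper does not reproduce either; it only cites Skandalis --- is structural rather than local. One first checks that the $\approx$-classes already form abelian groups (the inverse of $[\E,u,\Phi,T]$ is $[\E^{\op},u,\Phi\circ\eps_A,-T]$, their sum being operator homotopic to a degenerate cycle by the rotation trick), that the Kasparov product descends to these finer groups (this is where Kasparov's technical theorem and the separability of $A$ enter), and that the resulting functor is stable and split exact. Homotopy invariance of this functor --- equivalently, that the two evaluations of an $A$-$B[0,1]$ cycle agree up to operator homotopy and degenerates --- then follows either from Skandalis's product computation comparing $(\eps_0)_*$ and $(\eps_1)_*$, or from the Cuntz--Higson theorem that stable split-exact functors are automatically homotopy invariant. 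So the missing step in your proposal is not a technicality to be absorbed by a finer partition; it is the theorem itself, and it is proved by an entirely different mechanism.
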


\begin{theorem}[Kasparov]\label{thm-KKgroup} 
$KK^G(A,B)$  is an abelian group with addition 
defined by direct sum of Kasparov cycles:
$$[\E_1,u_1,\Phi_1, T_1]+[\E_2,u_2,\Phi_2, T_2]=[\E_1\oplus \E_2, u_1\oplus u_2, \Phi_1\oplus \Phi_2, T_1\oplus T_2],$$
where $[\E,u,\Phi,T]$ denotes the homotopy class of the Kasparov cycle $(\E,u,\Phi,T)$. The inverse of a class $[\E,u,\Phi, T]$ is given by 
the class $[\E^{\op}, u,\Phi\circ \eps_A, -T]$ in which $\E^{\op}$ denotes the module $\E$ with the {\em opposite grading}
$\eps_{\E^{\op}}=-\eps_{\E}$.
\end{theorem}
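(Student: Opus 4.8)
The plan is to verify that $KK^G(A,B)$ is a well-defined set-theoretic operation, that it is commutative and associative up to homotopy, that the zero cycle is a neutral element, and that the proposed inverse works. First I would check that the direct sum of two Kasparov cycles is again a Kasparov cycle: the conditions in Definition~\ref{def-KK-cycle}(i)--(ii) are checked componentwise, using that $\K(\E_1\oplus\E_2)\supseteq\K(\E_1)\oplus\K(\E_2)$ and that $T_1\oplus T_2$ has degree $1$. Next, to see that the operation descends to homotopy classes, I would observe that if $(\E_i,u_i,\Phi_i,T_i)\sim_h(\E_i',u_i',\Phi_i',T_i')$ via an $A$-$B[0,1]$ cycle, then the direct sums of these two homotopies give a homotopy between the direct sums; here one uses that evaluation at $t$ commutes with direct sums. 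Commutativity up to homotopy follows from the obvious isomorphism $\E_1\oplus\E_2\cong\E_2\oplus\E_1$ of correspondences intertwining $T_1\oplus T_2$ with $T_2\oplus T_1$, and isomorphic cycles are homotopic (via a constant homotopy). Associativity is likewise immediate from the associativity of $\oplus$ on Hilbert modules. That $[0,0,0,0]$ is neutral uses that $(\E,u,\Phi,T)\oplus(0,0,0,0)\cong(\E,u,\Phi,T)$.

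The substantive point --- and the part I expect to be the main obstacle --- is showing that $[\E,u,\Phi,T]+[\E^{\op},u,\Phi\circ\eps_A,-T]=0$ in $KK^G(A,B)$, i.e. that $(\E\oplus\E^{\op},u\oplus u,\Phi\oplus(\Phi\circ\eps_A),T\oplus(-T))$ is homotopic to the zero cycle. By Proposition~\ref{prop-homotopies} it suffices to connect this cycle, through operator homotopies and addition of degenerates, to $(0,0,0,0)$. The standard argument is a ``rotation homotopy'': on the module $\E\oplus\E^{\op}$ one defines, for $t\in[0,1]$, the operator
$$
T_t=\bmtr \cos(\tfrac{\pi}{2}t)\,T & -\sin(\tfrac{\pi}{2}t)\\ -\sin(\tfrac{\pi}{2}t) & -\cos(\tfrac{\pi}{2}t)\,T\emtr,
$$
(interpreting the off-diagonal $1$ as the canonical identification $\E\to\E^{\op}$ as Banach spaces), and checks that each $T_t$ together with the diagonal action $u\oplus u$ and representation $\Phi\oplus(\Phi\circ\eps_A)$ is again a Kasparov cycle: one must confirm self-adjointness modulo compacts, $T_t^2-1$ compact modulo the representation, $G$-continuity and $G$-invariance modulo compacts, and compactness of the graded commutator --- all of which follow from the corresponding properties of $T$ together with the fact that the purely off-diagonal terms commute with the (diagonal) representation. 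At $t=0$ one recovers $T_0=T\oplus(-T)$; at $t=1$ one gets the cycle $(\E\oplus\E^{\op},u\oplus u,\Phi\oplus(\Phi\circ\eps_A),T_1)$ with $T_1=\bmtr 0&-1\\-1&0\emtr$, which is \emph{degenerate} (all four expressions in Definition~\ref{def-KK-cycle} vanish identically, since $T_1^*=T_1$, $T_1^2=1$, $T_1$ is $G$-invariant, and $T_1$ graded-commutes with the diagonal representation because the grading of $\E^{\op}$ is reversed). By Remark~\ref{rem-degenerate} this degenerate cycle is homotopic to zero.

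There are a couple of routine technical checks I would flag but not belabour: that the identification $\E\to\E^{\op}$ is indeed degree-reversing, so that the off-diagonal entries of $T_t$ have the correct degree $1$ for $T_t$ to be a homogeneous degree-$1$ operator on the graded module $\E\oplus\E^{\op}$; and that the grading on $\E\oplus\E^{\op}$ is the one making $\Phi\oplus(\Phi\circ\eps_A)$ graded-equivariant. Once the rotation homotopy is in place, combining it (an operator homotopy) with Remark~\ref{rem-degenerate} and Proposition~\ref{prop-homotopies} yields $[\E,u,\Phi,T]+[\E^{\op},u,\Phi\circ\eps_A,-T]=0$, completing the verification that $KK^G(A,B)$ is an abelian group.
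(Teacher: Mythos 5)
Your proposal is correct, and it is the standard argument (the one found in Kasparov's paper and in Blackadar's book, to which this survey defers --- the survey itself states the theorem without proof). The routine verifications are handled appropriately, and the substantive step --- the rotation homotopy $T_t$ on $\E\oplus\E^{\op}$ connecting $T\oplus(-T)$ to the degenerate operator $\bmtr 0&-1\\-1&0\emtr$, whose off-diagonal entries are odd precisely because the grading on $\E^{\op}$ is reversed and which graded-commutes exactly with $\Phi\oplus(\Phi\circ\eps_A)$ --- is exactly right; combined with Remark \ref{rem-degenerate} it gives the inverse.
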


{\bf Functoriality:}  Every $G$-equivariant $*$-homomorphism $\Psi:A_1\to A_2$ induces a 
group homomorphism 
$$\Psi^*: KK^G(A_2,B)\to KK^G(A_1, B)\;;\; [\E, u, \Phi, T]\mapsto [\E, u, \Phi\circ \Psi, T]$$
and every $G$-equivariant $*$-homomorphism $\Psi: B_1\to B_2$ induces a group homomorphism
$$\Psi_*: KK^G(A, B_1)\to KK^G(A, B_2)\;;\;[\E, u,\Phi, T]\mapsto [\E\otimes_{B_1}B_2, u\otimes\beta_2, \Phi\otimes 1, T\otimes 1].$$
Hence, $KK^G$ is contravariant in the first variable  and covariant in the second variable. 

{\bf Direct sums.} If $A=\oplus_{i=1}^l A_i$ is a finite direct sum, then $KK^G(B,A)\cong \oplus_{i=1}^lKK^G(B, A_i)$.
The formula does not hold in general for (countable) infinite direct sums (see \cite{Bla86}*{19.7.2}).
On the other side, if  $A=\oplus_{i\in I} A_i$ is a countable direct sum of $G$-$C^*$-algebras, then 
$ KK^G(A,B)\cong\prod_{i\in I} KK^G(A_i,B)$
for every $G$-$C^*$-algebra $B$. We leave it to the reader to construct these isomorphisms.

{\bf The ordinary $K$-theory groups.} Recall that for a trivially graded unital
$C^*$-algebra $B$ the ordinary $K_0$-group 
can be defined as the Grothendieck group generated by the 
semigroup of all  
homotopy classes $[p]$ of projections $p\in M_\infty(B)=\cup_{n\in \NN}M_n(B)$ with direct sum 
$[p]+[q]=[p\oplus q]$ as addition.
If $p\in M_n(B)$, then  $p$ determines a $*$-homomorphism $\Phi_p:\CC\to M_n(B)\cong \K(B^n); \lambda\mapsto \lambda p$,
and hence a class $[B^n, \Phi_p,0]\in KK_0(\CC,B)$. Note that all elements of the module $B^n$ are homogeneous of degree $0$.
This construction preserves homotopy and direct sums and therefore 
induces a homomorphism of $K_0(B)$ into $KK(\CC,B)$, which, as shown by Kasparov, is actually an 
isomorphism of abelian groups. Thus we get
$$KK(\CC, B)\cong K_0(B).$$ 
If $B$ is not unital, we may first apply the above to the unitisation $B^1$ and for $\CC$ and then use 
split-exactness to get the general case. If $\Phi:A\to B$ is a $*$-homomorphism, then the 
isomorphisms $KK(\CC,A)\cong K_0(A)$ and $KK(\CC,B)\cong K_0(B)$ intertwines the induced 
homomorphism $\Phi_*:KK(\CC,A)\to KK(\CC,B)$ in  $KK$-theory with the morphism of $K$-theory groups 
sending $[p]$ to $[\Phi(p)]$.

If we put the complex numbers $\CC$ into the second variable, we obtain Kasparov's 
operator theoretic $K$-homology functor $K^0(A):=KK(A,\CC)$. 

{\bf The group $KK(\CC, \CC)$.} Each element in $KK(\CC,\CC)$ can be represented 
by a Kasparov cycle of the form $(\H, {\mathbf 1}, T)$ in which $\H=\H_0\oplus \H_1$ is a graded Hilbert space,
${\mathbf 1}:\CC\to \L(\H)$ is the action  ${\mathbf 1}(\lambda)\xi=\lambda \xi$ and $T$ is a self adjoint operator
satisfying $T^2-1\in \K(H)$. This follows from the standard simplifications as described in detail in \cite{Bla86}*{\S 17.4}.
If $T=\bmtr 0&P^*\\P&0\emtr$, then $T^2= \bmtr P^*P & 0\\ 0 & PP^*\emtr$ and the condition $T^2-1\in \K(H)$ then 
implies that $P$ is a Fredholm operator. We then obtain a well defined map
$$\indx: KK(\CC,\CC)\to \ZZ; [\H, {\mathbf 1}, T]\mapsto \indx(\H, {\mathbf 1}, T):=\indx(P),$$
where $\indx(P)=\dim(\ker(P))-\dim(\ker(P^*))$ denotes the Fredholmindex of $P$.
The index map induces the isomorphism $KK(\CC,\CC)\cong \ZZ=K_0(\CC)$. (Compare this with the above 
isomorphism $KK(\CC, B)\cong K_0(B)$ in case $B=\CC$.)

\subsection{The Kasparov product}
We are now coming to the main feature of Kasparov's $KK$-theory, namely the Kasparov product which 
is a pairing 
$$KK^G(A,B)\times KK^G(B,C)\to KK^G(A,C),$$
where $A,B,C$ are $G$-$C^*$-algebras.
Starting with an $A$-$B$ Kasparov cycle $(\E, u,\Phi, T)$ and a $B$-$C$ Kasparov cycle 
$(\F, v, \Psi, S)$, the Kasparov product will be represented by a Kasparov cycle 
of the form $(\E\otimes_B\F, u\otimes v, \Phi\otimes 1, R)$, where all ingredients
with exception of the operator $R$ are well known objects by now: 
$\E\otimes_B\F$ denotes the internal tensor product of $\E$ with $\F$ over $B$ (with diagonal grading),
$u\otimes v:G\to\Aut(\E\otimes_B\F)$ denotes the diagonal action, and $\Phi\otimes 1: A\to \L(\E\otimes_B\F)$ 
sends $a\in A$ to the operator $\Phi(a)\otimes 1$ of $\E\otimes_B\F$. 
But the construction of the operator $R$ is, unfortunately, quite complicated and 
 reflects the high complexity of Kasparov's theory. 
 
As a first attempt one would look at the operator 
 $$R=T\otimes 1+1\otimes S.$$
But there are several problems with this choice. First of all, 
 the operator $1\otimes S$ on the internal tensor product 
 is not well defined in general (it only makes sense, if $S$ commutes with $\Psi(B)\subseteq \L(\F)$).
 To resolve this, we need to replace $1\otimes S$ with a so-called {\em $S$-connection}, which we 
 shall explain below. But even if $1\otimes S$ is well-defined, the triple
 $(\E\otimes_B\F, u\otimes v, \Phi\otimes 1, T\otimes 1 + 1\otimes S)$
will usually fail to be a Kasparov  triple unless $S=0$, and one needs to replace 
$T\otimes 1+1\otimes S$ by a combination 
$$M^{1/2}(T\otimes 1)+N^{1/2}(1\otimes S)$$
where $M, N\geq 1$ are suitable operators with $M+N=1$ which can be obtained 
by an application of Kasparov's technical theorem \cite{K3}*{Theorem 1.4}.

{\bf $S$-connection:} For any $\xi\in \E$ define 
 $$\theta_\xi: \F\to \E\otimes_B\F; \theta_\xi(\eta)=\xi\otimes \eta.$$
 Then $\theta_\xi\in \K(\F,\F\otimes_B\E)$  with adjoint given by
 $$\theta_\xi^*(\zeta\otimes\eta)=\Psi(\lk \xi, \zeta\rk_B)\eta.$$
 An operator $S_{12}\in \L(\E\otimes_B\F)$ is then called an {$S$-connection},  if 
 for all homogeneous elements $\xi\in \E$ we have
\begin{equation}\label{eq-connect}
 \theta_\xi S-(-1)^{\deg(\xi)\deg{S}} S_{12}\theta_\xi, \;\; \theta_\xi S^*-(-1)^{\deg(\xi)\deg{S}} S_{12}^*\theta_\xi \in \K(\F, \E\otimes_B\F)
 \end{equation}
 It is a good exercise to check that if $S$ commutes with $\Psi(B)\subseteq \L(\F)$, then 
 $S_{12}=1\otimes S$ makes sense and is an $S$-connection in the above sense. 

\begin{definition}[Kasparov product]
Suppose that $A,B,C$ are $G$-$C^*$-algebras, and that $(\E, u,\Phi, T)$ is an $A$-$B$ Kasparov cycle  and 
$(\F, v, \Psi, S)$ is a $B$-$C$ Kasparov cycle. Let $S_{12}\in \L(\E\otimes_B\F)$ be an $S$-connection as 
explained above. Then the quadruple $(\E\otimes_B\F, u\otimes v, \Phi\otimes 1, S_{12})$ will be 
a {\em Kasparov product} for $(\E, u,\Phi, T)$ and $(\F, v, \Psi, S)$ if the following two conditions hold:
\begin{enumerate}
\item $(\E\otimes_B\F, u\otimes v, \Phi\otimes 1, S_{12})$ is an $A$-$C$ Kasparov cycle.
\item For all $a\in A$ we have $(\Phi(a)\otimes 1)\left[T\otimes 1, S_{12}\right](\Phi(a^*)\otimes 1)\geq 0$ mod. $\K(\E\otimes_B\F)$.
\end{enumerate}
In this case the class $[\E\otimes_B\F, u\otimes v, \Phi\otimes 1, S_{12}]\in KK^G(A,C)$ is called a Kasparov product of 
$[\E, u,\Phi, T]\in KK^G(A,B)$ with $[\F, v, \Psi, S]\in KK^G(B,C)$.
\end{definition}

We should note that the existence of an $S$-connection $S_{12}$ which satisfies the conditions of the above definition 
follows from an application of Kasparov's technical theorem \cite{K3}*{Theorem 14}. The proof is quite technical and we 
refer to the literature (see one of the references \cite{Skand, K3, Bla86}).

\begin{remark}\label{rem-homotop} 
{\bf (a)} One can show that the operator $S_{12}$ in a Kasparov product is unique up to operator homotopy.

{\bf (b)} 
The following easy case is often very useful: 
Suppose that $B$ acts on $\F$ by compact operators, i.e., $\Psi:B\to \L(\F)$ takes its values in $\K(\F)$.
Then $(\F, v, \Psi, 0)$ is a $B$-$C$ Kasparov cycle and the $0$-operator on $\E\otimes_B\F$ is then clearly 
a $0$-connection. Now if $\Phi:A\to \L(\E)$ also takes values in $\K(\E)$ and if $T=0$, it follows that 
$[\E\otimes_B\F, u\otimes v, \Phi\otimes 1, 0]$ is a Kasparov product for $[\E,u,\Phi,0]$ and $[\F, v, \Psi, 0]$.
We therefore obtain a functor from the compact correspondence category $\Corr_c(G)$ (see \cite{CroPro}*{Section 2.5.3}) -- here we use countably 
generated $\ZZ_2$-graded Hilbert modules) into $KK^G$ given by $(\E, u,\Phi)\mapsto [\E, u, \Phi, 0]$ which preserves multiplication.  
%We shall later use this observation to show that Morita equivalent algebras are also $KK$-equivalent.
\end{remark}

The details of the following theorem can be found in \cite{Bla86} or \cite{K3}.
 
\begin{theorem}[Kasparov]\label{thm-product}
Suppose that $A,B, C$ are separable $G$-$C^*$-algebras and let 
$x=[\E,u, \Phi, T]\in KK^G(A,B)$ and $y=[\F, v, \Psi, S]\in KK^G(B,C)$. Then the Kasparov product
$$x\otimes_By:=[\E\otimes_B\F, u\otimes v, \Phi\otimes 1, S_{12}]$$ 
exists and induces a well-defined bilinear pairing
$$\otimes_B:KK^G(A,B)\times KK^G(B,C)\to KK^G(A,C).$$
Moreover, the Kasparov product is associative: If $D$ is another $G$-$C^*$-algebra
and $z\in KK^G(C,D)$, then we have
$$(x\otimes_By)\otimes_Cz=x\otimes_B(y\otimes_Cz)\in KK^G(A,D).$$
The elements $1_A=[A,\alpha, \id_A,0]\in KK^G(A,A)$ and $1_B=[B,\beta,\id_B,0]\in KK^G(B,B)$ act as identities 
from the left and right on $KK^G(A,B)$, i.e., we have
$$1_A\otimes_Ax=x=x\otimes_B1_B\in KK^G(A,B)$$
for all $x\in KK^G(A,B)$. In particular, $KK^G(A,A)$ equipped with the Kasparov product is a unital ring.
\end{theorem}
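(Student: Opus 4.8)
I would not attempt a self-contained proof: the analytic core is Kasparov's technical theorem, and the complete argument fills several pages in \cite{K3} and \cite{Bla86}*{\S 18}, so I would ultimately refer the reader there. The architecture I would follow has four stages --- existence of the product for a single pair of cycles, descent to $KK^G$ and bilinearity, the behaviour of the units, and finally associativity, which is by far the hardest part.

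\emph{Existence.} Fix $(\E,u,\Phi,T)\in\EE_G(A,B)$ and $(\F,v,\Psi,S)\in\EE_G(B,C)$ and form $\E\otimes_B\F$ with diagonal action $u\otimes v$, diagonal grading and left action $\Phi\otimes1$. After checking, via a countable frame of $\E$ over $B$, that $S$-connections exist at all, one interpolates between $T\otimes1$ and a chosen $S$-connection $F$ by applying the (equivariant form of the) technical theorem \cite{K3}*{Theorem 1.4} to the $\sigma$-unital algebra $\mathcal J=\K(\E\otimes_B\F)$ --- $\sigma$-unital because $\E$ and $\F$ are countably generated over separable algebras. One feeds it the separable $C^*$-subalgebra $\A_1$ generated by $\Phi(A)\otimes1$ and the ``$T$-defects'' $(\Phi(a)\otimes1)\big((T\otimes1)^2-1\big)$, $(\Phi(a)\otimes1)(T\otimes1-(T\otimes1)^*)$, $[T\otimes1,\Phi(a)\otimes1]$, $(\Ad(u\otimes v)_g(T\otimes1)-T\otimes1)(\Phi(a)\otimes1)$; the separable $C^*$-subalgebra $\A_2$ generated by $\Phi(A)\otimes1$ and the corresponding ``$F$-defects''; and a separable derivation set $\Delta$ containing $\Phi(A)\otimes1$, $T\otimes1$, $F$ and enough of the $G$-action data. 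The theorem returns $G$-continuous positive $M,N$ with $M+N=1$, $[M,\Delta]\subseteq\mathcal J$, $M\A_1\subseteq\mathcal J$ and $N\A_2\subseteq\mathcal J$; setting $S_{12}:=M^{1/2}(T\otimes1)+N^{1/2}F$ one verifies that $S_{12}$ is an $S$-connection, that $(\E\otimes_B\F,u\otimes v,\Phi\otimes1,S_{12})$ is an $A$-$C$ Kasparov cycle, and that $(\Phi(a)\otimes1)[T\otimes1,S_{12}](\Phi(a^*)\otimes1)\geq0$ mod $\mathcal J$ --- the last because, modulo $\mathcal J$, the cross term $[M^{1/2}(T\otimes1),N^{1/2}F]$ reduces to $N^{1/2}[M^{1/2}(T\otimes1),F]$ plus lower-order terms, which is $\geq0$ mod $\mathcal J$ by Kasparov's classical computation. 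By Remark \ref{rem-homotop}(a) the resulting class is independent of the choices of $F$, $M$ and $N$, hence depends only on the two cycles.

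\emph{Descent, bilinearity, units.} A homotopy in either variable is a Kasparov cycle over the appropriate interval-algebra; running the construction above parametrically --- equivalently, applying the technical theorem once to $\K$ of the relevant tensor product over the interval-algebra --- turns it into a homotopy of products, so $\otimes_B$ descends to a pairing $KK^G(A,B)\times KK^G(B,C)\to KK^G(A,C)$. Additivity in each slot follows because the construction respects finite direct sums: $(\E_1\oplus\E_2)\otimes_B\F\cong(\E_1\otimes_B\F)\oplus(\E_2\otimes_B\F)$, and $F,M,N$ may be chosen diagonally, so the product of a sum is the sum of the products. For the units, pairing $x=[\E,u,\Phi,T]$ with $1_B=[B,\beta,\id_B,0]$ one has $\E\otimes_BB\cong\E$ with $\Phi\otimes1$ becoming $\Phi$ and $S=0$, so one may take $M=1$, $N=0$, giving $S_{12}=T\otimes1\cong T$ and $x\otimes_B1_B=x$; pairing $1_A=[A,\alpha,\id_A,0]$ with $x$ one has $A\otimes_A\E\cong\overline{\Phi(A)\E}\cong\E$ with $\id_A\otimes1$ becoming $\Phi$ and first operator $0$, so $M=0$, $N=1$ and the connection may be taken to be $T$ itself, giving $1_A\otimes_Ax=x$; in particular $KK^G(A,A)$ with $\otimes_A$ is a unital ring.

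\emph{Associativity --- the main obstacle.} With $x=[\E,u,\Phi,T]$, $y=[\F,v,\Psi,S]$, $z=[\mathcal G,w,\Xi,P]$ over $A,B,C,D$, both $(x\otimes_By)\otimes_Cz$ and $x\otimes_B(y\otimes_Cz)$ are carried by the same module $\E\otimes_B\F\otimes_C\mathcal G$ with the same data $\Phi\otimes1\otimes1$, $(u\otimes v)\otimes w$, and differ only in the operator. Following Kasparov, with the streamlining of Connes--Skandalis, I would (i) prove a uniqueness lemma: for fixed module data, an operator that is a connection ``on the left'' for all of the given operators simultaneously and satisfies the corresponding positivity conditions modulo $\K$ determines a unique class in $KK^G$; (ii) build on the triple product a single operator $Q$ --- a simultaneous ``$T$--$S$--$P$ connection'' --- by one global application of the technical theorem, handing it subalgebras and a derivation set that encode $T$, $S$, $P$ and the connections at all three levels; and (iii) verify that $Q$ satisfies the product conditions for both bracketings, so that by (i) both sides equal $[\E\otimes_B\F\otimes_C\mathcal G,(u\otimes v)\otimes w,\Phi\otimes1\otimes1,Q]$. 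The genuinely delicate points I expect here are tracking the positivity condition across the three tensor factors and checking that the data handed to the technical theorem actually meets its hypotheses --- precisely where Kasparov's original argument is most intricate --- so in practice I would follow \cite{Bla86}*{\S 18} / \cite{K3}*{\S 2} rather than redo it from scratch.
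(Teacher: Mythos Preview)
Your proposal is consistent with the paper's treatment: the paper does not prove this theorem at all but simply refers the reader to \cite{Bla86} and \cite{K3} for the details, which is exactly what you do as well. Your sketch of the four stages (existence via the technical theorem, homotopy invariance and bilinearity, units, and associativity via a simultaneous connection on the triple module) follows the standard Kasparov/Connes--Skandalis architecture found in those references, so there is nothing to correct.
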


The following result is helpful for the computation of Kasparov products in some 
important special cases. For the proof we refer to \cite{Bla86}*{8.10.1}.

\begin{proposition}\label{prop-product}
Suppose that $A,B,C$ are $G$-$C^*$-algebras, $(\E,u, \Phi, T)$ is an $A$-$B$ Kasparov cycle 
and  $(\F, v, \Psi, S)$ is a $B$-$C$ Kasparov cycle.
Suppose further that $T=T^*$ and $\|T\|\leq 1$. Let $S_{12}\in \L(\E\otimes_B\F)$ be a $G$-invariant $S$-connection 
of degree one and let
$$R:= T\otimes 1 + (\sqrt{1-T^2}\otimes 1)S_{12}.$$
If $[R, \Phi(A)\otimes 1]\in \K(\E\otimes_B\F)$, then  $(\E\otimes_B\F, u\otimes v,\Phi\otimes 1, R)\in \EE^G(A,C)$ and represents
the Kasparov product of 
$[\E,u, \Phi, T]$ with $[\F, v, \Psi, S]$
\end{proposition}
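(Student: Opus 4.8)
The plan is to verify directly that the quadruple $(\E\otimes_B\F, u\otimes v, \Phi\otimes 1, R)$ with $R = T\otimes 1 + (\sqrt{1-T^2}\otimes 1)S_{12}$ satisfies the two defining conditions of a Kasparov product, namely that it is an $A$-$C$ Kasparov cycle and that the positivity condition $(\Phi(a)\otimes 1)[T\otimes 1, S_{12}](\Phi(a^*)\otimes 1)\geq 0$ mod $\K(\E\otimes_B\F)$ holds. Since the Kasparov product is unique up to operator homotopy (Remark \ref{rem-homotop}(a)), once these are checked the conclusion follows. First I would record the easy structural facts: $R$ is homogeneous of degree one because $T\otimes 1$ has degree one, $\sqrt{1-T^2}\otimes 1$ has degree zero (it is a function of $T^2$), and $S_{12}$ has degree one by hypothesis; $G$-equivariance up to compacts of $R$ follows from that of $T$ and the $G$-invariance of $S_{12}$ together with condition (1) in Definition \ref{def-KK-cycle} applied to both input cycles.

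The substantive part is condition (2) of Definition \ref{def-KK-cycle} for $R$: one must show that modulo $\K(\E\otimes_B\F)$ and after multiplying by $\Phi(a)\otimes 1$, the operators $R-R^*$, $R^2-1$, $[R,\Phi(a)\otimes 1]$ all vanish. Self-adjointness mod compacts is immediate from $T=T^*$ and the fact that $S_{12}$ can be taken self-adjoint (or that $S_{12}-S_{12}^*$ acts as a compact after composing with the relevant algebra, using the $S$-connection relations \eqref{eq-connect} together with $(S-S^*)\Psi(b)\in\K(\F)$). The commutator $[R,\Phi(a)\otimes 1]\in\K$ is exactly the extra hypothesis imposed in the statement, so nothing is needed there. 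The heart of the matter is $(R^2-1)(\Phi(a)\otimes 1)\in\K(\E\otimes_B\F)$. Expanding,
\[
R^2 = (T\otimes 1)^2 + (T\otimes 1)(\sqrt{1-T^2}\otimes 1)S_{12} + (\sqrt{1-T^2}\otimes 1)S_{12}(T\otimes 1) + (\sqrt{1-T^2}\otimes 1)S_{12}(\sqrt{1-T^2}\otimes 1)S_{12}.
\]
Here I would use that $\sqrt{1-T^2}\otimes 1$ commutes with $T\otimes 1$, so the first two middle terms combine with the sign coming from degree one to an anticommutator-type expression; that $S_{12}$ is an $S$-connection lets one move $\sqrt{1-T^2}\otimes 1$ past $S_{12}$ modulo compacts (this is where \eqref{eq-connect} does the work, since $\sqrt{1-T^2}\otimes 1$ is approximated by sums of $\theta_\xi\theta_\eta^*$-type operators after multiplying by $\Phi(a)\otimes 1$); and that $S_{12}^2$ agrees with $1\otimes S^2$ modulo compacts on the relevant subspace, so $S_{12}^2-1$ contributes a compact after using $(S^2-1)\Psi(b)\in\K(\F)$. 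Collecting terms, $R^2-1$ reduces modulo $\K$ to $(T^2-1)\otimes 1 + (1-T^2)\otimes 1 \cdot(\text{something}) + \cdots$, and the cross terms cancel precisely because of the algebraic identity $\sqrt{1-T^2}^2 = 1-T^2$; multiplying through by $\Phi(a)\otimes 1$ and invoking $(T^2-1)\Phi(a)\in\K(\E)$ finishes it.

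Finally, for the positivity condition (2) of the Kasparov product, I would compute the graded commutator $[T\otimes 1, S_{12}]$ and show modulo compacts (after sandwiching with $\Phi(a)\otimes 1$ and $\Phi(a^*)\otimes 1$) that it equals, up to a positive scalar, an expression of the form $2(\sqrt{1-T^2}\otimes 1)^{-1}$ times a manifestly positive operator — more precisely, using $R$ self-adjoint of square $\equiv 1$ one writes $[T\otimes 1, S_{12}]$ in terms of $R$, $T\otimes 1$, and $\sqrt{1-T^2}\otimes 1$ and exploits that $R^2\equiv 1$ forces the relevant term to be $(\sqrt{1-T^2}\otimes 1)\cdot(\text{positive})\cdot(\sqrt{1-T^2}\otimes 1)$ mod $\K$. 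The main obstacle is the bookkeeping in the $R^2-1$ computation: keeping track of the degree-induced signs in the graded commutators and justifying each ``move $\sqrt{1-T^2}\otimes 1$ past $S_{12}$ modulo compacts'' step rigorously via the $S$-connection property \eqref{eq-connect} — this requires approximating $\sqrt{1-T^2}$ by polynomials in $T^2$ and then by finite sums of rank-one-type operators $\theta_\xi\theta_\eta^*$ after multiplying by elements of $\Phi(A)$, which is the standard but delicate argument. Everything else is routine once that is in place, and the final invocation of Remark \ref{rem-homotop}(a) identifies the class with the Kasparov product.
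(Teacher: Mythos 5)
The paper offers no proof of this proposition (it refers directly to \cite{Bla86}*{8.10.1}), so your sketch has to be judged against the standard direct verification given there; your overall plan is indeed that verification. But your opening reduction has a genuine logical gap: in the paper's definition of the Kasparov product the operator of the product cycle is required to be an $S$-connection \emph{in addition to} conditions (i) and (ii), and the uniqueness up to operator homotopy you invoke from Remark \ref{rem-homotop}(a) holds only among operators satisfying all three requirements. Cycle-plus-positivity alone does not pin down the class (any odd self-adjoint $F$ with $F^2-1$ and $[F,\Phi(a)\otimes 1]$ compact which exactly anticommutes with $T\otimes 1$ passes both tests, whatever class it represents), so you must in addition deal with the connection condition for $R$ --- and this is the genuinely delicate point, since $R-S_{12}=T\otimes 1+\bigl((\sqrt{1-T^2}-1)\otimes 1\bigr)S_{12}$ fails to be a $0$-connection in general (for instance $(T\otimes 1)\theta_\xi=\theta_{T\xi}$ is not compact unless $\Psi(B)\subseteq\K(\F)$), so one has to either argue around it or produce an explicit operator homotopy to a genuine product operator, as the reference does.

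Two further steps would fail as written. In the $R^2-1$ computation, ``multiplying through by $\Phi(a)\otimes 1$ and invoking $(T^2-1)\Phi(a)\in\K(\E)$'' does not finish it: $k\otimes 1\notin\K(\E\otimes_B\F)$ for $k\in\K(\E)$ in general, so the term $\bigl(\Phi(a)(T^2-1)\bigr)\otimes 1$ is not compact by itself; it must be cancelled against $\bigl(\Phi(a)(1-T^2)\otimes 1\bigr)S_{12}^2$ to give $(k\otimes 1)(S_{12}^2-1)$ with $k=\Phi(a)(1-T^2)\in\K(\E)$, which is compact because $S_{12}^2-1$ is an $(S^2-1)$-connection and $(S^2-1)\Psi(b)\in\K(\F)$. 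And the positivity argument is not viable: the condition concerns the graded commutator $[T\otimes 1,R]$ with the product operator $R$, not with the connection $S_{12}$, and the operator $(\sqrt{1-T^2}\otimes 1)^{-1}$ you invoke need not exist. The correct argument is short and does not use $R^2\equiv 1$: since $T$ commutes with $\sqrt{1-T^2}$ one has $[T\otimes 1,R]=2(T^2\otimes 1)+(\sqrt{1-T^2}\otimes 1)[T\otimes 1,S_{12}]$; after sandwiching with $\Phi(a)\otimes 1$ the second summand is compact, because $\Phi(a)\sqrt{1-T^2}\in\K(\E)$ (use the $C^*$-identity in $\L(\E)/\K(\E)$ applied to $\Phi(a)(1-T^2)\Phi(a)^*$, not polynomial approximation in $T^2$) and connections graded-commute with $\K(\E)\otimes 1$ modulo $\K(\E\otimes_B\F)$, while the first summand gives $2(\Phi(a)T\otimes 1)(\Phi(a)T\otimes 1)^*\geq 0$ exactly. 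With these repairs your outline does reproduce the argument of the cited reference.
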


We should notice that the conditions $T=T^*$ and $\|T\|\leq 1$ can always be fulfilled by choosing 
an appropriate Kasparov cycle representing the given class $x\in KK^G(A,B)$. 

Associativity and the existence of neutral elements for the Kasparov product gives rise to 
an easy notion of $KK^G$-equivalence for two $G$-$C^*$-algebras $A$ and $B$:
Assume that there are elements $x\in KK^G(A,B)$ and $y\in KK^G(B,A)$ such that 
$$x\otimes_By=1_A\in KK^G(A,A)\quad\text{and}\quad y\otimes_Ax=1_B\in KK^G(B,B).$$
Then taking products with $x$ from the left induces an isomorphism
$$x\otimes_B\cdot: KK^G(B,C)\to KK^G(A,C); z\mapsto x\otimes_B z$$
with inverse given by 
$$y\otimes_A\cdot: KK^G(A,C)\to KK^G(B,C); w\mapsto y\otimes_A w.$$
This follows from the simple identities
$$y\otimes_A(x\otimes_B z)=(y\otimes_Ax)\otimes_Bz=1_B\otimes_Bz=z$$
for all $z\in KK^G(B,C)$ and similarly we have $x\otimes_B(y\otimes_Aw)=w$
for all $w\in KK^G(A,C)$. Of course, taking products from the right by $x$ and $y$ 
will give us an isomorphism
$\cdot\otimes_Ax: KK^G(C,A)\to KK^G(C,B)$ with inverse  
$\cdot\otimes_By: KK^G(C,B)\to KK^G(C,A)$. 

\begin{definition}\label{def-KK-equivalence}
Suppose that $x\in KK^G(A,B)$ and $y\in KK^G(B,A)$ are as above. Then 
we say that $x$ is a {\em $KK^G$-equivalence from $A$ to $B$ with inverse $y$}.
Two $G$-$C^*$-algebras $A$ and $B$ are called {\em $KK^G$-equivalent}, 
if such elements $x$ and $y$ exist.
\end{definition}

\begin{lemma}\label{lem-Morita-KK}
Suppose that $(\E, u, \Phi)$ is a $G$-equivariant $A$-$B$ Morita equivalence for the
$G$-$C^*$-algebras $(A,\alpha)$ and $(B,\beta)$ and let  $(\E^*, u^*,\Phi^*)$ denote its 
inverse. Then $x=[\E,u,\Phi, 0]\in KK^G(A,B)$ is a $KK^G$-equivalence with inverse 
$y=[\E^*,u^*,\Phi^*, 0]$.
\end{lemma}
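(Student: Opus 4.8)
The plan is to combine Remark~\ref{rem-homotop}~(b) with the elementary structure theory of imprimitivity bimodules, so that no deep $KK$-theoretic input --- not even Kasparov's technical theorem --- is needed. First I would record that, since $(\E,u,\Phi)$ is a Morita equivalence, the left action satisfies $\Phi(A)=\K(\E)$; in particular $\Phi(A)\subseteq\K(\E)$, so $(\E,u,\Phi)$ is a morphism in the compact correspondence category and $(\E,u,\Phi,0)$ is a Kasparov cycle (cf.\ the example after Definition~\ref{def-KK-cycle}), so $x=[\E,u,\Phi,0]$ is a legitimate class in $KK^G(A,B)$, and likewise $y=[\E^*,u^*,\Phi^*,0]\in KK^G(B,A)$. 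Since both operators are $0$ and the $0$-operator on an internal tensor product is trivially a $0$-connection, Remark~\ref{rem-homotop}~(b) applies directly and computes the Kasparov products as naive tensor products of correspondences:
$$x\otimes_B y=[\E\otimes_B\E^*,\,u\otimes u^*,\,\Phi\otimes 1,\,0]\in KK^G(A,A),$$
and symmetrically $y\otimes_A x=[\E^*\otimes_A\E,\,u^*\otimes u,\,\Phi^*\otimes 1,\,0]\in KK^G(B,B)$.

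It then remains only to identify these cycles with the units $1_A$ and $1_B$. Writing $\bar\eta\in\E^*$ for the element of the dual module corresponding to $\eta\in\E$, I would show that the map
$$\Psi\colon\E\otimes_B\E^*\longrightarrow A,\qquad \xi\otimes\bar\eta\longmapsto {}_A\lk\xi,\eta\rk,$$
is an isomorphism of $G$-equivariant $A$-$A$ correspondences, where $A$ is given its standard structure (left multiplication as left action, $\lk a,b\rk_A=a^*b$, and the $G$-action $\alpha$). Concretely: $\Psi$ is $B$-balanced because ${}_A\lk\xi b,\eta\rk={}_A\lk\xi,\eta b^*\rk$; it intertwines $\Phi\otimes 1$ with left multiplication since ${}_A\lk\Phi(a)\xi,\eta\rk=a\,{}_A\lk\xi,\eta\rk$; it is surjective by fullness, $A={}_A\lk\E,\E\rk$; it is $G$-equivariant because ${}_A\lk u_g\xi,u_g\eta\rk=\alpha_g\big({}_A\lk\xi,\eta\rk\big)$; and it is isometric once one unravels the $A$-valued inner product on $\E\otimes_B\E^*$ --- this reduces to the identity ${}_A\lk\eta_1,\eta_2\lk\xi_2,\xi_1\rk_B\rk={}_A\lk\eta_1,\xi_1\rk\,{}_A\lk\xi_2,\eta_2\rk$, which follows by inserting the imprimitivity-bimodule relation $\zeta\lk\xi,\eta\rk_B={}_A\lk\zeta,\xi\rk\eta$. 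Every module in sight is trivially graded, so the degree and continuity conditions of Definition~\ref{def-KK-cycle} hold automatically and the two $0$-operators match under $\Psi$. Hence $(\E\otimes_B\E^*,u\otimes u^*,\Phi\otimes 1,0)\cong(A,\alpha,\id_A,0)$ as Kasparov cycles, i.e.\ $x\otimes_B y=1_A$ in $KK^G(A,A)$. Exchanging the roles of $(A,\E)$ and $(B,\E^*)$ yields in the same way $\E^*\otimes_A\E\cong B$ and hence $y\otimes_A x=1_B$, so by Definition~\ref{def-KK-equivalence} $x$ is a $KK^G$-equivalence from $A$ to $B$ with inverse $y$.

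The only genuine work is the verification that $\Psi$ is a well-defined isomorphism of correspondences --- particularly that it is $B$-balanced and isometric --- which is precisely where the imprimitivity-bimodule axioms relating the left $A$-valued and right $B$-valued inner products enter. These are standard facts of Morita theory (see the treatment of Morita equivalence in \cite{CroPro}, or \cite{Bla86}), so I would cite them rather than reproduce the bracket computations; all the $KK$-theoretic content has already been packaged into Remark~\ref{rem-homotop}~(b), and neither Kasparov's technical theorem nor the general construction of the Kasparov product is invoked.
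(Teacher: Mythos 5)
Your proposal is correct and follows exactly the paper's own route: apply Remark~\ref{rem-homotop}~(b) to compute both Kasparov products as naive internal tensor products with zero operator, then identify $\E\otimes_B\E^*$ with $(A,\alpha,\id_A)$ (and symmetrically for $\E^*\otimes_A\E$) using the standard imprimitivity-bimodule identities. The paper simply asserts this isomorphism of correspondences, whereas you spell out the verification; the content is the same.
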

\begin{proof} It follows from Remark \ref{rem-homotop} that the Kasparov product 
$x\otimes_By$ is represented by the Kasparov cycle $[\E\otimes_B\E^*, u\otimes u^*, \Phi\otimes 1, 0]$. 
But since the correspondence $(\E\otimes_B\E^*, u\otimes u^*, \Phi\otimes 1)$ 
is isomorphic to $(A,\alpha,\id_A)$, it follows that $x\otimes_By=1_A$.  
Similarly we have $y\otimes_Bx=1_B$.
\end{proof}

\begin{remark}\label{rem-full-corner}
If $p\in M(A)$ is a full projection in a $C^*$-algebra $A$, then $pA$ is a 
$pAp$-$A$ Morita equivalence and hence, if $\varphi:pAp\to \L(pA)$ denotes the 
canonical morphism, the element $[pA, \varphi, 0]\in KK(pAp, A)$ is a $KK$-equivalence
(it is a $KK^G$-equivalence if $A$ is a $G$-$C^*$-algebra and $p\in M(A)$ is $G$-invariant).
On the other hand, if $\psi:pAp\to A$ denotes the canonical inclusion, then 
$[A, \psi, 0]$ determines the class of the $*$-homomorphism $\psi$.
Both classes actually coincide, which follows from the simple fact that we can decompose 
the $KK$-cycle $(A,\psi,0)$ as the direct sum $(pA, \varphi, 0)\oplus ((1-p)A, 0, 0)$
where the second summand is degenerate. 
In particular, it follows from this that $\psi_*:K_*(pAp)\to K_*(A)$ is an isomorphism.
\end{remark}

We are now going to describe a more general version of the Kasparov product.
For this we  first need to introduce a homomorphism 
$$\cdot \hat\otimes 1_D: KK^G(A,B)\to KK^G(A\hat\otimes D, B\hat\otimes D)$$ 
which is defined for any $G$-$C^*$-algebra $(D,\delta)$ by 
$$[\E, u, \Phi, T]\hat\otimes 1_D:=[\E\hat\otimes D, u\hat\otimes \delta, \Phi\hat\otimes 1, T\hat\otimes 1].$$
One can check that $\cdot\hat\otimes 1_D$ is compatible with Kasparov products in the sense that
$$(x\otimes_By)\hat\otimes 1_D=(x\hat\otimes 1_D)\otimes_{B\hat\otimes D}(y\hat\otimes 1_D)$$
and it follows directly from the definition that  $1_A\hat\otimes 1_D=1_{A\hat\otimes D}$. 
In particular, it follows that $\cdot\hat\otimes 1_D$ sends $KK^G$-equivalences to $KK^G$-equivalences.
Of course, in a similar way we can define a homomorphism 
$$1_D\otimes \cdot: KK^G(A,B)\to KK^G(D\hat\otimes A, D\hat\otimes B).$$

\begin{remark} By our conventions ``$\hat\otimes$'' denotes the minimal graded tensor product 
of the $C^*$-algebras $A$ and $B$. But a similar map $\cdot\hat\otimes_{\max}D: KK^G(A,B)\to KK^G(A\hat\otimes_{\max}D,B\hat\otimes_{\max}D)$
exists for the maximal graded tensor product.
\end{remark}

\begin{theorem}[Generalized Kasparov product]\label{gen-product}
Suppose that $(A_1,\alpha_1)$, $(A_2, \alpha_2)$, $(B_1,\beta_1)$, $(B_2, \beta_2)$ and $(D,\delta)$ are 
$G$-$C^*$-algebras. Then there is a pairing
$$\otimes_D: KK^G(A_1, B_1\hat\otimes D)\times KK^G(D\hat\otimes A_2, B_2)\to 
KK^G(A_1\hat\otimes A_2, B_1\hat\otimes B_2)$$
given by 
$$(x,y)\mapsto x\otimes_D y:= (x\hat\otimes 1_{A_2})\otimes_{B_1\hat\otimes D\hat\otimes A_2}(1_{B_1}\hat\otimes y).$$
This pairing is associative (in a suitable sense) and coincides with the ordinary Kasparov product 
if $B_1=\CC=A_2$. Moreover, in case $D=\CC$,  the product 
$$\otimes_\CC: KK^G(A_1, B_1)\times KK^G(A_2, B_2)\to 
KK^G(A_1\hat\otimes A_2, B_1\hat\otimes B_2)$$
is commutative.
\end{theorem}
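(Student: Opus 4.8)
The plan is to reduce all four assertions to the ordinary Kasparov product of Theorem~\ref{thm-product} together with the formal properties of the external tensoring maps $\cdot\hat\otimes 1_D$ and $1_D\otimes\cdot$ recorded just above, silently using that the canonical re-bracketings and flips of minimal graded tensor products are $G$-equivariant $*$-isomorphisms and hence induce compatible isomorphisms between the $KK^G$-groups in sight.

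First I would check that $x\otimes_D y:=(x\hat\otimes 1_{A_2})\otimes_{B_1\hat\otimes D\hat\otimes A_2}(1_{B_1}\hat\otimes y)$ is well defined: for $x\in KK^G(A_1,B_1\hat\otimes D)$ the class $x\hat\otimes 1_{A_2}$ lies in $KK^G(A_1\hat\otimes A_2,(B_1\hat\otimes D)\hat\otimes A_2)$, and after identifying $(B_1\hat\otimes D)\hat\otimes A_2$ with $B_1\hat\otimes D\hat\otimes A_2$ the class $1_{B_1}\hat\otimes y$ lies in $KK^G(B_1\hat\otimes D\hat\otimes A_2,B_1\hat\otimes B_2)$, so the ordinary product over $B_1\hat\otimes D\hat\otimes A_2$ is defined and lands in $KK^G(A_1\hat\otimes A_2,B_1\hat\otimes B_2)$. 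Bilinearity is then immediate, since $\cdot\hat\otimes 1_{A_2}$ and $1_{B_1}\hat\otimes\cdot$ are additive and the Kasparov product is bilinear. For the special case $B_1=\CC=A_2$ I would simply note that under the canonical isomorphisms $A_1\hat\otimes\CC\cong A_1$, $\CC\hat\otimes D\hat\otimes\CC\cong D$ and $\CC\hat\otimes B_2\cong B_2$ the defining formula of $\cdot\hat\otimes 1_\CC$ on cycles gives $x\hat\otimes 1_\CC\cong x$ and $1_\CC\hat\otimes y\cong y$, so that $x\otimes_\CC y$ collapses to the ordinary Kasparov product $x\otimes_D y$ over $D$.

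For associativity I would first spell out the ``suitable sense'': given $x\in KK^G(A_1,B_1\hat\otimes D_1)$, $y\in KK^G(D_1\hat\otimes A_2,B_2\hat\otimes D_2)$ and $z\in KK^G(D_2\hat\otimes A_3,B_3)$, one wants $(x\otimes_{D_1}y)\otimes_{D_2}z=x\otimes_{D_1}(y\otimes_{D_2}z)$ in $KK^G(A_1\hat\otimes A_2\hat\otimes A_3,B_1\hat\otimes B_2\hat\otimes B_3)$. The argument unwinds both sides into iterated ordinary Kasparov products of externally tensored classes, pushes the external tensor factors through the products using the compatibility $(x'\otimes_B y')\hat\otimes 1_E=(x'\hat\otimes 1_E)\otimes_{B\hat\otimes E}(y'\hat\otimes 1_E)$ (and its left-handed analogue) together with the obvious commutation of $\cdot\hat\otimes 1_E$ with $1_F\otimes\cdot$, and then applies associativity of the ordinary Kasparov product from Theorem~\ref{thm-product}. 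This is heavy but entirely routine bookkeeping, the only care being to transport everything along the canonical reassociation $*$-isomorphisms of the tensor products.

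The interesting point is commutativity for $D=\CC$. Here the formula reads $x\otimes_\CC y=(x\hat\otimes 1_{A_2})\otimes_{B_1\hat\otimes A_2}(1_{B_1}\hat\otimes y)$, which is exactly a standard description of the \emph{external} Kasparov product of $x\in KK^G(A_1,B_1)$ with $y\in KK^G(A_2,B_2)$, and the claim is that the graded flip $*$-isomorphisms $A_1\hat\otimes A_2\cong A_2\hat\otimes A_1$ and $B_1\hat\otimes B_2\cong B_2\hat\otimes B_1$ intertwine $x\otimes_\CC y$ with $y\otimes_\CC x$. I would represent $x$ and $y$ by cycles $(\E_i,u_i,\Phi_i,T_i)$ with $T_i=T_i^*$ and $\|T_i\|\le 1$ (possible by the remark after Proposition~\ref{prop-product}), use Proposition~\ref{prop-product} to represent $x\otimes_\CC y$ on $\E_1\hat\otimes\E_2$ by an operator built from $T_1\hat\otimes 1$ and $1\hat\otimes T_2$, and connect it to the flipped operator representing $y\otimes_\CC x$ by the classical rotation homotopy, using paths of the form $\cos\theta\,(T_1\hat\otimes 1)+\sin\theta\,(1\hat\otimes T_2)$. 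The hard part --- and the only step that is not formal symbol-pushing --- is to verify that such a path stays inside $\EE^G(A_1\hat\otimes A_2,B_1\hat\otimes B_2)$, which rests on the fact that $T_1\hat\otimes 1$ and $1\hat\otimes T_2$ graded-commute modulo $\K(\E_1\hat\otimes\E_2)$; this is precisely where the $\ZZ_2$-gradings are essential, and for its detailed verification I would appeal to \cite{Bla86} and \cite{K3}.
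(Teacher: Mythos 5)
The paper itself does not prove this theorem --- it immediately defers to \cite{K3}*{Theorem 2.14} --- so there is no in-paper argument to compare against. Most of your sketch (well-definedness and bilinearity from the formal properties of $\cdot\hat\otimes 1_{A_2}$ and $1_{B_1}\hat\otimes\cdot$, the collapse to the ordinary product when $B_1=\CC=A_2$, associativity as bookkeeping with the compatibility $(x'\otimes_B y')\hat\otimes 1_E=(x'\hat\otimes 1_E)\otimes_{B\hat\otimes E}(y'\hat\otimes 1_E)$) is the standard reduction and is fine.

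The gap is in the commutativity step. The path $R_\theta=\cos\theta\,(T_1\hat\otimes 1)+\sin\theta\,(1\hat\otimes T_2)$ is \emph{not} a path of Kasparov cycles on $\E_1\hat\otimes\E_2$, and not only at the endpoints. Since $T_1\hat\otimes 1$ and $1\hat\otimes T_2$ are odd, they graded-anticommute exactly, so the cross terms in $R_\theta^2$ cancel and
$(R_\theta^2-1)\bigl(\Phi_1(a_1)\hat\otimes\Phi_2(a_2)\bigr)=\cos^2\theta\,\bigl((T_1^2-1)\Phi_1(a_1)\bigr)\hat\otimes\Phi_2(a_2)\pm\sin^2\theta\,\Phi_1(a_1)\hat\otimes\bigl((T_2^2-1)\Phi_2(a_2)\bigr)$,
which lies in $\K(\E_1)\hat\otimes\L(\E_2)+\L(\E_1)\hat\otimes\K(\E_2)$ but not in $\K(\E_1\hat\otimes\E_2)=\K(\E_1)\hat\otimes\K(\E_2)$ unless the $\Phi_i$ act by compacts; this is exactly why the product operator must have the ``mixed'' form $T_1\hat\otimes 1+(\sqrt{1-T_1^2}\hat\otimes 1)(1\hat\otimes T_2)$ of Proposition \ref{prop-product} rather than a scalar combination. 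Moreover the endpoints $T_1\hat\otimes 1$ and $1\hat\otimes T_2$ of your path represent neither $x\otimes_\CC y$ nor the flip of $y\otimes_\CC x$, so even if the path were admissible it would not connect the right classes. The standard repair is to avoid an explicit homotopy altogether: let $R$ be the Proposition \ref{prop-product} representative of $x\otimes_\CC y$ on $\E_1\hat\otimes\E_2$ and let $R'$ be the image under the graded flip $\E_2\hat\otimes\E_1\cong\E_1\hat\otimes\E_2$ of the analogous representative of $y\otimes_\CC x$; one checks directly that $R'$ is a $(1_{B_1}\hat\otimes y)$-connection and satisfies the positivity condition (ii) for the pair $(x\hat\otimes 1_{A_2},\,1_{B_1}\hat\otimes y)$ --- here the exact graded anticommutativity of $T_1\hat\otimes 1$ and $1\hat\otimes T_2$ makes the relevant graded commutator vanish --- and then invokes the uniqueness of the Kasparov product up to operator homotopy (Remark \ref{rem-homotop}(a)) to conclude $R\sim R'$. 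That uniqueness statement, not a rotation of the operators, is what carries the commutativity; the rotation trick in this paper is used elsewhere (for $\Sigma_{\D,\D}$ in Section \ref{sec-Bott}) on the \emph{algebras}, not on the cycle operators.
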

Note that there are several other important properties of the generalized Kasparov product, which we don't want 
to state here. We refer to \cite{K3}*{Theorem 2.14} for the complete list and their proofs. We close this section with a useful 
description of Kasparov cycles in terms of unbounded operators due to Baaj and Julg (see \cite{BJ}).  For our purposes 
it suffices to restrict to the case where $B=\CC$, in which case we may rely on the classical theory of unbounded operators
on Hilbert spaces. But the picture extends to the general case using a suitable theory of regular unbounded  operators 
on Hilbert modules.

\begin{lemma}[Baaj-Julg]\label{lem-unbounded}
Suppose that $A$ is a graded $C^*$-algebra and $\Phi:A\to \B(\H)$ is a graded $*$-representation of $A$ on the 
graded separable Hilbert space $\H$. Suppose further that $D=\bmtr 0 & d^*\\ d &0\emtr$ 
is an unbounded selfadjoint operator on $\H$ of degree one such that 
\begin{enumerate}
\item $(1+D^2)^{-1}\Phi(a)\in \K(\H)$ for all $a\in A$, and
\item the set of all $a\in A$ such that $[D,\Phi(a)]$ is densely defined and bounded is dense in $A$.
\end{enumerate}
Then $(\H, \Phi, T=\frac{D}{\sqrt{1+D^2}})$ is an $A$-$\CC$ Kasparov cycle. 
\end{lemma}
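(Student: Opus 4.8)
The plan is to verify the two defining conditions of a Kasparov cycle (Definition \ref{def-KK-cycle}) for the quadruple $(\H, \Phi, T)$ with $T = D(1+D^2)^{-1/2}$, where $G$ is trivial so the equivariance conditions are vacuous and only the compactness conditions in item (2) together with the continuity of $a\mapsto \Phi(a)T$ need to be checked. First I would record the elementary functional-calculus facts: since $D$ is selfadjoint, $T$ is a bounded selfadjoint operator with $\|T\|\le 1$, so $T = T^*$ and in particular $(T-T^*)\Phi(a)=0$; moreover $T$ has degree one because $D$ does and the functions $t\mapsto t(1+t^2)^{-1/2}$ is odd. Next, $1 - T^2 = (1+D^2)^{-1}$, so condition (1) of the lemma says precisely that $(T^2-1)\Phi(a) = -(1+D^2)^{-1}\Phi(a)\in\K(\H)$ for all $a\in A$. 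This already disposes of three of the four operators in Definition \ref{def-KK-cycle}(2).

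The remaining, and main, point is to show $[T,\Phi(a)]\in\K(\H)$ for all $a\in A$. Here I would first treat $a$ in the dense subalgebra $A_0\subseteq A$ of elements for which $[D,\Phi(a)]$ is densely defined and bounded, as guaranteed by hypothesis (2) of the lemma. The standard trick is the integral formula
\begin{equation}\label{eq-integralformula}
T = \frac{D}{\sqrt{1+D^2}} = \frac{1}{\pi}\int_0^\infty (1+D^2+\lambda)^{-1} D\,\lambda^{-1/2}\,d\lambda,
\end{equation}
valid as a norm-convergent (Bochner) integral. Commuting $\Phi(a)$ past the resolvent using the resolvent identity
$$[(1+D^2+\lambda)^{-1},\Phi(a)] = -(1+D^2+\lambda)^{-1}\,[D^2,\Phi(a)]\,(1+D^2+\lambda)^{-1}$$
and expanding $[D^2,\Phi(a)] = D[D,\Phi(a)] + [D,\Phi(a)]D$, one obtains after rearrangement a formula expressing $[T,\Phi(a)]$ as a norm-convergent integral whose integrand is a product of factors of the form $(1+D^2+\lambda)^{-1/2}$, the bounded operator $[D,\Phi(a)]$, and $D(1+D^2+\lambda)^{-1/2}$, all of which are uniformly bounded, with a net power of $\lambda$ that makes the integral converge. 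The key observation making this compact is that $(1+D^2)^{-1}\Phi(a)\in\K(\H)$ by hypothesis, hence also $(1+D^2)^{-1/2}\Phi(a)\in\K(\H)$ (apply functional calculus, or note $\K(\H)$ is a hereditary subalgebra and use that $(1+D^2)^{-1/2}$ dominates an approximation); since $A_0$ is a dense $*$-subalgebra, $(1+D^2)^{-1/2}\Phi(a)\in\K(\H)$ for all $a\in A$, and this compact factor can be inserted into each term of the integrand. The hard part will be bookkeeping the convergence of \eqref{eq-integralformula} together with the compactness: one must check that the auxiliary factors stay in $\L(\H)$ with bounds integrable against $\lambda^{-1/2}$, and that the compact operator $(1+D^2+\lambda)^{-1/2}\Phi(a)$ decays in $\lambda$ fast enough.

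Having established $[T,\Phi(a)]\in\K(\H)$ for $a\in A_0$, I would extend to all $a\in A$ by a density argument: the map $a\mapsto [T,\Phi(a)]$ is norm-continuous from $A$ to $\L(\H)$ (since $\|T\|\le 1$), $A_0$ is dense, and $\K(\H)$ is norm-closed in $\L(\H)$, so $[T,\Phi(a)]\in\K(\H)$ for every $a\in A$. Finally, the continuity condition (1) of Definition \ref{def-KK-cycle} is automatic because $G$ is trivial, and $\Phi$ is a $*$-representation so $a\mapsto\Phi(a)T$ is norm-continuous. This verifies all conditions and shows $(\H,\Phi,T)$ is an $A$-$\CC$ Kasparov cycle. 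I would remark that hypothesis (2) of the lemma — density of the set of $a$ with $[D,\Phi(a)]$ bounded — is used only to get the commutator estimate on a dense set, and that the formula $T = D(1+D^2)^{-1/2}$ is exactly the bounded transform appearing in Proposition \ref{prop-product}, which is why unbounded cycles are convenient for computing Kasparov products.
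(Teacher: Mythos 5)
The paper does not actually prove this lemma: it defers to \cite{Bla86}*{Proposition 17.11.3}, so there is no in-text argument to compare against. Your outline is the standard Baaj--Julg proof found in that reference, and most of it is sound: $T=T^*$ kills the first operator in Definition \ref{def-KK-cycle}(2); $1-T^2=(1+D^2)^{-1}$ reduces the second to hypothesis (i); $T$ is odd because an odd function of an odd selfadjoint operator is odd; the passage from $(1+D^2)^{-1}\Phi(a)\in\K(\H)$ to $(1+D^2)^{-1/2}\Phi(a)\in\K(\H)$ works (e.g.\ $X^*X=\Phi(a)^*(1+D^2)^{-1}\Phi(a)$ compact implies $X$ compact); and the closing density argument is correct since $a\mapsto[T,\Phi(a)]$ is $2$-Lipschitz and $\K(\H)$ is closed.

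The one genuine slip is the claim that
$T=\frac{1}{\pi}\int_0^\infty (1+D^2+\lambda)^{-1}D\,\lambda^{-1/2}\,d\lambda$
is a \emph{norm-convergent Bochner integral}. It is not when $D$ is unbounded: $\|(1+D^2+\lambda)^{-1}D\|\le \tfrac12(1+\lambda)^{-1/2}$, so the integrand's norm decays only like $(2\lambda)^{-1}$ and the integral fails to converge absolutely in norm (and the partial integrals do not converge in norm either, as one sees by evaluating the scalar integral at large spectral values $t$). The integral converges only strongly. What \emph{is} norm-convergent is the integral of the commutators: after the resolvent-identity expansion each term of $[(1+D^2+\lambda)^{-1}D,\Phi(a)]$ picks up an extra factor bounded by $(1+\lambda)^{-1/2}$, giving an integrand of norm $O(\lambda^{-1/2}(1+\lambda)^{-1})$. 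One must then justify interchanging the (strong) limit defining $T$ with the commutator to identify $[T,\Phi(a)]$ with this norm-convergent integral — for instance by noting that the truncated commutators converge both strongly (to $[T,\Phi(a)]$) and in norm (to the integral), so the limits agree. This is exactly the ``bookkeeping'' you defer, so the issue is a gap in rigor and a misstated convergence claim rather than a wrong strategy; with that repair the proof is the standard one.
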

For the proof of this lemma, even in the more general context of $A$-$B$ Kasparov cycles, we refer to
\cite{Bla86}*{Proposition 17.11.3}. Note that the operator $T=\frac{D}{\sqrt{1+D^2}}$ is constructed via  
 functional calculus for unbounded selfadjoint operators.

\subsection{Higher $KK$-groups and Bott-periodicity}\label{sec-Bott}
\begin{definition}\label{def-higherKK}
Suppose that $(A,\alpha)$ and $(B,\beta)$ are $G$-$C^*$-algebras. 
For each $n\in \NN_0$ we define the (higher) $KK^G$-group as 
$$KK^G_n(A,B):=KK^G(A, B\hat\otimes \Cl_n)\quad\text{and}\quad KK^G_{-n}(A,B):=KK^G(A\hat\otimes \Cl_n, B),$$
where $\Cl_n$ denotes the $n$th complex Clifford algebra with trivial $G$-action and grading   as defined
  in Section \ref{sec-KK}. 
 \end{definition}
 
 With this definition of higher $KK$-groups it is easy to prove a (formal) version of Bott-periodicity. We need
 the following easy lemma: 
 
 \begin{lemma}\label{lem-cliff}
 If $n\in \NN$ is even, then $\Cl_n$ is Morita equivalent to $\CC$ as graded $C^*$-algebras. If $n$ is odd,
 then $\Cl_n$ is Morita equivalent to $\Cl_1$ as graded $C^*$-algebra. 
 \end{lemma}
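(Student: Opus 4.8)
The plan is to use the structural isomorphisms for Clifford algebras already recorded in Example~\ref{ex-grading}, namely $\Cl_{2n}\cong M_{2^n}(\CC)$ with the even grading given by conjugation with a symmetry $J$, and $\Cl_{2n+1}\cong M_{2^n}(\CC)\oplus M_{2^n}(\CC)$ with the standard odd grading, together with the identity $\Cl_n\hat\otimes\Cl_m\cong\Cl_{n+m}$. So it suffices to show that $M_{2^n}(\CC)$ with the even grading by $J=\diag(1,\dots,1,-1,\dots,-1)$ is Morita equivalent, as a graded $C^*$-algebra, to $\CC$ with its trivial grading, and that $M_{2^n}(\CC)\oplus M_{2^n}(\CC)$ with the standard odd grading is Morita equivalent to $\Cl_1\cong\CC\oplus\CC$ with its standard odd grading.

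For the even case, first I would reduce to the case of $M_2(\CC)$: since $\Cl_2\cong M_2(\CC)$ with the standard even grading and $\Cl_{2n}\cong\Cl_2\hat\otimes\cdots\hat\otimes\Cl_2$ ($n$ factors), it is enough to observe that the graded tensor product of graded Morita equivalences is a graded Morita equivalence, so a graded equivalence $\Cl_2\sim_{\mathrm{Morita}}\CC$ tensors up to $\Cl_{2n}\sim_{\mathrm{Morita}}\CC$. (Here I use that $\CC\hat\otimes\CC\cong\CC$.) For $M_2(\CC)$ with grading $\Ad J$, the implementing bimodule is $\CC^2$ viewed as a graded Hilbert space $\H=\H_0\oplus\H_1$ with $\H_0=\CC e_1$, $\H_1=\CC e_2$ — equivalently the first column $M_2(\CC)p$ for the rank-one projection $p=\diag(1,0)$, with its inherited grading. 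One checks $\K(\H)=M_2(\CC)$ acts on the left with the correct grading, while the right action of $\CC$ (scalars) is compatible, and $\langle\H,\H\rangle$ is dense in $\CC$; the grading of the module is the one making the left and right actions graded, which is exactly the decomposition $\H_0\oplus\H_1$ above. This exhibits a graded $M_2(\CC)$-$\CC$ imprimitivity bimodule, hence the desired graded Morita equivalence.

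For the odd case, note that $\Cl_{2n+1}\cong\Cl_{2n}\hat\otimes\Cl_1$, so once we have a graded equivalence $\Cl_{2n}\sim_{\mathrm{Morita}}\CC$ from the even case, tensoring on the right with the identity equivalence $\Cl_1\sim_{\mathrm{Morita}}\Cl_1$ (i.e.\ $\Cl_1$ as a bimodule over itself) and using $\CC\hat\otimes\Cl_1\cong\Cl_1$ gives $\Cl_{2n+1}\sim_{\mathrm{Morita}}\Cl_1$ directly. This avoids any separate analysis of the odd grading on $M_{2^n}(\CC)\oplus M_{2^n}(\CC)$.

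The only point requiring care — the ``main obstacle'', such as it is — is bookkeeping of the gradings: one must verify that the naive bimodule $M_2(\CC)p$ carries a $\ZZ_2$-grading compatible with the given even grading on $M_2(\CC)$ and the trivial grading on $\CC$, i.e.\ that left multiplication by a homogeneous element of degree $i$ and right multiplication by a scalar (degree $0$) shift the module grading by $i$ and $0$ respectively, and that the two $\CC$-valued/$M_2(\CC)$-valued inner products are graded (satisfy $\langle\xi,\eta\rangle$ homogeneous of degree $\deg\xi+\deg\eta$). This is a direct check using $p=\diag(1,0)$ and $Jp J = p$, $J e_1 = e_1$, $J e_2 = -e_2$; there is nothing deep, just the need to be consistent with the sign conventions for graded Hilbert modules. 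Everything else is the standard (ungraded) fact that a full corner $pAp\subseteq A$ is Morita equivalent to $A$, applied with $A=M_2(\CC)$ and $pAp\cong\CC$, decorated with the grading.
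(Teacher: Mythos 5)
Your proposal is correct and follows essentially the same route as the paper: both rest on the structural isomorphisms $\Cl_{2n}\cong M_{2^n}(\CC)$ with grading $\Ad J$ and $\Cl_{2n+1}\cong M_{2^n}(\CC)\hat\otimes\Cl_1$, exhibit the graded Hilbert space (equivalently the column $M_{2^n}(\CC)p$) as the imprimitivity bimodule in the even case, and obtain the odd case by tensoring with $1_{\Cl_1}$. The only cosmetic difference is that you reduce to $M_2(\CC)$ and tensor up via $\Cl_{2n}\cong\Cl_2\hat\otimes\cdots\hat\otimes\Cl_2$, whereas the paper writes down $\CC^{2^n}$ with grading operator $J$ directly; both are fine.
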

 \begin{proof} Let $n\in \NN_0$. 
 We know from Section \ref{sec-KK} that 
 $\Cl_{2n}\cong M_{2^{n}}(\CC)$ with grading given by cunjugation with a symmetry $J\in M_{2^{n}}(\CC)$.
 It is then easy to check that the Hilbert space $\CC^{2^{n}}$ equipped with the grading operator $J$ and 
 the canonical left action of $\Cl_n$ on $\CC^{2^{n}}$ gives the desired Morita equivalence.
Similarly, we have $\Cl_{2n+1}\cong M_{2^{n}}(\CC)\oplus M_{2^{n}}(\CC)\cong M_{2^n}\hat\otimes \Cl_1$ 
as graded $C^*$-algebras, which is Morita equivalent to $\CC\hat\otimes \Cl_1=\Cl_1$.
\end{proof}

\begin{notation}\label{notation-Morita}
In what follows we shall denote by $x_{2n}\in KK(\Cl_{2n},\CC)$ the (invertible) class of the Morita equivalence between $\Cl_{2n}$ and $\CC$
as in the above lemma and by $x_{2n+1}\in KK(\Cl_{2n+1},\Cl_1)$ the class of the Morita equivalence between $\Cl_{2n+1}$ and $\Cl_1$.
\end{notation}

\begin{proposition}[Formal Bott-periodicity]\label{prop-Bott}
For each $n\in \NN_0$ there are canonical isomorphisms
$$KK^G(A\hat\otimes \Cl_{2n}, B)\cong KK^G_0(A,B)\cong KK^G(A, B\hat\otimes \Cl_{2n})$$
and 
\begin{align*}
&KK^G(A\hat\otimes \Cl_{2n+1}, B)\cong KK^G(A\hat\otimes \Cl_1, B)\\
&\quad\quad\cong 
KK^G(A, B\hat\otimes \Cl_1)\cong KK^G(A, B\hat\otimes \Cl_{2n+1}).
\end{align*}
As a consequence, we have $KK_l^G(A,B)\cong KK_{l+2}^G(A,B)$ for all $l\in \ZZ$.
\end{proposition}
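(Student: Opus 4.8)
The plan is to deduce everything from the two tools already developed: the $\cdot\hat\otimes 1_D$ homomorphism (which sends $KK^G$-equivalences to $KK^G$-equivalences and is compatible with Kasparov products), and Lemma~\ref{lem-Morita-KK} together with Lemma~\ref{lem-cliff}, which say that $\Cl_{2n}$ is $KK^G$-equivalent to $\CC$ and $\Cl_{2n+1}$ is $KK^G$-equivalent to $\Cl_1$ (via the Morita classes $x_{2n}$, $x_{2n+1}$ of Notation~\ref{notation-Morita}, which we view inside $KK^G$ using the trivial $G$-action). Concretely, I would first take the invertible class $x_{2n}\in KK(\Cl_{2n},\CC)$ and apply $\cdot\hat\otimes 1_A$ to obtain a $KK^G$-equivalence $A\hat\otimes\Cl_{2n}\sim_{KK^G} A\hat\otimes\CC\cong A$, and likewise apply $1_B\hat\otimes\,\cdot$ to obtain $B\hat\otimes\Cl_{2n}\sim_{KK^G} B$. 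Since $KK^G$-equivalences induce isomorphisms on $KK^G$-groups in either variable (by the discussion following Definition~\ref{def-KK-equivalence}), these immediately give the chain
$$KK^G(A\hat\otimes\Cl_{2n},B)\cong KK^G(A,B)=KK_0^G(A,B)\cong KK^G(A,B\hat\otimes\Cl_{2n}).$$

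For the odd case I would run the identical argument with $x_{2n+1}\in KK(\Cl_{2n+1},\Cl_1)$: applying $\cdot\hat\otimes 1_A$ gives a $KK^G$-equivalence $A\hat\otimes\Cl_{2n+1}\sim_{KK^G}A\hat\otimes\Cl_1$, hence $KK^G(A\hat\otimes\Cl_{2n+1},B)\cong KK^G(A\hat\otimes\Cl_1,B)$, and applying $1_B\hat\otimes\,\cdot$ gives $B\hat\otimes\Cl_{2n+1}\sim_{KK^G}B\hat\otimes\Cl_1$, hence $KK^G(A,B\hat\otimes\Cl_1)\cong KK^G(A,B\hat\otimes\Cl_{2n+1})$; the middle isomorphism $KK^G(A\hat\otimes\Cl_1,B)\cong KK^G(A,B\hat\otimes\Cl_1)$ is the one genuine input and is exactly what must be established. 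Here I would use $\Cl_1\hat\otimes\Cl_1\cong\Cl_2$ (from Example~\ref{ex-grading}, $\Cl_n\hat\otimes\Cl_m\cong\Cl_{n+m}$) together with the even case already proved: $\Cl_2$ is Morita equivalent to $\CC$, so there is an invertible class $x_2\in KK(\Cl_2,\CC)$, i.e.\ $\Cl_1\hat\otimes\Cl_1$ is $KK^G$-equivalent to $\CC$. Tensoring that equivalence on the left by $1_{\Cl_1}$, or more cleanly using the associativity/compatibility of $\cdot\hat\otimes 1_D$ with products, one gets a $KK^G$-equivalence between $A\hat\otimes\Cl_1$ and ... one must be a little careful: the point is that $\cdot\hat\otimes 1_{\Cl_1}$ applied to the $KK^G$-equivalence $A\hat\otimes\Cl_1\sim A$ in the first variable, combined with $\Cl_1\hat\otimes\Cl_1\cong\Cl_2\sim_{KK}\CC$, lets one shuttle a $\Cl_1$ from the first slot to the second, yielding $KK^G(A\hat\otimes\Cl_1,B)\cong KK^G(A,B\hat\otimes\Cl_1)$.

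The cleanest way to organize the odd-case middle step, and the one I would write up, is: the class $y:=x_2\hat\otimes 1_{\text{(something)}}$—more precisely, take the invertible $x_2\in KK(\Cl_1\hat\otimes\Cl_1,\CC)$ and its inverse $y_2\in KK(\CC,\Cl_1\hat\otimes\Cl_1)$; applying $\cdot\hat\otimes 1_{A}$ and $1_B\hat\otimes\,\cdot$ appropriately and using compatibility with Kasparov products shows that tensoring with $x_2$ gives mutually inverse isomorphisms $KK^G(A\hat\otimes\Cl_1,B)\to KK^G(A\hat\otimes\Cl_1\hat\otimes\Cl_1\hat\otimes\Cl_1,B)\cong KK^G(A\hat\otimes\Cl_1,B\hat\otimes\Cl_1\hat\otimes\Cl_1)\cong\cdots$; iterating and using $\Cl_1\hat\otimes\Cl_1\sim_{KK}\CC$ collapses this to the desired identification. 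Finally, for the consequence $KK_l^G(A,B)\cong KK_{l+2}^G(A,B)$ for all $l\in\ZZ$: for $l\ge 0$, $KK_{l+2}^G(A,B)=KK^G(A,B\hat\otimes\Cl_{l+2})$ and $\Cl_{l+2}\cong\Cl_l\hat\otimes\Cl_2\sim_{KK}\Cl_l\hat\otimes\CC=\Cl_l$ (tensoring the even equivalence $\Cl_2\sim\CC$ by $1_{\Cl_l}$), so $KK^G(A,B\hat\otimes\Cl_{l+2})\cong KK^G(A,B\hat\otimes\Cl_l)=KK_l^G(A,B)$; for $l\le -2$ the same argument applies in the first variable; and the two remaining cases $KK_0^G\cong KK_2^G$ and $KK_{-1}^G\cong KK_1^G$ are instances of the two displayed chains of isomorphisms with $n=1$. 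I expect the main obstacle to be purely bookkeeping—keeping track of which variable each $\Cl_1$ or $\Cl_2$ lives in, and verifying that the maps ``tensor with $x_{2n}$'' really are the isomorphisms claimed rather than just abstractly asserting a $KK^G$-equivalence exists—but no new analytic input beyond Lemmas~\ref{lem-Morita-KK} and \ref{lem-cliff} and the already-established formal properties of $\cdot\hat\otimes 1_D$ is needed.
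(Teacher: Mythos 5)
Your proposal follows essentially the same route as the paper's proof: the even case and the reduction of $\Cl_{2n+1}$ to $\Cl_1$ are handled via the Morita classes $x_{2n}$, $x_{2n+1}$ of Lemma~\ref{lem-cliff} and Lemma~\ref{lem-Morita-KK}, and the one genuine middle isomorphism $KK^G(A\hat\otimes\Cl_1,B)\cong KK^G(A,B\hat\otimes\Cl_1)$ is obtained by composing $y\mapsto y\hat\otimes 1_{\Cl_1}$ (which adds a $\Cl_1$ to \emph{both} variables) with the collapse $\Cl_1\hat\otimes\Cl_1\cong\Cl_2\sim_M\CC$ in the first variable, exactly as in the paper. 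One caution on your exposition: there is no $KK^G$-equivalence $A\hat\otimes\Cl_1\sim A$ (that would force $K_0(A)\cong K_1(A)$ for all $A$), and your displayed chain starting from $KK^G(A\hat\otimes\Cl_1\hat\otimes\Cl_1\hat\otimes\Cl_1,B)$ is garbled and its unexplained ``$\cong$'' moving Clifford factors between slots is circular — but the mechanism you describe in your final ``cleanest way'' paragraph is the correct (and the paper's) argument, so the slip is only in the write-up, not in the idea.
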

\begin{proof}  In the even case, the isomorphisms follow by taking Kasparov products with the  $KK$-equivalences
$1_A\otimes x_{2n}^{-1}\in KK^G(A,A\hat\otimes \Cl_{2n})$ and $1_B\otimes x_{2n}^{-1}\in KK^G(B, B\hat\otimes \Cl_{2n})$
from the left and right, respectively (where we consider the trivial action of $G$ on the Clifford algebras).
The same argument will provide isomorphisms 
$KK^G(A\hat\otimes \Cl_{2n+1}, B)\cong KK^G(A\hat\otimes \Cl_1, B)$ and 
$KK^G(A, B\hat\otimes \Cl_1)\cong KK^G(A, B\hat\otimes \Cl_{2n+1})$, respectively. 

To  finish off, one checks that the composition
{\small $$KK^G(A\hat\otimes \Cl_1, B) \stackrel{y\mapsto y\otimes 1_{\Cl_1}}{\longrightarrow} KK^G(A\hat\otimes \Cl_1\hat\otimes \Cl_1, B\hat\otimes \Cl_1) 
\stackrel{(1_A\otimes x_2^{-1})\otimes\cdot}{\cong} KK^G(A, B\hat\otimes \Cl_1)$$}
is an isomorphism with inverse given by the composition
{\small $$KK^G(A, B\hat\otimes \Cl_1)\stackrel{y\mapsto y\otimes 1_{\Cl_1}}{\longrightarrow} KK^G(A\hat\otimes \Cl_1, B\hat\otimes \Cl_1\hat\otimes \Cl_1)
\stackrel{\cdot\otimes (1_B\otimes x_2)}{\cong } KK^G(A\hat\otimes \Cl_1, B),$$}
where we use that $\Cl_1\hat\otimes\Cl_1\cong \Cl_2\sim_M\CC$. 
\end{proof}

Of course we would like to have a version of Bott-periodicity  showing that, alternatively, we could define the 
higher $KK$-groups via suspension. For this we are going to construct a $KK$-equivalence between
$\Cl_1$ and $C_0(\RR)$. Indeed, we shall do this by first constructing a $KK$-equivalence 
between $C_0(\RR)\hat\otimes \Cl_1\cong C_0(\RR,\Cl_1)$ with $\CC$. Since we consider the trivial $G$-action on $C_0(\RR)$ and 
$\Cl_1$ it suffices to do this for the trivial group $G=\{e\}$.
In what follows next we write $\mathcal D:=C_0(\RR)\hat\otimes \Cl_1$ and $\D_0:=C_c^{\infty}(\RR)\hat\otimes \Cl_1\subseteq \D$,
where $C_c^{\infty}(\RR)$ denotes the dense subalgebra of $C_0(\RR)$ consisting of smooth functions with compact supports.
A typical element of $\mathcal D$ can be written as $f_1+f_2 e$ with $f_1,f_2\in C_0(\RR)$, where we identify $f_1$ with $f_1 1_{\Cl_1}$ and where
$e=e_1$ denotes the generator of $\Cl_1$ with $e^2=1$.

{\bf The Dirac element:} We define an element 
$$\alpha=[\mathcal H, \Phi, T]\in KK(\mathcal D, \CC)$$
as follows: We let $\mathcal H= L^2(\RR)\oplus L^2(\RR)$ be equipped with the grading induced by the 
operator $J=\bmtr 1&0\\0&-1\emtr$. We define $T=\frac{D}{\sqrt{1+D^2}}$ with 
$D=\bmtr 0 & -d\\d&0\emtr$, where 
$d:L^2(\RR)\to L^2(\RR)$ denotes the densely defined operator $d=\frac{d}{dt}$.
Then $D$ is an essentially selfadjoint operator on the dense subspace 
$C_c^{\infty}(\RR)\oplus C_c^{\infty}(\RR)$ of $\mathcal H$ and therefore extends to a densely defined 
selfadjoint operator on $\mathcal H$ (we refer to \cite{HK}*{Chapter 10} for details).
Let $\Phi: \D\to \L(\mathcal H)$ be given  by
\begin{equation}\label{eq-Phi}
\Phi{(f_1+f_2e)}\cdot\bmtr \xi_1\\ \xi_2\emtr=\bmtr f_1 &f_2\\ f_2& f_1\emtr\cdot \bmtr \xi_1\\\xi_2\emtr=
\bmtr f_1\cdot\xi_1+ f_2\cdot\xi_2\\ f_2\cdot\xi_1+f_1\cdot\xi_2\emtr.
\end{equation}
In order to check that $(\mathcal H, \Phi, T)$ is a $\D$-$\CC$ Kasparov cycle
we need to check the conditions of Lemma \ref{lem-unbounded} for the triple $(\H, \Phi, D)$.
Notice that $D^2=\diag(\Delta,\Delta)$, where 
$\Delta=-\frac{d^2}{dt^2}$ denotes the (positive) Laplace operator on $\RR$.
By \cite{RS}*{XIII.4 Example 6} (or \cite{HK}*{10.5.1}) the operator $(1+\Delta)^{-1} M(f)$ is a compact operator
for all $f\in C_c^{\infty}(\RR)$ (where $M:C_b(\RR)\to\B(L^2(\RR))$ denotes the representation as multiplication operators). Hence
$$(1+D^2)^{-1}\circ \Phi(f_1+f_2e)=\bmtr (1+\Delta)^{-1} M(f_1) & (1+\Delta)^{-1} M(f_2)\\ (1+\Delta)^{-1} M(f_2)& (1+\Delta)^{-1} M(f_1) \emtr\in \K(\H)$$
 for all $f_1+f_2e\in \D_0=C_c^\infty(\RR)\oplus C_c^\infty(\RR)$. Since $\D_0$ is dense in $\D$
 and since $(1+D^2)^{-1}\Phi(f_1+f_2e)$ depends continuously on $(f_1,f_2)$, this proves condition (i) of
  Lemma \ref{lem-unbounded}. 
  To see condition (ii) we first observe that for all 
  $f\in C_c^{\infty}(\RR)$ the operator $[d,M(f)]$ is defined for all $\xi\in C_c^\infty(\RR)\subseteq L^2(\RR)$  and 
  we have  $$[d, M(f)]\xi=\frac{d}{dt}(f\xi)-f\cdot (\frac{d}{dt}\xi)=(\frac{d}{dt}f)\cdot \xi,$$
Hence $[d, M(f)]$ extends to a bounded operator on $L^2(\RR)$ and
  $$[D,\Phi(f_1+f_2e)]=\bmtr -[d, M(f_2)] & -[d, M(f_1)]\\ [d, M(f_1)] & [d, M(f_2)]\emtr$$
is densely defined and bounded for all $f_1+f_2e\in \D_0$.  
\\
\\
{\bf The dual-Dirac element:} Choose any odd  continuous function $\varphi:\RR\to [-1,1]$ such that 
 $\varphi(x)>0$ for $x>0$ and  $\lim_{x\to\infty} \varphi(x)=1$.
For instance we could take 
$$\varphi(x)=\frac{x}{\sqrt{1+x^2}}\quad\text{or}\quad\varphi(x)=\left\{\begin{matrix} \sin(x/2) & |x|\leq \pi\\ \frac{x}{|x|} & |x|\geq \pi
\end{matrix}\right\}.$$

We then define an element
$\beta=[\D, \one, S]\in KK(\CC, \D)$  as follows: We consider $\D$ as
 graded Hilbert $\D$-module in the canonical way, and we put $\one(\lambda)a=\lambda a$ 
for all $\lambda \in \CC$ and $a\in \D$.  The operator $S\in \M(\D)$ is defined via multiplication with the element
$\varphi e\in C_b(\RR)\hat\otimes\Cl_1\subseteq \M(\D)$.
To check that $(\D, \Phi, S)$ is  a Kasparov cycle it suffices to check that
$S^2-1 \in \K(\D)=\D$. 
But this follows from the fact that $S^2-1$ is given by pointwise multiplication with the function 
$\varphi^2-1$ which lies in $\D$ since $\lim_{\pm x\to\infty} \varphi^2(x)=1$ by conditions (i) and (ii) for  $\varphi$.
Since $S=S^*$ all other conditions of Definition \ref{def-KK-cycle} are trivial.

Note that the class $\beta$ does not depend on the particular choice of the function $\varphi:\RR\to \RR$. 
Indeed, if two functions $\varphi_0, \varphi_1$ are given which satisfy conditions (i) and (ii), we can define for each
 $t\in [0,1]$ a function 
$\varphi_t:\RR\to\RR$ by 
$$\varphi_t(x)=t\varphi_1(x)+(1-t)\varphi_0(x).$$
Then each $\varphi_t$ satisfies the requirements (i) and (ii) and if $S_t$ denotes the corresponding operators it follows
that $t\mapsto S_t$ is an operator homotopy joining $S_0$ with $S_1$. 

\begin{notation}\label{def-Bott}
The  element $\beta\in KK(\CC, C_0(\RR)\hat\otimes \Cl_1)=KK_1(\CC, C_0(\RR))$ constructed above
is called the {\em Bott class}. 
\end{notation}

{\bf The Kasparov product $\beta\otimes_\D\alpha\in KK(\CC,\CC)$.} We are now going to show that 
$\beta\otimes_\D\alpha=1_\CC\in KK(\CC,\CC)$. For this we first claim that $\beta\otimes_\D\alpha$ is represented 
by the triple
$(\H, {\mathbf 1}, T')$ with 
\begin{equation}\label{eqR}
T'=\bmtr 0& M(\varphi)\\ M(\varphi)&0\emtr - M(\sqrt{1-\varphi^2}) \bmtr 0&-\frac{d}{\sqrt{1+\Delta}}\\ \frac{d}{\sqrt{1+\Delta}}& 0\emtr,
\end{equation}
with $\H=L^2(\RR)\oplus L^2(\RR)$ as above. Indeed, since $\Phi: \D\to\L(\H)$ is a non-degenerate representation
we obtain an isomorphism
\begin{equation}\label{eq-iso}\D\otimes_{\D} \H\cong \H; a\otimes \xi\mapsto \Phi(a)\xi.
\end{equation}
Let $T_{12}$ denote the operator on $\D\otimes_\D\H$ corresponding to 
$T=\bmtr 0&-\frac{d}{\sqrt{1+\Delta}}\\ \frac{d}{\sqrt{1+\Delta}}& 0\emtr$ under this isomorphism.
We claim that $T_{12}$ is a $T$-connection. Recall that for any $a\in \D$ the operator
$\theta_a: \H\to\D \otimes_\D\H$ is given by $\xi\mapsto a\otimes \xi$. Composed with the above isomorphism 
we get the operator $\xi\mapsto \Phi(a)\xi$ on $\H$.
Condition (\ref{eq-connect})  follows then 
for $T_{12}$ from the fact that $[T,\Phi(a)]\in \K(\H)$ for all $a\in \D$.
We now use Proposition \ref{prop-product} to see that the Kasparov product $\beta\otimes_\D\alpha$ is represented 
by the triple
$$( \D\otimes_\D\H, \one\otimes 1, S\otimes 1 + (\sqrt{1-S^2}\otimes 1)T_{12}).$$
We leave it as an exercise to check that this operator corresponds to the operator $R$ of (\ref{eqR}) under 
the isomorphism (\ref{eq-iso}).

Hence to see that $\beta\otimes_\D\alpha=1_\CC\in KK(\CC,\CC)$ we only need to show that the Fredholm index 
of the operator 
$$F:=M(\varphi)+M\left(\sqrt{1-\varphi^2}\right)\frac{d}{\sqrt{1+\Delta}}:L^2(\RR)\to L^2(\RR)$$
is one (see the discussion at the end of Section \ref{sec-KKgroup}).

Recall that $\varphi:\RR\to\RR$ can be any function satisfying the conditions (i) and (ii) as stated in the 
construction of $\beta$. Thus we may choose
$$\varphi(x)=\left\{\begin{matrix} \sin(x/2) &\text{if $x\in [-\pi, \pi]$}\\ \frac{x}{|x|} & \text{if  $|x|>\pi$}\end{matrix}\right\}.$$
To do the computation we want to restrict the operator to the interval $[-\pi,\pi]$. For this consider the 
orthogonal projection $Q: L^2(\RR)\to L^2[-\pi,\pi]$.  
Since $Q$ commutes with $M(1-\varphi^2)$  and since $[\frac{d}{\sqrt{1+\Delta}}, M(\psi)]\in \K(L^2(\RR))$ for all $\psi\in C_0(\RR)$
(which follows from $[T,\Phi(a)]\in \K(\H)$ for all $a\in \D$), 
we have 
\begin{align*}
&M(1-\varphi^2)\frac{d}{\sqrt{1+\Delta}}\sim M(\sqrt{1-\varphi^2})\frac{d}{\sqrt{1+\Delta}}M(\sqrt{1-\varphi^2})\\
&=M(\sqrt{1-\varphi^2})Q\frac{d}{\sqrt{1+\Delta}}M(\sqrt{1-\varphi^2})Q\sim M(1-\varphi^2)\frac{d}{\sqrt{1+\Delta}}Q,
\end{align*}
where $\sim$ denotes equality up to compact operators. Thus we may replace $F$ by the operator 
$$F_1:=M(\varphi)+M\left(\sqrt{1-\varphi^2}\right)\frac{d}{\sqrt{1+\Delta}}Q:L^2(\RR)\to L^2(\RR)$$
Decomposing $L^2(\RR)$ as the direct sum $L^2[-\pi,\pi]\oplus L^2((-\infty,-\pi)\cup(\pi, \infty))$, we see that the 
operator $F_1$ fixes both summands and acts as the identity on the second summand. 
Hence for computing the index we may restrict our operator  
to the summand $L^2[-\pi,\pi]$ on which it acts by 
$$F_2:=M(\sin(x/2))+M(\cos(x/2))\frac{d}{\sqrt{1+\Delta}}.$$

 Now there comes a slightly tricky point and we need to appeal to some computations given in \cite{HR}*{Chapter 10}.
We want to replace the operator  $\frac{d}{\sqrt{1+\Delta}}$ by the operator  $\frac{\tilde{d}}{\sqrt{1-\Delta^\TT}}: L^2(\TT)\to L^2(\TT)$ 
(identifying $L^2[-\pi,\pi]$ with $L^2(\TT)$),
where $\tilde{d}:L^2(\TT)\to L^2(\TT)$ denotes the  operator
 given by the differential $\frac{d}{dt}$ on the smooth $2\pi$-periodic functions on $\RR$  and  where $\Delta^\TT=-\tilde{d}^2$ denotes 
 the corresponding Laplace operator. 
 Although the operators $d$ and $\tilde{d}$ clearly coincide on 
 $C_c^{\infty}(-\pi,\pi)$ the functional calculus which has been applied for producing the operators 
 $\frac{d}{\sqrt{1+\Delta}}$ and $\frac{\tilde{d}}{\sqrt{1-\Delta^\TT}}$ depends on the full domains of the selfadjoint extensions of 
 these operators, which clearly differ. The solution of this problem is implicitly given in \cite{HR}*{Lemma 10.8.4}:
 Recall that $\frac{d}{\sqrt{1+\Delta}}$ is equal to $-i\chi(id)$ where we apply the functional calculus for unbounded 
 selfadjoint operators for the function $\chi(x)=\frac{x}{\sqrt{1+x^2}}$ to the (unique) selfadjoint extension of $id$.
 Let $\psi\in C_c^\infty(-\pi,\pi)$ be any fixed function. Choose a positive function $\mu\in C_c^\infty(\RR)$ with $\supp\mu\in (-\pi,\pi)$ 
 such that $\mu\equiv 1$ on $U:=\supp\psi+(-\delta,\delta)$ for some suitable $\delta>0$. Let $d_\mu= M(\mu)\circ  d \circ M(\mu)$.
 It follows then from \cite{HR}*{Corollary 10.2.6} that $id_\mu$ is an essentially selfadjoint operator which coincides with 
 $id$ on $U$. It then follows from \cite{HR}*{Lemma 10.8.4} that 
 $M(\psi)\chi(id)\sim M(\psi)\chi( i d_\mu)$ on $L^2[-\pi,\pi]$,
 where, as above,  $\sim$ denotes equality up to compact operators. Applying the same argument to the canonical inclusion 
 of the interval $(-\pi,\pi)$ into $\TT$ shows that 
 $M(\psi)\chi(i d_\mu) \sim M(\psi)\chi(i\tilde{d})$.
 Together we see that $M(\psi)\chi(id)\sim M(\psi)\chi(i\tilde{d})$ on $L^2[-\pi,\pi]$ for all $\psi\in C_c^\infty(-\pi,\pi)$ and then also for all 
 $\psi\in C_0(-\pi,\pi)$. Applying this to $\psi(x)=\cos(x/2)$ gives the desired result.

 Thus we may  replace the operator $F_2$ by the operator
$$F_3:= M(\sin(x/2))+M(\cos(x/2))\frac{\tilde{d}}{\sqrt{1+\Delta^\TT}}.$$
Multiplying $F_3$  from the left with the invertible operator $M(2i e^{i\frac{x}{2}})$ does not change the Fredholm index, so we 
compute the index of the operator
\begin{align*}
F_4&= M(2ie^{i\frac{x}{2}}\sin(x/2))+M(2ie^{i\frac{x}{2}}\cos(x/2))\frac{\tilde{d}}{\sqrt{1+\Delta^\TT}}\\
&= M(e^{ix}-1)+ iM(e^{ix}+1)\frac{\tilde{d}}{\sqrt{1+\Delta^\TT}}.
\end{align*}
In what follows let $\{e_n:n\in \ZZ\}$ denote the standard othonormal basis of $\ell^2(\ZZ)$ and let $U:\ell^2(\ZZ)\to\ell^2(\ZZ)$ 
denote the bilateral shift operator $U(e_n)=e_{n+1}$. 
Using Fourier transform and the Plancherel isomorphism  $L^2[-\pi,\pi]\cong \ell^2(\ZZ)$ the operator $F_4$ 
transforms to the operator $\widehat{F_4}:\ell^2(\ZZ)\to\ell^2(\ZZ)$ given by
$$\widehat{F_4}=(U-\id)+i(U+\id)R$$
where $R:\ell^2(\ZZ)\to\ell^2(\ZZ)$ is given by $R(e_n)=\frac{in}{\sqrt{1+n^2}}e_n$.
 Let $\sign(n)=\left\{\begin{matrix} \frac{n}{|n|}& \text{if $n\neq 0$}\\0& \text{if $n=0$}\end{matrix}\right\}$ and let $R'(e_n)=i\sign(n)e_n$ for $n\in \ZZ$
 and let us write $\widehat{F_5}:=(U-\id)+i(U+\id)R'$.
Since $\left| \frac{in}{\sqrt{1+n^2}}-i\sign(n)\right|\to 0$ for $|n|\to\infty$ we have $R-R'\in \K(\ell^2(\ZZ))$, which implies that 
$\widehat{F_5}-\widehat{F_4}\in \K(\ell^2(\ZZ))$ and hence
$\indx(\widehat{F_5})=\indx(\widehat{F_4})$.
Applying $\widehat{F_5}$ to some basis element $e_n$ gives
\begin{align*}
\widehat{F_5}(e_n)&=(U-\id)+i(U+\id)R'(e_n)\\
&=(e_{n+1}-e_n)-\sign(n)(e_{n+1}+e_n)
=\left\{\begin{matrix} 2 e_{n+1}& \text{if $n<0$}\\ e_1-e_0&\text{if $n=0$}\\
-2 e_n&\text{if $n>0$}\end{matrix}\right\}.
\end{align*}
It follows from this that $\widehat{F_5}$ is surjective and  $\ker(\widehat{F_5})=\CC(e_1+2e_0+e_{-1})$. 
Hence 
$$\indx(\beta\otimes_\D\alpha)=\indx(\widehat{F_5})=1.$$
Let us state  as a lemma what we have proved so far:

\begin{lemma}\label{lem-product}
Let $\D=C_0(\RR)\hat\otimes \Cl_1=C_0(\RR)\oplus C_0(\RR)$ with the standard odd grading and let
$\alpha\in KK(\D,\CC)$ and $\beta\in KK(\CC,\D)$ as above. Then 
$$\beta\otimes_\D\alpha=1_\CC\in KK(\CC,\CC).$$
\end{lemma}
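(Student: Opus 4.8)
\emph{Proof idea.} The plan is to compute $\beta\otimes_\D\alpha$ as an explicit Kasparov product and then to pin it down through the index isomorphism $\indx:KK(\CC,\CC)\xrightarrow{\cong}\ZZ$ recalled at the end of Section~\ref{sec-KKgroup}. Since $\Phi:\D\to\L(\H)$ is nondegenerate, there is the canonical identification~(\ref{eq-iso}) $\D\otimes_\D\H\cong\H$, $a\otimes\xi\mapsto\Phi(a)\xi$, under which the bounded transform $T=D/\sqrt{1+D^2}$ of the Dirac operator, transported to $\D\otimes_\D\H$, is a $T$-connection: the connection requirement~(\ref{eq-connect}) reduces, after this identification, to the graded commutators $[T,\Phi(a)]$ lying in $\K(\H)$, which was verified while constructing $\alpha$. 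Because $S$ (multiplication by $\varphi e$) is self-adjoint with $\|S\|\le 1$, and because $A=\CC$ makes the extra hypothesis $[R,\Phi(\CC)\otimes 1]\in\K$ automatic, Proposition~\ref{prop-product} applies and represents $\beta\otimes_\D\alpha$ by $(\D\otimes_\D\H,\one\otimes 1,S\otimes 1+(\sqrt{1-S^2}\otimes 1)T_{12})$; a routine check via~(\ref{eq-iso}) rewrites this as $(\H,\one,T')$ with $T'$ the operator~(\ref{eqR}) on $\H=L^2(\RR)\oplus L^2(\RR)$. By the index description of $KK(\CC,\CC)$, everything reduces to showing that the Fredholm index of $F=M(\varphi)+M(\sqrt{1-\varphi^2})\frac{d}{\sqrt{1+\Delta}}$ equals $1$.

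Since the class $\beta$ does not depend on $\varphi$, I would fix the piecewise-sine choice of $\varphi$, equal to $\pm1$ off $[-\pi,\pi]$ and to $\sin(x/2)$ on it. Using that $[\frac{d}{\sqrt{1+\Delta}},M(\psi)]\in\K(L^2(\RR))$ for all $\psi\in C_0(\RR)$ — once more a consequence of $[T,\Phi(a)]\in\K(\H)$ — one can, modulo compacts, insert the orthogonal projection $Q$ onto $L^2[-\pi,\pi]$ and reach an operator that is the identity on $L^2(\RR\setminus[-\pi,\pi])$ and acts on $L^2[-\pi,\pi]$ as $F_2=M(\sin(x/2))+M(\cos(x/2))\frac{d}{\sqrt{1+\Delta}}$, so that $\indx(F)=\indx(F_2)$.

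The delicate point, which I expect to be the main obstacle, is to replace $\frac{d}{\sqrt{1+\Delta}}$ — obtained by functional calculus from the self-adjoint extension of $i\frac{d}{dt}$ on the \emph{line} — by the corresponding operator $\frac{\tilde{d}}{\sqrt{1+\Delta^\TT}}$ on $L^2(\TT)$. Although $d$ and $\tilde{d}$ agree on $C_c^\infty(-\pi,\pi)$, their self-adjoint extensions, hence their functional calculi, are different, so the substitution is not formal. I would argue as in \cite{HR}*{Lemma 10.8.4} together with \cite{HR}*{Corollary 10.2.6}: for a cutoff $\mu\in C_c^\infty(-\pi,\pi)$ with $\mu\equiv1$ near $\supp\psi$, the compression $d_\mu=M(\mu)\,d\,M(\mu)$ is essentially self-adjoint and agrees with $d$ near $\supp\psi$, so $M(\psi)\chi(id)\sim M(\psi)\chi(id_\mu)$ modulo $\K$ for $\chi(x)=x/\sqrt{1+x^2}$; the same reasoning for the inclusion $(-\pi,\pi)\hookrightarrow\TT$ gives $M(\psi)\chi(id_\mu)\sim M(\psi)\chi(i\tilde{d})$, and taking $\psi=\cos(x/2)$ passes from $F_2$ to $F_3=M(\sin(x/2))+M(\cos(x/2))\frac{\tilde{d}}{\sqrt{1+\Delta^\TT}}$ without changing the index.

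Finally I would trivialize $F_3$. Left multiplication by the invertible operator $M(2ie^{ix/2})$ leaves the index unchanged and turns $F_3$ into $F_4=M(e^{ix}-1)+iM(e^{ix}+1)\frac{\tilde{d}}{\sqrt{1+\Delta^\TT}}$; the Plancherel isomorphism $L^2(\TT)\cong\ell^2(\ZZ)$ converts $F_4$ into $\widehat{F_4}=(U-\id)+i(U+\id)R$, where $U$ is the bilateral shift $Ue_n=e_{n+1}$ and $Re_n=\frac{in}{\sqrt{1+n^2}}e_n$. Since $\frac{n}{\sqrt{1+n^2}}-\sign(n)\to0$, replacing $R$ by $R'e_n=i\sign(n)e_n$ is a compact perturbation, so it suffices to compute $\indx(\widehat{F_5})$ for $\widehat{F_5}=(U-\id)+i(U+\id)R'$; evaluating on basis vectors yields $\widehat{F_5}e_n=2e_{n+1}$ for $n<0$, $\widehat{F_5}e_0=e_1-e_0$, and $\widehat{F_5}e_n=-2e_n$ for $n>0$, whence $\widehat{F_5}$ is surjective with one-dimensional kernel $\CC(e_1+2e_0+e_{-1})$. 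Therefore $\indx(F)=\indx(\widehat{F_5})=1=\indx(1_\CC)$, and since $\indx$ is an isomorphism we conclude $\beta\otimes_\D\alpha=1_\CC$.
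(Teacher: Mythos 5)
Your proposal is correct and follows essentially the same route as the paper: the same identification $\D\otimes_\D\H\cong\H$ and $T$-connection argument via Proposition \ref{prop-product}, the same reduction to the index of $F$, the same localisation to $[-\pi,\pi]$ and passage to the circle via \cite{HR}*{Lemma 10.8.4}, and the same Fourier-side computation yielding index $1$.
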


In order to show that $\alpha$ and $\beta$ are $KK$-equivalences, we also need to check that 
the product $\alpha\otimes_\CC\beta=1_\D\in KK(\D,\D)$. For this we shall use a rotation trick which originally
 goes back to Atiyah, and which has been adapted very successfully to this situation by Kasparov.
Recall that by Theorem \ref{gen-product} the Kasparov product over $\CC$ is commutative, i.e., 
we have
\begin{align*}
\alpha\otimes_\CC\beta=\beta\otimes_\CC\alpha&= (\beta\otimes 1_\D)\otimes_{\D\hat\otimes \D} (1_\D\otimes \alpha)\\
&=(\beta\otimes 1_\D)\otimes_{\D\hat\otimes \D} \big(\Sigma_{\D,\D}\otimes_{\D\hat\otimes \D} (\alpha\otimes 1_\D)\big),
\end{align*}
where $\Sigma_{\D,\D}:\D\hat\otimes \D\to \D\hat\otimes \D$ denotes (the $KK$-class of) the flip homomorphism $x\otimes y\mapsto (-1)^{\deg(x)\deg(y)}y\otimes x$.
If we can show that there is an invertible  class $\eta\in KK(\D,\D)$ such that $\Sigma_{\D,\D}=1_\D\hat\otimes\eta\in KK(\D\hat\otimes \D, \D\hat\otimes \D)$ 
the result will follow from the following reasoning:
\begin{align*}
\alpha\otimes_\CC\beta&=(\beta\otimes 1_\D)\otimes_{\D\hat\otimes \D} \big(\Sigma_{\D,\D}\otimes_{\D\hat\otimes \D} (\alpha\otimes 1_\D)\big)\\
&=(\beta\otimes 1_\D)\otimes_{\D\hat\otimes \D} \big((1_\D\otimes \eta)\otimes_{\D\hat\otimes \D} (\alpha\otimes 1_\D)\big)\\
&=(\beta\otimes 1_\D)\otimes_{\D\hat\otimes \D}\big(\Sigma_{\D,\D} \circ (\eta\otimes 1_\D)\circ \Sigma_{\D,\D}^{-1}\big)\otimes_{\D\hat\otimes \D} \big(\Sigma_{\D,\D}\otimes_{\D\hat\otimes \D} (1_\D\otimes \alpha)\big)\\
&=(\beta\otimes 1_\D)\otimes_{\D\hat\otimes \D} \big(\Sigma_{\D,\D} \circ (\eta\otimes 1_\D)\big)\otimes_{\D\hat\otimes \D} (1_\D\otimes\alpha )\\
&=(\beta\otimes 1_\D)\otimes_{\D\hat\otimes \D} \Sigma_{\D,\D} \otimes_{\D\hat\otimes \D} (\eta\otimes_\CC\alpha)\\
&=(\beta\otimes 1_\D)\otimes_{\D\hat\otimes \D} (\alpha\otimes_\CC\eta)\\
&=\big((\beta\otimes 1_\D)\otimes_{\D\hat\otimes \D}(\alpha\otimes 1_\D)\big)\otimes_{\CC\hat\otimes \D} (1_\CC\otimes \eta)\\
&=\eta.
\end{align*}
Since $\eta$ is invertible in $KK(\D,\D)$ this implies that $\alpha$ has a right $KK$-inverse 
$\gamma$, say.
But then $\gamma=\beta$, since
$$\beta=\beta\otimes_\D(\alpha\otimes_\CC\gamma)=(\beta\otimes_\D\alpha)\otimes_\CC \gamma=\gamma.$$

To see that  $\Sigma_{\D,\D}=1_\D\hat\otimes\eta\in KK(\D\hat\otimes \D, \D\hat\otimes \D)$  for a suitable invertible $KK$-class $\eta$ 
 we consider the isomorphism 
$$\D\hat\otimes \D=\big(C_0(\RR)\hat\otimes  \Cl_1\big)\hat\otimes \big(C_0(\RR)\hat\otimes  \Cl_1\big)\cong
C_0(\RR^2)\hat\otimes \Cl_2.$$
If $\tau: \RR^n\to\RR^n$ is any orthogonal transformation, it induces an automorphism $\tau^*:C_0(\RR^n)\to C_0(\RR^n)$ 
by $\tau^*(f)(x)=f(\tau^{-1}(x))$ and an automorphism $\tilde\tau$ of $\Cl_n$ by extending, via the universal property of $\Cl_n$, 
the map $\tilde\tau: \RR^n\to \Cl_n; v\mapsto \iota\circ \tau(v)$ to all of $\Cl_n$, where $\iota:\RR^n\to \Cl_n$ denotes the canonical inclusion.
We then get an automorphism 
$$\Phi_\tau:= \tau^*\hat\otimes \tilde\tau: C_0(\RR^n)\hat\otimes \Cl_n\to C_0(\RR^n)\hat\otimes \Cl_n.$$
Moreover, a homotopy of orthogonal transformations of $\RR^n$ between $\tau_0$ and $\tau_1$ clearly 
 induces a homotopy between the automorphisms $\Phi_{\tau_0}$ and $\Phi_{\tau_1}$. In particular, for any 
 orthogonal transformation which is homotopic to $\id_{\RR^n}$
 we get 
 $$[\Phi_{\tau}]=1_{C_0(\RR^n,\Cl_n)}\in KK(C_0(\RR^n,\Cl_n),C_0(\RR^n,\Cl_n)).$$
It is not difficult to check that under the isomorphism $\D\hat\otimes \D\cong C_0(\RR^2)\hat\otimes \Cl_2$ 
the  flip automorphism $\Sigma_{\D,\D}$  corresponds to  $\Phi_\sigma:C_0(\RR^2)\hat\otimes \Cl_2\to C_0(\RR^2)\hat\otimes \Cl_2$  with 
$$\sigma:\RR^2\to \RR^2; \sigma(x,y)=(y,x).$$
Since $\det(\sigma)=-1$, it is, unfortunately, not homotopic to $\id_{\RR^2}$. 
But the orthogonal transformation $\rho:\RR^2\to \RR^2; \rho(x,y)=(-y, x)$ is homotopic to $\id_{\RR^2}$ via 
the path of transformations $\rho_t$, $t\in [0, \pi/2]$ with
$$\rho_t(x,y)=(\cos(t) x-\sin(t)y, \sin(t)x+\cos(t)y).$$
One checks that $\Phi_\rho$ corresponds to $\Sigma_{\D,\D}\circ (\id_\D\otimes \Phi_{-\id}),$
where $-\id: \RR\to \RR, x\mapsto -x$ is the flip on $\RR$. Hence, if $\eta=[\Phi_{-\id}]\in KK(\D,\D)$, we have 
$$\Sigma_{\D,\D}\otimes_{\D\hat\otimes \D} (1_\D\otimes \eta)=[\Sigma_{\D,\D}\circ (\id_\D\otimes \Phi_{-\id})]=[\Phi_\rho]=1_{\D\hat\otimes \D},$$
where, by abuse of notation, we identify $\Phi_\rho$ with the corresponding automorphism of $\D\hat\otimes \D$. 
Since $\Sigma_{\D,\D}=\Sigma_{\D,\D}^{-1}$ it  follows that 
$1_\D\otimes\eta=\Sigma_{\D,\D}\in KK(\D\hat\otimes \D, \D\hat\otimes \D)$ and we are done.

\begin{corollary}\label{cor-suspension}
Let $\alpha_1\in KK(C_0(\RR), \Cl_1)$ and $\beta_1\in KK(\Cl_1, C_0(\RR))$ be  the images 
of $\alpha$ and $\beta$ under the isomorphisms $KK(C_0(\RR)\otimes \Cl_1,\CC)\cong KK(\C_0(\RR),\Cl_1)$ and 
$KK(\CC, C_0(\RR)\otimes \Cl_1)\cong KK(\Cl_1, C_0(\RR))$ of Proposition \ref{prop-Bott}.  Then 
$\alpha_1$ is a $KK$-equivalence with inverse $\beta_1$. As a consequence, for all $G$-algebras $A$ and $B$, there are 
canonical Bott-isomorphisms
$$KK^G(A\hat\otimes C_0(\RR), B)\cong KK^G_1(A,B)\cong KK^G(A, B\hat\otimes C_0(\RR)).$$
More generally, for all $n,m\in \NN_0$ we get Bott-isomorphisms
$$KK^G(A\hat\otimes C_0(\RR^n), B\hat\otimes C_0(\RR^m))\cong KK_{n+m}^G(A,B).$$
\end{corollary}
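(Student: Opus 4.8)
The plan is to leverage Lemma~\ref{lem-product} together with the rotation-trick argument just completed, which together show that $\alpha\in KK(\D,\CC)$ and $\beta\in KK(\CC,\D)$ are mutually inverse $KK$-equivalences (that is $\beta\otimes_\D\alpha=1_\CC$ and $\alpha\otimes_\CC\beta=1_\D$, where $\D=C_0(\RR)\hat\otimes\Cl_1$), and to transport this first to $\alpha_1,\beta_1$ and then to arbitrary coefficients. First I would make the Bott isomorphisms of Proposition~\ref{prop-Bott} explicit: tracing through that proof with $A=C_0(\RR)$ and $B=\CC$ shows that $\alpha_1$ and $\beta_1$ are obtained from $\alpha$ and $\beta$ by applying the operation $\cdot\hat\otimes 1_{\Cl_1}$ and composing with Kasparov products against the Morita class $x_2\in KK(\Cl_1\hat\otimes\Cl_1,\CC)$ of Notation~\ref{notation-Morita} and its inverse (using $\Cl_1\hat\otimes\Cl_1\cong\Cl_2$), namely
$$\alpha_1=(1_{C_0(\RR)}\otimes x_2^{-1})\otimes_{C_0(\RR)\hat\otimes\Cl_1\hat\otimes\Cl_1}(\alpha\hat\otimes 1_{\Cl_1}),\qquad \beta_1=(\beta\hat\otimes 1_{\Cl_1})\otimes_{C_0(\RR)\hat\otimes\Cl_1\hat\otimes\Cl_1}(1_{C_0(\RR)}\otimes x_2).$$
Then I would compute $\alpha_1\otimes_{\Cl_1}\beta_1$ and $\beta_1\otimes_{C_0(\RR)}\alpha_1$ purely formally: by associativity of the Kasparov product (Theorem~\ref{thm-product}) and the identity $(x\otimes_B y)\hat\otimes 1_D=(x\hat\otimes 1_D)\otimes_{B\hat\otimes D}(y\hat\otimes 1_D)$, the $\alpha$--$\beta$ factors reduce via $\alpha\otimes_\CC\beta=1_\D$ and $\beta\otimes_\D\alpha=1_\CC$ while the Morita factors $x_2^{\pm1}$ cancel against one another ($x_2$ being a $KK$-equivalence), leaving $\alpha_1\otimes_{\Cl_1}\beta_1=1_{C_0(\RR)}$ and $\beta_1\otimes_{C_0(\RR)}\alpha_1=1_{\Cl_1}$. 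Thus $\alpha_1$ is a $KK$-equivalence with inverse $\beta_1$.

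Next I would pass to the equivariant and coefficient setting. Since $G$ acts trivially on $C_0(\RR)$ and $\Cl_1$, every Kasparov cycle over the trivial group with trivial actions becomes a $G$-equivariant cycle upon equipping the module with the trivial $G$-action, and this assignment is compatible with Kasparov products and units; hence $\alpha_1$ and $\beta_1$ determine mutually inverse $KK^G$-equivalences in $KK^G(C_0(\RR),\Cl_1)$ and $KK^G(\Cl_1,C_0(\RR))$. Applying $1_A\otimes\cdot$ and $1_B\otimes\cdot$, which send $KK^G$-equivalences to $KK^G$-equivalences, produces $KK^G$-equivalences $1_A\otimes\alpha_1\in KK^G(A\hat\otimes C_0(\RR),A\hat\otimes\Cl_1)$ and $1_B\otimes\alpha_1\in KK^G(B\hat\otimes C_0(\RR),B\hat\otimes\Cl_1)$, with inverses $1_A\otimes\beta_1$ and $1_B\otimes\beta_1$. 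Composing in $KK^G$ with these equivalences --- precomposing in the first variable and postcomposing in the second --- gives isomorphisms, natural in $A$ and $B$, $KK^G(A\hat\otimes C_0(\RR),B)\cong KK^G(A\hat\otimes\Cl_1,B)$ and $KK^G(A,B\hat\otimes C_0(\RR))\cong KK^G(A,B\hat\otimes\Cl_1)$; since $KK^G(A\hat\otimes\Cl_1,B)=KK^G_{-1}(A,B)\cong KK^G_1(A,B)$ and $KK^G(A,B\hat\otimes\Cl_1)=KK^G_1(A,B)$ by formal Bott-periodicity (Proposition~\ref{prop-Bott}), the first chain of isomorphisms follows.

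For the general statement I would use $C_0(\RR^n)\cong C_0(\RR)^{\hat\otimes n}$ and iterate: applying the first-variable isomorphism $n$ times (rearranging tensor factors and using $\Cl_1^{\hat\otimes n}\cong\Cl_n$) and the second-variable isomorphism $m$ times turns $KK^G(A\hat\otimes C_0(\RR^n),B\hat\otimes C_0(\RR^m))$ into $KK^G(A\hat\otimes\Cl_n,B\hat\otimes\Cl_m)$; a further appeal to Proposition~\ref{prop-Bott}, in the form $KK^G(A\hat\otimes\Cl_n,C)\cong KK^G(A,C\hat\otimes\Cl_n)$ (valid for every $n$), together with $\Cl_m\hat\otimes\Cl_n\cong\Cl_{m+n}$ (Example~\ref{ex-grading}), identifies this with $KK^G(A,B\hat\otimes\Cl_{m+n})=KK^G_{m+n}(A,B)$. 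The only genuinely delicate point is the bookkeeping in the first two steps: one has to open up the rather implicit definition of the Bott isomorphisms of Proposition~\ref{prop-Bott} and keep careful track of over which algebra each Kasparov product is formed, so that associativity and the $\cdot\hat\otimes 1_D$ identities apply cleanly; everything else is formal manipulation of $KK$-equivalences.
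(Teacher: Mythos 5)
Your argument is correct and is precisely the one the paper intends: the published proof of Corollary~\ref{cor-suspension} simply says it is ``a straightforward consequence of the results and techniques explained above and is left to the reader,'' and your write-up supplies exactly those details (transporting the mutually inverse classes $\alpha,\beta$ through the explicit maps in the proof of Proposition~\ref{prop-Bott}, cancelling the Morita factors $x_2^{\pm 1}$, promoting to $KK^G$ via trivial actions and $1_A\otimes\cdot$, and iterating with $C_0(\RR^n)\cong C_0(\RR)^{\hat\otimes n}$ and $\Cl_n\hat\otimes\Cl_m\cong\Cl_{n+m}$). No gaps; this matches the paper's approach.
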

\begin{proof} The proof is a straightforward consequence of the results and techniques explained above and is left to the reader.
\footnote{The above proof of Bott-periodicity follows in part some unpublished notes of Walter Paravicini. See
{\text  http://wwwmath.uni-muenster.de/u/echters/Focused-Semester/downloads.html}.}
\end{proof}

Kasparov actually proved a  more general version of the above $KK$-theoretic Bott-periodicity theorem,
which provides a $KK^G$-equivalence between $C_0(V)$ and the Clifford algebra $\Cl(V, \langle\cdot,\cdot\rangle)$, 
in which $G$ is a (locally) compact group which acts by a continuous  orthogonal representation $\rho:G\to \Ort(V)$ on the 
finite dimensional euclidean vector space $V$, and $\langle\cdot,\cdot\rangle$ is any $G$-invariant inner product on $V$.
The action of $G$ on $\Cl(V)$ is the unique action which extends the given action of $G$ on $V\subseteq \Cl(V)$.
Identifying $V$ with $\RR^n$ equipped with the standard inner product, this $KK$-equivalence is constructed as in the 
above special case where $n=1$ via a $KK^G$-equivalence between $C_0(\RR^n)\hat\otimes\Cl_n$ and $\CC$. 
In case of trivial actions, this result follows from the  case $n=1$, using the fact that the $n$-fold graded  tensor product 
of $\C_0(\RR)\hat\otimes\Cl_1$ with itself is isomorphic to $C_0(\RR^n)\hat\otimes \Cl_n$. We refer to 
Kasparov's original papers \cites{Kas1, Kas2} for the  proof of the general case. 

\subsection{Excision in $KK$-theory.}\label{sec-six-term}  Recall that every short exact sequence
$0\to I\stackrel{\iota}{\to} A\stackrel{q}{\to} A/I\to 0$
of $C^*$-algebras induces a six-term exact sequence in $K$-theory
$$
\begin{CD}
K_0(I) @>\iota_0 >> K_0(A)   @> q_0 >> K_0(A/I)\\
@A\delta AA    @.     @VV\exp V\\
K_1(A/I)  @<<\iota_1 < K_1(A)   @<< q_1 < K_1(I)
\end{CD}
$$
which happens to be extremely helpful for the computation of $K$-theory groups. To some extend we get 
similar six-term sequences in $KK$-theory, but one has to impose some extra conditions on the 
short exact sequence:

\begin{definition}\label{def-semi-split}
Let $G$ be a locally compact group. A short exact sequence of graded $G$-$C^*$-algebras
$$0\to I\stackrel{\iota}{\to} A\stackrel{q}{\to} A/I\to 0$$
is called {\em $G$-equivariantly semisplit} if there exists a 
$G$-equivariant completely positive, normdecreasing, grading preserving  cross setion $\phi:A/I\to A$ for the quotient map
$q:A\to A/I$. We then also say that $A$ is a {\em $G$-semisplit extension} of $A/I$ by $I$. 
\end{definition}
By an important result of Choi-Effros \cite{Choi-Effros}
a short exact sequence $0\to I\stackrel{\iota}{\to} A\stackrel{q}{\to} A/I\to 0$ (with trivial $G$-action) is always semisplit if  $A$ is nuclear.
But there are many other important cases of semisplit extensions. 

Every $G$-semisplit extension determines a unique class in $KK_1^G(A/I, I)$ which plays an important r\^ole 
in the construction of the six-term exact sequences in $KK$-theory. 
%To make things a little bit easier, we shall from now 
%on assume that all algebras in the extension are trivially graded, but we shall use the picture of $KK_1(A,B)$ as 
%$KK(A, B\hat\otimes \Cl_1)$ below, where, of course, $\Cl_1=\CC\oplus\CC$ will carry the usual odd grading.
%For the construction we need the following 
%consequence of a $G$-equivariant version of  Stinespring's theorem and a $G$-equivariant version of 
%Kasparov's stabilisation theorem. 
The non-equivariant version is well documented (e.g., see \cites{Kas2, Sk-exact-KK, Cu-Sk} and  \cite{Bla86}*{Section 19.5}).
But the details of the equivariant version, which we shall need below, are somewhat scattered in the literature.
The main ingredients are explained in \cite{BS1}*{Remarques 7.5} (see also the proof of \cite{CE2}*{Lemma 5.17}).
We summarise the important steps as follows:
\begin{enumerate}
\item If $0\to I\stackrel{\iota}{\to} A\stackrel{q}{\to} A/I\to 0$ is a $G$-equivariant semisplit extension, then the 
canonical embedding $e: I\to C_q:=C_0([0,1),A)/C_0((0,1), I)$, which sends $a\in I$ to the equivalence class of $(1-t)a$ in $C_q$,
determines a $KK^G$-equivalence $[e]\in KK^G(I,C_q)$.
\item View the Bott-class $\beta\in KK_1(\CC, C_0(0,1))$ as an element in $KK_1^G(\CC, C_0(0,1))$ with trivial 
$G$-actions everywhere and let $i: C_0((0,1),A/I)\to C_q$ denote the canonical map.  
Let  $c\in KK_1^G(A/I, I)$ be the class defined via the Kasparov product
$$ c= (\beta\otimes 1_{A/I})\otimes_{C_0((0,1),A/I)}[i]\otimes_{C_q} [e]^{-1},$$
where $[e]^{-1}\in KK^G(C_q,I)$ denotes the inverse of $[e]$. 
We call $c\in KK^G_1(A/I,I)$ the class attached to the equivariant semisplit extension $0\to I\stackrel{\iota}{\to} A\stackrel{q}{\to} A/I\to 0$.
\end{enumerate}
We then have the following theorem, which can be proved basically along the lines 
of the non-equivariant case using $G$-equivariant versions of Stinespring's theorem and of Kasparov's stabilisation theorem (\cite{Mingo-Phillips}).

\begin{theorem}\label{thm-sixterm}
Suppose that $0\to I\stackrel{\iota}{\to} A\stackrel{q}{\to} A/I\to 0$ is a $G$-equivariant semisplit short exact sequence
of $C^*$-algebras. Then for every $G$-$C^*$-algebra $B$, we have the following two six-term exact sequences:
$$\begin{CD}
KK_0^G(B,I) @>\iota_* >> KK_0^G(B,A) @>q_* >> KK_0^G(B,A/I)\\
@A \partial AA   @.    @VV \partial V\\
KK_1^G(B,A/I) @<< q_* <     KK_1^G(B,A) @<< \iota_* < KK_1^G(B,I)\\
\end{CD}
$$
and
$$\begin{CD}
KK_0^G(A/I, B) @ >q^*>> KK_0^G(A,B) @>\iota^* >> KK_0^G(I,B)\\
@A \partial AA   @.    @VV \partial V\\
KK_1^G(I, B) @<< \iota^* <     KK_1^G(A,B) @<< q^* < KK_1^G(A/I,B),\\
\end{CD}
$$
where the boundary maps are all given by taking Kasparov product with the class $c\in KK_1^G(A/I,I)$ 
of the given extension.
\end{theorem}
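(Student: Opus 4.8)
The plan is to deduce both six-term sequences from a single, more primitive statement: for any $G$-semisplit extension $0\to I\xrightarrow{\iota} A\xrightarrow{q} A/I\to 0$ and any $G$-$C^*$-algebra $B$, the sequence
$$KK^G(B,I)\xrightarrow{\iota_*} KK^G(B,A)\xrightarrow{q_*} KK^G(B,A/I)$$
is exact in the middle, and dually the sequence $KK^G(A/I,B)\xrightarrow{q^*} KK^G(A,B)\xrightarrow{\iota^*} KK^G(I,B)$ is exact in the middle. Granting this, the full six-term sequences follow by a standard rotation: one applies the middle-exactness not to the original extension but to the mapping-cone extension $0\to C_0(0,1)\otimes A/I\to C_q\to A\to 0$ (or, in the contravariant case, its analogue), uses that $[e]\colon I\to C_q$ is a $KK^G$-equivalence by step (1) preceding the theorem, and uses Bott-periodicity (Corollary \ref{cor-suspension}) to identify $KK^G(B,C_0(0,1)\otimes A/I)\cong KK^G_1(B,A/I)$. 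Splicing these middle-exact three-term sequences together around the hexagon and tracking the identifications produces exactness at all six spots; the connecting maps that emerge are, by construction of step (2), precisely Kasparov product with $c\in KK^G_1(A/I,I)$.

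The first substantive step is therefore to prove the middle-exactness. For the covariant statement one checks $q_*\circ\iota_* = (q\circ\iota)_* = 0$ immediately, and for exactness one takes a cycle $(\E,u,\Phi,T)$ representing a class in $KK^G(B,A)$ mapping to $0$ in $KK^G(B,A/I)$; after a standard simplification one arranges the representative so that the induced cycle over $A/I$ is literally degenerate, and then the $G$-equivariant completely positive splitting $\phi\colon A/I\to A$ is used, together with a $G$-equivariant Stinespring dilation, to push $(\E,u,\Phi,T)$ through the ideal $I$ up to homotopy. This is the place where the hypothesis of $G$-equivariant semisplitness is genuinely consumed, and it is the main technical obstacle: one needs the $G$-equivariant Stinespring theorem and the $G$-equivariant Kasparov stabilization theorem (here I would invoke \cite{Mingo-Phillips}) to control the Hilbert-module constructions equivariantly. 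The contravariant middle-exactness is handled symmetrically, realizing a class in $KK^G(A,B)$ restricting trivially to $I$ as coming from $A/I$, again via the splitting $\phi$ and a dilation argument.

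The second step is the identification of the connecting map. By step (1) the embedding $e\colon I\to C_q$ is a $KK^G$-equivalence, so the mapping-cone extension is, up to $KK^G$-equivalence, the original extension with a suspension inserted in the quotient slot; chasing the long exact sequence of the mapping cone and unwinding the definition $c=(\beta\otimes 1_{A/I})\otimes[i]\otimes[e]^{-1}$ shows that the boundary map $KK^G_1(B,A/I)\to KK^G_0(B,I)$ agrees with $\cdot\otimes_{A/I} c$, and likewise in the contravariant sequence the boundary map $KK^G_1(I,B)\to KK^G_0(A/I,B)$ agrees with $c\otimes_I\cdot$. I expect the proof that these two a priori different descriptions of the boundary coincide — the homological one coming from the mapping cone and the one given by the product with $c$ — to require a careful but routine diagram chase of the sort carried out in the non-equivariant case in \cite{Bla86}*{Section 19.5}, adapted using \cite{BS1}*{Remarques 7.5}. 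Finally one notes that periodicity of the higher $KK^G$-groups (Proposition \ref{prop-Bott}) collapses the long exact sequences into the stated hexagons, which completes the argument.
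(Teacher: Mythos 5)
Your outline matches the strategy the paper itself indicates: the paper gives no detailed proof of Theorem \ref{thm-sixterm}, but points to exactly the route you describe, namely the two preparatory steps (the $KK^G$-equivalence $[e]\colon I\to C_q$ and the class $c$ built from the Bott element and the mapping-cone inclusion), combined with the non-equivariant argument of \cite{Bla86}*{Section 19.5} upgraded by $G$-equivariant Stinespring and Kasparov stabilisation theorems as in \cite{Mingo-Phillips} and \cite{BS1}*{Remarques 7.5}. Your reduction to middle-exactness, the rotation via the mapping-cone extension and Bott periodicity, and the identification of the boundary maps with the product by $c$ are precisely the steps the paper defers to the literature, so the proposal is correct and takes essentially the same approach.
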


\section{The Baum-Connes conjecture} \label{sec-BC}
\subsection{The universal proper $G$-space}

In what follows, for a locally  compact group $G$, a {\em $G$-space} will  mean 
a locally compact space $X$ together with a homomorphism $h: G\to \Homeo(X)$ such that 
the map 
$$G\times X\to X; (s,x)\mapsto s\cdot x:=h(s)(x)$$
is continuous.  A $G$-space $X$ is called {\em proper}, if the map
$$\varphi:G\times X\to X\times X; (s,x)\mapsto (s\cdot x,  x)$$
is proper in the sense that inverse images of compact sets are compact. 
Equivalently, $X$ is a proper $G$-space, if every net $(s_i, x_i)$ in $G\times X$ 
such that $(s_i\cdot x_i, x_i)\to (y,x)$ for some  $(y,x)\in X\times X$ has a convergent subnet.
We  also say that $G$ {\em acts properly} on $X$. 
Proper $G$-spaces have an extremely nice behaviour and they are very closely connected 
to actions by compact groups. 

Let us state some important properties:

\begin{lemma}\label{lem-proper-prop}
Suppose that $X$ is a proper $G$-space. Then the following hold:
\begin{enumerate}
\item For every $x\in X$ the stabiliser $G_x=\{s\in G: s\cdot x=x\}$ is compact.
\item The orbit space $G\backslash X$ equipped with the quotient topology is 
a locally compact Hausdorff space.
\item If $X$ is a $G$-space, $Y$ is a proper $G$-space and $\phi:X\to Y$ is a $G$-equivariant continuous 
map, then $X$ is a proper $G$-space as well.
\end{enumerate}
\end{lemma}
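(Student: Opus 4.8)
The plan is to prove the three statements separately, in each case invoking whichever of the two equivalent descriptions of properness (pre-images of compacta are compact; the net criterion) is most convenient.

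For (i) I would simply note that $\{(x,x)\}$ is a compact subset of $X\times X$ and that $\varphi^{-1}(\{(x,x)\})=G_x\times\{x\}$. Properness of $\varphi$ makes this set compact, and since it is homeomorphic to $G_x$ via the first projection, $G_x$ is compact.

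For (ii) I would first recall the standard fact that the orbit map $q\colon X\to G\backslash X$ is open, since for open $U\subseteq X$ one has $q^{-1}(q(U))=\bigcup_{s\in G}s\cdot U$, which is open. An open continuous surjection takes a locally compact space to a locally compact space (push a compact neighbourhood of a point forward and use openness to see its image is still a neighbourhood of the image point), so $G\backslash X$ is locally compact. For Hausdorffness I would use the general criterion that, for an open quotient map $q$, the quotient $G\backslash X$ is Hausdorff exactly when the orbit equivalence relation $R\subseteq X\times X$ is closed. But $R$ is precisely the image of $\varphi$: the relation is symmetric, so $``(y,x)$ with $y\in G\cdot x"$ is the same set as $``x,y$ lie in a common orbit''. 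Now $\varphi$ is proper and $X\times X$ is locally compact Hausdorff, and a continuous proper map into a locally compact Hausdorff space is closed; hence $R=\varphi(G\times X)$ is closed. The only point needing a small argument is that closed-map lemma: given a limit point $y$ of $\varphi(C)$ for $C\subseteq G\times X$ closed, pick a compact neighbourhood $K$ of $y$; then $\varphi^{-1}(K)$ is compact, $C\cap\varphi^{-1}(K)$ is compact, its image $\varphi(C\cap\varphi^{-1}(K))$ is compact hence closed, and a short neighbourhood-chasing argument (any neighbourhood of $y$ contained in the interior of $K$ meets $\varphi(C)$ inside $\varphi(C\cap\varphi^{-1}(K))$) shows $y$ lies in it, so $y\in\varphi(C)$.

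For (iii) I would run the net criterion. Let $(s_i,x_i)$ be a net in $G\times X$ with $(s_i\cdot x_i,x_i)\to(x',x)$ in $X\times X$. Applying the continuous map $\phi\times\phi$ and using $G$-equivariance of $\phi$ gives $(s_i\cdot\phi(x_i),\phi(x_i))\to(\phi(x'),\phi(x))$ in $Y\times Y$. Since $Y$ is proper, there is a subnet along which $(s_i,\phi(x_i))$ converges; in particular $s_i\to s$ in $G$ along that subnet, and together with $x_i\to x$ this produces a convergent subnet of $(s_i,x_i)$ in $G\times X$. Hence $X$ is proper. None of this is deep: the only real subtlety is the point-set lemma used in (ii) that proper continuous maps into locally compact Hausdorff spaces are closed, together with the standing convention that ``locally compact'' includes Hausdorff, so that $X\times X$ is locally compact Hausdorff and the lemma applies.
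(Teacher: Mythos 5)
Your proof is correct, and parts (i) and (iii) match the paper's argument in substance: (i) is identical, and for (iii) you run the net characterisation of properness while the paper runs the compact-preimage characterisation (checking $\varphi_X^{-1}(K\times K)\subseteq C\times K$ with $C$ the projection of $\varphi_Y^{-1}(\phi(K)\times\phi(K))$ to $G$); since the paper states these two characterisations as equivalent just before the lemma, this is only a cosmetic difference.

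The genuinely different step is the Hausdorff part of (ii). The paper argues directly with nets: if a net of orbits converges to both $G(x)$ and $G(y)$, openness of $q$ lets one lift to nets $x_i\to x$ and $s_i\cdot x_i\to y$, and then properness (in its net form) yields a convergent subnet $(s_i,x_i)\to(s,x)$, forcing $y=s\cdot x$. You instead invoke two standard point-set facts: an open quotient is Hausdorff iff the equivalence relation $R\subseteq X\times X$ is closed, and a continuous map with compact preimages of compacta into a locally compact Hausdorff space is closed; since $R=\varphi(G\times X)$, properness of $\varphi$ gives closedness of $R$ at once. Both routes are sound. Yours buys modularity -- it isolates two reusable lemmas and avoids any subnet bookkeeping -- at the cost of having to prove (or cite) the closed-map lemma, which you do sketch correctly; the paper's version is more self-contained and stays entirely within the net vocabulary it has just set up for properness. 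One small point worth making explicit in your write-up: the closed-map lemma needs the target $X\times X$ to be Hausdorff so that compact images are closed, which is covered by the standing convention that $G$-spaces are locally compact Hausdorff, as you note.
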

\begin{proof} The first assertion follows from 
$G_x\times\{x\}=\varphi^{-1}(\{(x,x)\})$, if $\varphi:G\times X\to X\times X$ is the structure map.

For the second assertion we first observe that the quotient map $q:X\to G\backslash X$ is open
since for any open subset $U\subseteq X$ we have $q^{-1}(q(U))=G\cdot U$ is open in $X$. This then 
easily implies that $G\backslash X$ is locally quasi-compact. We need to 
show that   $G\backslash X$ is Hausdorff.
For this assume that there is net $(x_i)$ such that the net of orbits $(G(x_i))$ 
converges to two orbits $G(x)$,  $G(y)$. We need to show that $y=s\cdot x$ for some $s\in G$. 
Since the quotient map $q:X\to G\backslash X$ is open, we may assume, after passing to a 
subnet if necessary, that $x_i\to x$ and $s_i\cdot x_i\to y$ for some suitable net $(s_i)$ in $G$.
Hence $(s_i\cdot x_i, x_i)\to (y,x)$, and by properness we may assume, after passing to a subnet if necessary,
that $(s_i, x_i)\to (s,x)$ in $G\times X$ for some $s\in G$. 
 But then $s_i\cdot x_i\to s\cdot x$ which implies $y=s\cdot x$.
 
 For the third assertion let $K\subseteq X$ be compact. 
 If $(s\cdot x,x)\in K\times K$ it follows that $\phi\times \phi(s\cdot x, x)\in \phi(K)\times\phi(K)$, hence,
  by properness 
 of $Y$,
 $(s, \phi(x))$ lies in the compact set $\varphi_Y^{-1}(\phi(K)\times\phi(K))$ of $G\times Y$. 
 If $C\subseteq G$ denotes the compact projection of this set in $G$, we see that 
 $\varphi_X^{-1}(K\times K)\subseteq C\times K$ is compact as well.
  \end{proof}

\begin{example}\label{ex-proper}
{\bf (a)}  If $G$ is compact, then every $G$-space $X$ is proper, since 
for all $C\subseteq X$ compact, we have that
$\varphi^{-1}(C\times C)\subseteq G\times C$ is compact.

{\bf (b)} Suppose that $H\subseteq G$ is a closed subgroup of $G$ and assume that 
$Y$ is a proper $H$-space. The induced $G$-space $G\times_HY$ is defined 
as the quotient $H\backslash (G\times Y)$ with respect to the $H$-action $h\cdot (s,y)=(sh^{-1}, h\cdot y)$.
This action is proper by part (iii) of the above lemma, hence $G\times_HY$ is a locally compact Hausdorff space.
We let $G$ act on $G\times_HY$ by $s\cdot[t,y]=[st,y]$. We leave it as an exercise  to check that this action is proper as well.

{\bf (c)}  It follows as a special case of {\bf (b)} that whenever $K\subseteq G$ is a compact subgroup of $G$ and $Y$ is a 
$K$-space, then the induced $G$-space $G\times_KY$ is a proper $G$-space. Indeed, by a theorem of Abels
(see \cite{Ab-univ}*{Theorem 3.3}) every proper $G$-space is locally induced from compact subgroups. More precise, if $X$ is 
a proper $G$-space and $x\in X$, then there exists a $G$-invariant open neighbourhood $U$ of $x$ such that 
$U\cong G\times_KY$ as $G$-space for some compact subgroup $K$ of $G$ (depending on $U$) and 
some $K$-space $Y$. 
In particular, if $G$ does not have any compact subgroup, then every proper $G$-space is a principal $G$-bundle.

{\bf (d)} If $M$ is a finite dimensional manifold, then the action of the fundamental group $G=\pi_1(M)$ on the universal covering 
$\widetilde{M}$ by deck transformations is a (free and) proper action.
\end{example}

\begin{definition}\label{def-proper-uni}
A proper $G$-space $Z$ is called a {\em universal proper $G$-space} if for every proper $G$-space $X$ there 
exists a continuous $G$-map $\phi:X\to Z$ which is unique up to $G$-homotopy. 
We then write $Z=:\EG$. Note that $\EG$ is unique up to $G$-homotopy equivalence.
\end{definition}

The following result is due to Kasparov and Skandalis (see \cite{KS1}*{Lemma 4.1}):

\begin{proposition}\label{prop-EG}
Let $X$ be a proper $G$-space and let $\mathfrak M(X)$ denote the set of finite Radon measures on $X$ 
with total mass in $(\frac{1}{2}, 1]$ equipped  with the weak-* topology as a subset of the dual $C_0(X)'$ of $C_0(X)$
and equipped with the action induced by the action of $G$ on $C_0(X)$. Then $\mathfrak M(X)$ is a universal proper $G$-space.
\end{proposition}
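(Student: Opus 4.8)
The plan is to verify three things: that $\mathfrak M(X)$ is a locally compact Hausdorff $G$-space, that the $G$-action on it is proper, and that it has the universal property of $\EG$ in the sense of Definition~\ref{def-proper-uni}. The uniqueness half of that property will come essentially for free, and the real effort goes into properness and into constructing $G$-maps into $\mathfrak M(X)$. First I would regard $\mathfrak M(X)$ inside the Banach-space dual $C_0(X)'$. The set of positive Radon measures of total mass $\le 1$ is weak-$*$ closed in the unit ball, hence weak-$*$ compact by Banach--Alaoglu, and the functional $\mu\mapsto\mu(X)=\sup\{\int f\,d\mu:f\in C_c(X),\,0\le f\le 1\}$ is lower semicontinuous, being a supremum of weak-$*$ continuous functionals; hence $\{\mu\ge 0:\mu(X)>\tfrac12\}$ is open, so $\mathfrak M(X)$ is an open subset of a compact Hausdorff space and therefore locally compact Hausdorff. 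The $G$-action on $C_0(X)$ dualises to the affine action $s\cdot\mu=s_*\mu$, which preserves positivity and total mass, hence preserves $\mathfrak M(X)$; joint continuity of $G\times\mathfrak M(X)\to\mathfrak M(X)$ follows from the estimate $|(s\cdot\mu)(f)-(s_0\cdot\mu_0)(f)|\le\|s^{-1}\cdot f-s_0^{-1}\cdot f\|+|(\mu-\mu_0)(s_0^{-1}\cdot f)|$ together with the strong continuity of the $G$-action on $C_0(X)$.

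To prove properness I would reduce to properness of $X$. Let $(s_i,\mu_i)$ be a net in $G\times\mathfrak M(X)$ with $(s_i\cdot\mu_i,\mu_i)\to(\nu,\mu)$ in $\mathfrak M(X)\times\mathfrak M(X)$; it suffices to extract a subnet of $(s_i)$ converging in $G$. Since $\mu(X)>\tfrac12$, there is $f\in C_c(X)$ with $0\le f\le 1$ and $\int f\,d\mu>\tfrac12$, so $\mu_i(\supp f)\ge\int f\,d\mu_i>\tfrac12$ eventually; the same argument applied to $\nu$ and the net $s_i\cdot\mu_i$ produces $g\in C_c(X)$ with $\mu_i(s_i^{-1}\supp g)\ge\int g(s_i x)\,d\mu_i(x)>\tfrac12$ eventually. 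Because $\mu_i(X)\le 1$, the two subsets $\supp f$ and $s_i^{-1}\supp g$ cannot be disjoint, so there is $x_i\in\supp f$ with $s_i x_i\in\supp g$. Passing to subnets, $x_i\to x$ in the compact set $\supp f$ and $s_i x_i\to y$ in the compact set $\supp g$, hence $(s_i x_i,x_i)\to(y,x)$ in $X\times X$; properness of $X$ then forces a subnet of $(s_i,x_i)$, and so of $(s_i)$, to converge. This is the one genuinely measure-theoretic step.

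Uniqueness of $G$-maps into $\mathfrak M(X)$ up to $G$-homotopy is then immediate from convexity: $\mathfrak M(X)$ is a convex subset of $C_0(X)'$ (total mass is affine and $(\tfrac12,1]$ is convex) and the $G$-action is affine, so any two $G$-maps $\phi_0,\phi_1\colon Y\to\mathfrak M(X)$ are joined by the $G$-homotopy $(y,t)\mapsto(1-t)\phi_0(y)+t\phi_1(y)$. The same convexity will also feed the existence argument: for a compact subgroup $K\le G$ and a point $x_0\in X$, averaging $\delta_{x_0}$ over the normalised Haar measure of $K$ yields a $K$-invariant probability measure $\mu_{K,x_0}$ on $X$ --- the uniform measure on the compact orbit $Kx_0$ --- so $\mathfrak M(X)^K$ is nonempty (and convex, hence contractible).

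For the existence of a $G$-map from an arbitrary proper $G$-space $Y$ into $\mathfrak M(X)$ --- the crux of the matter --- I would fix $x_0\in X$ and, invoking Abels' theorem (quoted in Example~\ref{ex-proper}), cover $Y$ by $G$-invariant open sets $U_\alpha\cong G\times_{K_\alpha}V_\alpha$ with $K_\alpha\le G$ compact. On $U_\alpha$ I would set $\phi_\alpha([g,v]):=g_*\mu_{K_\alpha,x_0}$; the $K_\alpha$-invariance of $\mu_{K_\alpha,x_0}$ makes $\phi_\alpha$ well defined and continuous (it factors through $G/K_\alpha\to\mathfrak M(X)$) and manifestly $G$-equivariant. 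Then, choosing a $G$-invariant locally finite partition of unity $\{\psi_\alpha\}$ on $Y$ subordinate to $\{U_\alpha\}$, glue by $\phi:=\sum_\alpha\psi_\alpha\phi_\alpha$: each $\phi_\alpha(y)$ is a probability measure and $\sum_\alpha\psi_\alpha(y)=1$, so $\phi(y)$ is again a probability measure and $\phi\colon Y\to\mathfrak M(X)$ is a continuous $G$-map. (Alternatively one can first produce a $G$-map $Y\to\mathfrak M(G)$ from a cutoff function on $Y$ and then compose with the $G$-map $\mathfrak M(G)\to\mathfrak M(X)$ obtained by push-forward along the orbit map $g\mapsto g x_0$, which is proper because $G_{x_0}$ is compact and the orbit is closed.) The hard part will be exactly the existence of this $G$-invariant partition of unity --- equivalently of a cutoff function on $Y$ --- which needs $G\backslash Y$ to be paracompact; this is automatic when $G$ is second countable and $Y$ is $\sigma$-compact, the situation relevant for the Baum--Connes conjecture, but requires care in general, and for that, as well as for the routine verifications that push-forwards preserve the Radon property and are weak-$*$ continuous, I would refer to \cite{KS1}. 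Combining the three parts shows that $\mathfrak M(X)$ is a proper $G$-space admitting a $G$-map, unique up to $G$-homotopy, from every proper $G$-space, that is, $\mathfrak M(X)=\EG$.
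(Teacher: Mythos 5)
Your proof is correct. Note that the paper itself gives no proof of this proposition --- it is stated with a bare citation to Kasparov--Skandalis --- so there is nothing internal to compare against; but your argument is essentially the standard one from that source. The properness step is exactly right and is the whole point of the normalisation $\mu(X)\in(\frac12,1]$: two translated supports each carrying more than half of a total mass at most $1$ must meet, which transports the net criterion for properness from $X$ to $\mathfrak M(X)$. The topological preliminaries (Banach--Alaoglu plus lower semicontinuity of $\mu\mapsto\mu(X)$ to get local compactness) and the convexity argument for uniqueness up to $G$-homotopy are also fine. For existence of a $G$-map $Y\to\mathfrak M(X)$ you correctly identify the only genuinely delicate point, namely the $G$-invariant partition of unity subordinate to the Abels tubes, which amounts to paracompactness of $G\backslash Y$; this is harmless in the second countable setting the paper works in, and your honest deferral to the literature there matches what the paper itself does. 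One tiny remark: in the gluing step the measures $\phi_\alpha(y)$ are probability measures and $\sum_\alpha\psi_\alpha\equiv 1$, so the glued map actually lands in the mass-one slice of $\mathfrak M(X)$; this is consistent with, and slightly stronger than, what is needed.
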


Since the restriction of a proper $G$-action to a closed subgroup $H$ is again proper, it 
follows that the restriction of the $G$-space $\mathfrak M(X)$ of the above proposition to any closed subgroup $H$ is a universal proper 
$H$-space as well. By uniqueness of $\EG$ up to $G$-homotopy we get

\begin{corollary}\label{cor-subgroup-univ}
Suppose that $H$ is a closed subgroup of $G$ and let $Z$ be a universal proper $G$-space.
Then $Z$ is also a universal proper $H$-space if we restrict the given $G$-action to $H$.
\end{corollary}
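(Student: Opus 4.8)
The plan is to deduce the statement from Proposition \ref{prop-EG} together with the uniqueness clause in Definition \ref{def-proper-uni}. First I would record that $Z$, being a universal proper $G$-space, is in particular a proper $G$-space, and that its restriction to the closed subgroup $H$ is a proper $H$-space: since $H\times Z$ is closed in $G\times Z$, the structure map $\varphi\colon G\times Z\to Z\times Z$ restricts to a proper map $H\times Z\to Z\times Z$. So it is meaningful to ask whether $Z$ is universal as an $H$-space.

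Next I would apply Proposition \ref{prop-EG} to the proper $G$-space $X=Z$ itself, which produces the universal proper $G$-space $\mathfrak M(Z)$. The elementary observation that makes everything work is that $\mathfrak M(Z)$ — finite Radon measures on $Z$ of total mass in $(\tfrac12,1]$, with the weak-$*$ topology and with the action dual to the action on $C_0(Z)$ — is constructed without any reference to the acting group beyond its action on $C_0(Z)$. Consequently, restricting the $G$-action to $H$ turns $\mathfrak M(Z)$ into precisely the space that Proposition \ref{prop-EG}, applied with $H$ in place of $G$, associates to the proper $H$-space $Z$ (with the restricted action); hence $\mathfrak M(Z)$ equipped with the restricted $H$-action is a universal proper $H$-space. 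This is exactly the remark recorded just before the statement of the corollary.

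Finally I would invoke uniqueness. By Definition \ref{def-proper-uni} any two universal proper $G$-spaces are $G$-homotopy equivalent, so $Z$ is $G$-homotopy equivalent to $\mathfrak M(Z)$; restricting everything to $H$, and using that a $G$-equivariant homotopy equivalence is a fortiori an $H$-equivariant one, we conclude that $Z$ is $H$-homotopy equivalent to $\mathfrak M(Z)$ as $H$-spaces. Since being a universal proper $H$-space passes through $H$-homotopy equivalences — given a proper $H$-space $X$ one composes a classifying map $X\to\mathfrak M(Z)$ with a homotopy inverse $\mathfrak M(Z)\to Z$, and transports the uniqueness-up-to-$H$-homotopy in the same way — it follows that $Z$, with the $H$-action obtained by restriction, is a universal proper $H$-space, as claimed. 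I do not expect a genuine obstacle here; the only point that needs a sentence of care is the naturality of the construction in Proposition \ref{prop-EG} under restriction of the acting group, together with the routine fact that universality is a homotopy-invariant notion.
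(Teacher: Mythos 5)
Your proposal is correct and is essentially the argument the paper gives in the paragraph preceding the corollary: the space $\mathfrak M(X)$ of Proposition \ref{prop-EG} depends only on the action of the group on $C_0(X)$, so restricting to $H$ exhibits it as a universal proper $H$-space, and one then transports universality along the (restricted) $G$-homotopy equivalence with $Z$. You merely spell out the routine verification that universality is invariant under equivariant homotopy equivalence, which the paper leaves implicit.
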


\begin{example}\label{eq-EG}
{\bf (a)} If $G$ is compact, then the one-point space $\{\pt\}$ with the trivial $G$-action is 
a universal proper $G$-space. Similarly, every contractible space $Z$ with trivial $G$-action 
is universal. Hence $\EG=\{\pt\}$ and $\EG=Z$.

{\bf (b)} We have $\underline{E \RR^n}=\RR^n$: Since $\RR^n$ has no compact subgroups it follows that every proper $G$-space is a principal $\RR^n$-bundle.
On the other hand, since $\RR^n$ is contractible, it follows that every principal $\RR^n$-bundle is trivial.
It follows that every proper $\RR^n$-space $X$ is isomorphic to $\RR^n\times Y$  with trivial action on $Y$ and translation action on $\RR^n$.
Hence the projection $p:X\cong\RR^n\times Y\to\RR^n$ maps $X$ equivariantly into $\RR^n$.
If $\phi_0,\phi_1:X\to \RR^n$ are two such maps, then
$$\phi_t:X\to \RR^n: \phi_t(x)=t\phi_1(x)+(1-t)\phi_0(x)$$
is a $G$-homotopy of equivariant maps between them. Thus $\RR^n$ is universal.

{\bf (c)} It follows from {\bf (b)} together with Corollary \ref{cor-subgroup-univ} that  $\underline{E\ZZ^n}=\RR^n$.

{\bf (d)} If $G$ is a torsion free discrete group, then, as explained above,  every proper $G$-space
is a principal $G$-bundle. It follows that $\EG=EG$, the universal principal $G$-bundle.

{\bf (e)} If $G$ is an almost connected group (i.e., the quotient $G/G_0$ of $G$ by the connected component $G_0$
of the identity in $G$ is compact), then $G$ has a maximal compact subgroup $K\subseteq G$. It is then shown
by Abels in \cite{Ab} that $G/K$ is a universal proper $G$-space. 

{\bf (f)} It follows from {\bf (e)}  and Corollary \ref{cor-subgroup-univ} that for every closed subgroup $H$ of an 
almost connected group $G$, we have $\EH=G/K$, with $K$ a maximal compact subgroup of $G$.
In particular, we have $\underline{E\SL(n,\ZZ)}=\SL(n,\RR)/\SO(n)$.
\end{example}

\subsection{The Baum-Connes assembly map}\label{subsec-assembly}
The {\em Baum-Connes conjecture with coefficients in a $C^*$-algebra $A$} (denoted BCC for short) states, that for every $G$-$C^*$-algebra $A$ a certain
assembly map
$$\mu_{(G,A)}:K_*^G(\EG;A)\to K_*(A\rtimes_{\alpha}G)$$
is an isomorphism of abelian groups. Here $K_*^G(\EG;A)$, often called the {\em topological $K$-theory of $G$ with coefficients in $A$},
 can be regarded as 
the equivariant $K$-homology of $\EG$ with coefficients in $A$. We give a precise definition of this group 
and of the assembly map below. In case $A=\CC$ we get the {\em Baum-Connes conjecture with trivial coefficients} (BC for short),
which relates the equivariant $K$-homology $K_*^G(\EG):=K_*^G(\EG;\CC)$ with $K_*(C_r^*(G))$, the $K$-theory of the reduced 
group algebra of $G$. 

It is well known by work of Higson, Lafforgue and Skandalis (\cite{HLS}) that the now often called {\em Gromov Monster group}
$G$ fails the conjecture with coefficients.
But there is still no counterexample for the conjecture with trivial coefficients. On the other hand we know by work of Higson and Kasparov \cite{HK}  
that (even a very strong version of) BCC holds for all a-$T$-menable groups -- a large class of groups which 
contains all amenable groups.  We give a more detailed discussion of this in Section \ref{sec-Dirac}  below.
The relevance of the Baum-Connes conjecture comes from a number of facts:
\begin{enumerate}
\item It implies many other important conjectures, like the Novikov conjecture in topology, the Kaplansky conjecture on idempotents 
in group algebras, and the Gromov-Lawson conjecture on positiv scalar curvatures in Differential Geometry. 
So the validity of the conjecture for a given group $G$ has many positive consequences. We refer to \cites{BCH, Val89} for
more detailed discussions on these applications.
\item At least in the case of trivial coefficients the left hand side $K_*^G(\EG)$ is computable (at least in principle) by 
classical techniques from algebraic topology like excision, taking direct limits, and such. These methods are usually 
not available for the computation of $K_*(C_r^*(G))$ (or $K_*(A\rtimes_rG)$).
\item As we shall see further down, the conjecture allows a certain flexibility for the coefficients 
in a number of interesting cases, which makes it possible to perform explicit $K$-theory computations 
for certain crossed products and (twisted) group algebras.
\end{enumerate}
Before we go on with this general discussion, we now want to explain the ingredients of the conjecture.
For this we let $G$ be a locally compact group and $X$ a proper $G$-space. Let us further assume that 
$X$ is {\em $G$-compact}, which means that
$G\backslash X$ is compact.
Then there exists a continuous function 
$c:X\to [0,\infty)$ with compact support such that for all $x\in X$ we have
$$\int_G c(s^{-1}\cdot x)^2\, ds=1.$$
For the construction, just choose any compactly supported positive function $\tilde{c}$ on $X$ such 
that for each $x\in X$ there exists $s\in G$ with $\tilde{c}(s\cdot x)\neq 0$, divide this function by the strictly positive 
function $d(x):=\int_G\tilde{c}(s^{-1}\cdot x)\, ds$ and then  put $c=\sqrt{\frac{\tilde{c}}{d}}$.
We shall call such function $c:X\to[0,\infty)$ a {\em cut-off function} for $(X,G)$.  For such $c$
consider the function
$$p_c:G\times X\to [0,\infty); p_c(s,x)=\Delta_G(s)^{-1/2} c(x)c(s^{-1}\cdot x), \quad \forall (s,x)\in G\times X.$$
It follows from the properness of the action and the fact $c$ has compact support 
 that $p_c\in C_c(G\times X)\subseteq C_c(G, C_0(X))$. Thus $p_c$ can be regarded as an element of 
 the reduced (or full) crossed product $C_0(X)\rtimes_rG=\overline{C_c(G,C_0(X))}$. In fact, $p_c$ is a projection 
 in $C_0(X)\rtimes_rG$: For each $(s,x)\in G\times X$ 
 we have
% $$p_c^*(s,x)=\Delta_G(s^{-1})p_c(s^{-1}, s^{-1}\cdot x)=\Delta_G(s^{-1})\Delta_G(s)^{1/2}c(s^{-1}\cdot x)c(x)=p_c(s,x),$$
% and 
 \begin{align*} 
 p_c*p_c(s,x)&=\int_G p_c(t,x)p_c(t^{-1}s, t^{-1}\cdot x)\, dt\\
 &=\int_G\Delta_G(s)^{-1/2} c(x)c(t^{-1}\cdot x)^2 c(s^{-1}\cdot x)\,dt\\
 &=p_c(s,x) \cdot \int_G c(t^{-1}\cdot x)^2\, dt=p_c(s,x)
 \end{align*}
 and it is trivial to check that $p_c^*=p_c$.
 Thus $p_c$ determines a class $[p_c]\in K_0(C_0(X)\rtimes_rG)=KK_0(\CC, C_0(X)\rtimes_{r}G)$. Note that this class does not depend on the
 particular choice of the cut-off function $c$, for if $\tilde{c}$ is another cut-off function, then 
 $$c_t=\sqrt{tc^2+(1-t)\tilde{c}^2}, \quad t\in [0,1]$$
 is a path of cut-off functions joining $c$ with $\tilde{c}$, and then $p_{c_t}$ is a path of projections joining 
 $p_c$ with $p_{\tilde{c}}$. We call $[p_c]$ the {\em fundamental $K$-theory class of} $C_0(X)\rtimes_{r}G$.

Recall that Kasparov's descent homomorphism 
$$J_G: KK^G_*(A,B)\to KK_*(A\rtimes_rG, B\rtimes_rG)$$
is defined by sending a class $x=[\mathcal E, \Phi, \gamma, T]\in KK^G(A,B)$ to
the class $J_G(x)=[\mathcal E\rtimes_r G, \Phi\rtimes_rG, \tilde{T}]\in KK(A\rtimes_rG, B\rtimes_rG)$,
where $[\mathcal E, \Phi,\gamma]\mapsto [\mathcal E\rtimes_rG, \Phi\rtimes_rG]$ is the 
descent in the correspondence categories  as described in \cite[\S 5.4]{CroPro}, and  the operator 
$\tilde{T}$ on $\mathcal E\rtimes_rG$ is given on the dense subspace $C_c(G,\mathcal E)$ by 
$$(\tilde{T}\xi)(s)=T(\xi(s))\quad\forall \xi\in C_c(G,\mathcal E), s\in G.$$
A similar descent also exists if we replace the reduced crossed products by full crossed products.

Now, if $A$ is a $G$-$C^*$-algebra, we can consider the following chain of maps
 \begin{align*}
 \mu_X: KK_*^G(C_0(X),A)\stackrel{J_G}{\longrightarrow}& KK_*(C_0(X)\rtimes_r G, A\rtimes_{r}G)\\
 &\stackrel{[p_c]\otimes\cdot}{\longrightarrow}
 KK_*(\CC, A\rtimes_rG)\cong K_*(A\rtimes_rG),
 \end{align*}
 where $J_G$ denotes Kasparov's descent homomorphism. If $X$ and $Y$ are two $G$-compact proper $G$-spaces and 
 if $\varphi:X\to Y$ is a continuous $G$-equivariant map, then one can check that $\varphi:X\to Y$ is automatically proper,
 i.e., inverse images of compact sets are compact, and therefore it induces a $G$-equivariant $*$-homomorphism
 $$\varphi^*: C_0(Y)\to C_0(X); f\mapsto f\circ \varphi.$$
 Moreover, if $c:Y\to[0,\infty)$ is a cut-off function for $(G,Y)$, then $\varphi^*(c):X\to [0,\infty)$ is a cut-off function for $(G,X)$
 such that $p_{\varphi^*(c)}=(\varphi^*\rtimes_rG)(p_c)$. Using this fact, it is easy to check that the diagram
 $$\begin{CD}
 KK_*^G(C_0(X),A) @>>{J_G}> KK_*(C_0(X)\rtimes_r G, A\rtimes_{r}G) @>>[p_{\varphi^*c}]\otimes\cdot> KK_*(\CC, A\rtimes_rG)\\
 @V [\varphi^*]\otimes \cdot VV      @V [\varphi^*\rtimes_rG]\otimes \cdot VV   @VV=V\\
  KK_*^G(C_0(Y),A) @>>{J_G}> KK_*(C_0(Y)\rtimes_r G, A\rtimes_{r}G) @>>[p_{c}]\otimes\cdot> KK_*(\CC, A\rtimes_rG)
  \end{CD}
  $$ 
  commutes. Hence if we define the {\em topological $K$-theory of $G$ with coefficients in $A$} as
  $$K_*^G(\EG;A):=\lim_{\begin{smallmatrix} {X\subseteq \EG}\\{\text{$X$ is $G$-compact}}\end{smallmatrix}}KK_*^G(C_0(X),A)$$
  where the $G$-compact subsets of $\EG$ are ordered by inclusion, we get a well defined homomorphism
  $$\mu_{(G,A)}:K_*^G(\EG;A)=\lim_XKK_*^G(C_0(X),A)\stackrel{\lim_X \mu_X}{\longrightarrow} K_*(A\rtimes_rG).$$
 This is the Baum-Connes assembly map for the system $(A,G,\alpha)$. We say that {\em $G$ satisfies BC for $A$}, if this 
 map is bijective.

 \begin{remark}\label{rem-fullassembly}
 We should remark that  almost the same construction yields an assembly map 
 $$\mu^{full}_{(G,A)}: K_*^G(\EG;A)\to K_*(A\rtimes_{\alpha} G)$$
 for the full crossed product  $A\rtimes_{\alpha}G$ such that
 $$\Lambda_*\circ \mu^{full}_{(G,A)}=\mu_{(G,A)},$$
 where $\Lambda: A\rtimes_{\alpha}G\to A\rtimes_{\alpha,r}G$ denotes the regular representation.
 The only difference is that we then use Kasparov's descent $J_G^{full}: KK^G(C_0(X),A)\to KK(C_0(X)\rtimes G, A\rtimes G)$ 
 for the full crossed products. Note that, since $G$ acts properly (and hence amenably) on $X$, we have 
 $C_0(X)\rtimes G\cong C_0(X)\rtimes_rG$ (see Remark \ref{rem-Green-proper} below), 
 so that we can use the same product of the fundamental class $[p_c]\in K_*(C_0(X)\rtimes G)$
 as for the reduced assembly map. But it is well known that the full analogue of the conjecture must fail for 
 all lattices $\Gamma$ in any almost connected  Lie group $G$ which has Kazhdan's property (T). 
 But for a large class of groups (including the  $K$-amenable groups of Cuntz and Julg-Valette \cites{Cuntz-K-amenable, JV}), 
 the regular representation induces an isomorphism in $K$-theory for the full and the reduced  crossed products, 
 and then the full assembly map  coincides up to this isomorphism with the reduced one.
 \end{remark}

\begin{example}[{\bf The Green-Julg Theorem}]\label{ex-Green-Julg}
If $G$ is a compact group with normed Haar measure we have $\EG=\{\pt\}$ and hence $K_*^G(\EG;A)=KK_*^G(\CC, A)$ is the $G$-equivariant 
$K$-theory $K_*^G(A)$ of $A$. The isomorphism  $K_*^G(A)\cong K_*(A\rtimes G)$ is the content of the Green-Julg theorem
(see \cite{Julg}). Let us briefly look at the special form of the assembly map in this situation:
First of all, if $G$ is compact, we may realise $KK^G(\CC,A)$ as the set of homotopy classes of triples $(\E, \gamma, T)$
in which $\E$ is a graded Hilbert $A$-module, $\gamma:G\to \Aut(\E)$ is a compatible action and $T\in \L(\E)$ is a 
$G$-invariant operator such that 
$$T^*-T, T^2-1\in \K(\E).$$
(If $T$ is not $G$-invariant, it may be replaced by the $G$-invariant operator $\tilde{T}=\int_G \Ad\gamma_s(T)\,ds$). 
The cut-off function of the one-point space is simply the function which sends this point to $1$, and the 
projection $p\in C(G)\subseteq C^*(G)$ is the constant function $1_G$. It acts on $\xi\in C(G,\E)\subseteq \E\rtimes G$ via
$(p\xi)(s)=\int_G \gamma_t(\xi(t^{-1}s))\, dt$.
 
Now, given $(\E,\gamma, T)\in KK^G(\CC, A)$
the assembly map sends this class to the class of the Kasparov cycle $(\E\rtimes G, p, \tilde{T})$, with 
$(\tilde{T}\xi)(s)=T(\xi(s))$ for all $s\in G$. Since $T$ is $G$-invariant, a short computation shows that $\tilde{T}$ commutes with $p$,
and hence we can decompose $(\E\rtimes_rG, p, \tilde{T})=(p(\E\rtimes G), \one, \tilde{T})\oplus ((\one-p)(\E\rtimes G), 0, \tilde{T})$ in which the second summand is degenerate. Thus we get
$$\mu([\E,\gamma, T])=[p(\E\rtimes G), \tilde{T}]\in KK(\CC,A\rtimes G).$$
Note that a function $\xi\in C(G,\E)$ lies in $p(\E\rtimes G)$ if and only if $\xi(s)=\gamma_s(\xi(e))$ for all $s\in G$, and it 
is clear that such functions are dense in $p(\E\rtimes G)$. Using this, the module $p(\E\rtimes G)$ can be described alternatively as follows:
We equip $\E$ with the $A\rtimes G$ valued inner product and left action of $A\rtimes G$ given by
$$\lk e_1, e_2\rk_{A\rtimes G}(s)=\lk e_1, \gamma_s(e_e)\rk_A\quad\text{and}\quad e\cdot f=\int_G\gamma_s(e\cdot f(s^{-1}))\,ds$$
for $f\in C(G,A)\subseteq A\rtimes G$. We denote by $\E_{A\rtimes G}$ the completion of $\E$ as a Hilbert $A\rtimes G$-module with this 
action and inner product. It is then easy to check that 
every $G$-invariant operator $S\in \L(\E)$ extends to an operator $S^G\in \L(\E_{A\rtimes G})$.
A short computation shows that the  map
$\Phi: p(\E\rtimes G)\to \E_{A\rtimes G}$ given by  $\Phi(\xi)=\xi(e)$ for $\xi\in C(G,\E)\cap p(\E\rtimes G)$
is  an isomorphism of Hilbert $A\rtimes G$-modules which intertwines $\tilde{T}$ with
$T^G\in \L(\E_{A\rtimes G})$. Using this, we get the following description of the assembly map
$$\mu: KK^G(\CC,A)\to KK(\CC, A\rtimes G);\quad \mu([\E,\gamma, T])=[\E_{A\rtimes G}, T^G].$$
This map has a direct inverse given as follows: Let $L^2(G,A)$ be the Hilbert $A$-module with $A$-valued inner product given 
by $\lk f, g\rk_A=\int_G \alpha_{t^{-1}}(f(t)^*g(t))\,dt$ and right $A$-action given by $(f\cdot a)(s)=f(s)\alpha_s(a)$. Then 
$A\rtimes G$ acts on $L^2(G,A)$ via the regular representation given by convolution.
There is a canonical $\alpha$-compatible action  $\sigma:G\to \Aut(L^2(G,A))$ given by right translation $\sigma_s(f)(t)=f(ts)$. 
It is then not difficult to check that
$$\nu: KK(\CC,A\rtimes G)\to KK^G(\CC, A);\quad  [\F, S]\mapsto [\F\otimes_{A\rtimes G}L^2(G,A), \id\otimes \sigma, S\otimes 1]$$
is an inverse of $\mu$. For a few more details on these computations see \cite{E-appendix}.
\end{example}

\begin{example}\label{ex-discrete}
If $G$ is a discrete torsion free group, then $\EG=EG$, the universal principal $G$-bundle of $G$. 
Since $G$ acts freely and properly on $EG$ it follows from a theorem of Green \cite{Green-trans} 
that $C_0(EG)\rtimes G$ is Morita equivalent to $C_0(G\backslash EG)=C_0(BG)$, where $BG=G\backslash EG$
is the classifying space of $G$. Now, for any discrete group and any $G$-$C^*$-algebra $A$ we have a canonical isomorphism 
$$KK^G(A,\CC)\cong KK(A\rtimes G, \CC)$$
which sends the class of an equivariant $A-\CC$ $KK$-cycle $(\H, \Phi, \gamma, T)$ to the class of the $A\rtimes G- \CC$ 
$KK$-cycle $(\H, \Phi\rtimes \gamma, T)$. Note that in this situation $(\Phi,\gamma)$ is a covariant representation of 
$(A,G,\alpha)$ on the Hilbert space $\H$, and hence sums up to a representation of $A\rtimes G$ by the universal property of the 
full crossed product. Since $G$ is discrete, one checks that condition (ii) in Definition \ref{def-KK-cycle} for 
$(\H, \Phi\rtimes \gamma, T)$ is equivalent to the corresponding condition for $(\H, \Phi, \gamma, T)$.
Thus, if in addition $EG$ is $G$-compact, we get
\begin{align*}
K_*^G(\EG, \CC)\cong KK_*^G(C_0(EG), \CC)&\cong KK_*(C_0(EG)\rtimes G,\CC)\\
&\stackrel{\text{Morita-eq.}}{\cong } KK_*(C(BG),\CC)=K_*(BG).
\end{align*}
Hence in this situation the left hand side of the Baum-Connes conjecture is the topological $K$-homology of the classifying space of 
$G$. If $\EG$ is not $G$-compact, a similar argument gives
$$K_*^G(\EG, \CC)=\lim_{C\subseteq BG} K_*(C),$$
where $C$ runs through the compact subsets of $BG$, which is the $K$-homology of $BG$ with compact supports.
Hence the Baum-Connes conjecture relates the $K$-theory of  the (often quite complicated) $C^*$-algebra  $C_r^*(G)$ to the 
$K$-homology of the classifying space $BG$ of $G$, which can be handled by methods of classical algebraic topology
(but can still be difficult to compute).
\end{example}

We close this section with an exercise:

\begin{exercise}\label{ex-commute}
Suppose that $A$ and $B$ are $G$-$C^*$-algebras and let $x\in KK^G(A,B)$. Then $x$ induces a map 
$$\cdot\otimes_A x: K_*^G(\EG;A)\to K_*^G(\EG;A)$$
given on the level of $KK^G(C_0(X),A)$ for some $G$-compact subset $X\subseteq \EG$ via the map
$$KK^G(C_0(X),A)\to KK^G(C_0(X),B); y\mapsto y\otimes_Ax.$$
On the other hand, we have a map 
$$\cdot\otimes_{A\rtimes_rG} j_G(x): KK(\CC, A\rtimes_rG)\to KK(\CC, B\rtimes_rG)$$
between the $K$-theory groups of the crossed products.

Show that the map $\cdot\otimes_A x: K_*^G(\EG;A)\to K_*^G(\EG;A)$ is well-defined and that the 
diagram
$$
\begin{CD}
K_*^G(\EG;A)   @>\mu_{(G,A)} >> K_*(A\rtimes_r G)\\
@V \cdot\otimes_A x VV  @VV \cdot\otimes_{A\rtimes_rG}J_G(x)V\\
K_*^G(\EG;B)   @>\mu_{(G,B)} >> K_*(B\rtimes_r G)
\end{CD}
$$
commutes. Show that it follows from this that if $A$ and $B$ are $KK^G$-equivalent, then 
$\mu_{(G,A)}$ is an isomorphism if and only if $\mu_{(G,B)}$ is an isomorphism. 
Check that a similar result holds for the full assembly maps $\mu_{(G,A)}^{full}$  and 
$\mu_{(G,B)}^{full}$ of Remark \ref{rem-fullassembly}.
\end{exercise}

\subsection{Proper $G$-algebras and the Dirac dual-Dirac method}\label{sec-Dirac}
As an extension of the Green-Julg theorem one can prove that the Baum-Connes assembly map is always an isomorphism 
if the coefficient algebra $A$ is a proper $G$-$C^*$-algebra in the sense of Kasparov, which we are now going to explain.
Recall that if $X$ is a locally compact space, then a $C^*$-algebra $A$ is called a $C_0(X)$-algebra, if there exists
a non-degenerate $*$-homomorphism 
$$\Phi: C_0(X)\to \mathcal{ZM}(A),$$
the center of the multiplier algebra of $A$. If $A$ is a $C_0(X)$-algebra, then $A$ can be realized as an algebra of $C_0$-sections 
of a (upper semicontinuous) bundle of $C^*$-algebras $\{A_x: x\in X\}$, where each fibre $A_x$ is given by
$A_x=A/I_x$ with $I_x=(C_0(X\smallsetminus\{x\})\cdot A)$, where we write $f\cdot a:=\Phi(f)a$ for $f\in C_0(X)$, $a\in A$. We refer to 
\cite{Wi-crossed}*{Appendix C}  for a detailed discussion of $C_0(X)$-algebras. 

\begin{definition}\label{def-proper-G-algebra}
Suppose that $G$ is a locally compact group and $A$ is a $G$-$C^*$-algebra. Suppose further that $A$ is a $C_0(X)$-algebra 
such that the structure map $\Phi:C_0(X)\to \mathcal{ZM}(A)$ is $G$-equivariant. We then say that $A$ is an
$X\rtimes G$-$C^*$-algebra. If $A$ is an $X\rtimes G$-$C^*$-algebra for some proper $G$-space $X$, then
 $A$ is called a {\em proper $G$-$C^*$-algebra}.
\end{definition}

Note that in the definition of a proper $G$-$C^*$-algebra we may always assume $X$ to be a realisation of $\EG$: Since if $\varphi:X\to \EG$ is a 
$G$-equivariant continuous map, we get a non-degenerate $G$-equivariant $*$-homomorphism
$$\varphi^*: C_0(\EG)\to C_b(X)=\mathcal M(C_0(X)); \varphi^*(f)=f\circ \varphi$$ and then the composition 
$\Phi\circ \varphi^*: C_0(\EG)\to\mathcal{ZM}(A)$ makes $A$ into an $\EG\rtimes G$-algebra. 
Recall from our discussion of proper $G$-spaces that proper actions behave very much like actions 
of compact groups since they are {\em locally induced} from actions of compact subgroups. It is therefore not very surprising 
that an analogue of the Green-Julg theorem should hold also for proper $G$-$C^*$-algebras. 

\begin{theorem}\label{thm-proper-Green-Julg}
Suppose that $A$ is a proper $G$-$C^*$-algebra. Then the Baum-Connes assembly map
$$\mu: K_*^G(\EG,A)\to K_*(A\rtimes_rG)$$
is an isomorphism.
\end{theorem}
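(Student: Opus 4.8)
The plan is to reduce this theorem to the Green--Julg theorem (Example~\ref{ex-Green-Julg}) for compact groups. The two tools that make this possible are Abels' theorem (Example~\ref{ex-proper}(c)), which says that a proper action is locally induced from compact subgroups, and the six-term exact sequences attached to semisplit extensions (Theorem~\ref{thm-sixterm}), which let one run a Mayer--Vietoris induction. I would first make the standard reductions: realise $A$ as an $\EG\rtimes G$-algebra, so $A$ is an $X\rtimes G$-algebra with $X=\EG$ proper. Both sides of $\mu$ are continuous under increasing unions of $G$-invariant open subsets of $X$ --- writing $X=\bigcup_iX_i$ we have $A=\varinjlim_iA|_{X_i}$ as an increasing union of the ideals $A|_{X_i}=C_0(X_i)\cdot A$, and $K_*(-\rtimes_rG)$ respects this by continuity of $K$-theory while $K^G_*(\EG;-)=\varinjlim_ZKK^G_*(C_0(Z),-)$ does as well, compatibly with $\mu$. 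Combining this with Abels' theorem, it suffices to treat the case in which $X$ admits a \emph{finite} cover by $G$-invariant open sets $U_1,\dots,U_n$ with $U_j\cong G\times_{K_j}Y_j$ for compact subgroups $K_j\le G$.

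I would then induct on $n$. For $n=1$ the algebra $A$ is an induced algebra, treated below. For $n>1$ put $U=U_1\cup\dots\cup U_{n-1}$ and $V=U_n$; then $U\cap V$ is a $G$-invariant open subset of $V$, covered by the $n-1$ charts $U_j\cap V$, so the induction hypothesis applies to the proper $G$-algebras $A|_U$, $A|_V$ and $A|_{U\cap V}$. The sequence
$$0\to A|_{U\cap V}\to A|_U\oplus A|_V\to A\to 0,\qquad(a,b)\mapsto a+b,$$
is a $G$-equivariantly semisplit extension of $G$-$C^*$-algebras: a $G$-invariant partition of unity $\{\rho_U,\rho_V\}$ subordinate to $\{U,V\}$ yields the completely positive section $a\mapsto(\rho_U\cdot a,\rho_V\cdot a)$, where $\rho_U\cdot a=\Phi(\rho_U)a$ is completely positive because $\Phi(\rho_U)$ is central in the multiplier algebra. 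By Theorem~\ref{thm-sixterm} this extension produces a six-term exact sequence computing $K^G_*(\EG;-)$, and --- since the actions here are proper, hence amenable, so that full and reduced crossed products coincide (cf.\ Remark~\ref{rem-fullassembly}) --- also one computing $K_*(-\rtimes_rG)$. The assembly map connects the two, naturally as in Exercise~\ref{ex-commute}. Since $\mu$ is an isomorphism for $A|_{U\cap V}$, $A|_U$ and $A|_V$, hence for $A|_U\oplus A|_V$, the five lemma gives that $\mu_{(G,A)}$ is an isomorphism.

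It then remains to handle a single chart, i.e.\ $A=\Ind_K^GA'$ for a compact subgroup $K\le G$ and a $K$-$C^*$-algebra $A'$. On the right-hand side, Green's imprimitivity theorem \cite{Green-trans} gives a Morita equivalence $(\Ind_K^GA')\rtimes_rG\sim_M A'\rtimes_rK=A'\rtimes K$ (full equals reduced since $K$ is compact), so $K_*\bigl((\Ind_K^GA')\rtimes_rG\bigr)\cong K_*(A'\rtimes K)$. On the left-hand side, the restriction of $\EG$ to $K$ is a universal proper $K$-space by Corollary~\ref{cor-subgroup-univ}, and it is $K$-equivariantly contractible since $K$ is compact, so $\underline{EK}=\{\pt\}$ up to $K$-homotopy (Example~\ref{eq-EG}(a)); the change-of-groups isomorphism for induction then identifies $K^G_*(\EG;\Ind_K^GA')$ with $K^K_*(\underline{EK};A')=KK^K_*(\CC,A')=K^K_*(A')$. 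I would then check that, under these identifications, $\mu_{(G,A)}$ becomes exactly the Green--Julg assembly map $\mu_{(K,A')}\colon K^K_*(A')\to K_*(A'\rtimes K)$, which is an isomorphism by Example~\ref{ex-Green-Julg}; hence so is $\mu_{(G,A)}$, and the theorem follows.

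The hard part is this final compatibility: that the assembly map for $\Ind_K^GA'$ corresponds, through Green's imprimitivity bimodule on one side and the induction isomorphism on the other, to the Green--Julg assembly map for $A'$. Making this precise means tracking the fundamental projection $[p_c]$ --- built from a cut-off function on $G\times_KY$ --- together with Kasparov's descent $J_G$ across Green's bimodule, and this is where the bookkeeping is genuinely delicate. By comparison the Mayer--Vietoris step is routine once the semisplitness of the open-cover extensions, the amenability of proper actions, and the naturality of $\mu$ have been recorded.
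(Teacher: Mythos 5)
Your proposal is correct and follows essentially the same route as the paper's own (sketched) proof: localisation via Abels' theorem to induced algebras $\Ind_K^GA'$ for compact $K$, where the statement reduces to Green--Julg through the induction isomorphism of Theorem~\ref{thm-ind2} and Green's imprimitivity theorem (this is exactly Corollary~\ref{cor-ind-assembly} applied to a compact subgroup), followed by a Mayer--Vietoris induction on the number of charts using the semisplit six-term sequences and a passage to inductive limits over $G$-compact open subsets. You also correctly isolate the genuinely delicate point, namely the compatibility of the assembly maps with induction and the imprimitivity bimodule, which the paper delegates to \cite{CE2}*{Proposition 2.3}.
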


However, the proof of this result is 
much harder than the proof of the Green-Julg theorem shown in the previous section. In what follows we 
want to indicate at least some ideas towards this result. On the way we discuss some useful results about induced dynamical systems 
and their applications to the Baum-Connes conjecture. 

In what follows suppose that $H$ is a closed subgroup of the locally compact group $G$
and that $\beta:H\to \Aut(B)$ is an action of $H$ on the $C^*$-algebra $B$. Recall from  \cite{CroPro}*{Section 2.6}  that the {\em induced $C^*$-algebra}
$\Ind_H^GB$ is defined as the algebra 
$$\Ind_H^GB:=\left\{f\in C_b(G,B): \begin{matrix} f(sh)=\beta_{h^{-1}}(f(s))\;\text{for all $s\in G$ and $h\in H$}\\ \text{and}\; (sH\mapsto\| f(s)\|)\in C_0(G/H)
\end{matrix}\right\}.$$
This is a $C^*$-subalgebra of $C_b(G,B)$ which carries an action $\Ind\beta:G\to \Aut(\Ind_H^GB))$ given by 
$$\big(\Ind\beta_s(f)\big)(t)=f(s^{-1}t).$$
If $Y$ is a locally compact $H$-space, then $\Ind_H^GC_0(Y)\cong C_0(G\times_HY)$ as $G$-algebras.
Hence the above procedure extends the procedure of inducing $G$-spaces as discussed in Example \ref{ex-proper} above.
The following result is quite useful when working with induced algebras. For the formulation recall that for any 
$G$-$C^*$-algebra $A$ we have a continuous action of $G$ on the primitive ideal space $\Prim(A)$ given by $(s, P)\mapsto \alpha_s(P)$.

\begin{theorem}\label{thm-induced}
Suppose that $A$ is a $G$-$C^*$-algebra and let $H$ be a closed subgroup of $G$. Then the following are equivalent:
\begin{enumerate}
\item There exists an $H$-algebra $B$ such that $A\cong \Ind_H^GB$ as $G$-algebras.
\item $A$ carries the structure of a  $G/H\rtimes G$-$C^*$-algebra.
\item There exist a continuous $G$-equivariant map $\phi: \Prim(A)\to G/H$.
\end{enumerate}
\end{theorem}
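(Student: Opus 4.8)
The plan is to take condition $(ii)$ as the hub and prove $(i)\Leftrightarrow(ii)$ and $(ii)\Leftrightarrow(iii)$; all three conditions say that $A$ is assembled from the same local data parametrised by $G/H$, so the task is to translate between the descriptions. For $(i)\Rightarrow(ii)$ I would put the $C_0(G/H)$-structure on $\Ind_H^GB$ by hand: for $\psi\in C_0(G/H)$ let $\Phi(\psi)$ act on $f\in\Ind_H^GB$ by $(\Phi(\psi)f)(s)=\psi(sH)f(s)$. Since $\psi$ is constant on right $H$-cosets this respects the covariance condition $f(sh)=\beta_{h^{-1}}(f(s))$; $\Phi(\psi)$ is manifestly a central multiplier; non-degeneracy is immediate from the requirement $(sH\mapsto\|f(s)\|)\in C_0(G/H)$ built into the definition of $\Ind_H^GB$; and $G$-equivariance for the left-translation action on $C_0(G/H)$ is a one-line check against $(\Ind\beta_tf)(s)=f(t^{-1}s)$.

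For the converse $(ii)\Rightarrow(i)$ I would take $B:=A_{eH}=A/I_{eH}$, the fibre of $A$ over the base point $eH\in G/H$. Because the $G$-stabiliser of $eH$ is exactly $H$, each $\alpha_h$ with $h\in H$ sends $I_{eH}$ to $I_{h\cdot eH}=I_{eH}$ and hence descends to an action $\beta:H\to\Aut(B)$, which is strongly continuous since $\|\beta_h(a+I_{eH})-\beta_{h'}(a+I_{eH})\|\le\|\alpha_h(a)-\alpha_{h'}(a)\|$. The candidate isomorphism is
$$\Psi:A\to\Ind_H^GB,\qquad \Psi(a)(s)=\alpha_{s^{-1}}(a)+I_{eH}\in A_{eH}=B,$$
and one must check that $\Psi(a)$ satisfies the covariance condition, that $sH\mapsto\|\Psi(a)(s)\|=\|a+I_{sH}\|$ lies in $C_0(G/H)$ (here strong continuity of $\alpha$ together with transitivity of $G$ on $G/H$ forces continuity of this norm function, not merely upper semicontinuity), that $\Psi$ is a $G$-equivariant $*$-homomorphism, and --- the decisive point --- that $\Psi$ is $C_0(G/H)$-linear, using $\alpha_{s^{-1}}(\Phi(\psi)a)=\Phi(s^{-1}\psi)\alpha_{s^{-1}}(a)$ and that $\Phi(s^{-1}\psi)$ acts on $A_{eH}$ as the scalar $\psi(sH)$. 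Since both sides are $C_0(G/H)$-algebras and $\Psi$ restricts on the fibre over $sH$ to the isomorphism $\alpha_{s^{-1}}:A_{sH}\to A_{eH}=B=(\Ind_H^GB)_{sH}$, and a morphism of $C_0(G/H)$-algebras which is an isomorphism on every fibre is an isomorphism, $\Psi$ is the required $G$-isomorphism $A\cong\Ind_H^GB$.

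For $(ii)\Leftrightarrow(iii)$ I would invoke the Dauns--Hofmann isomorphism $\mathcal{ZM}(A)\cong C_b(\Prim(A))$ (see \cite{Wi-crossed}), the standard device relating $C_0(X)$-structures to continuous maps $\Prim(A)\to X$. Given $(iii)$, a continuous $G$-equivariant $\phi:\Prim(A)\to G/H$, composition with $\phi$ yields a $*$-homomorphism $C_0(G/H)\to C_b(\Prim(A))\cong\mathcal{ZM}(A)$; it is non-degenerate because the ideal it generates in $A$ lies in no primitive ideal $P$ (that would force $\psi(\phi(P))=0$ for all $\psi$), and it is $G$-equivariant since the $G$-action on $\mathcal{ZM}(A)$ corresponds under Dauns--Hofmann to the given action on $\Prim(A)$ and $\phi$ is equivariant. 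Conversely, given $(ii)$, Dauns--Hofmann says each $\Phi(\psi)\in\mathcal{ZM}(A)$ acts on the primitive quotient $A/P$ as a scalar $\Phi(\psi)(P)$ depending continuously on $P$; for fixed $P$ the map $\psi\mapsto\Phi(\psi)(P)$ is a character of $C_0(G/H)$, non-zero by non-degeneracy of $\Phi$, hence evaluation at a well-defined point $\phi(P)\in G/H$, and $P\mapsto\phi(P)$ is continuous and, by equivariance of $\Phi$, $G$-equivariant.

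I expect the only genuinely non-formal part to be $(ii)\Rightarrow(i)$: one has to verify that $\Psi$ has image equal to \emph{all} of $\Ind_H^GB$ (a priori only a $C_0(G/H)$-subalgebra) and that the fibrewise norm function is actually continuous, both of which rest on the upper-semicontinuous-bundle picture of $C_0(G/H)$-algebras together with the transitivity of the $G$-action on $G/H$. Everything else, in particular $(ii)\Leftrightarrow(iii)$, is bookkeeping once the Dauns--Hofmann correspondence is in hand.
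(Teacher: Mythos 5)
Your proposal is correct, but it is organised differently from the paper's. The paper routes everything through condition (iii): the equivalence (i)$\Leftrightarrow$(iii) is quoted from an external reference (\cite{CroPro}*{Theorem 2.6.2}), and (ii)$\Leftrightarrow$(iii) is obtained from the Dauns--Hofmann correspondence between continuous maps $\Prim(A)\to X$ and non-degenerate central structure maps $C_0(X)\to\mathcal{ZM}(A)$ --- exactly as in your last paragraph. The formulas you use for the direct equivalence (i)$\Leftrightarrow$(ii), namely $(\Phi(\psi)f)(s)=\psi(sH)f(s)$ and $\Psi(a)(s)=q(\alpha_{s^{-1}}(a))$ with $B=A_{eH}$, are precisely the ones the paper records informally \emph{after} its proof as the translation between (i) and (ii); what you add is a genuine verification that $\Psi$ is an isomorphism, via the observation that it is a $C_0(G/H)$-linear morphism which is bijective on each fibre. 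That buys a self-contained argument in place of the citation, at the cost of leaning on two standard facts from the upper-semicontinuous-bundle picture (that the fibrewise norm function of an element vanishes at infinity when the structure map is non-degenerate, and that a closed $C_0(X)$-submodule with full fibres is everything); both are in the appendix of \cite{Wi-crossed} that the paper already invokes, and your homogeneity argument correctly upgrades upper semicontinuity of $sH\mapsto\|a+I_{sH}\|$ to continuity. So the two proofs cover the same ground; yours is the more explicit, the paper's the more economical.
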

\begin{proof} (i) $\Leftrightarrow$ (iii) is  \cite{CroPro}*{Theorem 2.6.2}. 
The proof of (iii) $\Leftrightarrow$ (ii) follows from 
the general correspondence between continuous maps $\phi: \Prim(A)\to X$ and non-degenerate $*$-homomorphisms 
$\Phi:C_0(X)\to \mathcal{ZM}(A)$ given by the Dauns-Hofmann Theorem. We refer to \cite{Wi-crossed}*{Appendix C} for a discussion 
of this correspondence. 
\end{proof}

Let us briefly indicate how the objects in (i) and (ii) of the above theorem are related to each other.
If $A=\Ind_H^GB$, then the $G$-equivariant $*$-homomorphism $\Phi:C_0(G/H)\to \mathcal{ZM}(\Ind_H^GB)$ is simply given by
$$\big(\Phi(g)f\big)(s)=g(sH)f(s), \quad g\in C_0(G/H), f\in \Ind_H^GB.$$
Conversely, if $\Phi: C_0(G/H)\to\mathcal{ZM}(A)$ is given, let 
$$B:=A_{eH}=A/(C_0(G/H\smallsetminus \{eH\})\cdot A)$$
 be the fibre of $A$ over the coset $eH$. 
Since $\Phi:C_0(G/H)\to\mathcal{ZM}(A)$ is $G$-equivariant, it follows that the ideal $C_0(G/H\smallsetminus \{eH\})\cdot A$ is $H$-invariant for the restriction of $\alpha$ to $H$. Thus $\alpha|_H$ induces an action 
$\beta:H\to \Aut(A_{eH})=\Aut(B)$. The $G$-isomorphism $\Psi:A\to\Ind_H^GB$ is then given by
$$\Psi(a)(s)=q(\alpha_{s^{-1}}(a)),$$
where $q:A\to A_{eH}$ denotes the quotient map. 

The induction of $H$-algebras  to $G$-algebras extends to an induction map  
$$\Ind_H^G: KK^H(B,C)\to KK^G(\Ind_H^GB, \Ind_H^GC)$$
given as follows: If $[\E,\Phi,\gamma, T]\in KK^H(B,C)$, then we define the induced Hilbert $\Ind_H^GC$-module $\Ind_H^G\E$
as 
$$\Ind_H^G\E=\left\{\xi\in C_b(G,\E): \begin{matrix}\xi(sh)=\gamma_{h^{-1}}(\xi(s))\;\text{for all $s\in G$ and $h\in H$}\\ \text{and}\; (sH\mapsto\| f(s)\|)\in C_0(G/H)
\end{matrix}\right\}$$
with $\Ind_H^GC$-valued inner product and left $\Ind_H^GC$-action given as follows:
$$\lk \xi,\eta\rk_{\Ind_H^GC}(s)=\lk \xi(s),\eta(s)\rk_C \quad \text{and}\quad (\xi\cdot f)(s)=\xi(s)\cdot f(s).$$
Similarly, if $\Phi:B\to \L(\E)$ is a left action of $B$ on $\E$, then we get an action $\Ind\Phi:\Ind_H^GB\to \L(\Ind_H^G\E)$ 
by 
$$(\Ind\Phi(g)\xi)(s)=\Phi(g(s))\xi(s).$$
Finally, we define the operator $\tilde{T}\in \L(\Ind_H^G\E)$ via $(\tilde{T}\xi)(s)=T(\xi(s))$. It is not difficult to check that 
$(\ind_H^G\E, \Ind\Phi, \Ind\beta, \tilde{T})$ is a $G$-equivariant $\Ind_H^GB-\Ind_H^GC$ Kasparov cycle and Kasparov's induction 
map in $KK$-theory is then defined as 
$$\Ind_H^G([\E,\Phi,\gamma, T])=[\ind_H^G\E, \Ind\Phi, \Ind\beta, \tilde{T}]\in KK^G(\Ind_H^GB,\Ind_H^GC).$$
We want to use this map to define an induction map 
$$\Ind_H^G: K_*^H(\EH; B)\to K_*^G(\EG ; \Ind_H^GB)$$
for every $H$-algebra $B$. For this suppose that $Y\subseteq \EH$ is an $H$-compact subset.
Then the induced $G$-space $G\times_HY$ is proper and  $G$-compact  and therefore maps equivariantly 
into $\EG$ via some continuous map $j: G\times_HY\to \EG$ whose image is a $G$-compact subset $X(Y)\subseteq \EG$. 
One can check that the composition of maps 
$$KK^G(C_0(Y), B)\stackrel{\Ind_H^G}{\longrightarrow} KK^G(C_0(G\times_HY), \Ind_H^GB)  \stackrel{j_*}{\longrightarrow} KK^G(C_0(X(Y)), \Ind_H^GB)$$
is compatible with taking limits and therefore induces a well-defined induction map
$$I_H^G: K^H_0(\EH;B)\to K^G_0(\EG;\Ind_H^GB).$$
Replacing $B$ by $B\otimes \Cl_1$ (or $B\otimes C_0(\RR)$) gives an analogous map from 
$ K^H_1(\EH;B)$ to  $K^G_1(\EG;\Ind_H^GB)$. We then have the following theorem, which has been shown in \cite{CE2}*{Theorem 2.2} (for  
 $G$ discrete and  $H$ finite the result has first been obtained  earlier in \cite{GHT}):

\begin{theorem}\label{thm-ind2}
Suppose that $H$ is a closed subgroup of $G$. Then the induction map 
$$I_H^G: K^H_*(\EH;B)\to K^G_*(\EG;\Ind_H^GB)$$
is an isomorphism of abelian groups for every $H$-$C^*$-algebra $B$.
\end{theorem}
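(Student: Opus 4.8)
The plan is to compare the two functors of the coefficient algebra, $B\mapsto K_*^H(\EH;B)$ and $B\mapsto K_*^G(\EG;\Ind_H^GB)$, which are linked by the natural transformation $I_H^G$, by first establishing good formal properties and then reducing to a case where Theorem~\ref{thm-proper-Green-Julg} applies. First I would record that both functors are homotopy invariant, are compatible with countable inductive limits, and send $H$-equivariantly semisplit extensions $0\to I\to A\to A/I\to0$ to six-term exact sequences. For the second functor this follows from Theorem~\ref{thm-sixterm} once one checks that $\Ind_H^G$ carries an $H$-semisplit extension to a $G$-semisplit one: applying the given $H$-equivariant completely positive section $A/I\to A$ pointwise produces a map $\Ind_H^G(A/I)\to\Ind_H^GA$ which one verifies lands in the induced algebra and is $G$-equivariant. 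An inspection of the formula for $\Ind_H^G$ on Kasparov cycles then shows that $I_H^G$ is a morphism of these exact sequences and of the inductive systems defining the topological $K$-groups, i.e. it behaves like a morphism of homology theories in $B$.

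Secondly I would reduce to \emph{proper} coefficient algebras. Using $\EH=\EG|_H$ (Corollary~\ref{cor-subgroup-univ}) one has $K_*^H(\EH;B)=\lim_Y KK_*^H(C_0(Y),B)$ over the $H$-compact subsets $Y\subseteq\EH$; for each such $Y$ the space $G\times_HY$ is a $G$-compact proper $G$-space mapping canonically to $\EG$, with $\Ind_H^GC_0(Y)\cong C_0(G\times_HY)$, and these spaces are cofinal among the $G$-compact proper $G$-spaces over $\EG$, so also $K_*^G(\EG;\Ind_H^GB)=\lim_Y KK_*^G(\Ind_H^GC_0(Y),\Ind_H^GB)$, with $I_H^G$ the colimit of the induction maps $\Ind_H^G$. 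By Abels' theorem (Example~\ref{ex-proper}) an $H$-compact proper $H$-space admits a finite cover by $H$-invariant open sets of the form $H\times_LZ$ with $L\le H$ compact, whose intersections are again of this form; feeding the associated ($H$-semisplit) restriction extensions into the six-term sequences of the first step and inducting on the size of the cover via the five lemma reduces the problem to $Y=H\times_LZ$. Here $C_0(H\times_LZ)=\Ind_L^HC_0(Z)$ and $\Ind_H^GC_0(H\times_LZ)=\Ind_L^GC_0(Z)$ are proper algebras, and via a Green--Julg argument for the compact group $L$ one is reduced, equivalently, to showing that $I_H^G$ is an isomorphism whenever the coefficient algebra $B$ is proper.

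Finally, for $B$ a proper $H$-$C^*$-algebra, $\Ind_H^GB$ is a proper $G$-$C^*$-algebra, so Theorem~\ref{thm-proper-Green-Julg} makes the assembly maps $\mu_{(H,B)}\colon K_*^H(\EH;B)\to K_*(B\rtimes_rH)$ and $\mu_{(G,\Ind_H^GB)}\colon K_*^G(\EG;\Ind_H^GB)\to K_*(\Ind_H^GB\rtimes_rG)$ into isomorphisms, while Green's imprimitivity theorem gives a Morita equivalence $\Ind_H^GB\rtimes_rG\sim_M B\rtimes_rH$, hence $K_*(\Ind_H^GB\rtimes_rG)\cong K_*(B\rtimes_rH)$. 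It then remains to check that $I_H^G$ is compatible with these identifications, i.e. that the square
$$
\begin{CD}
K_*^H(\EH;B) @>\mu_{(H,B)}>> K_*(B\rtimes_rH)\\
@VI_H^GVV @VV\cong V\\
K_*^G(\EG;\Ind_H^GB) @>\mu_{(G,\Ind_H^GB)}>> K_*(\Ind_H^GB\rtimes_rG)
\end{CD}
$$
commutes, the right-hand arrow being induced by Green's imprimitivity bimodule. This I expect to be the main obstacle: it amounts to pushing the induction construction on Kasparov cycles through Kasparov's descent $J_G$ and the Kasparov product with the fundamental class $[p_c]$, and then matching the outcome with Green's bimodule and the $H$-equivariant descent. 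Granting this compatibility, commutativity of the square together with the two isomorphisms from Theorem~\ref{thm-proper-Green-Julg} forces $I_H^G$ to be an isomorphism for proper $B$, and by the reductions above it is then an isomorphism for every $H$-$C^*$-algebra $B$.
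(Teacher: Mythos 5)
Note first that the paper itself does not prove this theorem: it is quoted from \cite{CE2}*{Theorem 2.2}, so your proposal has to be measured against the logic of the surrounding text and the proof given there. Your first reduction is sound and does match the actual strategy: induced spaces $G\times_HY$ are cofinal among the $G$-compact subsets of $\EG$, so both sides become inductive limits over the $H$-compact $Y\subseteq\EH$, and an Abels cover together with the six-term sequences of Theorem \ref{thm-sixterm} (in the \emph{first} variable) and the five lemma reduces everything to $Y=H\times_LZ$ with $L$ compact. But your conclusion from this reduction is off target. What it actually leaves you with is the need for a compression/adjunction isomorphism $KK^H_*(C_0(H\times_LZ),B)\cong KK^L_*(C_0(Z),B)$, together with its $G$-analogue and their compatibility with $\Ind_H^G$ --- i.e.\ the closed-subgroup generalisation of Lemma \ref{lem-compression}, which is the technical heart of \cite{CE2}. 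It does not leave you with ``$B$ is proper'': your Mayer--Vietoris decomposition takes place entirely in the space variable $C_0(Y)$, while the coefficient algebra $B$ remains an arbitrary $H$-algebra throughout, so the claimed ``reduction to proper coefficient algebras'' does not follow.

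The more serious problem is that your final step is circular within this paper's architecture. Theorem \ref{thm-proper-Green-Julg} is proved by covering the proper $G$-space by induced pieces and invoking Corollary \ref{cor-ind-assembly}, and that corollary is itself deduced from Theorem \ref{thm-ind2} together with the very square you propose to use (commutativity being \cite{CE2}*{Proposition 2.3}). The author flags this explicitly in the remark following Theorem \ref{thm-gamma}: the proof of Theorem \ref{thm-ind2} in \cite{CE2} must avoid BC for proper coefficients, and instead uses Kasparov's $\gamma$-element for almost connected groups to obtain the injectivity statements it needs. So you may not invoke Theorem \ref{thm-proper-Green-Julg}, nor Corollary \ref{cor-ind-assembly}, nor the commutativity of the assembly/imprimitivity square as known inputs here. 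To make your outline into a proof you would need either to establish the compression isomorphisms for compact $L\leq H\leq G$ directly and finish without mentioning assembly maps at all, or to supply an independent proof of the Baum--Connes isomorphism for proper coefficients.
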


Now Green's imprimitivity theorem (see \cite{CroPro}*{Theorem 2.6.4}) says that the full (resp. reduced) crossed products 
$B\rtimes_{\beta, (r)}G$ and $\Ind_H^GB\rtimes_{\Ind\beta, (r)}G$ are Morita equivalent via a canonical $B\rtimes_{(r)} H-\Ind_H^GB\rtimes_{(r)} G$ equivalence bimodule $X_H^G(B)_{(r)}$. Since Morita equivalences provide 
$KK$-equivalences, we obtain the following diagram of maps
$$
\begin{CD}
K^H_*(\EH;B)   @>\mu_H >>  K_*(B\rtimes_rH)\\
@V I_H^G VV      @VV \otimes [X_H^G(B)_r] V\\
K_*^G(\EG, \Ind_H^GB)   @>>\mu_G > K_*(\Ind_H^GB\rtimes_rG)
\end{CD}
$$
in which both vertical arrows are isomorphisms. It is shown in \cite{CE2}*{Proposition 2.3} that this diagram commutes. As a corollary 
we get

\begin{corollary}\label{cor-ind-assembly}
Suppose that $H$ is a closed subgroup of $G$ and $B$ is an $H$-algebra. Then the assembly map
$$\mu_H : K^H_*(\EH;B)  \to  K_*(B\rtimes_rH)$$
is an isomorphism if and only if 
$$\mu_G:K_*^G(\EG, \Ind_H^GB) \to  K_*(\Ind_H^GB\rtimes_rG)$$
is an isomorphism. 
In particular, if $G$ satisfies BCC, then so does $H$. A similar result holds for the assembly maps into the 
$K$-theories of the full crossed products $B\rtimes H$ and $\Ind_H^GB\rtimes G$, respectively (see Remark \ref{rem-fullassembly}).
\end{corollary}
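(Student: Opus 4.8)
The plan is to read the statement off the commutative square displayed immediately above the corollary, whose top edge is $\mu_H\colon K^H_*(\EH;B)\to K_*(B\rtimes_rH)$, whose bottom edge is $\mu_G$ for the coefficient algebra $\Ind_H^GB$, whose left edge is the induction map $I_H^G$, and whose right edge is $\cdot\otimes[X_H^G(B)_r]$, the map induced by Green's imprimitivity bimodule. First I would observe that the right vertical map is an isomorphism: by Green's imprimitivity theorem $B\rtimes_rH$ and $\Ind_H^GB\rtimes_rG$ are Morita equivalent, so by Lemma \ref{lem-Morita-KK} the class $[X_H^G(B)_r]$ is a $KK$-equivalence, and taking the Kasparov product with a $KK$-equivalence is an isomorphism (as explained after Definition \ref{def-KK-equivalence}). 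The left vertical map $I_H^G$ is an isomorphism by Theorem \ref{thm-ind2}. Together with commutativity of the square (which is \cite{CE2}*{Proposition 2.3}), this shows $\mu_H$ and $\mu_G$ are conjugate by isomorphisms on both sides, hence one is bijective if and only if the other is. This proves the first assertion.

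For the ``in particular'' clause: if $G$ satisfies BCC, then $\mu_G$ is an isomorphism for every $G$-$C^*$-algebra, in particular for $A=\Ind_H^GB$ with $B$ an arbitrary $H$-$C^*$-algebra; by the equivalence just established, $\mu_H$ is then an isomorphism for every $H$-$C^*$-algebra $B$, i.e.\ $H$ satisfies BCC.

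For the full-crossed-product version one repeats the argument verbatim with the reduced crossed products replaced by full ones and the reduced assembly maps replaced by the maps $\mu_H^{full}$, $\mu_G^{full}$ of Remark \ref{rem-fullassembly}. The induction map $I_H^G$ on topological $K$-theory is unchanged, since it is built from $G$-spaces and from $\Ind_H^G$ of Kasparov cycles and does not involve a choice of crossed-product completion; Green's imprimitivity theorem equally provides a $B\rtimes H$--$\Ind_H^GB\rtimes G$ equivalence bimodule $X_H^G(B)$, hence a $KK$-equivalence; and the analogue of the commuting square for the full assembly maps holds by the same proof as \cite{CE2}*{Proposition 2.3}. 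The two-out-of-three argument then applies unchanged.

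The only substantial ingredient is the commutativity of the square, i.e.\ \cite{CE2}*{Proposition 2.3}: one must check, working with cut-off functions and the fundamental projections $p_c$, that the composite ``induce the $G$-compact part, apply $J_G$, pair with $[p_c]$'' agrees with ``apply $J_H$, pair with the $H$-cut-off, then transport along the imprimitivity bimodule'' --- concretely, that $I_H^G$ sends the $H$-fundamental class to the image under $[X_H^G(B)_r]$ of the $G$-fundamental class of the induced space. Were one proving this from scratch rather than citing it, this matching of fundamental classes would be the main obstacle; here it is supplied by the reference.
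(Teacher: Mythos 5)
Your proof is correct and is essentially the paper's own argument: the corollary is read off the commutative square preceding it, whose vertical arrows are isomorphisms by Theorem \ref{thm-ind2} and by Green's imprimitivity theorem (a Morita equivalence, hence a $KK$-equivalence), with commutativity supplied by \cite{CE2}*{Proposition 2.3}. Your additional remarks on the ``in particular'' clause and the full crossed-product variant are exactly the intended (and routine) extensions.
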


We now come back to general proper $G$-algebras $A$. So suppose that $X$ is a proper $G$-space 
and that $A$ is an $X\rtimes G$-$C^*$-algebra. Since every proper $G$-space is locally induced from a 
compact subgroup, we find for each $x\in X$ an open $G$-invariant neighbourhood $U\subseteq X$ 
such that $U\cong G\times_KY$ for some compact subgroup $K$ of $G$ and some $K$-space $Y$.
Then $C_0(G\times_KY)\cong \Ind_H^GC_0(Y)$ is a $G/K\rtimes G$-algebra by Theorem \ref{thm-induced}.
Let $A(U):=\Phi(C_0(U))A\subseteq A$. Then $A(U)$ is a $G$-invariant ideal of $A$
and carries the structure of a $U\rtimes G$-algebra in the canonical way.
The composition 
$$C_0(G/H)\to C_b(G\times_KY)\cong C_b(U)\stackrel{\Phi}{\longrightarrow} \mathcal{ZM}(A(U))$$
then gives $A(U)$ the structure of a $G/K\rtimes G$-algebra. Thus it follows from Theorem \ref{thm-induced}
that $A(U)\cong \Ind_K^GB$ for some $K$-algebra $B$. By the Green-Julg theorem we know that $K$ 
satisfies BCC and hence it follows from Corollary \ref{cor-ind-assembly} that the assembly map
$$\mu_U: K_*^G(\EG, A(U))\to K_*(A(U)\rtimes_r G)$$
is an isomorphism. Thus we see that for proper $G$-algebras the Baum-Connes conjecture holds {\em locally}.
Now every $G$-invariant open subset $W\subseteq X$ with $G$-compact closure $\overline{W}$ can be covered 
by a finite union of open sets $U_1,\ldots, U_l$ such that each $U_i$ is isomorphic to some induced space $G\times_{K_i}Y_i$
for some compact subgroup $K_i\subseteq G$. Using six-term sequences and induction on the number $l$ of open sets in this 
covering, we then conclude that
$$\mu_W: K_*^G(\EG, A(W))\to K_*(A(W)\rtimes_r G)$$
for all such $W$. Now, taking inductive limits indexed by $W$, one can show that the assembly map
$$\mu: K_*^G(\EG, A)\to K_*(A\rtimes_r G)$$
is an isomorphism as well. This then finishes the proof of Theorem \ref{thm-proper-Green-Julg}. (We refer to \cite{CEM} for the 
original proof and further details.)

\begin{remark}\label{rem-Green-proper} An application of Green's imprimitivity theorem also implies 
 that for any proper $G$-$C^*$-algebra $A$ the 
full and reduced crossed products coincide. To see this, let $A$ be an $X\rtimes G$-algebra for some proper $G$-space $X$.
Let  $\{U_i:i\in I\}$ be an open cover of $X$ consisting of $G$-invariant open sets such that each of these sets is induced by some 
compact subgroup $K$ of $G$. Suppose now that $\pi\rtimes U: A\rtimes G\to \B(\H)$ is any irreducible representation of 
$A\rtimes G$. We claim that there exists at least one $i\in I$ such that $\pi$ does not vanish on the ideal $A(U_i)$ of $A$, 
and hence $\pi\rtimes U$ does not vanish on the ideal $A(U_i)\rtimes G$ of the crossed product. Indeed, since $\{U_i:i\in I\}$ is 
a covering  of $X$ it is an easy exercise, using a partition of unity argument,
 to show that $\sum_{i\in I}A(U_i)$ is a dense ideal in $A$. The claim then follows since $\pi\neq 0$. 

It now suffices to show that $A(U_i)\rtimes G=A(U_i)\rtimes_rG\subseteq A\rtimes_rG$, since this implies that every irreducible 
representation of the ideal $A(U_i)\rtimes G$ corresponds to an irreducible representation of $A\rtimes_rG$.  To see this
recall that $A(U_i)\cong \Ind_K^GB$ for some compact subgroup $K$ of $G$ and some $K$-algebra $B$. 
Since $K$ is compact, hence amenable, we have $B\rtimes K= B\rtimes_r K$. 
 Since the $B\rtimes_rK - \Ind_K^GB\rtimes_rG$-equivalence bimodule $X_H^G(B)_r$ is 
the quotient of the $B\rtimes K - \Ind_K^GB\rtimes G$-equivalence bimodule $X_K^G(B)$ by the submodule 
$(\ker\Lambda_{(B,K)})\cdot X_K^G(B)=\{0\}$, it follows from the Rieffel-correspondence (\cite{CroPro}*{Proposition 2.5.4})
that $A(U_i)\rtimes G\cong \Ind_K^GB\rtimes G=\Ind_K^GB\rtimes_rG\cong A(U_i)\rtimes_rG$, which finishes the proof.
\end{remark}

We are now coming to Kasparov's Dirac-dual Dirac method for proving the Baum-Connes conjecture. 
As we shall discuss below, this has been the most successful method so far for proving that the conjecture 
holds for certain classes of groups. Since, as we saw above, the Baum-Connes conjecture always holds
for proper $G$-$C^*$-algebras as coefficients, the basic idea is to show that for a  given group $G$ 
{\em every} $G$-$C^*$-algebra  $B$ is $KK^G$-equivalent to a proper $G$-$C^*$-algebra. Since by Exercise \ref{ex-commute} the 
validity of the Baum-Connes conjecture is invariant under passing to $KK^G$-equivalent coefficient 
algebras, this would result in a proof that 
the group $G$ satisfies BCC, i.e., Baum-Connes for all coefficients. 
Indeed, we need less:

\begin{definition}\label{def-strongBC} 
Suppose that $G$ is a second countable locally compact group and assume that there is a proper $G$-$C^*$-algebra $\mathcal D$
together with elements 
$$\alpha\in KK^G_0(\mathcal D, \CC) \quad\text{and}\quad \beta\in KK^G(\CC, \mathcal D)$$
such that
$$\gamma:=\beta\otimes_{\mathcal D}\alpha=1_\CC\in KK^G(\CC,\CC).$$
We then say that  {\em $G$ has $\gamma$-element equal to one}.
If, in addition $\alpha\otimes_\CC \beta=1_{\mathcal D}$, then we say that $G$ satisfies the 
{\em strong Baum-Connes conjecture}.\end{definition}

Note that in almost all cases
where  $G$ has  $\gamma$-element equal to one  there is also a proof of strong BC.

If $G$ has $\gamma$-element equal to one and  $B$ is any other  $G$-$C^*$-algebra, it follows that
$$(\beta\hat\otimes 1_B)\otimes_{\mathcal D\hat\otimes B}(\alpha\hat\otimes 1_B)=\gamma\hat\otimes 1_B=1_B\in KK^G(B,B)$$
and similarly, since the descent $KK^G(A,B)\to KK(A\rtimes_rG, B\rtimes_rG)$ is compatible with Kasparov products, we get
$$J_G(\beta\hat\otimes 1_B)\otimes_{(\mathcal D\hat\otimes B)\rtimes_rG}J_G(\alpha\hat\otimes 1_B)=J_G(1_B)=1_{B\rtimes_rG}\in KK(B\rtimes_rG,B\rtimes_rG).$$

Moreover, it follows from Exercise \ref{ex-commute} that the following diagram commutes
\begin{equation}\label{eq-DD}
\begin{CD}
K_*^G(\EG, B) @>\mu_{(G,B)} >> K_*(B\rtimes_rG)\\
@V\cdot \otimes (\beta\otimes 1_B) VV      @VV \cdot\otimes J_G(\beta\otimes 1_B) V\\
K_*^G(\EG, \mathcal D\hat\otimes B) @>\mu_{(G,\mathcal D\hat\otimes B)} >\cong > K_*((\mathcal D\hat\otimes B)\rtimes_rG)\\
@V\cdot \otimes (\alpha\otimes 1_B) VV      @VV \cdot\otimes J_G(\alpha\otimes 1_B) V\\
K_*^G(\EG, B) @>\mu_{(G,B)} >>  K_*(B\rtimes_rG).
\end{CD}
\end{equation}
Since $\mathcal D\hat\otimes B$ is a proper $G$-algebra (via the composition of $\Phi: C_0(X)\to \mathcal{ZM}(\mathcal D)$ with  the 
canonical map of $\mathcal M(\mathcal D)$ to $\mathcal M(\mathcal D\hat\otimes B)$), the middle horizontal  map is an isomorphism of abelian groups,
and by the above discussion it follows that the compositions of the vertical maps on either side are isomorphisms as well. It then follows 
by an easy diagram chase that the upper horizontal map is injective and the lower horizontal map is surjective, hence $\mu_{(G,B)}$ is an isomorphism as well. 
Thus we get

\begin{corollary}\label{cor-strong}
If $G$ has $\gamma$-element equal to one, then $G$ satisfies the Baum-Connes conjecture with coefficients (BCC).
\end{corollary}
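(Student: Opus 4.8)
The plan is to deduce this from the commuting diagram~(\ref{eq-DD}) together with Theorem~\ref{thm-proper-Green-Julg}, so that the only work left is an elementary diagram chase. Fix an arbitrary $G$-$C^*$-algebra $B$. The first step is to make precise the three formal facts underlying~(\ref{eq-DD}). Since exterior multiplication $\cdot\hat\otimes 1_B$ is compatible with the Kasparov product and $\gamma=\beta\otimes_{\mathcal D}\alpha=1_\CC$ by hypothesis, we obtain
$$(\beta\hat\otimes 1_B)\otimes_{\mathcal D\hat\otimes B}(\alpha\hat\otimes 1_B)=\gamma\hat\otimes 1_B=1_B\in KK^G(B,B).$$
Applying Kasparov's descent $J_G$, which respects Kasparov products, yields
$$J_G(\beta\hat\otimes 1_B)\otimes_{(\mathcal D\hat\otimes B)\rtimes_rG}J_G(\alpha\hat\otimes 1_B)=J_G(1_B)=1_{B\rtimes_rG}.$$
Finally, Exercise~\ref{ex-commute} provides the commutativity of each square in~(\ref{eq-DD}); combined with the two identities just displayed, this shows that the composition of the two left-hand vertical maps in~(\ref{eq-DD}) is $\cdot\otimes 1_B=\id$ on $K_*^G(\EG;B)$, and the composition of the two right-hand vertical maps is the identity on $K_*(B\rtimes_rG)$.

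The second step is to note that $\mathcal D\hat\otimes B$ is again a proper $G$-$C^*$-algebra: the structure map $C_0(X)\to\mathcal{ZM}(\mathcal D)$ composed with the canonical map $\mathcal M(\mathcal D)\to\mathcal M(\mathcal D\hat\otimes B)$ makes $\mathcal D\hat\otimes B$ an $X\rtimes G$-algebra for the same proper $G$-space $X$. Hence Theorem~\ref{thm-proper-Green-Julg} applies and the middle horizontal arrow $\mu_{(G,\mathcal D\hat\otimes B)}$ in~(\ref{eq-DD}) is an isomorphism. Now I would run the diagram chase. For injectivity of $\mu_{(G,B)}$: if $\mu_{(G,B)}(x)=0$, then commutativity of the top square gives $\mu_{(G,\mathcal D\hat\otimes B)}\bigl(x\otimes(\beta\hat\otimes 1_B)\bigr)=\mu_{(G,B)}(x)\otimes J_G(\beta\hat\otimes 1_B)=0$, so $x\otimes(\beta\hat\otimes 1_B)=0$ since the middle arrow is injective, whence $x=x\otimes(\beta\hat\otimes 1_B)\otimes(\alpha\hat\otimes 1_B)=0$. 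For surjectivity: given $y\in K_*(B\rtimes_rG)$, lift $y\otimes J_G(\beta\hat\otimes 1_B)$ through the bijective middle arrow to some $z\in K_*^G(\EG;\mathcal D\hat\otimes B)$ and set $x:=z\otimes(\alpha\hat\otimes 1_B)$; commutativity of the bottom square gives $\mu_{(G,B)}(x)=\mu_{(G,\mathcal D\hat\otimes B)}(z)\otimes J_G(\alpha\hat\otimes 1_B)=y\otimes J_G(\beta\hat\otimes 1_B)\otimes J_G(\alpha\hat\otimes 1_B)=y$. Since the top and bottom horizontal arrows of~(\ref{eq-DD}) are both the single map $\mu_{(G,B)}$, it is simultaneously injective and surjective, hence an isomorphism; as $B$ was arbitrary, $G$ satisfies BCC.

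The corollary itself presents no serious obstacle once~(\ref{eq-DD}) is in place --- the argument is purely diagrammatic. The genuine difficulty has already been absorbed into the inputs: Theorem~\ref{thm-proper-Green-Julg} (the validity of Baum--Connes for proper coefficient algebras, which rests on the Green--Julg theorem plus the induction/six-term-sequence argument over the $G$-compact pieces of $\EG$) and the formal machinery asserting that descent and exterior product are multiplicative for the Kasparov product, together with the naturality of assembly under Kasparov products (Exercise~\ref{ex-commute}). Verifying that last naturality statement at the level of $KK$-cycles and then passing to the inductive limit over $G$-compact subsets of $\EG$ is the only place where one needs to be somewhat careful.
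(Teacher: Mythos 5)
Your argument is correct and is essentially identical to the paper's: the paper also derives the corollary from diagram~(\ref{eq-DD}), the multiplicativity of $\cdot\hat\otimes 1_B$ and of the descent $J_G$, the properness of $\mathcal D\hat\otimes B$ feeding into Theorem~\ref{thm-proper-Green-Julg}, and then a diagram chase giving injectivity from the top square and surjectivity from the bottom one. You have merely written out the chase that the paper leaves as ``easy''.
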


\begin{remark}
In  diagram (\ref{eq-DD}) we can  replace all reduced crossed products by the full ones and the (reduced) assembly map
by the full assembly map to see that whenever $G$ has $\gamma$-element equal to one,
then the full assembly map
$$\mu^{full}_{(G,B)}:K_*^G(\EG;B)\to K_*(B\rtimes_\beta G)$$
is an isomorphism as well. Moreover, if  $\Lambda: B\rtimes_{\beta}G\to B\rtimes_{\beta,r}G$ is the regular representation, then  the diagram
$$
\xymatrix{
K_*^G(\EG;B) \ar[rd]_{\mu_{(G,B)}} \ar[r]^{\mu_{(G,B)}^{full}}
&K_*(B\rtimes G) \ar[d]^{\Lambda_*}\\
&K_*(B\rtimes_rG)}
$$
commutes. Since both assembly maps are isomorphisms, 
it follows that the regular representation induces an isomorphism in $K$-theory between the maximal and the reduced 
crossed products by $G$. Indeed, it is shown by Tu in \cite{Tu3} that $G$ is $K$-amenable in the sense of 
Cuntz \cite{Cuntz-K-amenable} and Julg-Valette \cite{JV} (which actually implies that $\Lambda$ is a $KK$-equivalence)
whenever $G$ has $\gamma$-element equal to one.
\end{remark}
\begin{remark}\label{rem-restriction}
If $G$ has $\gamma$-element equal to one, then so does every closed subgroup of $G$. Indeed, if 
$\alpha\in \KK^G(\D,\CC)$ and $\beta\in KK^G(\CC,\D)$ are  as in the definition of strong BC, then 
the action of $G$ on $\D$ restricts to a proper action of $H$ on $\D$.
%Suppose that we can successfully apply the Dirac dual-Dirac method to a given locally compact group $G$, i.e., we 
%found a proper $G$-algebra $\D$ and classes $\alpha\in \KK^G(\D,\CC)$ and $\beta\in KK^G(\CC,\D)$ such that 
%$\beta\otimes_\D\alpha=1_\CC\in KK^G(\CC, \CC)$. Then the same is trivially true for any closed subgroup $H$ of $G$.
%To see this we just observe that the restriction of a proper action to a closed subgroup gives a proper action of the 
%subgroup.
 Moreover, for every pair of $G$-$C^*$-algebras $A,B$ we have a natural homomorphism
$$\res^G_H:KK^G(A,B)\to KK^H(A,B)$$
which is given by simply restricting all actions on algebras and Hilbert modules from $G$ to $H$. 
It is easy to see that this restriction map is compatible with the Kasparov product, so that we get
$$\res_H^G(\beta)\otimes_\D \res_H^G(\alpha)=\res_H^G(\beta\otimes_\D\alpha)=\res_H^G(1_\CC)=1_\CC\in KK^H(\CC,\CC).$$
\end{remark}

\begin{example}\label{ex-BC-ZZ}
As a sample, we want to show that  $\RR$ and $\ZZ$ satisfy strong BC (and, in particular, have $\gamma=1_\CC$). For this recall the 
construction of the Dirac and dual Dirac elements in the proof of the Bott periodicity theorem in Section \ref{sec-Bott}:
Let  $\D=C_0(\RR)\hat\otimes \Cl_1$. We constructed  elements 
$\alpha\in KK(\D,\CC)$ and $\beta\in KK(\CC,\D)$ which are inverse to each other in $KK$. Let $\tau:\RR\to \Aut(C_0(\RR))$ 
denote the translation action $(\tau_s(f))(x)=f(x-s)$. Then $\D$ becomes a proper $\RR$-algebra via the action $\tau\hat\otimes\id_{\Cl_1}$.
Now recall that the classes $\alpha$ and $\beta$ have been given by
$$\alpha=\left[\H=L^2(\RR)\oplus L^2(\RR), \Phi, T=\frac{D}{\sqrt{1+D^2}}\right]\quad\text{and}\quad \beta=[\D, \one, S],$$
in which $D=\bmtr 0& -\frac{d}{dt}\\ \frac{d}{dt}&0\emtr$, $\Phi: \D\to\L(\H)$ is given as in (\ref{eq-Phi}),  and $S=S_\varphi$ is given by
pointwise   multiplication with a function
 $\varphi:\RR\to[-1,1]$, which can be  any 
odd continuous function with $\varphi(x)=0\Leftrightarrow x=0$ and $\lim_{t\to \infty}\varphi(t)=1$. 
With the given $\RR$-action on $\mathcal D$ we may view $\beta$ as a class
$$\beta=[\D, \tau\hat\otimes\id_{\Cl_1}, \one, S]\in KK^\RR(\CC,\D).$$
The only extra condition to check is the condition that
$\Ad_{\tau\otimes \id(s)}(S)- S\in \K(\D)=\D$, which follows from the fact that for any function $\varphi$ as above, we have
$\tau_s(\varphi)-\varphi\in C_0(\RR)$ for all $s\in \RR$. Similarly, if we equip $\H= L^2(\RR)\oplus L^2(\RR)$ with  the representation $\lambda\oplus \lambda$,
where $\big(\lambda_s(\xi)\big)(x)=\xi(x-s)$ denotes the regular representation of $\RR$, we obtain a class 
$$\alpha=[\H, \lambda, \Phi, T]\in KK^\RR(\D,\CC).$$
As above, the only extra condition to check is that $(\Ad\lambda_s(T)-T)\Phi(d)\in \K(\H)$ for all $s\in \RR$ and $d\in \D$, which we leave as 
an exercise for the reader. We claim that 
\begin{equation}\label{eq-proper-Bott}
\beta\otimes_{\D}\alpha=1_\CC\in KK^\RR(\CC,\CC).
\end{equation} 
Note that this would be true if we  equipped everything with the trivial $\RR$-action instead of the translation action, since then 
it would be a direct consequence of the product $\beta\otimes_\D\alpha=1_\CC\in KK(\CC,\CC)$, which we proved in Section \ref{sec-Bott}.
We show equation (\ref{eq-proper-Bott}) by a simple trick, showing that  the translation action of $\RR$ is homotopic to the trivial action in the following sense:
We consider the algebra $\D[0,1]=\D\hat\otimes C[0,1]$ equipped with the $\RR$-action
 $\tilde{\tau}\hat\otimes \id_{\Cl_1}$ where 
$$\tilde\tau:\RR\to \Aut(C_0(\RR\times [0,1])); \big(\tilde\tau_s(f)\big)(x,t)=f(x-ts, t).$$
We then consider the class
$\tilde\alpha\in KK^\RR(\D[0,1],C[0,1])$, where $C[0,1]$ carries the trivial $\RR$-action, given by
$$\tilde\alpha=\big[\H\hat\otimes C[0,1], \tilde\lambda\oplus\tilde\lambda, \Phi\hat\otimes 1,  T\hat\otimes 1\big],$$
where the $\RR$-action $\tilde\lambda\oplus\tilde\lambda$ on $\H\hat\otimes C[0,1]$ is given by a formula similar to the one for $\tilde\tau$. 
On the other hand, we consider the class 
$$\tilde\beta=\big[\D[0,1], \tilde\tau\hat\otimes 1, \one, S\hat\otimes 1\big]\in KK^\RR(\CC, \D[0,1]).$$
If we evaluate the class $\tilde\beta\otimes_{\D[0,1]}\tilde\alpha\in KK^{\RR}(\CC, C[0,1])$ in $0$, we 
obtain the product of $\beta$ with $\alpha$ equipped with trivial $\RR$-actions, which is $1_\CC\in KK^\RR(\CC, \CC)$ 
by the proof of Bott periodicity. If we evaluate at $1$, we obtain the product $\beta\otimes\alpha$ with respect to the 
proper translation action on $\D$. Hence both classes are homotopic  which proves (\ref{eq-proper-Bott}).

Hence we see that the Dirac dual-Dirac method applies to $\RR$, and by Remark \ref{rem-restriction} it then also applies to $\ZZ\subseteq \RR$
and both have $\gamma$-element equal to one.
Using an easy product argument,  this proof also implies that $\RR^n$ and $\ZZ^n$ have $\gamma$-element equal to one. We leave 
it as an exercise  to check  that $\alpha\otimes_{\CC}\beta=1_{\mathcal D}\in KK^\RR(\D, \D)$, i.e., that
$\RR^n$ and $\ZZ^n$ do  satisfy the strong Baum-Connes conjecture.
\end{example}

Extending  Example \ref{ex-BC-ZZ}  to higher dimensions, one can use Kasparov's equivariant Bott periodicity theorem as discussed in the last paragraph 
of Section \ref{sec-Bott} to show that the Dirac dual-Dirac method works for all groups which act properly and isometrically by affine transformations 
on a finite dimensional euclidean space. This has already been pointed out by Kasparov in his conspectus 
\cite{K2}. Later, in \cite{K3}, he extended this to show that the method works for all amenable Lie groups (and their closed subgroups) and, together 
with Pierre Julg in \cite{JK}, they showed that the method works for the Lie groups $SO(n,1)$ and $SU(n,1)$ and their closed subgroups.
But the most far reaching positive result which includes all cases mentioned above
 has been obtained by Higson and Kasparov in \cite{HK}:

\begin{theorem}[Higson-Kasparov]\label{thm-HK}
Suppose that the second countable locally compact group acts continuously and metrically properly by isometric affine transformations 
on a separable real Hilbert space $\H$. Then  $G$ satisfies the  strong Baum-Connes conjecture.
\end{theorem}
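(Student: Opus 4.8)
The plan is to realize this via the Dirac dual-Dirac method: by Definition~\ref{def-strongBC} and Corollary~\ref{cor-strong} it suffices to produce a proper $G$-$C^*$-algebra $\A$ together with classes $\alpha\in KK^G(\A,\CC)$ and $\beta\in KK^G(\CC,\A)$ such that $\beta\otimes_\A\alpha=1_\CC$ and $\alpha\otimes_\CC\beta=1_\A$. The algebra $\A$ will be an infinite-dimensional analogue of the Clifford algebra $C_0(\RR^n)\hat\otimes\Cl_n$ of Section~\ref{sec-Bott}, and the hypotheses --- the action is isometric and \emph{metrically proper} --- are exactly what make $\A$ a proper $G$-algebra and what make the error terms in the $G$-equivariance conditions compact.

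First I would construct $\A=\A(\H)$. For each finite-dimensional affine subspace $W\subseteq\H$ with linear part $W_0$ put $\mathcal C(W):=C_0(\RR)\hat\otimes C_0(W)\hat\otimes\Cl(W_0)$, with $C_0(\RR)$ graded by even and odd functions. For $W\subseteq W'$ with orthogonal splitting $W'=W\oplus W_1$, the finite-dimensional Bott map of Section~\ref{sec-Bott} (cf.\ Corollary~\ref{cor-suspension}), applied in the $W_1$-directions and realized as a genuine $*$-homomorphism through functional calculus with the unbounded position--Clifford multiplier, gives a graded $*$-homomorphism $\mathcal C(W)\to\mathcal C(W')$; these are compatible, and I set $\A:=\varinjlim_W\mathcal C(W)$ over the directed family of finite-dimensional affine subspaces. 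Since $G$ acts on $\H$ by affine isometries it permutes the $W$'s, so $\A$ is a separable $G$-$C^*$-algebra (separability of $\H$ is used here), and the metric properness of the action makes $\A$ a proper $G$-$C^*$-algebra --- so that Theorem~\ref{thm-proper-Green-Julg}, or a direct-limit refinement of it, shows the assembly map is an isomorphism with coefficients in $\A\hat\otimes B$ for every $G$-$C^*$-algebra $B$.

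Next I would construct the two classes. The dual-Dirac class $\beta\in KK^G(\CC,\A)$ is assembled from the one-dimensional Bott elements of Section~\ref{sec-Bott}: fixing a basepoint of $\H$, the classes $\beta_W\in KK(\CC,\mathcal C(W))$ are compatible along the connecting maps and hence define $\beta$, and the metric properness forces the error term $\Ad_g(S)-S$ for the assembled dual-Dirac operator $S$ to lie in $\A$, giving $G$-equivariance. The Dirac class $\alpha\in KK^G(\A,\CC)$ comes from the infinite-dimensional Dirac operator: on the module ``$L^2(\H)$'' obtained as the inductive limit of the $L^2(W)$'s along the Gaussians (the symmetric Fock space picture), let $D$ be the sum of the operator $d$ of Section~\ref{sec-Bott} and Clifford multiplication by the position vector; functional calculus applied to $D$, together with the finite-dimensional computations of Section~\ref{sec-Bott}, shows that $(L^2(\H),\Phi,D/\sqrt{1+D^2})$ is an $\A$-$\CC$ Kasparov cycle, and the affine-isometric $G$-action supplies the equivariance, again using metric properness to get $(\Ad_g(T)-T)\Phi(a)\in\K(L^2(\H))$.

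It remains to compute the two Kasparov products, and this is the main obstacle. The identity $\beta\otimes_\A\alpha=1_\CC\in KK^G(\CC,\CC)$ is the infinite-dimensional Bott periodicity theorem: the product lives a priori over the enormous algebra $\A$ and involves an unbounded operator on an infinite-dimensional ``space'', so one cannot simply invoke a finite-dimensional index theorem. The technical content is to run the Kasparov technical theorem and the needed operator homotopies uniformly along the inductive system, so that the product is detected by a single Fredholm index computation which, thanks to the Gaussian decay in the transverse directions, collapses to the one-dimensional computation $\indx=1$ of Lemma~\ref{lem-product}; this is precisely where both separability of $\H$ and metric properness are indispensable. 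The second identity $\alpha\otimes_\CC\beta=1_\A\in KK^G(\A,\A)$ then follows from the rotation trick of Section~\ref{sec-Bott} (rotating the double of $\H$), carried out in the inductive limit. With both products in hand $G$ has $\gamma$-element equal to one and in fact satisfies the strong Baum-Connes conjecture of Definition~\ref{def-strongBC}; Corollary~\ref{cor-strong} together with diagram~(\ref{eq-DD}) then yields BCC and completes the proof.
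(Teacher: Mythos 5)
The paper does not actually prove this theorem: it states it, sketches the basic idea (the proper $G$-algebra is built as an inductive limit of the algebras $C_0(V)\hat\otimes\Cl(V)$ over finite-dimensional subspaces $V\subseteq\H$), and then explicitly defers to the original work of Higson and Kasparov, remarking that ``the precise construction of the algebra $\D$ and the classes $\alpha$ and $\beta$ is very complex.'' Your outline reproduces exactly that sketch --- the Higson--Kasparov--Trout algebra $\mathcal C(W)=C_0(\RR)\hat\otimes C_0(W)\hat\otimes\Cl(W_0)$ with Bott-type connecting maps, the assembled dual-Dirac class, and the Bott--Dirac operator ($d$ plus Clifford multiplication by the position vector) for the Dirac class --- so as a statement of strategy it is faithful to what the paper (and the literature) says.

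But as a proof it has a genuine gap, and you have in fact located it yourself: the identity $\beta\otimes_\A\alpha=1_\CC$ is the entire content of the theorem, and your paragraph on it only asserts that the product ``collapses to the one-dimensional computation'' of Lemma~\ref{lem-product}. Nothing in the finite-dimensional machinery of Section~\ref{sec-Bott} gives this; the operator $D$ does not have compact resolvent against $\Phi(a)$ for free in infinite dimensions (one needs the harmonic-oscillator structure of $D^2$ and the Gaussian ground states, which is why the $C_0(\RR)$ factor and the limit along Gaussians are there at all), and forming the Kasparov product over the inductive-limit algebra $\A$ cannot be reduced to ``running the technical theorem uniformly along the system'' without a real argument. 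Indeed, this difficulty is precisely why the original Higson--Kasparov proof is carried out in $E$-theory via asymptotic morphisms rather than in $KK$: the connecting maps and the Dirac element are naturally only asymptotic, and a bona fide $KK$-theoretic product computation of the kind you propose requires substantial additional work. A second, smaller gap: the rotation trick for $\alpha\otimes_\CC\beta=1_\A$ is a finite-dimensional argument (it uses $\det(\rho)=1$ and a compact path in $\Ort(2)$), and ``rotating the double of $\H$'' in the inductive limit again needs to be made compatible with the connecting maps --- it is not automatic. None of this makes your approach wrong --- it is the approach --- but the two product identities are stated rather than proved, and they are where all the difficulty of the theorem lives.
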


Note that the action of $G$ on $\H$ is called {\em metrically proper} if for any $\xi\in \H$ and  $R>0$ there exists a compact subset $C\subseteq G$ such that 
$ \| s\cdot\xi \|>R$ for all $s\in G\setminus C$. The basic idea of the proof is to construct the proper $G$-algebra as an inductive limit of algebras $C_0(V)\hat\otimes\Cl(V)$, where 
$V\subseteq \H$ runs through the finite dimensional subspaces of the Hilbert space $\H$. But the precise construction of the algebra $\D$ 
and the classes $\alpha$ and $\beta$ is very complex and we refer to the original work \cite{HK} of Higson and Kasparov 
for more details. A detailed exposition of certain aspects of the proof can be found in the very recent paper \cite{Ni}.
We should also note that the original proof of Higson and Kasparov uses $E$-theory, a variant of $KK$-theory introduced 
by Connes and Higson in \cite{CH} (see also \cite{Bla86}*{Chapter 25}).  A groupoid version of the above theorem has been shown by Tu in \cite{Tu2}.

The class of groups which satisfies the conditions of the Higon-Kasparov theorem has been studied first by Gromov who called them
{\em a-$T$-menable} groups. 
A second countable group $G$ is  a-$T$-menable if and only if it satisfies the Haagerup approximation property
which says that the trivial representation $1_G$ can be approximated uniformly on compact sets by a net of positive definite functions 
$(\varphi_i)$ on $G$ such that each $\varphi_i$ vanishes at $\infty$ on $G$.  We refer to \cite{CCJJV} for a detailed exposition on the 
class of a-$T$-menable groups.  As a consequence of the theorem we get

\begin{corollary}\label{cor-HK}
Every amenable second countable locally compact group satisfies strong BC. Also, 
the free groups $F_n$ in $n$ generators, $n\in \NN\cup\{\infty\}$ and all closed subgroups 
of the Lie groups $\operatorname{SU}(n,1)$ and $\operatorname{SO}(n,1)$ satisfy   strong BC.
\end{corollary}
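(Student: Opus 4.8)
The plan is to deduce the corollary directly from the Higson--Kasparov Theorem~\ref{thm-HK}: for each family of groups in the statement I would exhibit a continuous, metrically proper action by affine isometries on a \emph{separable} real Hilbert space, i.e. show that the group is a-$T$-menable, and then quote Theorem~\ref{thm-HK}. Second countability is no issue: $F_\infty$ (free on countably many generators) is countable, hence second countable as a discrete group, and closed subgroups of second countable groups are second countable; and the Hilbert space can always be taken separable by replacing it with the closed span of the orbit of the cocycle, which is separable when $G$ is second countable. Throughout I would freely use the equivalence, recorded in \cite{CCJJV}, between the Haagerup approximation property (as described just above the statement) and the existence of a metrically proper affine isometric action on a real Hilbert space, picking whichever picture is more convenient.

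\emph{Amenable groups.} If $G$ is amenable and second countable, then by Hulanicki's theorem the trivial representation is weakly contained in the regular representation, and a standard averaging of matrix coefficients of $\lambda_G$ produces a net of normalised positive definite functions with compact support converging to $1$ uniformly on compacta; hence $G$ has the Haagerup property. Translating this into a metrically proper affine isometric action on a separable real Hilbert space and invoking Theorem~\ref{thm-HK} gives that $G$ satisfies the strong Baum--Connes conjecture.

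\emph{Free groups.} For $F_n$ with $n\in\NN\cup\{\infty\}$ I would use that $F_n$ acts freely and metrically properly on its Cayley graph with respect to a free generating set, which is a tree $T$. Fixing a base vertex $x_0$ and letting $E$ be the set of oriented edges of $T$, the map $g\mapsto b(g):=\mathbf 1_{[x_0,\,g\cdot x_0]}$ (indicator of the geodesic edge path from $x_0$ to $g\cdot x_0$) is a $1$-cocycle for the permutation action of $F_n$ on $\ell^2(E)$ with $\|b(g)\|^2$ equal to the word length of $g$; since the action on $T$ is metrically proper, so is the associated affine isometric action, and $\ell^2(E)$ is separable. Theorem~\ref{thm-HK} then yields strong Baum--Connes for $F_n$.

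\emph{Closed subgroups of $\operatorname{SU}(n,1)$ and $\operatorname{SO}(n,1)$.} These Lie groups act isometrically and properly on complex, respectively real, hyperbolic space, which carries a continuous proper conditionally negative definite kernel (for the real case the geodesic distance itself is conditionally negative definite by the classical computation of Faraut--Harzallah; in the complex case one takes a suitable proper function of the distance). The affine isometric action on the associated Hilbert space is then metrically proper, so $\operatorname{SO}(n,1)$ and $\operatorname{SU}(n,1)$ are a-$T$-menable; restricting the action shows the same for any closed subgroup $H$, and Theorem~\ref{thm-HK} applies to $H$. The main obstacle here is not the assembly of the argument, which is a routine reduction to Theorem~\ref{thm-HK}, but rather the two geometric verifications of conditional negative-definiteness --- the tree cocycle is essentially automatic, whereas the hyperbolic-space kernels require genuine computation --- together with the precise bridge between the Haagerup property and the hypothesis of Theorem~\ref{thm-HK}; for all of these I would cite \cite{CCJJV}.
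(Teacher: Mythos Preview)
Your proposal is correct and follows exactly the paper's approach: the paper's entire proof is the single sentence ``All groups in the above corollary satisfy the Haagerup property,'' deferring the verifications to \cite{CCJJV}, and you have simply spelled out those verifications (amenable $\Rightarrow$ Haagerup via approximation by coefficients of $\lambda_G$, free groups via the tree cocycle, and the rank-one groups via conditionally negative definite kernels on hyperbolic space) before invoking Theorem~\ref{thm-HK}.
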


All groups in the above corollary satisfy the Haagerup property. 

The Dirac dual-Dirac method can also be used in cases, in which the element 
$$\gamma=\beta\otimes_\D\alpha\in KK^G(\CC,\CC)$$
is not necessarily equal to $1_\CC$, but where it satisfies the following weaker condition:

\begin{definition}[Kasparov's $\gamma$-element]\label{def-gamma}
Suppose that $\D$ is a proper  $G$-algebra, $\alpha\in \KK^G(\D,\CC)$ and  $\beta\in KK^G(\CC,\D)$.
Then  $\gamma=\beta\otimes_\D\alpha\in KK^G(\CC,\CC)$ is called  a  {\em $\gamma$-element for $G$} iff
$$\gamma\otimes 1_{C_0(X)}=1_{C_0(X)}\in KK^G(C_0(X), C_0(X))$$
for every proper $G$-space $X$.
\end{definition}

The class of groups which admit a $\gamma$-element is  huge.
It has been shown by Kasparov in \cites{K2, K3} 
that it contains all almost connected groups
(i.e., groups with co-compact connected component of the identity) and it is clear that the
existence of a $\gamma$-elements passes to closed subgroups.
In \cites{KS1, KS2} Kasparov and Skandalis proved existence of  $\gamma$-elements
for  many other groups. Note that the above definition of a $\gamma$-element 
is slightly weaker than Kasparov's original 
definition, in which he requires that $\gamma\otimes 1_{C_0(X)}=1_{C_0(X)}$ in the 
$X\rtimes G$-equivariant group $KK^{X\rtimes G}(C_0(X), C_0(X))$, where $X\rtimes G$ 
denotes the transformation groupoid for the $G$-space $X$. Since the above definition 
suffices for our purposes and since we want to avoid to talk about equivariant $KK$-theory 
for groupoids, we use it here. We have:

\begin{theorem}[Kasparov]\label{thm-gamma}
Suppose that $G$ is a second countable group which admits a $\gamma$-element. 
Then for every $G$-$C^*$-algebra $B$ the Baum-Connes assembly map
$$\mu_{(G,B)}:K_*^G(\EG;B)\to K_*(B\rtimes_rG)$$
is split injective (the same holds true for the full assembly map $\mu_{(G,B)}^{full}$).
\end{theorem}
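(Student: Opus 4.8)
The plan is to run the diagram chase that was used to deduce Corollary \ref{cor-strong}, but now exploiting only the weaker defining property of a $\gamma$-element. Fix a $G$-$C^*$-algebra $B$ and a $\gamma$-element $\gamma=\beta\otimes_{\mathcal D}\alpha$, with $\mathcal D$ a proper $G$-algebra, $\alpha\in KK^G(\mathcal D,\CC)$ and $\beta\in KK^G(\CC,\mathcal D)$. Applying Exercise \ref{ex-commute} to the two classes $\beta\hat\otimes 1_B\in KK^G(B,\mathcal D\hat\otimes B)$ and $\alpha\hat\otimes 1_B\in KK^G(\mathcal D\hat\otimes B,B)$ produces the commuting diagram
$$
\begin{CD}
K_*^G(\EG, B) @>\mu_{(G,B)} >> K_*(B\rtimes_rG)\\
@V\cdot \otimes (\beta\hat\otimes 1_B) VV      @VV \cdot\otimes J_G(\beta\hat\otimes 1_B) V\\
K_*^G(\EG, \mathcal D\hat\otimes B) @>\mu_{(G,\mathcal D\hat\otimes B)} >\cong > K_*((\mathcal D\hat\otimes B)\rtimes_rG)\\
@V\cdot \otimes (\alpha\hat\otimes 1_B) VV      @VV \cdot\otimes J_G(\alpha\hat\otimes 1_B) V\\
K_*^G(\EG, B) @>\mu_{(G,B)} >>  K_*(B\rtimes_rG)
\end{CD}
$$
in which the middle horizontal arrow is an isomorphism: indeed $\mathcal D\hat\otimes B$ is a proper $G$-$C^*$-algebra (via the structure map of $\mathcal D$ composed with $\mathcal M(\mathcal D)\to\mathcal M(\mathcal D\hat\otimes B)$), so Theorem \ref{thm-proper-Green-Julg} applies. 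I would then define the candidate splitting
$$
s:=\big(\cdot\otimes(\alpha\hat\otimes 1_B)\big)\circ\mu_{(G,\mathcal D\hat\otimes B)}^{-1}\circ\big(\cdot\otimes J_G(\beta\hat\otimes 1_B)\big)\colon K_*(B\rtimes_rG)\to K_*^G(\EG;B).
$$

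The key step — and the only place the $\gamma$-element hypothesis enters — is to check that $\cdot\otimes(\gamma\hat\otimes 1_B)$ is the identity endomorphism of $K_*^G(\EG;B)$. Since $K_*^G(\EG;B)=\lim_X KK^G(C_0(X),B)$ over the $G$-compact subsets $X\subseteq\EG$, it is enough to treat, for each such $X$, the map $KK^G(C_0(X),B)\to KK^G(C_0(X),B)$, $y\mapsto y\otimes_B(\gamma\hat\otimes 1_B)$, and to verify that the identification obtained is natural in $X$. Using associativity of the Kasparov product together with commutativity of the exterior product $\otimes_\CC$ from Theorem \ref{gen-product} (the flip isomorphisms being trivial since one of the factors is $\CC$), one rewrites $y\otimes_B(\gamma\hat\otimes 1_B)=\gamma\hat\otimes_\CC y=(\gamma\hat\otimes 1_{C_0(X)})\otimes_{C_0(X)}y$. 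Because $X$ is a proper $G$-space, the definition of a $\gamma$-element (Definition \ref{def-gamma}) gives $\gamma\hat\otimes 1_{C_0(X)}=1_{C_0(X)}\in KK^G(C_0(X),C_0(X))$, so this last expression equals $y$. Naturality of the rewriting in $X$ then shows that $\cdot\otimes(\gamma\hat\otimes 1_B)=\id$ on the inductive limit.

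Granting the key step, the conclusion is routine. From the upper square one has $(\cdot\otimes J_G(\beta\hat\otimes 1_B))\circ\mu_{(G,B)}=\mu_{(G,\mathcal D\hat\otimes B)}\circ(\cdot\otimes(\beta\hat\otimes 1_B))$, so
$$
s\circ\mu_{(G,B)}=\big(\cdot\otimes(\alpha\hat\otimes 1_B)\big)\circ\big(\cdot\otimes(\beta\hat\otimes 1_B)\big)=\cdot\otimes\big((\beta\hat\otimes 1_B)\otimes_{\mathcal D\hat\otimes B}(\alpha\hat\otimes 1_B)\big)=\cdot\otimes(\gamma\hat\otimes 1_B)=\id ,
$$
using associativity of the Kasparov product and compatibility of $\cdot\hat\otimes 1_B$ with products for the middle equalities. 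Hence $\mu_{(G,B)}$ is split injective, with explicit retraction $s$. For the full assembly map I would repeat the argument verbatim with reduced crossed products replaced by full ones: Kasparov's full descent $J_G^{full}$ is again multiplicative, $\mathcal D\hat\otimes B$ is proper so its full and reduced crossed products coincide (Remark \ref{rem-Green-proper}) and $\mu^{full}_{(G,\mathcal D\hat\otimes B)}$ is an isomorphism (Theorem \ref{thm-proper-Green-Julg} together with Remark \ref{rem-fullassembly}), while Exercise \ref{ex-commute} again supplies the needed commuting square. The main obstacle I anticipate is not conceptual but bookkeeping: carrying out the identifications $\CC\hat\otimes A\cong A$ and the flip isomorphisms correctly when passing from $y\otimes_B(\gamma\hat\otimes 1_B)$ to $(\gamma\hat\otimes 1_{C_0(X)})\otimes_{C_0(X)}y$, and checking that this rewriting is natural in $X$ so that it really descends to the inductive limit defining $K_*^G(\EG;B)$; once that is settled, the rest is formal.
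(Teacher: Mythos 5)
Your argument is correct and is essentially the paper's own proof: both run the diagram (\ref{eq-DD}) built from Exercise \ref{ex-commute} and Theorem \ref{thm-proper-Green-Julg}, and both reduce split injectivity to showing that $\cdot\otimes(\gamma\hat\otimes 1_B)$ is the identity on $K_*^G(\EG;B)$, which follows on each $G$-compact $X\subseteq\EG$ from commutativity of the exterior product over $\CC$ and the defining property $\gamma\otimes 1_{C_0(X)}=1_{C_0(X)}$ of the $\gamma$-element. Your explicit formula for the retraction $s$ and the remarks on the full assembly map are just a slightly more detailed write-up of the same reasoning.
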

\begin{proof} Going back to diagram (\ref{eq-DD}), we see that for proving split injectivity it suffices 
to show that the composition of the left vertical arrows of the diagram is the identity map.
So we need to check that the map
$\cdot\otimes \gamma: K_*^G(\EG;B)\to  K_*^G(\EG;B)$, which is given on the level of 
any $G$-compact subset $X\subseteq \EG$ by the map
$$KK_*^G(C_0(X), B)\to KK_*^G(C_0(X),B); x\mapsto x\otimes_B(1_B\otimes \gamma),$$
is the identity on $KK_*^G(C_0(X), B)$.
But by commutativity of the Kasparov product over $\CC$ we get
\begin{align*}
x\otimes(1_B\otimes\gamma)&=x\otimes_\CC \gamma=\gamma\otimes_\CC x
=(\gamma\otimes 1_{C_0(X)})\otimes_{C_0(X)}x
=1_{C_0(X)}\otimes_{C_0(X)}x=x.
\end{align*}
\end{proof}

The above proof relies heavily on  Theorem \ref{thm-proper-Green-Julg}, which in turn
relies on Theorem \ref{thm-ind2}. In the course of proving that theorem in \cite{CE2} the 
authors made heavy use of Kasparov's  result that all almost connected groups 
have a $\gamma$-element and that this implies (without using BC for proper coefficients)
that the Baum-Connes assembly map is injective whenever $G$ has a $\gamma$-element
in the stronger sense of Kasparov. 

\begin{remark}\label{rem-Novikov}
It is shown by Kasparov in \cites{K2, K3} that for a discrete group 
$G$ the rational injectivity 
(i.e., injectivity after tensoring both sides with $\QQ$) of the assembly map
$$\mu_G: K_*^G(\EG;\CC)\to K_*(C_r^*(G))$$
implies the famous Novikov-conjecture from topology. We do not want to discuss this 
conjecture here (e.g., see \cite{K3} for the formulation), but we want to mention 
that Theorem \ref{thm-gamma} shows that every group which admits a $\gamma$-element
also satisfies the Novikov conjecture. This fact leads to the following notation:
A group $G$ is said to satisfy the {\em strong Novikov conjecture with coefficients}, 
if the assembly map $\mu_{(G,B)}$ is injective for every $G$-$C^*$-algebras $B$.
\end{remark}

\begin{remark}\label{rem-DD-surjective}
If $G$ has a $\gamma$-element in the sense of Definition \ref{def-gamma} and if 
$B$ is any given $G$-$C^*$-algebra, then the assembly map 
$\mu_{(G,B)}: K_*^G(\EG;B)\to K_*(B\rtimes_rG)$ is surjective if and only 
if the map
$$F_\gamma: K_*(B\rtimes_rG)\to K_*(B\rtimes_rG); x\mapsto x\otimes_{B\rtimes_rG}J_G(1_B\otimes \gamma)$$
coincides with  the identity map. This follows easily from the proof of Theorem \ref{thm-gamma}
together with diagram (\ref{eq-DD}). Indeed, more generally, we may conclude from the lower square 
of diagram (\ref{eq-DD})  that every element in the image of $F_\gamma$ lies in the image of the assembly map,
and then the outer rectangle of (\ref{eq-DD}) implies that we actually have 
$$\mu_{(G,B)}(K_*^G(\EG;B))=F_\gamma(K_*(B\rtimes_rG)).$$
Moreover, it follows also from (\ref{eq-DD}) that $F_\gamma$ is  idempotent, so it is surjective 
if and only if it is the identity.
Kasparov calls $F_\gamma(K_*(B\rtimes_rG))$ the {\em $\gamma$-part of $K_*(B\rtimes_rG)$}. 

So one strategy for proving the Baum-Connes conjecture for given coefficients is to show  
that $F_\gamma$ is the identity on $K_*(B\rtimes_rG)$. This method has been used quite effectively 
by Lafforgue in \cite{Laf} for proving the Baum-Connes conjecture with trivial coefficients 
for a large class of groups (including all real or $p$-adic reductive linear groups). 
For doing this he first introduced a Banach version of $KK$-theory in oder to show
that the $\gamma$-element induces an isomorphism in $K$-theory of  certain 
Banach algebras $\mathcal S(G)$, which can be viewed as algebras of Schwartz-functions, 
which admit an embedding as dense subalgebras  of $C_r^*(G)$ such that the inclusion 
$\mathcal S(G)\into C_r^*(G)$ induces an isomorphism in $K$-theory. As a result, the 
map $F_\gamma$ is the identity on $K_*(C_r^*(G))$ which proves BC.

Extending his methods, Lafforgue later showed that all Gromov-hyperbolic groups satisfy
the Baum-Connes conjecture with coefficients (see \cites{Laf-hyp, Puschnigg}).
\end{remark}

We close this section with a short argument of how Connes's Thom isomorphism for the 
$K$-theory of crossed products by $\RR$ and 
the Pimsner-Voiculescu sequence for the $K$-theory of crossed products by $\ZZ$ can be deduced 
quite easily 
from the Dirac-dual Dirac method for actions of $\RR$ as worked out in Example \ref{ex-BC-ZZ}.

\begin{corollary}[Connes's Thom isomorphism]\label{cor-Thm}
Let $\alpha:\RR\to\Aut(A)$ be an action of $\RR$ on the $C^*$-algebra $A$. Then the crossed product 
$A\rtimes_{\alpha}\RR$ is $KK$-equivalent to $A\hat\otimes \Cl_1$. In particular, there is a canonical 
isomorphism $$K_*(A\rtimes_\alpha \RR)\cong K_*(A\hat\otimes \Cl_1)=K_{*+1}(A).$$
\end{corollary}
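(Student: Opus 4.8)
The plan is to deduce the isomorphism directly from the $KK^\RR$-equivalence between $\D=C_0(\RR)\hat\otimes\Cl_1$ (carrying the \emph{proper} translation action $\tau\hat\otimes\id_{\Cl_1}$) and $\CC$ that is established in Example \ref{ex-BC-ZZ}, combined with Kasparov's descent and Green's imprimitivity theorem. The goal is a chain of $KK$-equivalences $A\rtimes_\alpha\RR \sim (\D\hat\otimes A)\rtimes\RR \sim A\hat\otimes\Cl_1$, where $\D\hat\otimes A$ carries the diagonal action $(\tau\hat\otimes\id_{\Cl_1})\hat\otimes\alpha$; the $K$-theory statement then follows from the $KK$-equivalence $\Cl_1\sim C_0(\RR)$ of Corollary \ref{cor-suspension}. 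Throughout, all crossed products by $\RR$ are simultaneously full and reduced, since $\RR$ is amenable.

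\emph{Step 1.} Write $\alpha_D\in KK^\RR(\D,\CC)$ and $\beta_D\in KK^\RR(\CC,\D)$ for the Dirac and dual-Dirac classes (renamed to avoid clash with the action $\alpha$); by Example \ref{ex-BC-ZZ} they satisfy $\beta_D\otimes_\D\alpha_D=1_\CC$ and $\alpha_D\otimes_\CC\beta_D=1_\D$ for the proper translation action on $\D$, i.e.\ $\RR$ satisfies the strong Baum--Connes conjecture. Applying the homomorphism $\cdot\hat\otimes 1_A$ (with $A$ carrying $\alpha$) and using its compatibility with the Kasparov product, the classes $\beta_D\hat\otimes 1_A\in KK^\RR(A,\D\hat\otimes A)$ and $\alpha_D\hat\otimes 1_A\in KK^\RR(\D\hat\otimes A,A)$ are mutually inverse, so $A$ and $\D\hat\otimes A$ are $KK^\RR$-equivalent. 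Since Kasparov's descent $J_\RR$ is compatible with Kasparov products and sends identity classes to identity classes, it carries $KK^\RR$-equivalences to $KK$-equivalences; hence $A\rtimes_\alpha\RR$ is $KK$-equivalent to $(\D\hat\otimes A)\rtimes\RR$.

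\emph{Step 2.} Now I identify $(\D\hat\otimes A)\rtimes\RR$ with $A\hat\otimes\Cl_1$ up to Morita equivalence. Since $\RR$ acts trivially on the $\Cl_1$-factor of $\D\hat\otimes A=(C_0(\RR)\hat\otimes A)\hat\otimes\Cl_1$, one has $(\D\hat\otimes A)\rtimes\RR\cong\big((C_0(\RR)\hat\otimes A)\rtimes_{\tau\otimes\alpha}\RR\big)\hat\otimes\Cl_1$. The untwisting $*$-isomorphism $\Psi\colon C_0(\RR,A)\to C_0(\RR,A)$, $(\Psi f)(x)=\alpha_{-x}(f(x))$, intertwines the diagonal action $\tau\otimes\alpha$ with the plain translation action, so $(C_0(\RR)\hat\otimes A)\rtimes_{\tau\otimes\alpha}\RR\cong\big(\Ind_{\{e\}}^\RR A\big)\rtimes\RR$, which by Green's imprimitivity theorem is Morita equivalent to $A\rtimes\{e\}=A$. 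Tensoring this equivalence with $\Cl_1$ gives a (graded) Morita equivalence between $(\D\hat\otimes A)\rtimes\RR$ and $A\hat\otimes\Cl_1$, hence a $KK$-equivalence by Lemma \ref{lem-Morita-KK} applied with the trivial group. Combining Steps 1 and 2, $A\rtimes_\alpha\RR$ is $KK$-equivalent to $A\hat\otimes\Cl_1$. Finally, Corollary \ref{cor-suspension} gives a $KK$-equivalence $\Cl_1\sim C_0(\RR)$, so $A\hat\otimes\Cl_1$ is $KK$-equivalent to $C_0(\RR)\hat\otimes A$, and Bott periodicity (Theorem \ref{thm-BottK}) yields $K_*(A\rtimes_\alpha\RR)\cong K_*(A\hat\otimes\Cl_1)\cong K_{*+1}(A)$.

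\emph{Where the difficulty lies.} Essentially all of the analytic work has been done in Example \ref{ex-BC-ZZ} --- the construction of $\alpha_D,\beta_D$ and the rotation/homotopy argument showing their products are identity classes, equivariantly for the translation action --- and I may quote it. The only non-routine point is the bookkeeping in Step 2: verifying that the diagonal action genuinely untwists to the translation action via $\Psi$, that pulling the trivially-acted $\Cl_1$-factor through the crossed product and through the Morita equivalence is legitimate at the graded level, and that the imprimitivity bimodule is grading-compatible so Lemma \ref{lem-Morita-KK} applies. None of this is deep, but it is the only place an error could creep in.
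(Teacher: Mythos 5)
Your proposal is correct and follows essentially the same route as the paper: descend the equivariant $KK$-equivalence $1_A\hat\otimes\beta$ between $A$ and $A\hat\otimes\D$ (which, as you note, rests on the product $\alpha_D\otimes_\CC\beta_D=1_\D$ left as an exercise in Example \ref{ex-BC-ZZ}), then identify $(A\hat\otimes\D)\rtimes\RR$ with $A\hat\otimes\Cl_1$ up to stabilisation. The only cosmetic difference is that the paper performs your Step 2 by citing the untwisting isomorphism and the identification $C_0(\RR)\rtimes_\tau\RR\cong\K(L^2(\RR))$ directly, whereas you phrase the same computation via Green's imprimitivity theorem for the trivial subgroup; these are the same argument.
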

\begin{proof} To construct the $KK$-equivalence, let $\beta\in KK^{\RR}(\CC, C_0(\RR)\otimes \Cl_1)$ be as in Example \ref{ex-BC-ZZ}.
Then $1_A\otimes \beta\in KK^{\RR}(A, A\hat\otimes C_0(\RR)\hat\otimes \Cl_1)$ is an $\RR$-equivariant $KK$-equivalence between
$A$ and $A\hat\otimes C_0(\RR)\hat\otimes \Cl_1$ and its descent $J_{\RR}(1_A\otimes \beta)\in KK(A\rtimes_{\alpha}\RR, 
(A\hat\otimes C_0(\RR)\hat\otimes \Cl_1)\rtimes_{\alpha\otimes \tau\otimes \id_{\Cl_1}}\RR) $ is a $KK$-equivalence as well.
But $(A\hat\otimes C_0(\RR)\hat\otimes \Cl_1)\rtimes_{\alpha\otimes \tau\otimes \id_{\Cl_1}}\RR$ is isomorphic to $
A\hat\otimes \K(L^2(\RR))\hat\otimes \Cl_1$
by an application of \cite{CroPro}*{Lemma 2.4.1} and  \cite{CroPro}*{Example 2.6.6} (2). This finishes the proof.
\end{proof}

\begin{theorem}[Pimsner-Voiculescu]
Let $\alpha$ be a fixed automorphism of the $C^*$-algebra $A$ and let $n\mapsto \alpha^n$ be the corresponding action 
of $\ZZ$ on $A$. Then there is a six-term exact sequence
$$
\begin{CD} 
K_0(A) @>\id-\alpha_*>> K_0(A) @>\iota_* >> K_0(A\rtimes_\alpha\ZZ)\\
@A\partial AA @.   @VV\partial V\\
K_1(A\rtimes_{\alpha}\ZZ) @<<{\iota_*}< K_1(A)   @<<\id-\alpha_* <   K_1(A)
\end{CD}
$$
where $\iota:A\to A\rtimes_{\alpha}\ZZ$ denotes the canonical inclusion.
\end{theorem}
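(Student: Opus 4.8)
The plan is to deduce the Pimsner--Voiculescu sequence from Connes's Thom isomorphism (Corollary~\ref{cor-Thm}) by means of the mapping torus of $\alpha$. Set $M_\alpha:=\Ind_\ZZ^\RR A$, where $\ZZ\subseteq\RR$ acts on $A$ via $n\mapsto\alpha^n$; since $\RR/\ZZ$ is compact the $C_0$-condition in the definition of the induced algebra is vacuous, so $M_\alpha\cong\{f\in C([0,1],A):f(1)=\alpha^{-1}(f(0))\}$, which carries the translation action $\Ind\alpha$ of $\RR$. Evaluation at an endpoint yields a short exact sequence
$$0\longrightarrow C_0(0,1)\otimes A\stackrel{\iota'}{\longrightarrow}M_\alpha\stackrel{q}{\longrightarrow}A\longrightarrow 0.$$

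First I would identify $K_*(A\rtimes_\alpha\ZZ)$ with $K_{*+1}(M_\alpha)$. By Green's imprimitivity theorem (\cite{CroPro}*{Theorem 2.6.4}) the crossed products $A\rtimes_\alpha\ZZ$ and $M_\alpha\rtimes_{\Ind\alpha}\RR$ are Morita equivalent, hence $KK$-equivalent by Lemma~\ref{lem-Morita-KK}; and by Corollary~\ref{cor-Thm} applied to the $\RR$-action $\Ind\alpha$ on $M_\alpha$, the algebra $M_\alpha\rtimes_{\Ind\alpha}\RR$ is $KK$-equivalent to $M_\alpha\hat\otimes\Cl_1$. Combining, $K_*(A\rtimes_\alpha\ZZ)\cong K_*(M_\alpha\rtimes\RR)\cong K_{*+1}(M_\alpha)$.

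Next I would feed the evaluation sequence into the six-term exact sequence in $K$-theory, using the suspension isomorphism $K_*(C_0(0,1)\otimes A)\cong K_{*+1}(A)$ from Theorem~\ref{thm-BottK}. The resulting hexagon reads
$$
\begin{CD}
K_1(A) @>\iota'_*>> K_0(M_\alpha) @>q_*>> K_0(A)\\
@A\partial AA @. @VV\exp V\\
K_1(A) @<q_*<< K_1(M_\alpha) @<\iota'_*<< K_0(A).
\end{CD}
$$
Substituting $K_0(M_\alpha)\cong K_1(A\rtimes_\alpha\ZZ)$ and $K_1(M_\alpha)\cong K_0(A\rtimes_\alpha\ZZ)$ from the previous paragraph and rotating the hexagon, this becomes a six-term exact sequence of precisely the asserted shape, once two points are checked: (a) both connecting maps $\exp\colon K_0(A)\to K_0(A)$ and $\partial\colon K_1(A)\to K_1(A)$ are $\id-\alpha_*$; and (b) under all these identifications the maps $K_*(A)\to K_*(A\rtimes_\alpha\ZZ)$ induced by $\iota'_*$ become the maps induced by the canonical inclusion $\iota\colon A\to A\rtimes_\alpha\ZZ$.

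The crux is (a). Here I would compute the $KK$-class of the evaluation extension. That extension is semisplit --- $a\mapsto\bigl(t\mapsto(1-t)a+t\,\alpha^{-1}(a)\bigr)$ is a completely positive contractive section, being a convex combination of $*$-homomorphisms --- so Theorem~\ref{thm-sixterm} applies with trivial group and with $\CC$ in the remaining variable, and the connecting maps become Kasparov product with the class $c\in KK_1(A,C_0(0,1)\otimes A)\cong KK(A,A)$ attached to the extension, the last isomorphism being Bott periodicity. A standard computation with the mapping cone --- comparing the Bott loop of a projection $p$ with the unitary loop $t\mapsto\exp\bigl(2\pi i\,((1-t)p+t\,\alpha^{-1}(p))\bigr)$, the latter being a lift of $p$ along $q$ --- identifies $c$ with $\pm(1_A-[\alpha^{-1}])$, where $[\alpha^{-1}]\in KK(A,A)$ is the class of the automorphism $\alpha^{-1}$. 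Since Kasparov product with $[\alpha^{-1}]$ on $K_*(A)=KK(\CC,A)$ is $(\alpha^{-1})_*=\alpha_*^{-1}$, the connecting maps equal $\pm(\id-\alpha_*^{-1})$, and $\id-\alpha_*^{-1}=-\alpha_*^{-1}\circ(\id-\alpha_*)$ differs from $\id-\alpha_*$ only by composition with isomorphisms, so the sequence can be written with $\id-\alpha_*$ as stated (reversing a connecting map if necessary). Point (b) then follows routinely from naturality --- of the six-term sequence, of Green's bimodule, and of the Thom isomorphism --- in the coefficient algebra. The main obstacle of the whole argument is the bookkeeping in (a) pinning down the $KK$-class of the mapping-torus extension as $1_A-[\alpha^{-1}]$; everything else is assembly of results already in hand.
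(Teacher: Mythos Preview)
Your approach is essentially the same as the paper's: pass to the mapping torus $M_\alpha=\Ind_\ZZ^\RR A$, use Green's imprimitivity plus Connes's Thom isomorphism to identify $K_*(A\rtimes_\alpha\ZZ)\cong K_{*+1}(M_\alpha)$, and then feed the evaluation extension $0\to C_0(0,1)\otimes A\to M_\alpha\to A\to 0$ into the six-term sequence. The paper presents this only as a sketch and explicitly remarks that ``it is not completely trivial to check that the maps in the sequence coincide with the ones given in the theorem''; you go further by outlining how to pin down the connecting maps as $\id-\alpha_*$ via the $KK$-class of the extension and how to match $\iota'_*$ with $\iota_*$ via naturality, which is exactly the bookkeeping the paper defers.
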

\begin{proof}[Scetch of proof]
By Green's theorem \cite{CroPro}*{Theorem 2.6.4} the crossed product $A\rtimes_\alpha \ZZ$ 
is Morita equivalent to the crossed product $\Ind_{\ZZ}^{\RR}A\rtimes_{\Ind\alpha}\RR$ where the 
induced algebra $\Ind_{\ZZ}^{\RR}A$ is isomorphic to the mapping cone
$C_\alpha(A)=\{f:[0,1]\to A: f(0)=\alpha(f(1))\}$. Thus, by  Connes's Thom isomophism, we get
$$K_*(A\rtimes_{\alpha}\ZZ)\cong K_*(C_\alpha(A)\rtimes \RR)\cong K_{*+1}(C_\alpha(A)).$$
The mapping cone $C_\alpha(A)$ fits into a canonical short exact sequence 
$$0\to C_0(0,1)\otimes A\to C_\alpha(A)\to A\to 0,$$
where the quotient map is given by evaluation at $1$, say.
This gives the six-term exact sequence
$$
\begin{CD}
K_0(C_0(0,1)\otimes A) @>>> K_0(C_\alpha(A)) @>>> K_0(A)\\
@AAA  @. @VVV\\
K_1(A)  @<<< K_1(C_\alpha(A)) @<<< K_1(C_0(0,1)\otimes A).
\end{CD}
$$
Using  $K_*( C_0(0,1)\otimes A)\cong K_{*+1}(A)$
 and $K_{*+1}(C_\alpha(A))\cong K_*(A\rtimes_{\alpha}\ZZ)$ this turns into the six-term sequence of the theorem.
 (However, it is not completely trivial to check that the maps in the sequence coincide with the ones 
 given in the theorem).
 \end{proof}

The above method of proof of the Pimsner-Voiculescu theorem is taken from Blackadar's book \cite{Bla86}. 
The original proof of Pimsner and Voiculescu in \cite{PV} was independent of Connes's Thom isomorphism and 
used a certain Toeplitz extension of $A\rtimes_\alpha\ZZ$.

\subsection{The Baum-Connes conjecture for group extensions}\label{sec-extensions}
%In this section we want to discuss briefly the permanence results for the Baum-Connes conjecture
%for group extensions as shown in \cite{CEO2} as a consequence of a quite general variant 
%of the Going-Down principle which we shall discuss in the following section.
%
Suppose that $N$ is a closed normal subgroup of the second countable 
locally compact group $G$. Then, if $A$ is a $G$-$C^*$-algebra, we would like to 
relate the Baum-Connes conjecture for $G$ to the Baum-Connes conjecture 
for $N$ and $G/N$. In order to do so, we first need to
write the 
crossed product $A\rtimes_rG$ as an iterated crossed product 
$(A\rtimes_rN)\rtimes_rG/N$ for a suitable action of $G/N$ on $A\rtimes_rN$.
Unfortunately, this is not possible in general if we restrict ourselves to 
ordinary actions, but it can be done by using twisted actions as discussed in  \cite{CroPro}*{Section 2.8}.
%
%and one has to generalise the notion of an
%action to reach this goal. There are several possibilities of such  generalisations 
%and the one we want to use here is the one introduced by by Green in \cite{Green}.
%Since we don't want to go into the technical details of  Green's twisted actions 
%we refer the reader to \cite{Green}, \cite{CE1} or \cite[\S 12]{Ech-crossed} for the relevant 
%results. 
In what follows we shall simply write $A\rtimes_rG/N$ for the reduced crossed product
of a twisted action of the pair $(G,N)$ in the sense of Green. We then get the desired 
isomorphism 
$$A\rtimes_rG\cong (A\rtimes_rN)\rtimes_rG/N$$
(and similarly for the full crossed products). 
Recall that by  \cite{CroPro}*{Theorem 2.8.9} every  Green-twisted $(G,N)$-action 
 is equivariantly Morita equivalent to an ordinary action of $G/N$,
which allows to cheaply extend many results known for ordinary crossed products to 
the twisted case. 
In \cite{CE1} the authors extended the Baum-Connes assembly map to the category of twisted 
$(G,N)$-actions, and they constructed a partial assembly map
\begin{equation}\label{eq-partial-ass}
\mu_{(N,B)}^{(G,N)}:K_*^G(\EG; B)\to K_*^{G/N}(\underline{E(G/N)}, B\rtimes_rN)
\end{equation}
such that the following diagram commutes
$$
\begin{CD}
K_*^G(\EG;B)   @>\mu_{(G,B)} >> K_*(B\rtimes_rG)\\
@V\mu_{(N,B)}^{(G,N)}VV   @VV\cong V\\
K_*^{G/N}(\underline{E(G/N)}, B\rtimes_rN) @>>\mu_{(G/N, B\rtimes_rN)} > K_*((B\rtimes_rN)\rtimes_rG/N)
\end{CD}
$$
As a consequence, if the partial assembly map (\ref{eq-partial-ass}) is an isomorphism, then 
$G$ satisfies BC for $B$ if and only if $G/N$ satisfies BC for $B\rtimes_rN$. Using these ideas,
the following result has been shown in \cite{CEO}*{Theorem 2.1} extending some earlier results 
of \cites{CE1, CE2, O}:

\begin{theorem}\label{thm-ext}
Suppose that $N$ is a closed normal subgroup of the second countable locally compact group $G$ 
and let $B$ be a  $G$-$C^*$-algebra. Assume further, that the following condition (A) holds:
\begin{enumerate}
\item[(A)] Every closed subgroup $L\subseteq G$ such that $N\subseteq L$ and $L/N$ is compact satisfies the Baum-Connes conjecture for $B$.
\end{enumerate}
Then $G$ satisfies BC for $G$ if and only if $G/N$ satisfies BC  for $B\rtimes_rN$.
\end{theorem}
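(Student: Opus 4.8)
The plan is to isolate a single point, namely that hypothesis (A) forces the partial assembly map
$$\mu_{(N,B)}^{(G,N)}\colon K_*^G(\EG;B)\longrightarrow K_*^{G/N}(\underline{E(G/N)},B\rtimes_rN)$$
of (\ref{eq-partial-ass}) to be an isomorphism. Granting this, the theorem follows at once from the commutative square relating $\mu_{(G,B)}$, $\mu_{(N,B)}^{(G,N)}$ and $\mu_{(G/N,B\rtimes_rN)}$: the right vertical arrow there is the isomorphism induced by $B\rtimes_rG\cong(B\rtimes_rN)\rtimes_rG/N$, so once $\mu_{(N,B)}^{(G,N)}$ is an isomorphism, $\mu_{(G,B)}$ is an isomorphism if and only if $\mu_{(G/N,B\rtimes_rN)}$ is one, which is precisely the asserted equivalence. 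Hence from now on the only task is to show: (A) implies $\mu_{(N,B)}^{(G,N)}$ is an isomorphism.

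To prove this I would run a dévissage over the $G$-compact pieces of $\EG$. Write $K_*^G(\EG;B)=\varinjlim_X KK_*^G(C_0(X),B)$, the colimit over $G$-compact $G$-invariant $X\subseteq\EG$; for such $X$ the quotient $X/N$ is a $G/N$-compact proper $G/N$-space (properness of the $G/N$-action on $X/N$ is an easy consequence of properness of the $G$-action on $X$), and the partial assembly map is defined level-wise, carrying $KK_*^G(C_0(X),B)$ into $KK_*^{G/N}(C_0(X/N),B\rtimes_rN)$ and thence into the target colimit via $X/N\to\underline{E(G/N)}$; moreover it is natural in $X$ and commutes with the colimit. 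Two further properties of this map, both built into its construction in \cite{CE1}, are needed: it is compatible with the Mayer--Vietoris sequences attached to a decomposition of $X$ into two $G$-invariant open subsets (on the source) and the corresponding sequences on the target, and it is additive. Now, by the Abels structure theorem for proper actions (Example \ref{ex-proper}), the $G/N$-compact proper $G/N$-space $W:=X/N$ is covered by finitely many $G/N$-invariant open sets $W_1,\dots,W_l$ with each $W_i\cong(G/N)\times_{K_i}Y_i$, $K_i\le G/N$ compact. Pulling this cover back along $X\to W$ yields a $G$-invariant open cover $X=X_1\cup\dots\cup X_l$; by induction on $l$, using the Mayer--Vietoris sequences and the five lemma, it suffices to prove that $\mu_{(N,B)}^{(G,N)}$ is an isomorphism at the level of each $X_i$ (and of the finite intersections, which are of the same form).

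The crux is therefore the case of one induced piece. Let $L_i\le G$ be the preimage of $K_i$ under $G\to G/N$, so that $N\subseteq L_i$ and $L_i/N=K_i$ is compact, and let $Y_i'\subseteq X_i$ be the preimage of $Y_i\subseteq W_i$; then the $G$-map $X_i\to W_i\to G/L_i$ exhibits $X_i\cong G\times_{L_i}Y_i'$ as a $G$-space induced from $L_i$, so $C_0(X_i,B)\cong\Ind_{L_i}^G\!\bigl(C_0(Y_i')\hat\otimes B\bigr)$ (with $B$ restricted to $L_i$), and, taking the crossed product by the normal subgroup $N$, $C_0(X_i,B)\rtimes_rN$ is induced from $L_i/N$ with fibre the twisted $(L_i,N)$-algebra $C_0(Y_i',B)\rtimes_rN$. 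Using the induction isomorphism of Theorem \ref{thm-ind2} on the source, its twisted analogue from \cite{CE1} on the target, Green's imprimitivity theorem, and the compatibility of induction with the assembly map (Corollary \ref{cor-ind-assembly}), one identifies the restriction of $\mu_{(N,B)}^{(G,N)}$ to the $X_i$-level with the partial assembly map $\mu_{(N,\,B|_{L_i})}^{(L_i,N)}$ for the pair $(L_i,N)$. For that pair everything collapses: $L_i/N$ is compact, so by the Green--Julg theorem (Example \ref{ex-Green-Julg}) the bottom map $\mu_{(L_i/N,\,B\rtimes_rN)}$ of its partial-assembly square is an isomorphism, while the composite around that square is $\mu_{(L_i,\,B|_{L_i})}$, which is an isomorphism by hypothesis (A); hence $\mu_{(N,\,B|_{L_i})}^{(L_i,N)}$ is an isomorphism. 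Reassembling via the five lemma over the cover $\{W_i\}$ and passing to the colimit over $X$ completes the argument.

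The main obstacle I anticipate is the identification in the last paragraph: to justify that on an induced piece $X_i=G\times_{L_i}Y_i'$ the partial assembly map genuinely reduces to the $(L_i,N)$ partial assembly map, one must match, $G/N$-equivariantly and compatibly with the colimit maps, the partial $N$-descent, the fundamental projection for the $N$-action on $X_i$, the two induced-algebra decompositions, and the relevant Green equivalence bimodules --- and, since all of this is phrased through Green-twisted $(G,N)$- and $(L_i,N)$-actions, the twists must be carried along carefully. A secondary, purely organisational difficulty is checking that the Mayer--Vietoris machinery and the five lemma really do patch together over the directed system so that the level-wise isomorphisms assemble to an isomorphism of colimits. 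Beyond these, no input is needed that is not already available above: the partial-assembly machinery of \cite{CE1}, the induction isomorphism (Theorem \ref{thm-ind2}) with its assembly-compatibility (Corollary \ref{cor-ind-assembly}), the Green--Julg theorem (Example \ref{ex-Green-Julg}), and the continuity and Mayer--Vietoris exactness of both sides of the conjecture.
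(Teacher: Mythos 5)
Your proposal is correct and follows exactly the route the paper indicates: the text around the theorem states that ``the idea is that one should show that condition (A) implies that the partial assembly map (\ref{eq-partial-ass}) is an isomorphism,'' attributes this approach to \cite{CE1} and \cite{CE2}, and otherwise defers all details to those references and to \cite{CEO} (which proves this exact statement via a slightly different partial assembly map, obtained from the more general Going-Down principle). Your d\'evissage over an Abels-type cover of a $G$-compact piece, with the key case of a piece induced from a subgroup $L\supseteq N$ with $L/N$ compact handled by Green--Julg together with hypothesis (A), is precisely the implementation of that idea in \cite{CE1} and \cite{CE2}, and the obstacles you flag --- matching the twisted induction data on induced pieces, the cofinality of the spaces $X/N$ in the target colimit, and the Mayer--Vietoris bookkeeping --- are exactly where those papers spend their effort.
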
 

Of course, the idea is that one should show that condition (A) implies that the partial assembly map (\ref{eq-partial-ass}) is an isomorphism.
This has been  the approach in \cites{CE1, CE2}, but in \cite{CEO} a slightly different version of the partial assembly map has been used
instead. Since every compact extension $N\subseteq L$ of an a-$T$-menable group $N$ is a-$T$-menable (see  \cite{CCJJV}), and since 
every a-$T$-menable group satisfies the Baum-Connes conjecture with coefficients,
we get the following corollary:

\begin{corollary}\label{cor-ext}
Suppose that $N$ is a closed normal subgroup of the second countable locally compact group $G$ such that $N$ is 
a-$T$-menable. Suppose further that $G$ is any $G$-$C^*$-algebra. Then $G$ satisfies BC for $B$ if and only if 
$G/N$ satisfies BC for $B\rtimes_rN$. In particular, if 
$$1\to N\to G\to G/N\to 1$$
is a short exact sequence of second countable groups such that $G$ and $G/N$ are both a-$T$-menable, then 
$G$ satisfies the Baum-Connes conjecture with coeficients.
\end{corollary}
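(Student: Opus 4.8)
The plan is to deduce Corollary~\ref{cor-ext} as an essentially formal consequence of Theorem~\ref{thm-ext} and the Higson--Kasparov theorem (Theorem~\ref{thm-HK}). First I would check that, under the hypothesis that $N$ is a-$T$-menable, condition (A) of Theorem~\ref{thm-ext} holds automatically. So let $L\subseteq G$ be a closed subgroup with $N\subseteq L$ and $L/N$ compact. Then $L$ is a compact extension of the a-$T$-menable group $N$, and since the class of a-$T$-menable groups is stable under compact extensions (\cite{CCJJV}), the group $L$ is itself a-$T$-menable. By Theorem~\ref{thm-HK}, $L$ therefore satisfies the strong Baum--Connes conjecture, hence BCC, hence in particular BC for the restriction of $B$ to $L$. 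This verifies (A), and Theorem~\ref{thm-ext} immediately yields that $G$ satisfies BC for $B$ if and only if $G/N$ satisfies BC for $B\rtimes_rN$, which is the first assertion.

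For the ``in particular'' statement I would argue as follows. Let $B$ be an arbitrary $G$-$C^*$-algebra. Since $G$ is a-$T$-menable and a-$T$-menability passes to closed subgroups (\cite{CCJJV}), the normal subgroup $N$ is a-$T$-menable, so the first part of the corollary applies and it suffices to show that $G/N$ satisfies BC for $B\rtimes_rN$. Now $B\rtimes_rN$ carries a Green-twisted $(G,N)$-action, and by \cite{CroPro}*{Theorem 2.8.9} this twisted action is equivariantly Morita equivalent to an ordinary action of $G/N$ on some $C^*$-algebra $C$; using the $KK^{G/N}$-invariance of the Baum--Connes conjecture (Exercise~\ref{ex-commute}) the twisted Baum--Connes statement for $B\rtimes_rN$ is then equivalent to the ordinary one for $C$. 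Since $G/N$ is a-$T$-menable it satisfies BCC by Theorem~\ref{thm-HK}, hence BC for $C$, hence BC for $B\rtimes_rN$. Thus $G$ satisfies BC for $B$, and as $B$ was arbitrary, $G$ satisfies the Baum--Connes conjecture with coefficients.

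I do not expect any genuine obstacle here: all the analytic weight is carried by Theorem~\ref{thm-ext} and Theorem~\ref{thm-HK}, which we may use freely, so the proof is a short deduction. The only two points that require a little care are the two permanence properties of the class of a-$T$-menable groups --- stability under compact extensions and under closed subgroups --- which I would simply cite from \cite{CCJJV}, and the identification of ``$G/N$ satisfies BC for $B\rtimes_rN$'' with an honest Baum--Connes statement, which is bookkeeping handled by the twisted-to-ordinary Morita equivalence together with the $KK^{G/N}$-invariance of the conjecture recalled above.
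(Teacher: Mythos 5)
Your proposal is correct and follows essentially the same route as the paper: condition (A) of Theorem \ref{thm-ext} is verified by noting that any closed $L$ with $N\subseteq L$ and $L/N$ compact is a compact extension of the a-$T$-menable group $N$, hence a-$T$-menable by \cite{CCJJV} and hence satisfies BCC by the Higson--Kasparov theorem; the ``in particular'' statement then follows since a-$T$-menability passes to the closed subgroup $N$ and since the a-$T$-menable quotient $G/N$ satisfies BC for $B\rtimes_rN$. Your extra care with the twisted-to-ordinary Morita equivalence for the $(G,N)$-action on $B\rtimes_rN$ is exactly the bookkeeping the paper delegates to \cite{CroPro}*{Theorem 2.8.9}.
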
 

Note that it is not true in general that $G$ is a-$T$-menable if $N$ and $G/N$ are a-$T$-menable. For counter examples 
see \cite{CCJJV}.

Theorem \ref{thm-ext} has been used extensively in \cite{CEN} to give the proof of the original Connes-Kasparov conjecture, which 
is equivalent to the Baum-Connes conjecture with trivial coefficients for the class of all (second countable) 
almost connected groups. The basic idea goes as follows: If $G$ is any almost connected group, then one can use structure 
theory for such groups to see that there exists an amenable normal subgroup of $N$ of $G$ such that $G/N$ is a reductive Lie-group.
Since amenable groups are also a-$T$-menable, we can apply Corollary \ref{cor-ext}
to see that $G$ satisfies BC with trivial coefficients if and only $G/N$ satisfies BC with coefficient $C_r^*(N)$.
Now by Lafforgue's results we know that the reductive group $G/N$ satisfies BC with trivial coefficients and we 
somehow need to find good arguments which give us BC for the coefficient algebra $C_r^*(N)$ instead. It is this point 
where the arguments become quite complicated and we refer to \cite{CEN} for the details of the proof.

\section{The Going-Down   (or restriction) principle and applications}\label{sec-rest}
\subsection{The Going-Down principle}\label{subsec-DG}
In this section we want to discuss an application of the Baum-Connes conjecture which 
helps, among other things,  to give explicit $K$-theory computations in some interesting cases.
Assume we have two $G$-$C^*$-algebras $A$ and $B$  and a $G$-equivariant 
$*$-homomorphism  $\phi:A\to B$. This map descents 
to a map 
$$\phi\rtimes_rG: A\rtimes_rG\to B\rtimes_rG.$$
Suppose  we want to prove that this map induces an isomorphism in $K$-theory.
If $G$ satisfies the Baum-Connes conjecture for $A$ and $B$, this problem is equivalent
to the problem that the map
$$\phi_*: K_*^G(\EG;A)\to K_*^G(\EG;B)$$
is an isomorphism (use Exercise \ref{ex-commute}). 
The restriction (or Going-Down) principle allows us to deduce the isomorphism on the 
level of topological $K$-theory from the behaviour on compact subgroups of $G$. 
Let us state the theorem:

\begin{theorem}[Going-Down principle]\label{thm-GD}
Suppose that $G$ is a second countable locally compact group,  $A$ and $B$ are $G$-$C^*$-algebras, and $x\in KK^G(A,B)$ such that 
for all compact subgroups $K\subseteq G$ the class
$\res_K^G(x)\in KK^K(A,B)$ induces an isomorphism
$$\cdot\otimes_A\res_K^G(x): KK_*^K(\CC, A)\stackrel{\cong}{\to} KK_*^K(\CC, B).$$
Then the map
$$\cdot\otimes_A x: K_*^G(\EG;A)\to K_*^G(\EG;B),$$
which is given on the level of $KK_*^G(C_0(X),A)$ for a $G$-compact $X\subseteq \EG$ by 
Kasparov product with $x$, is an isomorphism. As a consequence, if $G$ satisfies the 
Baum-Connes conjecture for $A$ and $B$, then the class $x$ induces an isomorphism
$$\cdot\otimes_{A\rtimes_rG} J_G(x):K_*(A\rtimes_rG)\stackrel{\cong}{\longrightarrow} K_*(B\rtimes_rG).$$
\end{theorem}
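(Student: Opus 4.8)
The plan is to derive the last assertion from the first. Suppose it has been shown that $\cdot\otimes_Ax\colon K_*^G(\EG;A)\to K_*^G(\EG;B)$ is an isomorphism, and that $G$ satisfies the Baum--Connes conjecture for $A$ and for $B$. Then in the commuting square of Exercise~\ref{ex-commute},
$$
\begin{CD}
K_*^G(\EG;A)   @>\mu_{(G,A)} >> K_*(A\rtimes_r G)\\
@V \cdot\otimes_A x VV  @VV \cdot\otimes_{A\rtimes_rG}J_G(x) V\\
K_*^G(\EG;B)   @>\mu_{(G,B)} >> K_*(B\rtimes_r G)
\end{CD}
$$
three of the four arrows are invertible, so $\cdot\otimes_{A\rtimes_rG}J_G(x)=\mu_{(G,B)}\circ(\cdot\otimes_Ax)\circ\mu_{(G,A)}^{-1}$ is invertible as well. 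Everything therefore reduces to proving that $\cdot\otimes_Ax$ is an isomorphism on topological $K$-theory.

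Recall that $K_*^G(\EG;A)=\operatorname{colim}_X KK_*^G(C_0(X),A)$, the colimit over $G$-compact subsets $X$ of $\EG$, and that by associativity of the Kasparov product $\cdot\otimes_Ax$ is a morphism of the colimit systems for $A$ and for $B$. Since filtered colimits of abelian groups are exact, it suffices to show that $\cdot\otimes_Ax\colon KK_*^G(C_0(X),A)\to KK_*^G(C_0(X),B)$ is an isomorphism for all $X$ in a cofinal family, and here we may take $\EG$ to be a proper $G$-CW complex and let $X$ range over its finite proper $G$-CW subcomplexes (cf.\ \cite{BCH}). I would argue by induction on the number of equivariant cells of $X$, the empty complex being trivial. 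If $X$ is obtained from a subcomplex $X'$ by attaching one $G$-cell $G/K\times D^n$ --- with $K$ compact, because the action is proper --- along $G/K\times S^{n-1}$, then the open $G$-inclusion $G/K\times\RR^n\into X$ gives a short exact sequence
$$0\longrightarrow C_0(G/K)\otimes C_0(\RR^n)\longrightarrow C_0(X)\longrightarrow C_0(X')\longrightarrow 0,$$
which is $G$-equivariantly semisplit (commutative $C^*$-algebras are nuclear, and a $G$-equivariant completely positive section is obtained from a partition of unity adapted to the cell structure). By Theorem~\ref{thm-sixterm} this yields a six-term exact sequence in $KK_*^G(-,A)$, natural in $A$ and hence compatible with $\cdot\otimes_Ax$. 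By the inductive hypothesis $\cdot\otimes_Ax$ is an isomorphism on the $C_0(X')$-term, so by the five lemma it remains to treat the ideal term $C_0(G/K)\otimes C_0(\RR^n)$.

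For that term, Bott periodicity (Corollary~\ref{cor-suspension}, together with Proposition~\ref{prop-Bott}) identifies $\cdot\otimes_Ax$ on $KK_*^G\big(C_0(G/K)\otimes C_0(\RR^n),A\big)$, via Kasparov product with a fixed $KK^G$-equivalence, with $\cdot\otimes_Ax$ on $KK_*^G(C_0(G/K),A)$ up to a shift of degree; so we are reduced to the case $n=0$. Now $C_0(G/K)=\Ind_K^G\CC$, and the induction--restriction adjunction in equivariant $KK$-theory provides a natural isomorphism $KK^G(\Ind_K^G\CC,A)\cong KK^K(\CC,\res_K^GA)$ that intertwines $\cdot\otimes_Ax$ with $\cdot\otimes_{\res_K^GA}\res_K^G(x)$, using that $\res_K^G$ is a functor compatible with Kasparov products. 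Since $K$ is a compact subgroup of $G$, this last map $KK_*^K(\CC,A)\to KK_*^K(\CC,B)$ is an isomorphism by hypothesis, which closes the induction. The two points requiring care --- and where I expect the real work to lie --- are the $G$-equivariant semisplitness of the cell-attaching sequences and, above all, the induction--restriction adjunction $KK^G(\Ind_K^GB,A)\cong KK^K(B,\res_K^GA)$ together with the verification that it is natural enough to intertwine composition with $x$ on one side and with $\res_K^G(x)$ on the other; this is the technical heart, available in the literature (see \cite{K3} and \cite{CE2}), and once granted the induction on cells and the diagram chases above are routine.
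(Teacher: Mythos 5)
Your proposal is correct and follows essentially the same route as the paper's proof: a cell-by-cell induction over a cofinal family of $G$-compact pieces, semisplit attaching sequences with the six-term sequence and the five lemma, Bott periodicity to strip off the $C_0(\RR^n)$ factor, and the induction--restriction (compression) isomorphism $KK^G_*(C_0(G/K),B)\cong KK^K_*(\CC,B)$ of Lemma \ref{lem-compression} to land on the compact-subgroup hypothesis; the paper merely organises the induction by dimension of a $G$-simplicial complex (handling all top-dimensional open cells at once) rather than by the number of equivariant cells. The one caveat is that the reduction to a cofinal family of finite proper $G$-CW (or $G$-finite simplicial) subcomplexes of $\EG$ is only justified in the text for discrete $G$ (Lemmas \ref{lem-simplicial} and \ref{lem-simplicial1}, via the complexes $\mathcal M_F$); for general second countable locally compact $G$ both your argument and the paper's own exposition defer to the machinery of \cite{CEO}.
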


\begin{remark}\label{rem-compact-subgroup} 
There are many interesting groups $G$ for which the trivial subgroup is the only compact subgroup
(e.g., $G=\RR^n, \ZZ^n$ or the free group $F_n$ in $n$ generators). For those groups the condition 
on compact subgroups in the theorem reduces to the single condition that 
$$\cdot\otimes_A x: K_*(A)\to K_*(B)$$ 
is an isomorphism. In many applications, this condition comes for free. 
\end{remark}

\begin{remark}\label{remark-GD}
Instead of asking that $\cdot\otimes_A\res_K^G(x): KK_*^K(\CC, A)\stackrel{\cong}{\to} KK_*^K(\CC, B)$ 
is an isomorphism for all compact subgroups $K$ of $G$, we could alternatively require that 
$$\cdot\otimes_{A\rtimes K}J_K(\res_K^G(x)): K_*(A\rtimes K)\to K_*(B\rtimes K)$$
is an isomorphism for all such $K$. This follows from the commutativity of the diagram
$$
\begin{CD}
KK_*^K(\CC, A)  @> \cdot\otimes_A\res_K^G(x) >> KK_*^K(\CC, B)\\
@V\mu_{(K,A)}VV   @VV\mu_{(K,B)}V\\
K_*(A\rtimes K) @>>\cdot\otimes_{A\rtimes K}J_K(\res_K^G(x)) >K_*(B\rtimes K),
\end{CD}
$$
in which the vertical arrows are the isomorphisms of the Green-Julg theorem (see Example \ref{ex-Green-Julg}).
\end{remark}

The proof of Theorem \ref{thm-GD} in the above version is given in  \cite{ELPW}*{Proposition 1.6.}, but 
it relies very heavily on a more general Going-Down principle obtained by Chabert, Echterhoff and Oyono-Oyono 
in \cite{CEO}*{Theorem 1.5}. In that paper we also show how Theorem \ref{thm-ext} on the Baum-Connes conjecture 
for group extensions can be obtained as a consequence of the (more general)
Going-Down principle. In what follows below we shall present the proof in the case where $G$ is discrete. In this case 
the proof becomes much easier, but still reveals the basic ideas. Note that most of the relevant details for the 
discrete case first appeared (in the language of $E$-theory) in \cite{GHT}*{Chapter 12}.

If $G$ is discrete, then the first observation is that each $G$-compact proper $G$-space 
maps continuously and $G$-equivariantly into the geometric realisation of a $G$-finite $G$-simplicial complex.
For this let $F\subseteq G$ be any finite subset of $G$ which contains the identity of $G$. We then define 
$$\mathcal M_F=\big\{f\in C_c^+(G):  \sum_{g\in G}f(g)=1\;\text{and $\forall g,h\in \supp(f): g^{-1}h\in F$}\big\}.$$
Then $\mathcal M_F$ is the geometric realisation of  a locally finite simplicial complex 
with vertices $\{g: g\in G\}$ and $\{g_1,\ldots, g_n\}$ is an $n$-simplex if and only if 
$g_i\neq g_j$ for $i\neq j$ and $g_i^{-1}g_j\in F$ for all
$1\leq i,j\leq n$. It follows directly from the definition that for any simplex $\{g_1,\ldots, g_n\}$ we have
$g_1^{-1}\{g_1,\ldots, g_n\}\subseteq F$, hence $\mathcal M_F$ is $G$-finite in the sense that
there exists a finite set $\mathcal S$ of simplices such that 
every other simplex is a translate of one in $\mathcal S$. Note that this implies that for all $f\in \mathcal M_F$
 the formula
\begin{equation}\label{eq-cut-M_F}
1=\sum_{g\in G} f(g)=\sum_{g\in G}g\cdot f(e)
\end{equation}
holds. 
Note also that if $F\subseteq F'$ for some finite set $F'\subseteq G$, then there is a canonical inclusion 
$\mathcal M_F\subseteq \mathcal M_{F'}$. With this we get:

\begin{lemma}\label{lem-simplicial}
Suppose that $G$ is a discrete group and let $X$ be a $G$-compact proper $G$-space.
Suppose further that $c:X\to [0,1]$ is a cut-off function for $X$ as in Section \ref{subsec-assembly}.
Then there exists a finite subset $F\subseteq G$ such that $g(\supp(c))\cap \supp(c)=\emptyset$ 
for all $g\notin F$ and a continuous $G$-map
$$\varphi_c:X\to \mathcal M_F; \varphi_c(x)=[g\mapsto c^2(g^{-1}x)].$$
Moreover, for any other continuous $G$-map $\psi:X\to \mathcal M_F$ there is a finite set $F'\subseteq G$ containing $F$
such that  $\psi$ is $G$-homotopic to $\varphi$ in $\mathcal M_{F'}$.
\end{lemma}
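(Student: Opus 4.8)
The plan is to construct $\varphi_c$ directly from the cut-off function and check the three required properties in turn: that a suitable finite $F$ exists, that $\varphi_c$ lands in $\mathcal M_F$ and is continuous and $G$-equivariant, and finally that any other $G$-map into $\mathcal M_F$ is $G$-homotopic to it in a possibly larger $\mathcal M_{F'}$. First I would produce the finite set $F$. Since $c$ has compact support and $X$ is a proper $G$-space, the set of $g\in G$ with $g(\supp c)\cap \supp c\neq\emptyset$ is finite: indeed, if $\varphi:G\times X\to X\times X$ is the structure map of the proper action, this set is the projection to $G$ of the compact set $\varphi^{-1}(\supp c\times \supp c)$, hence finite because $G$ is discrete. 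Enlarging it if necessary we may assume $F=F^{-1}$ and $e\in F$.

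Next I would verify that $\varphi_c(x):=[g\mapsto c^2(g^{-1}x)]$ is a well-defined point of $\mathcal M_F$. The function $g\mapsto c^2(g^{-1}x)$ is non-negative, finitely supported (again by properness, only finitely many $g$ give $g^{-1}x\in\supp c$), and by the cut-off normalisation $\int_G c(s^{-1}x)^2\,ds=1$, which for discrete $G$ with counting measure reads $\sum_{g\in G}c^2(g^{-1}x)=1$. If $g,h\in\supp(g\mapsto c^2(g^{-1}x))$ then $g^{-1}x, h^{-1}x\in\supp c$, so $(g^{-1}h)(h^{-1}x)=g^{-1}x\in\supp c$ shows $g^{-1}h$ moves $\supp c$ into itself, hence $g^{-1}h\in F$ by the choice of $F$. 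Thus $g\mapsto c^2(g^{-1}x)$ lies in $\mathcal M_F$. Continuity of $\varphi_c$ follows from continuity of $c$ together with the fact that near any $x_0$ the support is controlled by a fixed finite set of group elements (local finiteness of the proper action), so only finitely many coordinates vary; and $G$-equivariance is immediate: $\varphi_c(hx)=[g\mapsto c^2(g^{-1}hx)]=[g\mapsto c^2((h^{-1}g)^{-1}x)]=h\cdot\varphi_c(x)$, using the $G$-action $(h\cdot f)(g)=f(h^{-1}g)$ on $C_c^+(G)$ that defines the simplicial $G$-action on $\mathcal M_F$.

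For the last claim, let $\psi:X\to\mathcal M_F$ be another continuous $G$-map. The straight-line homotopy $\psi_t=t\psi+(1-t)\varphi_c$ is continuous, $G$-equivariant, and each $\psi_t(x)$ is a non-negative function summing to $1$; the only issue is that the simplex-size condition $g^{-1}h\in F$ need not be preserved, since $\supp\psi_t(x)$ can be as large as $\supp\varphi_c(x)\cup\supp\psi(x)$. However, by $G$-compactness of $X$ and $G$-finiteness of $\mathcal M_F$ one finds a single finite set $F'\supseteq F$ (for instance $F'$ containing all products $a^{-1}b$ with $a\in\supp\varphi_c(x)$, $b\in\supp\psi(x)$ as $x$ ranges over a compact fundamental domain for the $G$-action) such that every $\psi_t(x)$ satisfies the simplex condition for $F'$, i.e. $\psi_t(x)\in\mathcal M_{F'}$. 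Hence $\psi_t$ is a $G$-homotopy in $\mathcal M_{F'}$ from $\varphi_c$ to $\psi$ under the canonical inclusion $\mathcal M_F\subseteq\mathcal M_{F'}$. The main obstacle is this last step: one has to extract the finite set $F'$ uniformly, which uses that $G\backslash X$ is compact so that the supports of $\varphi_c(x)$ and $\psi(x)$, modulo the $G$-action, range over a compact set and therefore only finitely many relative positions $a^{-1}b$ occur. Everything else is a routine unwinding of definitions.
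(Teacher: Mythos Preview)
Your proof is correct and follows essentially the same outline as the paper's: the existence of the finite set $F$ via properness, the verification that $\varphi_c$ lands in $\mathcal M_F$ and is a continuous $G$-map, and the straight-line homotopy for the last part.

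There is one small difference in the last step worth noting. The paper observes that \emph{any} continuous $G$-map $\psi:X\to\mathcal M_F$ is automatically of the form $\varphi_{\tilde c}$ for another cut-off function $\tilde c(x):=\sqrt{\psi(x)(e)}$ (this uses $G$-equivariance and equation~(\ref{eq-cut-M_F})). Interpolating the cut-off functions via $d(x,t)=\sqrt{(1-t)c^2(x)+t\tilde c^2(x)}$ then produces your straight-line homotopy, but now the existence of the finite $F'$ is immediate: one simply reapplies the first part of the lemma to the cut-off function $d$ on the proper $G$-compact space $X\times[0,1]$. Your argument for $F'$ via a compact fundamental domain and the finiteness of $\psi(K)$ in the locally finite complex $\mathcal M_F$ is also valid, but the paper's route avoids having to control the supports of $\psi$ separately.
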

\begin{proof} Existence of the finite set $F$ as in the lemma follows from compactness of 
the set $\{(g, x): (gx, x)\in \supp(c)\times \supp(c)\}\subseteq G\times X$.  It is compact since $G$ acts
properly on  $X$.
It is then straightforward to check that $\varphi_c$ is a continuous $G$-map. 
Suppose now that $\psi:X\to \mathcal M_F$ is any other continuous $G$-map. We define
$\tilde{c}:X\to [0,1]$ as $\tilde{c}(x):=\sqrt{\psi(x)(e)}$. It follows then from (\ref{eq-cut-M_F})
that $\tilde{c}$ is a cut-off function as well and that $\psi=\varphi_{\tilde{c}}$. Now let 
$d:X\times [0,1]\to [0,1]$ be given by $d(x,t):= \sqrt{(1-t)c^2(x)+ t\tilde{c}^2(x)}$.
Then there exists a finite set $F'\supseteq F$ such that $g(\supp(d))\cap \supp(d)=\emptyset$ for all
$g\notin F'$. The continuous $G$-map $\varphi_d: X\times [0,1]\to \mathcal M_{F'}$ then evaluates 
to $\varphi_c$ at $t=0$ (using $\mathcal M_F\subseteq \mathcal M_{F'}$)
and  to $\varphi_{\tilde{c}}=\psi$ at $t=1$.
\end{proof}

\begin{lemma}\label{lem-simplicial1}
Let $G$ be a discrete group. Then for every $G$-$C^*$-algebra $A$ we have
$$K_*^G(\EG;A)=\lim_{F} KK_*^G(C_0(\mathcal M_F),A),$$
where $F$ runs through all finite subsets of $G$ and the limit is taken 
with respect to the canonical  inclusion $\mathcal M_F\subseteq \mathcal M_{F'}$ if $F\subseteq F'$.
\end{lemma}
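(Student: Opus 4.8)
The plan is to prove that the subsystem $\{\mathcal M_F\}_F$ is \emph{cofinal}, in a suitable homotopical sense, inside the directed system of $G$-compact subsets of $\EG$, so that the two colimits agree. Throughout I use that $KK^G$ is homotopy invariant in each variable, that $G$-homotopic $G$-maps between $G$-spaces induce $G$-homotopic $*$-homomorphisms on the corresponding $C_0$-algebras, and two standing facts about $\mathcal M_F$: it is a $G$-compact proper $G$-space (it is the realisation of a $G$-finite, locally finite simplicial complex on which the discrete group $G$ acts simplicially with finite stabilisers, and $G$-maps between $G$-compact proper $G$-spaces are automatically proper by Lemma \ref{lem-proper-prop}, so that pullback on $C_0$ makes sense), and, second, that every continuous $G$-map $\psi\colon X\to\mathcal M_F$ is of the form $\psi=\varphi_{\tilde c}$ for the cut-off function $\tilde c(x)=\sqrt{\psi(x)(e)}$, by the computation carried out in the proof of Lemma \ref{lem-simplicial}.

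First I would construct a homomorphism $\Psi\colon \lim_F KK_*^G(C_0(\mathcal M_F),A)\to K_*^G(\EG;A)$. Since $\mathcal M_F$ is a $G$-compact proper $G$-space, universality of $\EG$ provides a continuous $G$-map $\rho_F\colon \mathcal M_F\to\EG$, unique up to $G$-homotopy, with $G$-compact image $Z_F\subseteq\EG$; the $*$-homomorphism $\rho_F^*\colon C_0(Z_F)\to C_0(\mathcal M_F)$ induces $KK_*^G(C_0(\mathcal M_F),A)\to KK_*^G(C_0(Z_F),A)$, and composition with the structure map into the colimit gives $\psi_F$. For $F\subseteq F'$ the $G$-maps $\rho_{F'}|_{\mathcal M_F}$ and $\rho_F$ agree up to $G$-homotopy, and since any such homotopy has $G$-compact image in $\EG$ (because $\mathcal M_F\times[0,1]$ is $G$-compact), the maps $\psi_F$ and $\psi_{F'}$ are compatible with the inclusion $\mathcal M_F\subseteq\mathcal M_{F'}$, so the $\psi_F$ assemble into $\Psi$. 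Conversely, given a $G$-compact $X\subseteq\EG$ and a cut-off function $c$ for $X$, Lemma \ref{lem-simplicial} provides a finite $F$ and a $G$-map $\varphi_c\colon X\to\mathcal M_F$, hence a map $KK_*^G(C_0(X),A)\to KK_*^G(C_0(\mathcal M_F),A)\to\lim_F$; two cut-offs for $X$ give $G$-maps into a common $\mathcal M_{F'}$ that are $G$-homotopic there by the last assertion of Lemma \ref{lem-simplicial}, and the restriction of a cut-off for $X'$ to $X\subseteq X'$ is again a cut-off for $X$, so these maps are compatible with the inclusions $X\subseteq X'$; this yields a homomorphism $\Phi\colon K_*^G(\EG;A)\to\lim_F KK_*^G(C_0(\mathcal M_F),A)$.

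The core step is to check that $\Phi$ and $\Psi$ are mutually inverse. I would first isolate a bookkeeping statement: if $P$ is any $G$-compact proper $G$-space and $f_1\colon P\to Z_1$, $f_2\colon P\to Z_2$ are $G$-maps into $G$-compact subsets of $\EG$ that are $G$-homotopic as maps into $\EG$, then the two induced composites $KK_*^G(C_0(P),A)\to KK_*^G(C_0(Z_i),A)\to K_*^G(\EG;A)$ coincide; this follows by enlarging $Z_1\cup Z_2$ to a $G$-compact $W\subseteq\EG$ containing the (necessarily $G$-compact) image of the homotopy and applying homotopy invariance of $KK^G$ in $W$. Applied with $P=X$, $f_1=\rho_F\circ\varphi_c$ and $f_2=\mathrm{id}_X$, using that $\rho_F\circ\varphi_c$ is $G$-homotopic to the inclusion $X\hookrightarrow\EG$ by universality, this gives $\Psi\circ\Phi=\mathrm{id}$. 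For $\Phi\circ\Psi=\mathrm{id}$ one needs the analogue for the target colimit: the $G$-map $g:=\varphi_c\circ\rho_F\colon\mathcal M_F\to\mathcal M_{F'}$ equals $\varphi_{c_1}$ for the cut-off $c_1(z)=\sqrt{g(z)(e)}$ of $\mathcal M_F$, while for $F''\supseteq F\cup F'$ the inclusion $\mathcal M_F\hookrightarrow\mathcal M_{F''}$ equals $\varphi_{c_0}$ for the cut-off $c_0(z)=\sqrt{z(e)}$; by the last assertion of Lemma \ref{lem-simplicial} these are $G$-homotopic inside some $\mathcal M_{F'''}$, so $g$ induces on $\lim_F$ exactly the inclusion-induced map, whence $\Phi\circ\Psi=\mathrm{id}$. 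Since the structure maps of each colimit are jointly surjective, these two identities finish the proof.

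I expect the main obstacle to be precisely this bookkeeping: organising the nested enlargements of the finite subsets $F$ and of the $G$-compact subsets $X\subseteq\EG$ so that the various $G$-homotopies — between competing maps $\mathcal M_F\to\EG$, and between the maps $\mathcal M_F\to\mathcal M_{F'}$ and the inclusions — can be absorbed into single members of the respective directed systems, while keeping the contravariance of $KK^G$ in the first variable straight when composing the induced maps. The geometric inputs (that $\mathcal M_F$ is $G$-compact and proper, and that $G$-maps between $G$-compact proper $G$-spaces are proper so that pullback on $C_0$ is defined) should be stated explicitly but are routine given the preceding material.
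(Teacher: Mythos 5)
Your proposal is correct and follows essentially the same route as the paper: the paper's proof is exactly the cofinality/zigzag argument between the directed system of $G$-compact subsets $X\subseteq\EG$ and the system $\{\mathcal M_F\}_F$, using the universal property of $\EG$ for the maps $\mathcal M_F\to X_F$ and Lemma \ref{lem-simplicial} for the maps $X\to\mathcal M_{F'}$, with the diagram commuting up to $G$-homotopy after suitable enlargements. You have merely spelled out in more detail the verification that the two induced maps on colimits are mutually inverse, which the paper compresses into the phrase that the zigzag ``commutes sufficiently well''.
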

\begin{proof}
This follows  from the definition 
$$K_*^G(\EG;A)=\lim_{X}KK_*^G(C_0(X), A),$$ where 
$X$ runs through the $G$-compact subsets of $\underline{EG}$ and Lemma \ref{lem-simplicial}: By the universal 
property of $\underline{EG}$ there are $G$-compact subsets $X_F\subseteq \underline{EG}$ and 
$G$-continuous maps $\mathcal M_F\to X_F\subseteq \underline{EG}$, which, up to a 
possible enlargement of $X_F$, are unique up to $G$-homotopy. On the other hand, 
Lemma \ref{lem-simplicial} provides maps $X_F\to \mathcal M_{F'}$ for some $F'\supseteq F$ which, 
up to passing to a bigger set $F''$ if necessary, is also unique up to $G$-homotopy. Thus we 
get a zigzag diagram
{\tiny$$ \xymatrix{&KK_*^G(C_0(\mathcal M_F),A)   \ar[d]  \ar[r] & KK_*^G(C_0(\mathcal M_{F'}),A)   \ar[d]  \ar[r] &  KK_*^G(C_0(\mathcal M_{F''}),A).....\\
KK^G_*(C_0(X), A)  \ar[ur]  \ar[r]&    KK_*^G(C_0( X_F),A)   \ar[ur]  \ar[r] & KK_*^G(C_0(X _{F'}),A)   \ar[ur]  \ar[r] & .....
}
$$}
which commutes sufficiently well to induce an isomorphism of the inductive limits.
\end{proof}

The next lemma gives the crucial point in the proof Theorem \ref{thm-GD} in case of discrete $G$.
It has first been shown in the setting of $E$-theory in \cite{GHT}*{Lemma 12.11}. A more general version 
for arbitrary open subgroups $H$ of a second countable locally compact group $G$ has been shown
in \cite{CE2}*{Proposition 5.14}.

\begin{lemma}\label{lem-compression}
Suppose that $K\subseteq G$ is a finite subgroup of the discrete group $G$. Then,
for every $G$-$C^*$-algebra $B$, 
there is a well defined {\em compression isomorphism}
$$\comp_K: KK^G_*(C_0(G/K), B)\to KK_*^K(\CC, B)$$
given as the composition of the maps
$$KK^G_*(C_0(G/K), B) \stackrel{\res_K^G}{\longrightarrow} KK_*^K(C_0(G/K), B)\stackrel{\iota^*}{\longrightarrow}
KK_*^K(\CC, B),$$
where $\iota:\CC\into C_0(G/K)$ denotes the inclusion $\lambda\mapsto \lambda \delta_{eK}$ with 
$\delta_{eK}$  the characteristic function of the open one-point set $\{eK\}\subseteq G/K$.
\end{lemma}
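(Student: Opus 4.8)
The plan is to show that $\comp_K$ is an isomorphism by exhibiting its inverse via induction from $K$ to $G$. The key structural fact is that $C_0(G/K)$, as a $G$-$C^*$-algebra, is precisely the induced algebra $\Ind_K^G\CC$, where $\CC$ carries the trivial $K$-action; this is a special case of the discussion of induced algebras (Theorem~\ref{thm-induced}), since $C_0(G/K)\cong\Ind_K^GC_0(\{\pt\})$. The first step is therefore to recall Kasparov's induction homomorphism $\Ind_K^G:KK_*^K(\CC,B)\to KK_*^G(\Ind_K^G\CC,\Ind_K^GB)$ and to compose it with the map on the second variable induced by the canonical $G$-equivariant $*$-homomorphism (indeed a nondegenerate inclusion onto a $G$-invariant ideal, or an appropriate Morita/collapse map) $\Ind_K^GB\to B$ coming from the $K$-equivariant evaluation $B\to B$ at the coset $eK$; more precisely one uses that for $K$ finite, $\Ind_K^GB\rtimes G$ is Morita equivalent to $B\rtimes K$ by Green's imprimitivity theorem, so there is a natural $KK$-class relating $\Ind_K^GB$ and $B$ after the descent. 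This yields a candidate inverse map $\iota_K:KK_*^K(\CC,B)\to KK_*^G(C_0(G/K),B)$.

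The second step is to verify that $\comp_K\circ\iota_K$ and $\iota_K\circ\comp_K$ are both the identity. For $\comp_K\circ\iota_K=\id$ on $KK_*^K(\CC,B)$: starting from a $K$-equivariant cycle $(\H,\Phi,T)$ over $B$, one forms the induced $G$-equivariant cycle over $C_0(G/K)$ and $\Ind_K^GB$, then restricts back to $K$ and pulls back along $\iota:\CC\into C_0(G/K)$; the point is that the fibre over $eK$ of the induced module, with its residual $K$-action, is canonically the original module $\H$, so one recovers $(\H,\Phi,T)$ up to isomorphism of cycles. This is essentially a bookkeeping computation with the explicit formulas for $\Ind_K^G\E$, $\Ind\Phi$, and $\tilde T$ given in the excerpt. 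For the other composite $\iota_K\circ\comp_K=\id$ on $KK_*^G(C_0(G/K),B)$: the crucial observation is that any $G$-equivariant $C_0(G/K)$-$B$ cycle is \emph{determined} by its behaviour over the single coset $eK$, because the $G$-action permutes the fibres transitively and the $G$-equivariance then reconstructs the whole cycle from the fibre over $eK$ together with the residual $K$-action. In cycle language, a $G$-$C_0(G/K)$-module $\E$ decomposes as a ``direct sum over cosets'' $\bigoplus_{gK\in G/K}\E_{gK}$ with $\E_{gK}=g\cdot\E_{eK}$, and this is exactly the induced-module picture; hence restricting to $eK$ and then re-inducing returns the same class.

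The main obstacle I expect is making the identification $C_0(G/K)=\Ind_K^G\CC$ and the compatibility of the two directions precise at the level of \emph{cycles} rather than just algebras --- in particular, handling the grading and the operator $T$ correctly through induction and through the pullback along the non-unital inclusion $\iota:\CC\into C_0(G/K)$, and checking that the degenerate pieces (the cycles supported away from $eK$) really do drop out in homotopy. A clean way to organise this, which I would adopt, is to factor everything through the Morita-equivalence statement: by Green's imprimitivity and the naturality of descent one has $C_0(G/K)\rtimes G\cong\K(\ell^2(G/K))$-Morita-equivalent to $\CC=\CC\rtimes K$, and more generally $(C_0(G/K)\otimes B')\rtimes G$ is Morita equivalent to $B'\rtimes K$ naturally in the $K$-algebra $B'$; combined with the Green--Julg description of the assembly maps for $K$ (Example~\ref{ex-Green-Julg}) and the fact that $C_0(G/K)$ is a proper $G$-algebra so that Theorem~\ref{thm-proper-Green-Julg} applies, one gets a square of isomorphisms which forces $\comp_K$ to be an isomorphism as well. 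I would present the direct cycle-level argument as the main proof and mention this Morita-theoretic route as a cross-check.

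Thus the proof runs: (1) identify $C_0(G/K)\cong\Ind_K^G\CC$; (2) define $\iota_K$ via Kasparov induction followed by the canonical map $\Ind_K^GB\rightsquigarrow B$; (3) compute $\comp_K\circ\iota_K=\id$ using that the fibre at $eK$ of an induced module is the original module; (4) compute $\iota_K\circ\comp_K=\id$ using that a $G$-equivariant $C_0(G/K)$-cycle is reconstructed from its $K$-equivariant restriction to the coset $eK$, the ``off-diagonal'' summands being degenerate; and conclude that $\comp_K$ is the claimed isomorphism.
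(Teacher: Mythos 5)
Your overall strategy is the one the paper uses: construct an explicit inverse to $\comp_K$ by inducing Kasparov cycles from $K$ up to $G$, verify one composite by identifying the fibre of an induced module over $eK$ with the original module, and the other by reconstructing a $G$-equivariant $C_0(G/K)$--$B$ cycle from its compression at $eK$. The first place your write-up would not survive being made precise is step (2): there is no canonical $G$-equivariant $*$-homomorphism $\Ind_K^G(\res_K^GB)\to B$ (evaluation at $eK$ is only $K$-equivariant, and $\Ind_K^G\res_K^GB\cong C_0(G/K)\otimes B$ is not an ideal of $B$), and Green's imprimitivity theorem lives after the descent, so it cannot be invoked to produce a map landing in $KK_*^G(C_0(G/K),B)$. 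What does exist is a $KK^G$-correspondence from $C_0(G/K)\otimes B$ to $B$ (for instance $\ell^2(G/K)\otimes B$ with the multiplication representation), but the paper sidesteps the detour entirely: it induces the Hilbert $B$-module $\E$ directly to a Hilbert $B$-module of functions $\xi:G\to\E$ with $\xi(gk)=\gamma_{k^{-1}}(\xi(g))$, the $B$-valued inner product twisted by $\beta_g$, so the coefficient algebra never changes and no collapse map is needed.

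The second gap is in step (4): the coset decomposition $\F=\oplus_{gK}\Phi(\delta_{gK})\F$ holds for the module but not for the operator $S$ of a general cycle, so the claim that the summands away from $eK$ are degenerate is not yet available. The paper first averages $S$ over $K$ to make it $K$-invariant and then replaces it by $\tilde S=\sum_{gK}\Phi(\delta_{gK})S\Phi(\delta_{gK})$, which is a legitimate compact perturbation precisely because $[S,\Phi(\delta_{gK})]\in\K(\F)$ and the sum against any $f\in C_0(G/K)$ converges in norm; after this $S$ commutes with $\Phi$, respects the decomposition, and the complement of $p\F$ with $p=\Phi(\delta_{eK})$ is genuinely degenerate. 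You correctly flagged this area as the difficulty, but the perturbation is the missing ingredient, not a routine bookkeeping step. Finally, your proposed Morita/Green--Julg cross-check does not apply as stated: here $C_0(G/K)$ sits in the \emph{first} variable of $KK^G$, whereas the Baum--Connes statements for proper coefficients concern the second variable, so no square of isomorphisms is immediately available.
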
 

\begin{proof}We  construct an inverse 
$$\ind_K^G: KK_*^K(\CC, B)\to KK^G_*(C_0(G/K), B) $$
for the compression map. We may restrict ourselves 
to the case of the $K_0$-groups, the $K_1$-case then follows from passing from 
 $B$ to $B\otimes C_0(\RR)$.
Let  $(\E, \one, \gamma, T)$ be a representative for a class $x\in KK^K(\CC,B)$ where
$T\in \L(\E)$ 
is a $K$-invariant operator  such that $T^*-T, T^2-1\in \K(\E)$. We then define a Hilbert  $B$-module $\ind_K^G\E$ as 
$$\Ind_K^G\E=\left\{ \xi:G\to\E: \;\begin{matrix}\text{s.t.}\;\xi(gk)=\gamma_{k^{-1}}(\xi(g))\;\text{for all} \,g\in G, k\in K\\
 \;\text{and}\;\sum_{g\in G}\beta_g(\lk \xi(g), \xi(g)\rk_B) <\infty\end{matrix}\right\}$$
where $\sum_{g\in G}\beta_g(\lk \xi(g), \xi(g)\rk) <\infty$ just means that the sum converges in the  norm-topology of $B$.
The grading on $\Ind_K^G\E$ is given by the grading of $\E$ applied pointwise to the elements of $\Ind_K^G\E$.
We define the $B$-valued inner product  and the right $B$-action 
 on $\Ind_K^G\E$ by 
$$
\lk \xi,\eta\rk_B=\frac{1}{|K|}\sum_{g\in G}\beta_g(\lk \xi(g), \eta(g)\rk_B)\quad\text{and}\quad (\xi\cdot b)(g)=\xi(g)\beta_{g^{-1}}(b)
$$
for all $\xi,\eta\in \Ind_K^G\E$, $b\in B$ and $g\in G$. Moreover, we define a $*$-homomorphism
$$M:C_0(G/K)\to \L(\Ind_K^G\E); (M(f)\xi)(g):=f(gK)\xi(g)$$
and an operator $\tilde{T}\in \L(\Ind_K^G\E)$ by $(\tilde{T}\xi)(g)=T\xi(g)$. Finally, the $G$-action
$\Ind\gamma:G\to \Aut(\Ind_K^G\E)$ is given by $(\Ind\gamma_g\xi)(h)=\xi(g^{-1}h)$ for $g,h\in G$.

It is then an easy exercise to check that
$(\Ind_K^G\E, M, \Ind\gamma, \tilde T)$ is a $G$-equivariant $C_0(G/K)-B$ Kasparov cycle such that 
$$\comp_K\big([\Ind_K^G\E, M, \Ind\gamma, \tilde T]\big)=[\E, \one, \gamma, T].$$
For the converse, averaging over $K$, we may first assume that for a given 
class $x=[\F, \Phi, \nu, S]\in KK^G(C_0(G/K), B)$ the operator $S$ is $K$-invariant and 
that $\Phi:C_0(G/K)\to \L(\F)$ is non-degenerate. 
Let $\tilde{S}=\sum_{gK\in G/K} \Phi(\delta_{gK})S\Phi(\delta_{gK})$. 
We claim that $\tilde{S}$ is a compact perturbation of $S$, i.e., 
$$(S-\tilde{S})\Phi(f)=\sum_{gK\in G/K} (S-\Phi(\delta_{gK})S)\Phi(\delta_{gK})f(gK)\in \K(\F)$$ for all
$f\in C_0(G/K)$. For this we first observe that, since $[S,\Phi(\delta_{gK})]\in \K(\F)$ for all $gK\in G/K$, each summand lies in $\K(\F)$.
Moreover, since $f\in C_0(G/K)$, the sum converges in norm, which proves the claim.
Thus, replacing $S$ by $\tilde{S}$ if necessary, we may assume that $[S,\Phi(f)]=0$ for all $f\in C_0(G/K)$. In particular,
if $p:=\Phi(\delta_{eK})$, it follows that $S=pSp+(1-p)S(1-p)$.

The class $\comp_K(x)$ is  represented by the $KK$-cycle 
$[\F, \Phi|_{\CC\delta_{eK}}, \nu|_K, S]$. 
For $p=\Phi(\delta_{eK})$,  let $\E:=p\F$, $T=pSp$ and $\gamma:K\to \Aut(\E)$ be the restriction of $\nu|_K$ to the summand $\E$ of $\F$. 
Since $S=pSp+(1-p)S(1-p)$ and since $[(1-p)\F, \Phi|_{\CC\delta_{eK}}, \nu|_K, (1-p)S(1-p)]$
is degenerate, we see that 
$\comp_K(x)=[\E, \one, \gamma, T]$.  It is then straightforward to check that 
$$U: \Ind_K^G\E\to \F; U\xi=\frac{1}{|K|} \sum_{g\in G} \nu_g(\xi(g)) $$
is a an isomorphism of Hilbert-$B$-modules which induces an isomorphism between the $KK$-cycles
$(\Ind_K^G\E, M, \Ind\gamma, \tilde{T})$ and $(\F, \Phi, \nu, S)$. This finishes the proof. \end{proof}

Suppose now that $X$ is a proper $G$-space, $U\subseteq X$ is an open $G$-invariant subset of $X$,  and  $Y:=X\smallsetminus U$.
Since $C_0(X)$ is nuclear, there exists a completely positive contractive 
section $\Phi: C_0(Y)\to C_0(X)$ for the restriction homomorphism $\res_Y: C_0(X)\mapsto C_0(Y): f\mapsto f|_Y$.
By properness of the action, we may average $\Phi$ with the help of a cut-off function $c:X\to [0,\infty)$ to get 
the $G$-equivariant completely positive and contractive section
$$\Phi^G(\varphi)(x):=\int_G c^2(g^{-1}x) \Phi(\varphi)(x)\,dg$$
for $\varphi\in C_0(Y)$. It follows then from Theorem \ref{thm-sixterm} that for every $G$-algebra $B$ there exists 
a six-term exact sequence 
$$
\begin{CD} KK_0^G(C_0(Y), B)  @>\res_Y^*>> KK_0^G(C_0(X), B)  @>\iota^* >> KK_0^G(C_0(U),B)\\
@A\partial AA @. @VV\partial V\\
KK_1^G(C_0(U), B)  @<<\iota^* < KK_1^G(C_0(X), B)  @<<\res_K^*<  KK_0^G(C_0(Y),B).
\end{CD}
$$
We are now ready for 

\begin{proof}[Proof of Theorem \ref{thm-GD}  for $G$ discrete]
By Lemma \ref{lem-simplicial1} it suffices to show that for every (geometric realisation) of a
$G$-finite $G$-simplicial complex $X$, the map
$$\cdot\otimes_A x: KK_*^G(C_0(X), A)\to KK^G_*(C_0(X), B)$$
given by taking Kasparov product with the class $x\in KK^G(A,B)$ is an isomorphism.
We do the proof by induction on the dimension of $X$. Suppose first that $\dim(X)=0$.
In that case $X$ is discrete and therefore decomposes into a finite union of $G$-orbits 
$$X=G(x_1)\dot\cup G(x_2)\dot\cup \cdots \dot\cup G(x_l)$$
for suitable elements $x_1,\ldots, x_l$ in $X$. Then we have $C_0(X)\cong \bigoplus_{i=1}^l C_0(G(x_i))$ and 
$KK_*^G(C_0(X), A)=\prod_{i=1}^l KK_*^G(C_0(G(x_i)), A)$ (and similarly for $KK_*^G(C_0(X),B)$). 
Thus it suffices to show that $\cdot\otimes_A x: KK_*^G(C_0(G(x_i)), A)\stackrel{\cong}{\to} KK_G^*(C_0(G(x_i)), B)$
for all $1\leq i\leq l$. But $G(x_i)\cong G/G_{x_i}$ as a $G$-space, where $G_{x_i}=\{g\in G: gx_i=x_i\}$ denotes the
stabiliser of $x_i$. By properness, we have $G_{x_i}$  finite  for all $i$. We then get a commutative diagram
$$
\begin{CD}
KK_*^G(C_0(G/G_{x_i}), A)  @> \cdot\otimes_A x >> KK_*^G(C_0(G/G_{x_i}), B)\\
@V \comp_{G_{x_i}} VV   @VV \comp_{G_{x_i}} V\\
KK_*^{G_{x_i}}(\CC, A)  @> \cdot\otimes_A \res_{G_{x_i}}^G(x) >> KK_*^{G_{x_i}}(\CC, B)
\end{CD}
$$
in which all vertical arrows are isomorphisms by Lemma \ref{lem-compression} and the lower horizontal arrow 
is an isomorphism by the assumption of the theorem. Hence the upper horzontal arrow is an isomorphism as well.

Suppose now that $\dim(X)=n$. After performing a baricentric subdivision of $X$, if necessary, we may assume 
that the action of $G$ on $X$ satisfies the following condition: If $\Delta$ is a simplex in $X$, then an element $g\in G$
either fixes all of $\Delta$ or $g\cdot \inte(\Delta)\cap \inte(\Delta)=\emptyset$, where $\inte(\Delta)$ denotes the interior of $\Delta$.
Now let $\widetilde{X}$ denote the union of the interiors of all $n$-dimensional simplices in $X$. Then 
$X_{n-1}:=X\smallsetminus \widetilde{X}$ is an $n-1$-dimensional $G$-simplicial complex and by the induction  assumption we 
have $KK_*^G(C_0(X_{n-1}), A)\cong KK^G_*(C_0(X_{n-1}), B)$ via taking  Kasparov product with $x$. 
We now show that $KK_*^G(C_0(\widetilde{X}), A)\cong KK^G_*(C_0(\widetilde{X}), B)$ as well. If this is done, then the five-Lemma 
applied to the  diagram 
{\small
\begin{align*}
\begin{CD}
KK_{*-1}^G(C_0(\widetilde{X}), A) @>\partial >> KK_*^G(C_0(X_{n-1}), A) @>\res^*>> KK_*^G(C_0(X), A) \\
 @V\cdot\otimes_Ax VV  @V\cdot\otimes_Ax VV   @V\cdot\otimes_Ax VV   \\
 KK_{*-1}^G(C_0(\widetilde{X}), B) @>\partial >> KK_*^G(C_0(X_{n-1}), B) @>\res^*>> KK_*^G(C_0(X), B)  
 \end{CD}\quad\quad\quad\quad\quad\quad\\
\quad\quad\quad\quad\quad\quad \begin{CD}
 @>\iota^*>>  KK_{*}^G(C_0(\widetilde{X}), A) @>\partial >> KK_{*+1}^G(C_0(X_{n-1}), A)\\
 @. @VV\cdot\otimes_Ax V  @VV\cdot\otimes_Ax V\\
 @>\iota^*>> 
 KK_{*}^G(C_0(\widetilde{X}), B) @>\partial >> KK_{*+1}^G(C_0(X_{n-1}), B)\\
 \end{CD}
\end{align*}}
shows that $KK_*^G(C_0(X), A)\cong KK_G^*(C_0(X), B)$. 
 
 To see that $KK_*^G(C_0(\widetilde{X}), A)\cong KK^G_*(C_0(\widetilde{X}), B)$ we first observe that $\widetilde{X}$ 
 is a finite union of orbits of open simplices $\inte(\Delta_1), \ldots, \inte(\Delta_k)$ for some $k\in \NN$. 
 Via the corresponding product decomposition of the $KK$-groups, we may then assume that $\widetilde{X}=G\cdot \inte(\Delta)$ 
 for a single open $n$-simplex $\Delta$. By our assumption on the action of $G$ on $X$, we have 
 $$G\cdot\inte(\Delta)\cong G/G_\Delta\times \inte(\Delta)$$
 where $G_{\Delta}=\{g\in G: g\cdot \Delta=\Delta\}$ denotes the (finite!) stabiliser of $\Delta$ and where the $G$-action 
 on $G/G_\Delta\times \inte(\Delta)$ is given by left  translation on the first factor. 
 We then get a diagram
{\small $$
 \begin{CD} 
 KK_*^G(C_0(G/G_\Delta\times \inte(\Delta)), A)
@>\operatorname{Bott} >\cong>  KK_{*+n}^G(C_0(G/G_\Delta), A) @>\comp_{G_{\Delta}} >\cong> KK_{*+n}^{G_\Delta}(\CC,A)\\
  @V\cdot\otimes_Ax VV   @V\cdot\otimes_Ax VV    @V\cdot\otimes_A\res_{G_\Delta}^G(x) VV\\
 KK_*^G(C_0(G/G_\Delta\times \inte(\Delta)), B)
@>\operatorname{Bott} >\cong>  KK_{*+n}^G(C_0(G/G_\Delta), B) @>\comp_{G_{\Delta}} >\cong> KK_{*+n}^{G_\Delta}(\CC,B).
\end{CD}
$$}
Since, by assumption,  the last vertical arrow is an isomorphism, the result follows.
\end{proof}

We should note that for groups which satisfy the strong Baum-Connes conjecture in the sense 
of Definition \ref{def-strongBC} a stronger version of 
 Theorem \ref{thm-GD} has been shown by Meyer and Nest in \cite{MN}*{Theorem 9.3}:
 
 \begin{theorem}[Meyer-Nest]\label{thm-MN}
 Suppose that the second countable group $G$ satisfies the  strong Baum-Connes conjecture (e.g., this is satisfied if $G$ 
 is a-$T$-menable or, in particular,  if $G$ is amenable)
and assume that $x\in KK^G(A,B)$ such that  for every compact subgroup $K$  of $G$ the class 
$J_K(\res_K^G(x))\in KK(A\rtimes K,B\rtimes K)$ is a $KK$-equivalence. 
Then $J_G(x)\in KK(A\rtimes_rG, B\rtimes_rG)$ is $KK$-equivalence as well.
\end{theorem}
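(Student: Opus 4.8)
The plan is to establish the a priori stronger statement that $x$ is already a $KK^G$-equivalence; the assertion about $J_G(x)$ then follows at once, since $J_G$ is a product-preserving functor, so that $J_G(x^{-1})$ inverts $J_G(x)$.

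First one upgrades the hypothesis: for a \emph{compact} group $K$ the descent $J_K\colon KK^K(A,B)\to KK(A\rtimes K,B\rtimes K)$ reflects $KK$-equivalences (this is standard for compact groups; see \cite{MN}), so the assumption says exactly that $\res_K^G x\in KK^K(A,B)$ is a $KK^K$-equivalence for every compact $K\subseteq G$. Replacing $A$ and $B$, if necessary, by $KK^G$-equivalent models (for instance in the Cuntz picture, or in the triangulated-category setting of \cite{MN}), we represent $x$ by a $G$-equivariant $*$-homomorphism $\phi\colon A\to B$ and form its mapping cone $C_\phi$, which sits in a $G$-equivariantly semisplit extension $0\to B\hat\otimes C_0(0,1)\to C_\phi\to A\to 0$ whose boundary map in $KK^G$ is Kasparov product with $x$ (up to a Bott shift). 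By the six-term exact sequence of Theorem~\ref{thm-sixterm} applied in the first variable, together with the Yoneda lemma, $x$ is a $KK^G$-equivalence if and only if $C_\phi$ is $KK^G$-contractible; and since $\res_K^G C_\phi=C_{\res_K^G\phi}$, the foregoing shows that $\res_K^G C_\phi$ is $KK^K$-contractible for every compact $K\subseteq G$.

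Everything thus reduces to the following assertion, which is the one place where the strong Baum--Connes conjecture is used: \emph{if $N$ is a $G$-$C^*$-algebra with $\res_K^G N$ $KK^K$-contractible for every compact subgroup $K\subseteq G$, then $N$ is $KK^G$-contractible}. To prove it, take $\D$, $\alpha\in KK^G(\D,\CC)$, $\beta\in KK^G(\CC,\D)$ as in Definition~\ref{def-strongBC}, so that $\alpha$ is a $KK^G$-equivalence with inverse $\beta$ and $\D$ is a proper $G$-$C^*$-algebra. Then $N\simeq\D\hat\otimes N$ in $KK^G$, and $\D\hat\otimes N$ is a proper $G$-$C^*$-algebra --- an $X\rtimes G$-algebra over a proper $G$-space $X$ --- still satisfying the hypothesis after restriction to every compact $K$ (since $\res_K^G(\D\hat\otimes N)=\res_K^G\D\hat\otimes\res_K^G N$); so we may assume $N$ is itself proper. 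Let $\mathcal T$ be the class of $G$-$C^*$-algebras $M$ with $KK^G_0(M,N)=0=KK^G_1(M,N)$. For a compact subgroup $K$ and a $K$-$C^*$-algebra $D$, Frobenius reciprocity --- the general form of the compression isomorphism of Lemma~\ref{lem-compression} --- gives $KK^G_*(\Ind_K^G D,N)\cong KK^K_*(D,\res_K^G N)=0$, so $\mathcal T$ contains every compactly induced $G$-$C^*$-algebra; and $\mathcal T$ is closed under suspension, under $G$-equivariantly semisplit extensions (Theorem~\ref{thm-sixterm}), under countable direct sums, and under the inductive limits used in the proof of Theorem~\ref{thm-proper-Green-Julg}. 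Running precisely the devissage of that proof --- cover $X$ by $G$-invariant open sets each isomorphic to a space induced from a compact subgroup, so that the corresponding ideals $N(U)$ are compactly induced; build $N$ up from these ideals via the semisplit extensions $0\to N(U)\to N(W)\to N(W\setminus U)\to 0$, whose $G$-equivariant completely positive cross sections arise by averaging an ordinary section against a cut-off function as in Section~\ref{subsec-DG}; and take the inductive limit over the $G$-invariant open subsets of $X$ with $G$-compact closure --- yields $N\in\mathcal T$. Taking $M=N$ gives $1_N=0$ in $KK^G(N,N)$, i.e.\ $N$ is $KK^G$-contractible. (In the language of \cite{MN}, this is the statement that, under the strong Baum--Connes conjecture, the localizing subcategory of $KK^G$ generated by the compactly induced algebras is all of $KK^G$.)

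Assembling the pieces: $C_\phi$ is $KK^G$-contractible, $x$ is a $KK^G$-equivalence, and hence $J_G(x)$ is a $KK$-equivalence. The main obstacle is the displayed assertion above --- re-running the proof of Theorem~\ref{thm-proper-Green-Julg} for the functor $KK^G_*(-,N)$ instead of the $K$-theory of the crossed product, tracking $KK^G$-morphisms rather than $K$-theory groups --- the inductive-limit step being, as in that proof, the point demanding the most care. A subsidiary point, needed at the outset, is the conservativity of the descent functor for compact groups.
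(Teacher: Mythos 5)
The paper does not actually prove this theorem; it only cites \cite{MN}*{Theorem 9.3}, so your argument has to stand on its own. Your second and third steps are the correct Meyer--Nest strategy: reduce, via the mapping cone, to showing that under the strong Baum--Connes conjecture a $G$-algebra $N$ with $\res^G_K N$ $KK^K$-contractible for every compact $K$ is itself $KK^G$-contractible, and prove that by replacing $N$ with the proper algebra $\D\hat\otimes N$ and running the devissage of Theorem \ref{thm-proper-Green-Julg} against the functor $KK^G_*(-,N)$, using the induction--restriction adjunction. Modulo the usual care with mapping telescopes at the inductive-limit step (which you flag), this part is sound.

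The genuine gap is the step you dismiss as "standard": it is \emph{false} that for a compact group $K$ the descent $J_K$ reflects $KK$-equivalences. Take $K=\ZZ/2$ and $v=2[\triv]-[\operatorname{sgn}]\in R(\ZZ/2)=KK^{\ZZ/2}(\CC,\CC)$, and let $M$ be a mapping cone of $v$. Since $\res_{\{e\}}(v)=1$, $M$ is $KK$-contractible; but multiplication by $v$ on $R(\ZZ/2)\cong\ZZ^2$ has determinant $3$, so $K_*(M\rtimes\ZZ/2)\cong KK^{\ZZ/2}_*(\CC,M)$ contains a copy of $\ZZ/3$. Now let $N=M\rtimes\ZZ/2$ with the dual action: Takai duality gives $N\rtimes\ZZ/2\cong M\otimes M_2(\CC)\simeq 0$ in $KK$, yet $K_*(N)\neq 0$, so $N$ is not a zero object of $KK^{\ZZ/2}$. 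Since reflecting equivalences is the same as reflecting zero objects (pass to cones), $J_{\ZZ/2}$ does not reflect $KK$-equivalences. The same example defeats the obvious attempt to run the devissage directly on the weaker hypothesis: one would need $J_K(y)$ invertible to imply $J_K(1_D\otimes y)$ invertible for every $K$-algebra $D$, but for $D=C(\ZZ/2)$ one has $(D\otimes N)\rtimes\ZZ/2\sim_M N\not\simeq 0$ while $N\rtimes\ZZ/2\simeq 0$. So as written your proof establishes the theorem only under the stronger hypothesis that $\res^G_K(x)$ itself is a $KK^K$-equivalence for every compact $K$; bridging from the stated hypothesis on $J_K(\res^G_K(x))$ to that one (presumably exploiting that the hypothesis holds for \emph{all} closed subgroups of each compact $K$ simultaneously) is exactly the nontrivial content that must be extracted from \cite{MN}, and it is not supplied here.
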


\subsection{Applications of the Going-Down principle}\label{subsec-appGD}
We are now going to give a number of applications. 
%The first one gives another weak formulation of the $\gamma$-element:
%Suppose that $G$ is a second countable locally compact group such that there is is a proper 
%$G$-$C^*$-algebra $\D$ and classes $\alpha\in KK^G(\D,\CC)$ and $\beta\in KK^G(\CC,\D)$ 
%such that for every compact subgroup $K\subseteq G$ 
%the element $\gamma:=\beta\otimes_\D\alpha\in KK^G(\CC,\CC)$ satisfies
%$$\res_K^G(\gamma)=1_{\CC}\in KK^K(\CC, \CC).$$
%We then say that $\gamma$ is a {\em weak $\gamma$-element}. 
%The following result has first been shown for discrete $G$ in \cite{GHT} and for general $G$ in \cite{CEO2}.
%
%\begin{theorem}\label{thm-weekgamma}
%Suppose that $G$ has a weak $\gamma$-element. Then 
%$G$ satisfies the strong Novikov conjecture with coefficients, i.e., 
%for each 
%$G$-$C^*$-algebra $B$ the assembly map 
%$$\mu_{(G,B)}:K_*^G(\EG; B)\to K_*(B\rtimes_rG)$$
%is split injective.
%A similar statement holds true for the full assembly map $\mu_{(G,B)}^{full}$.
%\end{theorem}
%\begin{proof} The proof follows directly from the commutative diagram (\ref{eq-DD}) together 
%with the fact that the composition of the vertical arrows on the left hand side of that diagram
%is an isomorphism since  $\res_K^G(\gamma)=1_{\CC}\in KK^K(\CC, \CC)$
%for every compact subgroup $K$ of $G$.
%\end{proof}
%
The first one is the proof that every exact locally compact group satisfies the 
strong Novikov conjecture. Recall that a locally compact group is called exact (in the sense 
of Kirchberg and Wassermann) if for every short exact sequence of $G$-$C^*$-algebras 
$$0\to I\stackrel{\iota}{\to} A\stackrel{q}{\to} A/I\to 0$$
the corresponding sequence of reduced crossed products 
$$0\to I\rtimes_rG\stackrel{\iota\rtimes_rG}{\to} A\rtimes_rG\stackrel{q\rtimes_rG}{\to} A/I\rtimes_rG\to 0$$
is also exact. It has been known for a long time by work of Ozawa and others (see \cite{Ozawa}) that 
a discrete group is exact if and only if it admits an amenable action on a compact space $X$.
This means that the transformation groupoid $X\rtimes G$ is topologically amenable in the sense
of \cite{ADR}. Very recently the result of Ozawa has been generalised by Brodzki, Cave and Li  (\cite{BCL}) 
to second countable locally compact groups. The following result has been shown first for discrete 
$G$ by Higson in \cite{H}. The result has been extended in \cite{CEO}  to the case of
 second countable locally compact 
groups acting amenably on a compact space. 

\begin{theorem}\label{thm-exact}
Let $G$ be an exact second countable locally compact group. Then 
$G$ satisfies the strong Novikov conjecture with coefficients, i.e., 
for each 
$G$-$C^*$-algebra $B$ the assembly map 
$$\mu_{(G,B)}:K_*^G(\EG; B)\to K_*(B\rtimes_rG)$$
is split injective.
A similar statement holds true for the full assembly map $\mu_{(G,B)}^{full}$.
\end{theorem}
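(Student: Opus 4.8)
The plan is to deduce Theorem~\ref{thm-exact} from the Going-Down principle (Theorem~\ref{thm-GD}) together with a ``Dirac dual-Dirac'' type argument which produces a factorization of the identity on $K_*^G(\EG;B)$ through the topological $K$-theory of a \emph{proper} coefficient algebra. The starting point is that exactness of $G$ means $G$ acts amenably on some compact space $X$, i.e.\ the transformation groupoid $X\rtimes G$ is topologically amenable in the sense of \cite{ADR}. Since $X$ is compact, $C(X)$ is a unital $G$-$C^*$-algebra, and for any $G$-$C^*$-algebra $B$ the balanced tensor product $C(X)\hat\otimes B$ carries a diagonal $G$-action. The key structural input is that, because the action on $X$ is amenable, the crossed products $(C(X)\hat\otimes B)\rtimes G$ and $(C(X)\hat\otimes B)\rtimes_rG$ agree, and, more to the point, that $G$ acting amenably on $X$ behaves — at the level of $KK^G$ and assembly — like a group with a $\gamma$-element equal to one. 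Concretely, I would use that for $X\rtimes G$ amenable one can construct, in the spirit of Higson-Kasparov and Tu, a proper $G$-$C^*$-algebra $\mathcal D$ (built out of $C(X)$ and finite-dimensional approximations) together with classes $\alpha\in KK^G(\mathcal D,\CC)$ and $\beta\in KK^G(\CC,\mathcal D)$ whose product $\gamma=\beta\otimes_{\mathcal D}\alpha\in KK^G(\CC,\CC)$ is a $\gamma$-element in the sense of Definition~\ref{def-gamma}. Granting this, Theorem~\ref{thm-gamma} immediately gives split injectivity of $\mu_{(G,B)}$ for \emph{every} $G$-$C^*$-algebra $B$, and likewise for the full assembly map, which is exactly the assertion.

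\textbf{Reducing the construction of $\gamma$ to compact subgroups.} The role of the Going-Down principle is to certify that the candidate $\gamma$ really acts as the identity on the left-hand side. The plan is: let $\mathcal D=\mathcal D_X$ be the proper $G$-algebra associated to the amenable action (an inductive limit of algebras of the form $C_0(V)\hat\otimes\Cl(V)$ glued to $C(X)$, exactly as in the Higson-Kasparov/Tu construction), and let $\alpha,\beta$ be the corresponding Dirac and dual-Dirac classes. First I would verify that $\mathcal D$ is a genuine proper $G$-$C^*$-algebra, so that Theorem~\ref{thm-proper-Green-Julg} applies and $\mu_{(G,\mathcal D\hat\otimes B)}$ is an isomorphism for all $B$. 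Next, running the argument around diagram~(\ref{eq-DD}) verbatim, split injectivity of $\mu_{(G,B)}$ follows once one knows that the composite of the two left vertical arrows — cap product with $\gamma=\beta\otimes_{\mathcal D}\alpha$ on $K_*^G(\EG;B)$ — is the identity. By the computation at the end of the proof of Theorem~\ref{thm-gamma}, this reduces (using commutativity of the Kasparov product over $\CC$ and the definition of a $\gamma$-element) to the single statement $\gamma\otimes 1_{C_0(X')}=1_{C_0(X')}$ in $KK^G(C_0(X'),C_0(X'))$ for every $G$-compact proper $G$-space $X'$. This last identity is precisely where the Going-Down principle (or rather its underlying mechanism, restriction to compact subgroups) enters: one checks $\res_K^G(\gamma)=1_\CC\in KK^K(\CC,\CC)$ for each compact subgroup $K\subseteq G$, which holds because a compact group acts amenably on every space and its own $\gamma$-element is $1$, and then the Going-Down machinery (Theorem~\ref{thm-GD} applied to the class $\gamma\otimes 1_{C_0(X')}-1_{C_0(X')}$, or directly the simplicial induction argument in its proof) propagates the equality from compact subgroups to all of $K_*^G(\EG;B)$.

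\textbf{The full assembly map and the main obstacle.} For the full assembly statement one repeats diagram~(\ref{eq-DD}) with $\rtimes_r$ replaced by $\rtimes$ throughout — the descent $J_G^{full}$ is compatible with Kasparov products, $\mathcal D\hat\otimes B$ is still proper so its full and reduced crossed products agree, and the same diagram chase yields split injectivity of $\mu_{(G,B)}^{full}$. As for difficulty: the routine diagram chases and the reduction to compact subgroups are entirely formal given what is already in the excerpt. The genuinely hard step — and the one I would expect to be the crux — is the construction of the proper $G$-algebra $\mathcal D_X$ and the classes $\alpha,\beta$ out of an amenable action on a compact space, together with the verification that $\gamma=\beta\otimes_{\mathcal D}\alpha$ has the defining property of a $\gamma$-element. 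This is essentially the content of Tu's groupoid version of the Higson-Kasparov theorem adapted to the amenable groupoid $X\rtimes G$, and in a self-contained treatment one would either cite \cite{Tu2}/\cite{Tu3} for the groupoid Dirac dual-Dirac construction, or reproduce the finite-dimensional-approximation argument; either way this is the technical heart. The remaining input — that exactness of a second countable locally compact $G$ is equivalent to admitting an amenable action on a compact space — is quoted from \cite{BCL} (with the discrete case due to \cite{Ozawa}), and does not need to be reproved here.
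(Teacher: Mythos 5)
Your overall plan --- obtain split injectivity from a diagram of the shape of (\ref{eq-DD}) by factoring through a proper coefficient algebra --- has the right general shape, but the key input you assume is not available, and this is a genuine gap rather than a technicality. You propose to build a proper $G$-$C^*$-algebra $\mathcal D$ with classes $\alpha\in KK^G(\mathcal D,\CC)$ and $\beta\in KK^G(\CC,\mathcal D)$ such that $\gamma=\beta\otimes_{\mathcal D}\alpha$ is a $\gamma$-element \emph{for the group $G$}. What Tu's theorem actually produces from the amenable (hence a-$T$-menable) groupoid $X\rtimes G$ is a \emph{groupoid} Dirac/dual-Dirac pair: a proper $X\rtimes G$-algebra with classes in $KK^{X\rtimes G}(\mathcal D, C(X))$ and $KK^{X\rtimes G}(C(X),\mathcal D)$, whose product lives in $KK^{X\rtimes G}(C(X),C(X))$ --- the unit $C(X)$ of the groupoid category replaces $\CC$ throughout. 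There is no $G$-equivariant way to collapse $C(X)$ back to $\CC$ (that would require, say, a $G$-fixed point of $X$, or a $G$-equivariant $K$-homology class splitting the unital inclusion $\CC\into C(X)$), so one cannot extract from the groupoid construction a $\gamma$-element in $KK^G(\CC,\CC)$. Whether exact groups admit a $\gamma$-element in the sense of Definition~\ref{def-gamma} is not known; if your step were available, Theorem~\ref{thm-gamma} would apply verbatim, and that is precisely why the argument is not run this way.

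The actual proof avoids constructing $\gamma$ for $G$ by changing coefficients instead: one replaces $B$ by $B\otimes C(X)$. Following Higson one may take $X$ to be a compact convex $G$-space, so that $X$ is $K$-equivariantly contractible for every compact subgroup $K$; hence $b\mapsto b\otimes 1_X$ is a $KK^K$-equivalence for all compact $K$, and the Going-Down principle (Theorem~\ref{thm-GD}) shows that it induces an isomorphism $K_*^G(\EG;B)\cong K_*^G(\EG;B\otimes C(X))$. This is the only place Going-Down enters --- not, as in your sketch, to verify an identity $\gamma\otimes 1_{C_0(X')}=1_{C_0(X')}$. One then uses the forgetful map of \cite{CEO1} to identify $\mu_{(G,B\otimes C(X))}$ with the assembly map of the groupoid $X\rtimes G$, which is an isomorphism by Tu's theorem since that groupoid is amenable; split injectivity of $\mu_{(G,B)}$ follows from the commuting square comparing $\mu_{(G,B)}$ with $\mu_{(G,B\otimes C(X))}$, the splitting being $\Phi_*^{-1}\circ\mu_{(G,B\otimes C(X))}^{-1}\circ(\Phi\rtimes_rG)_*$. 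Your idea of checking $\res^G_K(\gamma)=1_\CC$ on compact subgroups and propagating by Going-Down is sound in form, but it cannot be started without the nonexistent $\gamma$; you should restructure the argument around the coefficient change $B\rightsquigarrow B\otimes C(X)$.
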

\begin{proof} 
By \cite{BCL}, being exact is equivalent to the condition that there exists a compact amenable $G$-space
$X$.
Following the arguments given by Higson in \cite{H}*{Lemma 3.5 and Lemma 3.6} 
we may as well assume that $X$ is a metrisable convex space and $G$ acts by 
affine transformations. In particular, $X$ is $K$-equivariantly contractible for every 
compact subgroup $K$ of $X$. 
It then follows that the inclusion map
$\iota: \CC\to C(X)$ is a $KK^K$-equivalence for every compact subgroup $K$ of $G$ -- it's 
inverse is given by the map $C(X)\to\CC; f\mapsto f(x_0)$ for any $K$-fixed point $x_0\in X$. 
It then follows, that for every $G$-$C^*$-algebra $B$, the $*$-homomorphism $B\to B\otimes C(X); b\mapsto b\otimes 1_X$
is a $KK^K$-equivalence as well. Thus it follows from Theorem \ref{thm-GD} that 
$$\Phi_*: K_*^G(\EG; B)\to K_*^G(\EG; B\otimes C(X))$$
is an isomorphism. Moreover, by Tu's extension of the Higson-Kasparov theorem to groupoids (see \cite{Tu2}), 
the assembly map 
$$\mu_{(X\rtimes G, B\otimes C(X))}:K_*^{X\rtimes G}(\underline{E(X\rtimes G)}, B\otimes C(X))\to K_*((B\otimes C(X))\rtimes_rG)$$
is an isomorphism, since $X\rtimes G$ is an amenable, and hence a-$T$-menable groupoid. 
Moreover, it has been shown in \cite{CEO1} that the {\em forgetful map}
$$F: K_*^{X\rtimes G}(\underline{E(X\rtimes G)}, B\otimes C(X))\to K_*^{G}(\EG, B\otimes C(X))$$ is an isomorphism 
and that the diagram
$$\begin{CD}
K_*^{X\rtimes G}(\underline{E(X\rtimes G)}, B\otimes C(X))  @>\mu_{(X\rtimes G, B\otimes C(X))} >> K_*((B\otimes C(X))\rtimes_rG)\\
@VF VV     @VV=V\\
K_*^{G}(\EG, B\otimes C(X))@>\mu_{(G, B\otimes C(X))} >> K_*((B\otimes C(X))\rtimes_rG)
\end{CD}
$$
commutes. The result then follows from the commutative diagram
$$\begin{CD}
K_*^{G}(\EG, B)@>\mu_{(G, B\otimes C(X))} >> K_*(B\rtimes_rG)\\
@V\Phi_* VV  @VV \Phi\rtimes_rG_* V\\
K_*^{G}(\EG, B\otimes C(X))@>\mu_{(G, B\otimes C(X))} >> K_*((B\otimes C(X))\rtimes_rG)
\end{CD}
$$
in which the left vertical arrow and the bottom horizontal arrow are isomorphisms.
\end{proof}

%There are several other applications of the (more general) Going-Down principle given in \cite{CEO2}, including 
%the proof of Theorem \ref{thm-ext}.
 The main application of the Going-Down principle
 in \cite{CEO} was the proof of a version of the 
K\"unneth formula for $K_*^G(\EG, B)$ with applications for the Baum-Connes conjecture with trivial coefficients.
We don't want to go into the details here. 
But we would like to mention some other useful applications. By a homotopy between two 
actions $\alpha^0,\alpha^1:G\to\Aut(A)$  we  understand a path of actions  $\alpha^t:G\to \Aut(A)$, $t\in [0,1]$,
such that 
$$\big(\alpha_g(f)\big)(t):=\alpha^t_g(f(t))\quad \forall f\in A[0,1], g\in G, t\in [0,1]$$
defines an action on  $A[0,1]=C([0,1],A)$. The following is of course a direct consequence of Theorem \ref{thm-GD}:

\begin{corollary}\label{cor-action}
Suppose that $\alpha:G\to \Aut(A[0,1])$ is a homotopy between the actions $\alpha^0,\alpha^1:G\to\Aut(A)$
and assume that $G$ satisfies BC for $(A[0,1],\alpha)$ and $(A, \alpha^t)$ for $t=0,1$. 
Suppose further that for  $t=0,1$ and for every compact subgroup $K$ of $G$
 the evaluation map $\eps_t:A[0,1]\to A; f\mapsto f(t)$ induces an isomorphism
 $\eps_t\rtimes K_*: K_*(A[0,1]\rtimes_\alpha K)\stackrel{\cong}{\to} K_*(A\rtimes_{\alpha^t}K)$. Then 
 $$\eps_t\rtimes_rG_*: K_*(C([0,1],A)\rtimes_{\alpha,r}G)\to K_*(A\rtimes_{\alpha^t,r}G)$$
 is an isomorphism as well. In particular, we have $K_*(A\rtimes_{\alpha^0,r}G)\cong K_*(A\rtimes_{\alpha^1,r}G)$.
  \end{corollary}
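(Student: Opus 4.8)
The plan is to derive this corollary as a direct application of the Going-Down principle (Theorem \ref{thm-GD}), together with Exercise \ref{ex-commute} which connects isomorphisms on the topological side to isomorphisms on the $K$-theory of reduced crossed products. The only mild subtlety is that the hypothesis is phrased in terms of the maps $\eps_t \rtimes K_*$ on crossed products by compact subgroups $K$, whereas Theorem \ref{thm-GD} wants isomorphisms on the groups $KK_*^K(\CC, -)$; but Remark \ref{remark-GD} tells us precisely that these two formulations are equivalent, via the Green-Julg isomorphism.

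First I would fix $t \in \{0,1\}$ and let $x_t := [\eps_t] \in KK^G(A[0,1], A)$ be the class of the $G$-equivariant evaluation $*$-homomorphism $\eps_t : A[0,1] \to A$ (here $A[0,1]$ carries the action $\alpha$ and $A$ carries $\alpha^t$). For every compact subgroup $K \subseteq G$, the hypothesis says that $\eps_t \rtimes K$ induces an isomorphism $K_*(A[0,1]\rtimes_\alpha K) \xrightarrow{\cong} K_*(A \rtimes_{\alpha^t} K)$; since $J_K(\res_K^G(x_t))$ is exactly the class of $\eps_t \rtimes K$ in $KK(A[0,1]\rtimes K, A\rtimes K)$, and taking Kasparov product with it computes the induced map on $K$-theory of the crossed products, Remark \ref{remark-GD} yields that $\cdot \otimes_{A[0,1]} \res_K^G(x_t) : KK_*^K(\CC, A[0,1]) \to KK_*^K(\CC, A)$ is an isomorphism for all compact $K$.

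Next I would invoke Theorem \ref{thm-GD} with this $x_t$: its conclusion is that $\cdot \otimes_{A[0,1]} x_t : K_*^G(\EG; A[0,1]) \to K_*^G(\EG; A)$ is an isomorphism, and moreover — since by assumption $G$ satisfies BC for $(A[0,1],\alpha)$ and for $(A, \alpha^t)$, so that $\mu_{(G, A[0,1])}$ and $\mu_{(G,A)}$ are isomorphisms — the commuting square of Exercise \ref{ex-commute} forces $\cdot \otimes_{A[0,1]\rtimes_rG} J_G(x_t) : K_*(A[0,1]\rtimes_{\alpha,r}G) \to K_*(A\rtimes_{\alpha^t,r}G)$ to be an isomorphism as well. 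Since $J_G(x_t) = [\eps_t \rtimes_r G]$, this map is precisely $(\eps_t \rtimes_r G)_*$, which gives the first claim. Finally, for the last sentence one composes: $(\eps_0\rtimes_r G)_*$ is an isomorphism and $(\eps_1 \rtimes_r G)_*$ is an isomorphism, both with source $K_*(C([0,1],A)\rtimes_{\alpha,r}G)$, so $K_*(A\rtimes_{\alpha^0,r}G) \cong K_*(C([0,1],A)\rtimes_{\alpha,r}G) \cong K_*(A\rtimes_{\alpha^1,r}G)$.

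I do not expect any serious obstacle here; the corollary is essentially a bookkeeping exercise packaging Theorem \ref{thm-GD}, Remark \ref{remark-GD}, and Exercise \ref{ex-commute}. The one point that deserves a line of care is the identification $J_K(\res_K^G([\eps_t])) = [\eps_t \rtimes K]$ (and its $G$-analogue), i.e.\ that Kasparov's descent sends the class of a $G$-equivariant $*$-homomorphism to the class of the induced $*$-homomorphism on crossed products — this is immediate from the definition of $J_G$ on cycles of the form $(B, \beta, \Phi, 0)$, but it is worth stating explicitly so the reader sees why the hypothesis about $\eps_t \rtimes K$ feeds correctly into the Going-Down machine.
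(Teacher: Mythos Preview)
Your proposal is correct and matches the paper's approach: the paper states the corollary as ``of course a direct consequence of Theorem \ref{thm-GD}'' and gives no further proof, while you have simply written out the details, invoking Remark \ref{remark-GD} to translate the hypothesis on $\eps_t\rtimes K$ into the form Theorem \ref{thm-GD} wants and then reading off the conclusion (the BC assumption and the identification $J_G([\eps_t])=[\eps_t\rtimes_rG]$ are already built into the last sentence of Theorem \ref{thm-GD}).
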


Of course, the condition on the compact subgroups in the above corollary is quite annoying. However, for those
groups which have no compact subgroups other than the trivial group, the corollary becomes very nice, since
the evaluation maps $\eps_t: A[0,1]\to A; f\mapsto f(t)$ are always $KK$-equivalences.

\begin{corollary}\label{cor-action-trivial}
Suppose that $\alpha:G\to \Aut(A[0,1])$ is a homotopy between the actions $\alpha^0,\alpha^1:G\to\Aut(A)$
and assume that $G$ satisfies BC for $(A[0,1],\alpha)$ and $(A, \alpha^0)$, $(A,\alpha^1)$. If 
 $\{e\}$ is the only compact subgroup of $G$, then 
$K_*(A\rtimes_{\alpha^0,r}G)\cong K_*(A\rtimes_{\alpha^1,r}G)$.
\end{corollary}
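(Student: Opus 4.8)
The plan is to deduce Corollary \ref{cor-action-trivial} directly from Corollary \ref{cor-action} by verifying that, under the stated hypothesis on compact subgroups, the extra condition in Corollary \ref{cor-action} is automatically satisfied. Since the only compact subgroup of $G$ is the trivial group $K=\{e\}$, the condition ``for every compact subgroup $K$ of $G$ the evaluation map $\eps_t:A[0,1]\to A$ induces an isomorphism $\eps_t\rtimes K_*:K_*(A[0,1]\rtimes_\alpha K)\xrightarrow{\cong}K_*(A\rtimes_{\alpha^t}K)$'' reduces to the single requirement that $\eps_{t*}:K_*(A[0,1])\to K_*(A)$ is an isomorphism for $t=0,1$ (note that $A[0,1]\rtimes_\alpha\{e\}=A[0,1]$ and $A\rtimes_{\alpha^t}\{e\}=A$).

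The key step is therefore to observe that $\eps_t:A[0,1]\to A$ is a homotopy equivalence of $C^*$-algebras, hence induces isomorphisms on $K$-theory by the homotopy invariance of $K$-theory (the Proposition on homotopy invariance in Section \ref{sec-Ktheory}). Indeed, let $\sigma:A\to A[0,1]$ be the $*$-homomorphism sending $a$ to the constant function $t\mapsto a$. Then $\eps_t\circ\sigma=\id_A$ for every $t$, while $\sigma\circ\eps_t:A[0,1]\to A[0,1]$ is homotopic to $\id_{A[0,1]}$ via the path of $*$-homomorphisms $\Theta_s:A[0,1]\to A[0,1]$ given by $\big(\Theta_s(f)\big)(u)=f\big((1-s)u+st\big)$ for $s\in[0,1]$; at $s=0$ this is $\id_{A[0,1]}$ and at $s=1$ it is $\sigma\circ\eps_t$. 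Thus $\eps_{t*}:K_*(A[0,1])\to K_*(A)$ is an isomorphism with inverse $\sigma_*$, for $t=0$ and $t=1$ alike.

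With this in hand, all hypotheses of Corollary \ref{cor-action} are met: $G$ satisfies BC for $(A[0,1],\alpha)$ and for $(A,\alpha^0),(A,\alpha^1)$ by assumption, and the compact-subgroup condition holds by the previous paragraph since $\{e\}$ is the only compact subgroup. Hence Corollary \ref{cor-action} gives that $\eps_t\rtimes_r G_*:K_*(C([0,1],A)\rtimes_{\alpha,r}G)\to K_*(A\rtimes_{\alpha^t,r}G)$ is an isomorphism for $t=0,1$, and composing the isomorphism for $t=0$ with the inverse of the isomorphism for $t=1$ yields $K_*(A\rtimes_{\alpha^0,r}G)\cong K_*(A\rtimes_{\alpha^1,r}G)$, as claimed.

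I do not expect any genuine obstacle here; the whole content is bookkeeping plus the elementary homotopy-equivalence argument for the evaluation map. The only point requiring a moment's care is writing down an explicit contraction $\Theta_s$ of $A[0,1]$ onto the constant functions at the endpoint $t\in\{0,1\}$ and checking it is a well-defined path of $*$-homomorphisms (i.e.\ that $(s,f)\mapsto\Theta_s(f)$ gives a $*$-homomorphism $A[0,1]\to A[0,1][0,1]$), which is routine since it is induced by an affine reparametrisation of $[0,1]$ fixing $t$.
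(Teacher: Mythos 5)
Your proposal is correct and follows essentially the same route as the paper: the paper deduces the corollary from Corollary \ref{cor-action} by noting that when $\{e\}$ is the only compact subgroup the extra hypothesis reduces to the evaluation maps $\eps_t:A[0,1]\to A$ inducing $K$-theory isomorphisms, which holds because they are ($KK$-)equivalences. Your explicit verification that $\eps_t$ is a homotopy equivalence (via the section $\sigma$ and the affine contraction $\Theta_s$) is exactly the standard justification the paper leaves implicit.
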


In \cite{ELPW}  Corollary \ref{cor-action} has been used to show that for groups $G$ which  satisfy BC for 
suitable coefficients, the $K$-theory of  reduced twisted group algebras $C_r^*(G,\omega)$, where 
$\om:G\times G\to \TT$ is a Borel $2$-cocycle on $G$, only depends 
on the homotopy class of the $2$-cocycle $\omega$ (with a suitable definition of homotopy). 
We don't want to go into the details here, but  we 
want to mention that if $(\om_t)_{t\in [0,1]}$ is such a homotopy of $2$-cocycles, it induces a 
homotopy $\alpha:G\to \Aut(\K[0,1])$ of actions of $G$ on the compact operators $\K=\K(\ell^2(\NN))$
such that $\K\rtimes_{\alpha^t,r}G\cong \K\otimes C_r^*(G,\om_t)$ for all $t\in [0,1]$.
We refer to  \cite{CroPro}*{Section 2.8.6} for a discussion of twisted group algebras.
It follows from the results in \cite{EW1} that a homotopy of actions of a compact group $K$ on $\K$
must be exterior equivalent to a constant path of actions, hence 
$$\K[0,1]\rtimes K\cong (\K\rtimes K)[0,1]$$
for all compact subgroups of $G$, from which it follows that the evaluation maps
$$\eps_t\rtimes K: \K[0,1]\rtimes K\to \K\rtimes K$$
are $KK$-equivalences for all $K$. Thus, if $G$ satisfies BC for $\K$ and $\K[0,1]$ (for the relevant actions),
it follows from Corollary \ref{cor-action} that 
$$K_*(C_r^*(G,\om_0))\cong K_*(\K\rtimes_{\alpha^0,r}G)\cong K_*(\K\rtimes_{\alpha^1,r}G)\cong K_*(C_r^*(G, \om_1)).$$
Note that this result extends earlier results of Elliott (\cite{Ell}) for the case of finitely generated abelian groups $G$
and of Packer and Raeburn \cite{PR92} for a class of solvable groups $G$. 
The main application in \cite{ELPW} was given for the computation of the $K$-theory of the 
crossed products $A_\theta\rtimes F$ of the non-commutative $2$-tori $A_{\theta}$, $\theta\in [0,1]$ 
with finite subgroups $F\subseteq \SL(2,\ZZ)$ acting canonically on $A_\theta$.
It turned out that $A_\theta\rtimes F\cong C_r^*(\ZZ^2\rtimes F, \omega_\theta)$ for some cocycles 
$\omega_\theta$ which depend continuously on the parameter $\theta$. Since $\ZZ^2\rtimes F$ is amenable 
it satisfies strong BC, and then it follows from the above results that 
$$
K_*(A_\theta\rtimes F)\cong K_*(C_r^*(\ZZ^2\rtimes F, \omega_\theta))\cong K_*(C_r^*(\ZZ^2\rtimes F, \omega_0))
=K_*(C(\TT^2)\rtimes F).
$$
The last group can be computed by methods from classical topology. We refer to \cite{ELPW} for further details 
on this. Note that in this situation we can also use Theorem \ref{thm-MN} to deduce that all algebras 
$A_\theta\rtimes F$, $\theta\in [0,1]$,  are pairwise $KK$-equivalent. 

\subsection{Crossed products by actions on totally disconnected spaces}\label{subsec-disconnected}

We now want to apply our techniques to certain crossed products of groups $G$ acting ``nicely'' on totally disconnected 
spaces $\Om$. The main application of this will be given for reduced semigroup algebras and crossed products by 
certain semigroups, which is described in detail in  \cite{AlgAct} and  \cite{SgpC}.

If $\Om$ is a totally disconnected locally compact space we denote by 
$\mathcal U_c(\Omega)$ the collection of all compact open subsets of $\Omega$.
This set is countable if and only if $\Omega$ has a countable basis of its topology, i.e., 
$\Om$ is second countable. 
For any set $\mathcal  V\subseteq \mathcal U_c(\Om)$
we say that $\mathcal V$ {\em generates $\mathcal U_c(\Omega)$}, if every 
subset $\mathcal U\subseteq \mathcal U_c(\Om)$ which contains $\V$  
and  is closed under finite intersections, finite unions, and taking differences $U\smallsetminus W$ with $U,W\in \mathcal U$,
must coincide with $\U_c(\Om)$.

Let $C_c^\infty(\Om)$ denotes the dense subalgebra of $C_0(\Om)$ consisting 
of locally constant functions with compact supports on $\Omega$.  Then 
$$C_c^{\infty}(\Om)=\spn\{1_U: U\in \mathcal U_c(\Om)\},$$
where $1_U$ denotes the indicator function of $U\subseteq \Om$.
The straight-forward proof of the following lemma is given in \cite{CEL2}*{Lemma 2.2}:

\begin{lemma}
Suppose that $\mathcal V$ is a family of compact open subsets of the totally disconnected locally compact space $\Omega$. 
Then the following are equivalent:
\begin{enumerate}
\item The set $\{1_V: V\in \mathcal V\}$ of characteristic functions of the elements in $\mathcal V$ generates $C_0(\Om)$ as a $C^*$-algebra.
\item The set $\mathcal V$ generates $\mathcal U_c(\Om)$ in the sense explained above.
\end{enumerate}
If, in addition, $\mathcal V$ is closed under finite intersections, then (i) and (ii) are equivalent to 
\begin{enumerate}
\item[(iii)] $\spn\{1_V: V\in \mathcal V\}=C_c^{\infty}(\Om)$.
\end{enumerate}
\end{lemma}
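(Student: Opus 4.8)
The plan is to prove the two (resp. three) conditions equivalent by a cycle of implications, exploiting the fact that $\spn\{1_U : U \in \mathcal U_c(\Om)\} = C_c^\infty(\Om)$ is dense in $C_0(\Om)$, so that "generating $C_0(\Om)$ as a $C^*$-algebra'' is the same as "generating a dense $*$-subalgebra'', and ultimately the same as "the linear span of the products of the given generators contains all $1_U$ with $U \in \mathcal U_c(\Om)$''. The key translation is that $C_c^\infty(\Om)$ is spanned by indicator functions of compact open sets, the indicator functions multiply by the rule $1_U \cdot 1_W = 1_{U \cap W}$, and the algebraic operations on the set level (finite intersection, finite union, set difference) correspond exactly to algebraic operations on indicator functions: $1_{U\cap W} = 1_U 1_W$, $1_{U \cup W} = 1_U + 1_W - 1_U 1_W$, and $1_{U \setminus W} = 1_U - 1_U 1_W$. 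Conversely any polynomial relation in the $1_V$ produces, after collecting terms, a linear combination of indicator functions of sets built from the $V$'s by these three Boolean operations.

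First I would prove (ii) $\Rightarrow$ (i). Assume $\mathcal V$ generates $\mathcal U_c(\Om)$. Let $B$ denote the $C^*$-subalgebra of $C_0(\Om)$ generated by $\{1_V : V \in \mathcal V\}$, and let $\mathcal U := \{U \in \mathcal U_c(\Om) : 1_U \in B\}$. Using the identities above, $\mathcal U$ contains $\mathcal V$ and is closed under finite intersections, finite unions and differences; hence $\mathcal U = \mathcal U_c(\Om)$ by hypothesis. Therefore $B \supseteq \spn\{1_U : U \in \mathcal U_c(\Om)\} = C_c^\infty(\Om)$, which is dense in $C_0(\Om)$; since $B$ is closed, $B = C_0(\Om)$, proving (i). Next, (i) $\Rightarrow$ (ii): suppose $\{1_V : V \in \mathcal V\}$ generates $C_0(\Om)$ as a $C^*$-algebra. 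Let $\mathcal U \subseteq \mathcal U_c(\Om)$ be any family containing $\mathcal V$ and closed under finite intersections, unions and differences; we must show $\mathcal U = \mathcal U_c(\Om)$. Consider $A := \spn\{1_U : U \in \mathcal U\}$. By the Boolean identities this is a $*$-subalgebra of $C_c^\infty(\Om)$ (it is closed under products since $1_U 1_W = 1_{U\cap W}$ with $U \cap W \in \mathcal U$, and self-adjoint since the $1_U$ are real-valued projections). Its closure $\bar A$ is a $C^*$-subalgebra of $C_0(\Om)$ containing all $1_V$, $V \in \mathcal V$, hence $\bar A = C_0(\Om)$ by (i). Then I would argue that $\bar A \cap C_c^\infty(\Om) = A$: for any compact open $K$, density gives $f \in A$ with $\|1_K - f\| < 1/2$, and since elements of $A$ are locally constant linear combinations of indicators of sets in $\mathcal U$, one can use a locally constant partition refining the relevant sets to conclude $1_K$ itself is an integer combination — in fact lies in $A$; more directly, given $f \in A$ with $\|1_K - f\|_\infty < 1/2$, the function $f$ is constant on the atoms of a finite partition of a compact open neighbourhood into sets in the Boolean algebra generated by $\mathcal U$ (which is contained in $\mathcal U$), and $K$ is exactly the union of those atoms on which $f > 1/2$, whence $1_K \in A$ and $K \in \mathcal U$. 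Thus $\mathcal U = \mathcal U_c(\Om)$, giving (ii).

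Finally, under the extra hypothesis that $\mathcal V$ is closed under finite intersections, I would show (iii) $\Leftrightarrow$ (i),(ii). The implication (iii) $\Rightarrow$ (i) is immediate since $\spn\{1_V : V \in \mathcal V\} = C_c^\infty(\Om)$ is dense in $C_0(\Om)$. For (ii) $\Rightarrow$ (iii): since $\mathcal V$ is already closed under intersections, the linear span $\spn\{1_V : V \in \mathcal V\}$ together with the relation $1_{V} 1_{W} = 1_{V \cap W}$ shows it is closed under multiplication; one then checks it is closed under the operations needed to realise unions and differences by passing to a common refinement — concretely, for $V, W \in \mathcal V$ we have $1_{V\cup W} = 1_V + 1_W - 1_{V\cap W}$ and $1_{V \setminus W} = 1_V - 1_{V \cap W}$, both lying in $\spn\{1_V : V \in \mathcal V\}$ because $V \cap W \in \mathcal V$. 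Hence the family $\mathcal U := \{U \in \mathcal U_c(\Om) : 1_U \in \spn\{1_V : V \in \mathcal V\}\}$ contains $\mathcal V$ and is closed under finite intersections, unions and differences, so by (ii) it equals $\mathcal U_c(\Om)$, giving $\spn\{1_V : V \in \mathcal V\} \supseteq \spn\{1_U : U \in \mathcal U_c(\Om)\} = C_c^\infty(\Om)$, and the reverse inclusion is trivial.

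I expect the main obstacle to be the care needed in the step "approximation in $C_0(\Om)$ of an indicator function by elements of the algebraic span forces the indicator function to lie in the span'' — this is where total disconnectedness and the locally-constant structure are genuinely used, and one must be slightly careful to pass to a finite partition of a compact open set into atoms of the relevant finite Boolean subalgebra and argue that this Boolean algebra is contained in $\mathcal U$ (it is, since $\mathcal U$ is by assumption closed under the three operations generating a Boolean algebra of sets). Everything else is a routine bookkeeping of the Boolean identities $1_{U \cap W} = 1_U 1_W$, $1_{U \cup W} = 1_U + 1_W - 1_U 1_W$, $1_{U \setminus W} = 1_U - 1_U 1_W$, together with density of $C_c^\infty(\Om)$ in $C_0(\Om)$.
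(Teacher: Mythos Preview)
The paper does not actually prove this lemma; it only remarks that the proof is ``straight-forward'' and refers to \cite{CEL2}*{Lemma 2.2}. Your argument is correct and is precisely the expected one: the Boolean identities $1_{U\cap W}=1_U1_W$, $1_{U\cup W}=1_U+1_W-1_U1_W$, $1_{U\setminus W}=1_U-1_U1_W$ reduce everything to bookkeeping, and the one genuinely nontrivial step---the approximation argument in (i)$\Rightarrow$(ii), where you show that any compact open $K$ with $\|1_K-f\|<1/2$ for some $f\in A=\spn\{1_U:U\in\mathcal U\}$ must itself lie in $\mathcal U$ by writing $K$ as a union of atoms of the finite Boolean algebra generated by the $U_i$ occurring in $f$---is handled correctly.

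One small point of presentation: in your (ii)$\Rightarrow$(iii) paragraph you illustrate the union and difference identities only for $V,W\in\mathcal V$, but what is needed is closure of $\mathcal U=\{U:1_U\in\spn\{1_V:V\in\mathcal V\}\}$ under these operations for arbitrary $U,W\in\mathcal U$. This does follow immediately from your prior observation that the span is an algebra (so $1_{U\cap W}=1_U1_W$ lies in it), but it would read more cleanly if you stated the closure for general $U,W\in\mathcal U$ rather than for $V,W\in\mathcal V$.
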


We see in particular, that the commutative $C^*$-algebra $C_0(\Om)$ is generated as a $C^*$-algebra by a (countable) set of projections.
The converse is also true: If $D$ is any commutative $C^*$-algebra which is generated by a set of projections $\{e_i: i\in I\}\subseteq D$
and if $\Om=\Spec(D)$ is the Gelfand spectrum of $D$, then $\Omega$ is totally disconnected and the sets 
$$\mathcal V=\{\supp(\hat{e}_i): i\in I\},$$
where, for any $d\in D$, $\hat{d}\in C_0(\Om)$ denotes the Gelfand transform of $d$, is a family of compact open subsets of $\Om$
which generates $\mathcal U_c(\Om)$.  For a proof see \cite{CEL2}*{Lemma 2.3}. Thus there is an equivalence between studying sets of 
projections which generate $D$ or sets of compact open subsets of $\Om$ which generate $\mathcal U_c(\Om)$.

\begin{lemma}\label{lem-supp}
Suppose that $\{e_i:i\in I\}$ is a set of projections in the commutative $C^*$-algebra $D$. Then 
for each finite subset $F\subseteq I$ there exists a smallest projection $e\in D$ such that 
$e_i\leq e$ for every $i\in F$. We then write $e:=\vee_{i\in F}e_i$.
\end{lemma}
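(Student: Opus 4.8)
The plan is to construct $e:=\vee_{i\in F}e_i$ explicitly via an inclusion-exclusion type formula and then verify it has the required minimality property. Since $D$ is commutative, the projections $\{e_i:i\in F\}$ all commute with one another, so the $C^*$-subalgebra $D_F$ generated by $\{e_i:i\in F\}$ together with a unit (working in $D^1$ if $D$ is non-unital) is a finite-dimensional commutative $C^*$-algebra, hence isomorphic to $\CC^m$ for some $m$. In this picture every projection is a sum of minimal projections, and the candidate $e$ is simply the supremum of the $e_i$ in the (finite, hence complete) lattice of projections of $D_F$.

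Concretely, I would first reduce to the unital case by passing to $D^1$; note that the supremum we construct will automatically lie in $D$ itself (not merely $D^1$) because it is dominated by, e.g., $\sum_{i\in F}e_i\in D$ and $D$ is an ideal in $D^1$ — more precisely, $e$ will be a polynomial in the $e_i$ with no constant term. The cleanest explicit formula: order $F=\{i_1,\dots,i_n\}$ and set $e := 1 - \prod_{k=1}^n(1-e_{i_k})$, where the product is taken in $D^1$. Expanding, $e=\sum_{\emptyset\neq S\subseteq F}(-1)^{|S|+1}\prod_{j\in S}e_j$, which visibly has no constant term and so lies in $D$. Since the $e_i$ commute and are idempotent, a direct computation shows $e=e^*=e^2$, so $e$ is a projection; and since each $1-e_{i_k}\leq 1-e_i\cdot(1-e_{i_k})$... — more simply, $(1-e)e_i = e_i\prod_{k}(1-e_{i_k}) = 0$ because the factor $(1-e_i)$ appears and $e_i(1-e_i)=0$, hence $e_i\leq e$ for each $i\in F$.

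Next I would verify minimality: suppose $f\in D$ is a projection with $e_i\leq f$ for all $i\in F$. Then $(1-f)e_i=0$ for each $i$, so $(1-f)$ annihilates $e_{i_k}$ for every $k$; since $1-f$ commutes with everything and $(1-f)(1-e_{i_k})=(1-f)$, we get $(1-f)\prod_k(1-e_{i_k})=(1-f)$, i.e. $(1-f)(1-e)=1-f$, which rearranges to $fe = f - (1-f) + (1-f)e$... let me instead argue: $(1-f)(1-e) = 1-f$ means $1-e\geq 1-f$ after multiplying, so $e\leq f$. That is exactly the claimed minimality, and it also shows uniqueness of the smallest such projection. Finally I would remark that $e$ is independent of the chosen enumeration of $F$ (clear from the symmetric expanded formula, or from the uniqueness just proved).

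I expect no genuine obstacle here; the only mild care needed is the bookkeeping to ensure $e\in D$ rather than $D^1$ when $D$ is non-unital, which is handled by observing the expanded formula has vanishing constant term, and the routine but slightly fiddly algebra verifying $e^2=e=e^*$ using commutativity and idempotency of the generators. This step I would present compactly rather than expanding every term.
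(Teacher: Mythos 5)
Your proof is correct, but it takes a different route from the paper's. The paper argues spatially: it invokes the preceding Gelfand-duality discussion to write $D=C_0(\Om)$, so that each $e_i$ becomes the indicator function $1_{V_i}$ of the compact open set $V_i=\supp(\hat e_i)$, and the supremum is then simply $1_V$ with $V=\bigcup_{i\in F}V_i$ — a one-line proof given the setup of the surrounding section. You instead construct the supremum algebraically as $e=1-\prod_{i\in F}(1-e_i)$ in the unitization, check via the inclusion--exclusion expansion that $e$ has no constant term and hence lies in $D$, verify that $1-e$ is a product of commuting projections (so $e$ is a projection), that $(1-e)e_i=0$ gives $e_i\leq e$, and that any projection $f$ dominating all the $e_i$ satisfies $(1-f)(1-e)=1-f$, i.e.\ $e\leq f$. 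The two arguments are of course two faces of the same fact — your formula is exactly what $1_{\bigcup V_i}$ computes — but yours is self-contained and does not require identifying $D$ with an algebra of functions on a (totally disconnected) space, at the cost of a little more bookkeeping; the paper's buys brevity by leaning on machinery it has already installed. Your minimality verification stumbles briefly mid-computation, but the argument you settle on ($(1-f)(1-e)=1-f$ for commuting projections forces $1-f\leq 1-e$) is correct, so there is no gap.
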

\begin{proof} By the above discussion we may assume that $D=C_0(\Om)$ for some totally disconnected space $\Om$.
For each $i\in F$ let $V_i:=\supp(e_i)$. 
Then $e=1_V$ with $V=\cup_{i=1}^l V_i$.
\end{proof}

The independence condition given in the following definition is central for the results of this section:

\begin{definition}\label{def-independence}
Suppose that $\{X_i: i\in I\}$ is a family of subsets of a set $X$. We then say that $\{X_i:i\in I\}$ is {\em independent}
if for any finite subset $F\subseteq I$ and for any index $i_0\in I$ we have
$$X_{i_0}=\cup_{i\in F} X_i\Rightarrow i_0\in F.$$
Similarly,  a family $\{e_i:i\in I\}$ of projections in the  commutative $C^*$-algebra $D$ is called {\em independent}
if for any finite subset $F\subseteq I$ and every $i_0\in I$ we have
$$e_{i_0}=\vee_{i\in F} e_i\Rightarrow i_0\in F.$$
\end{definition}

Of course, if $D=C_0(\Om)$ then  $\{e_i: i\in I\}$ is an independent family of projections in $D$ if and only 
if $\{\supp(e_i): i\in I\}$ is an independent family of compact open subsets of $\Om$. The following lemma is 
\cite{CEL2}*{Lemma 2.8}. The proof follows from \cite{LiNuc}*{Proposition 2.4}:

\begin{lemma}\label{lem-independence}
Suppose that $\{e_i:i\in I\}$ is an independent set of projections in the commutative $C^*$-algebra $D$ which is closed under 
finite multiplication up to $0$. Then $\{e_i:i\in I\}$ is independent if and only if it is linearly independent.
\end{lemma}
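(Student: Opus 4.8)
The statement to be proved is Lemma~\ref{lem-independence}: for a set $\{e_i:i\in I\}$ of projections in a commutative $C^*$-algebra $D$, closed under finite multiplication up to $0$, being independent (in the sense of Definition~\ref{def-independence}) is equivalent to being linearly independent. One direction is immediate: if the family is linearly independent and $e_{i_0}=\vee_{i\in F}e_i$ for some finite $F\subseteq I$ with $i_0\notin F$, then replacing $D$ by $C_0(\Om)$ with $\Om=\Spec(D)$ (using the discussion preceding the lemma) and writing $V_i=\supp(e_i)$, the relation $V_{i_0}=\bigcup_{i\in F}V_i$ together with inclusion–exclusion expresses $1_{V_{i_0}}$ as a $\ZZ$-linear combination of the $1_{\cap_{j\in S}V_j}$ for $\emptyset\neq S\subseteq F$; since the family is closed under finite multiplication up to $0$, each such $1_{\cap_{j\in S}V_j}$ is either $0$ or again some $e_k$ with $k\in I$ (or, more carefully, lies in the linear span of the family), contradicting linear independence unless $i_0\in F$. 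So the real content is the forward direction: independence $\Rightarrow$ linear independence.

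For that direction the plan is to pass to $D=C_0(\Om)$, $\Om$ totally disconnected and locally compact, via the Gelfand correspondence already set up in the excerpt, and argue with the compact open sets $V_i=\supp(e_i)$. Suppose some nontrivial linear relation $\sum_{i\in F}\lambda_i 1_{V_i}=0$ holds with all $\lambda_i\neq 0$ and $F$ finite; choose such a relation with $|F|$ minimal. Pick $i_0\in F$ with $V_{i_0}$ maximal among $\{V_i:i\in F\}$ with respect to inclusion (minimality of $|F|$ will force that no $V_i$ with $i\in F$ equals $V_{i_0}$, else we could merge terms and shorten the relation, using that all the sets in question are among the $V_j$ up to empty intersections). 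Evaluating the relation at a point of $V_{i_0}\setminus\bigcup_{i\in F\setminus\{i_0\}}V_i$ would give $\lambda_{i_0}=0$, a contradiction; hence $V_{i_0}\subseteq\bigcup_{i\in F\setminus\{i_0\}}V_i$. Intersecting with $V_{i_0}$ and using closure under multiplication up to $0$, each $V_i\cap V_{i_0}$ ($i\in F\setminus\{i_0\}$, nonempty) is again $V_{k(i)}$ for some $k(i)\in I$, and these satisfy $V_{i_0}=\bigcup_i V_{k(i)}$; since each $V_{k(i)}\subseteq V_{i_0}$ and, by maximality and minimality considerations, none equals $V_{i_0}$, this contradicts the independence hypothesis applied to the index $i_0$ and the finite set $\{k(i)\}$.

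The step I expect to require the most care is the bookkeeping in the previous paragraph: making precise that, after imposing minimality of the length of the linear relation and maximality of $V_{i_0}$, one genuinely arrives at a decomposition $V_{i_0}=\bigcup_{i}V_{k(i)}$ with all $V_{k(i)}\subsetneq V_{i_0}$ and each $V_{k(i)}$ an honest member of the family (not just of its linear span). This is where the hypothesis ``closed under finite multiplication up to $0$'' is essential — it guarantees $V_i\cap V_{i_0}$ is again one of the $V_k$ (or empty) rather than merely a compact open set — and where one must be careful about indices $i$ with $V_i\cap V_{i_0}=\emptyset$, which simply get discarded. Since the excerpt explicitly tells us the proof follows from \cite{LiNuc}*{Proposition 2.4}, I would organize the write-up around that reference, reproducing only the set-theoretic core above and citing \cite{LiNuc} for the remaining verifications; no deeper input (nothing from $KK$-theory or Baum–Connes) is needed here, as this is a purely combinatorial/commutative-algebra fact about the generating family.
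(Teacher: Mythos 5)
Your proposal is correct, and it is worth pointing out that the paper gives no argument at all for this lemma beyond the citation of \cite{LiNuc}*{Proposition 2.4}, so your write-up supplies a self-contained proof where the paper defers to a reference. Your forward direction (take a shortest nontrivial relation, pick $i_0$ with $V_{i_0}$ maximal, deduce $V_{i_0}\subseteq\bigcup_{i\in F\smallsetminus\{i_0\}}V_i$ by evaluating at a point, intersect with $V_{i_0}$ and invoke independence) is sound; the only bookkeeping to spell out is that if independence returned $i_0\in\{k(i)\}$, i.e.\ $V_i\cap V_{i_0}=V_{i_0}$ for some $i$, then maximality of $V_{i_0}$ forces $V_i=V_{i_0}$, which minimality of the relation has excluded. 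In the converse direction your parenthetical hedge is slightly misplaced: closure under multiplication up to $0$ gives that each $\prod_{j\in S}e_j$ is $0$ or an honest member of the family (not merely an element of its span); the point that actually needs a sentence is that the inclusion--exclusion identity $e_{i_0}=\sum_{\emptyset\neq S\subseteq F}(-1)^{|S|+1}\prod_{j\in S}e_j$ is a \emph{nontrivial} relation, which holds because $\prod_{j\in S}e_j=e_{i_0}$ would force $V_{i_0}\subseteq V_j\subseteq\bigcup_{i\in F}V_i=V_{i_0}$, hence $e_j=e_{i_0}$ for some $j\in F$, impossible for distinct projections when $i_0\notin F$. An alternative route to the hard direction, already latent in the paper's Lemma~\ref{lem-finite-independent}, is to orthogonalize: for a finite multiplicatively closed subfamily set $e_i'=e_i-\vee_{e_j<e_i}e_j$; independence gives $e_i'\neq 0$, the $e_i'$ are pairwise orthogonal, and the transition matrix to the $e_i$ is unipotent, whence linear independence. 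That version buys an explicit orthogonal basis and an invertible integral change of basis (exactly what the proof of Theorem~\ref{thm-Om} uses later), while your minimal-counterexample argument is more economical if linear independence is all one wants.
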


\begin{definition}\label{def-regular-basis}
Suppose that $\Om$ is a totally disconnected locally compact Hausdorff space. 
An independent family $\mathcal V$ of non-empty compact open subsets of $\Om$ is called a {\em regular basis} (for the 
compact open subsets of $\Om$) if it generates $\mathcal U_c(\Om)$ and if $\mathcal V\cup\{\emptyset\}$ is closed under finite intersections.

A family of projections  $\{e_i:i\in I\}$  in the commutative $C^*$-algebra $D$ is called a {\em regular basis} for $D$, if it 
is (linearly) independent, closed under finite multiplication up to $0$ and generates $D$ as a $C^*$-algebra.
\end{definition}

The  following lemma is  a consequence of the above discussions. We leave the details to the reader.

\begin{lemma}\label{lem-basis} 
A family of projections $\{e_i:i\in I\}$ in the commutative $C^*$-algebra $D$ is a regular basis for $D$ if and only if 
the set $\V=\{\supp\hat{e}_i: i\in I\}$ is a regular basis for the compact open subsets of $\Om=\Spec(D)$.
Conversely, $\V$ is a regular basis for the compact open subsets of the locally compact space $\Om$
if and only if $\{1_V: V\in \V\}$ is a regular basis for $C_0(\Om)$.
\end{lemma}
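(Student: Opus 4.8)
The plan is to reduce both equivalences to the Gelfand picture and then match the three defining conditions of a regular basis one at a time, quoting the lemmas established earlier in this subsection. By Gelfand duality the transform $d\mapsto\hat d$ is a $*$-isomorphism of $D$ onto $C_0(\Om)$ with $\Om=\Spec(D)$, and it carries a projection $e\in D$ to the indicator function $1_V$ of the compact open set $V=\supp\hat e$; conversely every compact open $V\subseteq\Om$ is $\supp 1_V$ for the projection $1_V\in C_0(\Om)$. Hence the two asserted equivalences are the same statement transported along this isomorphism (the second one being just the special case $D=C_0(\Om)$), and it suffices to show: a family of projections $\{e_i:i\in I\}$ in a commutative $C^*$-algebra $D$ is a regular basis for $D$ if and only if $\V=\{\supp\hat e_i:i\in I\}$ is a regular basis for the compact open subsets of $\Spec(D)$.

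Next I would check the three conditions separately, writing $V_i=\supp\hat e_i$. \emph{Generation:} $\{e_i\}$ generates $D$ as a $C^*$-algebra if and only if $\{1_{V_i}\}$ generates $C_0(\Om)$, which by the first Lemma of this subsection is equivalent to $\V$ generating $\U_c(\Om)$. \emph{Closure:} since $\widehat{e_ie_j}=1_{V_i\cap V_j}$, the family $\{e_i\}$ being closed under finite multiplication up to $0$ corresponds exactly to $\V\cup\{\emptyset\}$ being closed under finite intersections; this also matches the harmless bookkeeping convention that the $V_i$ are non-empty, i.e. the $e_i$ are non-zero. \emph{Independence:} by Lemma \ref{lem-supp} the supremum $\vee_{i\in F}e_i$ corresponds to the union $\cup_{i\in F}V_i$, so the implication ``$e_{i_0}=\vee_{i\in F}e_i\Rightarrow i_0\in F$'' of Definition \ref{def-independence} translates verbatim into ``$V_{i_0}=\cup_{i\in F}V_i\Rightarrow i_0\in F$'', i.e. combinatorial independence of $\{e_i\}$ is equivalent to independence of $\V$.

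The only point that needs a little extra care is that Definition \ref{def-regular-basis} phrases regularity of $\{e_i\}$ via \emph{linear} independence, while regularity of $\V$ is phrased via combinatorial independence of sets. Here Lemma \ref{lem-independence} bridges the gap: once $\{e_i\}$ is closed under finite multiplication up to $0$ (equivalently $\V\cup\{\emptyset\}$ is closed under finite intersections), linear independence of $\{e_i\}$ is equivalent to their combinatorial independence, hence to independence of $\V$. Assembling the three matched conditions together with this last observation yields both ``if and only if'' statements.

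I do not expect a genuine obstacle here; the proof is purely a dictionary argument. The mild subtlety to watch is the treatment of the empty set and the zero projection, so that ``closed under finite multiplication up to $0$'' lines up precisely with ``$\V\cup\{\emptyset\}$ closed under finite intersections'' and with the requirement that the members of a regular basis be non-empty; once that convention is fixed, the translation is routine.
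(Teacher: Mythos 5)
Your proof is correct and is exactly the "dictionary argument" the paper has in mind: the paper explicitly leaves the details to the reader, calling the lemma "a consequence of the above discussions," and your write-up supplies precisely those details (Gelfand duality, the unnamed generation lemma, Lemma \ref{lem-supp} for $\vee_{i\in F}e_i\leftrightarrow\cup_{i\in F}V_i$, and Lemma \ref{lem-independence} to reconcile linear with combinatorial independence). Your handling of the zero projection versus the empty set is the right bookkeeping point to flag.
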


It is not difficult to see that every totally disconnected locally compact space $\Om$ has a regular basis 
for its compact open sets. A formal proof is given in \cite{CEL2}*{Proposition 2.12}. The following example
shows the existence for the Cantor set:

\begin{example}\label{ex-base}
Let $\Om= \{0,1\}^{\ZZ}$ denote the direct product of copies of $\{0,1\}$
over $\ZZ$ equipped with the product topology. For each finite subset $F\subseteq \ZZ$  
let 
$$V_{F}=\{(\eps_n)_n\in \Om: \eps_n=0 \, \forall n\in F\}$$
be the corresponding {\em cylinder set} in $\Om$.
It is then an easy exercise to check that the collection 
$\V=\{V_{F}: F\subseteq \ZZ\;\text{finite}\}$
is a regular basis for the compact open subsets of $\Om$. 
\end{example}

Assume now that $G$ is a second countable locally compact group and 
$\Om$ is a second countable totally disconnected  $G$-space such that there exists 
a {\bf $G$-invariant} regular basis   $\V=\{V_i:i\in I\}$ for the compact open subsets 
of $\Om$. For all $i\in I$ let $e_i=\one_{V_i}$ be the characteristic function of $V_i$.
Then, since $\V$  is $G$-invariant, the action of $G$ on $\Om$ induces an action of $G$
on $I$. Consider the unitary representation $U:G\to \U(\ell^2(I)); (U_g\xi)(i)=\xi(g^{-1} i)$
and let $\Ad U: G\to \Aut(\K(\ell^2(I)))$ denote the corresponding adjoint action.
For each $i\in I$ let  $\delta_i$ denote the Dirac function at $i$ and let $d_i:\ell^2(I)\to\CC\delta_i$ denote the 
orthogonal projection. 
Then there exists a unique $G$-equivariant $*$-homomorphism
$$\Phi:C_0(I)\to C_0(\Om)\otimes \K(\ell^2(I))\quad\text{such that}\quad \Phi(\delta_i)=e_i\otimes d_i,$$
for all $i\in I$,  where the action of $G$ on $C_0(I)$ is induced by the action on $I$ and the action on 
$C_0(\Om)\otimes \K(\ell^2(I))$ is given by the diagonal action $\tau\otimes \Ad U$, where 
$\tau:G\to \Aut(C_0(\Om))$ denotes the given action of $G$ on $C_0(\Om)$.
More generally if $\alpha:G\to\Aut(A)$ is an action of $G$ on a $C^*$-algebra $A$, then
 there exists a $G$-equivariant $*$-homomorphism
$$\Phi_A: C_0(I)\otimes A\to C_0(\Om)\otimes A\otimes   \K(\ell^2(I))\quad\text{s.t.}\quad \;\Phi_A(\delta_i\otimes a)= e_i\otimes a\otimes d_i.$$
Note that the action $\tau\otimes\alpha\otimes\Ad U$ of $G$ on $C_0(\Om)\otimes A\otimes \K(\ell^2(I))$ is Morita equivalent to the 
action $\tau\otimes \alpha$ of $G$ on $C_0(\Om)\otimes A$ via the $G$-equivariant equivalence bimodule
$\E:=(C_0(\Om)\otimes A\otimes \ell^2(I), \tau\otimes\alpha\otimes U)$. Thus we obtain a $KK^G$-class 
$$x=[\Phi_A]\otimes_{C_0(\Om)\otimes A\otimes \K}\E\in KK^G(C_0(I,A), C_0(\Om,A)).$$

The following is the main result of this section:

\begin{theorem}[{cf. \cite{CEL2}}]\label{thm-Om}
Suppose that $\{e_i:i\in I\}$ is a $G$-equivariant regular basis for $C_0(\Om)$, $A$ is any $G$-$C^*$-algebra,  and  
$G$ satisfies the Baum-Connes conjecture for $C_0(I,A)$ and $C_0(\Om,A)$. Then the descent
$$J_G(x)\in \KK( C_0(I,A)\rtimes_rG, C_0(\Om,A)\rtimes_rG)$$
of the class $x\in KK^G(C_0(I,A), C_0(\Om,A))$ constructed above 
induces an isomorphism $K_*(C_0(I,A)\rtimes_rG)\cong K_*(C_0(\Om,A)\rtimes_rG)$. 

If, moreover, $G$ satsfies the  strong Baum-Connes conjecture and if  $A$ is type I, then 
$J_G(x)\in \KK( C_0(I,A)\rtimes_rG, C_0(\Om,A)\rtimes_rG)$ is a $KK$-equivalence.
\end{theorem}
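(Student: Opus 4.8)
The plan is to deduce both statements from the Going-Down principle (Theorem~\ref{thm-GD}) and the Meyer--Nest theorem (Theorem~\ref{thm-MN}), the only real work being the verification of their hypotheses on compact subgroups. First observe that if $\{e_i:i\in I\}$ is a $G$-invariant regular basis for $C_0(\Om)$, then for every compact subgroup $K\subseteq G$ it is a $K$-invariant regular basis as well, and the class $\res_K^G(x)\in KK^K(C_0(I,A),C_0(\Om,A))$ is obtained from the restricted data exactly as $x$ was obtained from the $G$-data. So everything reduces to the following assertion, which I would prove for an \emph{arbitrary} compact group $K$ and $K$-$C^*$-algebra $A$: if $\{e_i:i\in I\}$ is a $K$-invariant regular basis for $C_0(\Om)$ then $\cdot\otimes x\colon KK^K_*(\CC,C_0(I,A))\to KK^K_*(\CC,C_0(\Om,A))$ is an isomorphism, and if moreover $A$ is type~I then $J_K(x)\in KK(C_0(I,A)\rtimes K, C_0(\Om,A)\rtimes K)$ is a $KK$-equivalence. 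Granting the first half, Theorem~\ref{thm-GD} together with the assumed validity of Baum--Connes for $C_0(I,A)$ and $C_0(\Om,A)$ yields the first statement of the theorem; granting the second half, Theorem~\ref{thm-MN} (whose compact-subgroup hypothesis is, via Remark~\ref{remark-GD}, exactly this second half) together with the assumed strong Baum--Connes conjecture yields the second statement.

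To prove the compact-group assertion I would reduce to finite-dimensional pieces by exploiting the combinatorics of a regular basis. Since $\Om$ is second countable, $I$ is countable; and since $\mathcal V\cup\{\emptyset\}$ is closed under finite intersections and the $K$-orbits on $I$ are finite, the intersection-closure of any finite $K$-invariant subset of $\mathcal V=\{V_i=\supp\hat e_i\}$ is again finite and $K$-invariant. Choosing an increasing exhaustion $\mathcal V=\bigcup_n\mathcal V_n$ by such finite, intersection-closed, $K$-invariant subsets, with index sets $I_n$, one obtains finite-dimensional $K$-invariant $*$-subalgebras $D_n=\spn\{1_V:V\in\mathcal V_n\}\subseteq C_0(\Om)$ and $E_n=C_0(I_n)=\spn\{\delta_i:i\in I_n\}\subseteq C_0(I)$, with dense unions, and the homomorphism $\Phi_A$ and the Morita bimodule $\E$ restrict compatibly, so that $x$ is represented as the ``inductive limit'' of classes $x_n\in KK^K(E_n\otimes A, D_n\otimes A)$. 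By continuity of $KK^K_*(\CC,-)$ and of $B\mapsto K_*(B\rtimes K)$ under such limits, it suffices to show each $x_n$ is a $KK^K$-equivalence.

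For this I would identify $D_n$ and $E_n$ $K$-equivariantly with $\bigoplus_{\mathcal V_n}\CC$. The atoms of the finite Boolean algebra generated by $\mathcal V_n$ are in $K$-equivariant bijection with $\mathcal V_n$ via $V\mapsto V\smallsetminus\bigcup\{W\in\mathcal V_n:W\subsetneq V\}$; the crucial point is that this set is non-empty precisely because of the independence hypothesis of Definition~\ref{def-independence} (no element of $\mathcal V$ is the union of strictly smaller ones), and that, again by independence and intersection-closure, $1_V=\sum_{W\subseteq V}1_{\mathrm{atom}(W)}$. Under these identifications, and after composing with $\E$, the class $x_n$ corresponds to the integer incidence matrix $Z=(Z_{V,W})$ of the finite $K$-poset $(\mathcal V_n,\subseteq)$ with $Z_{V,W}=1$ if $W\subseteq V$ and $0$ otherwise. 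This matrix commutes with the $K$-permutation action and is unitriangular with respect to any linear extension of $\subseteq$, hence invertible over $\ZZ$ with inverse the M\"obius matrix; an integer matrix invertible over $\ZZ$ defines an invertible element of $KK^K(\bigoplus_{\mathcal V_n}A,\bigoplus_{\mathcal V_n}A)$, so $x_n$, and hence $J_K(x_n)$, is an equivalence. Taking limits gives the first half of the assertion. For the second half, the same computation shows (via Remark~\ref{remark-GD}) that $J_K(x)$ is an isomorphism on $K$-theory; and when $A$ is type~I, $C_0(I,A)\rtimes K$ and $C_0(\Om,A)\rtimes K$ are separable type~I $C^*$-algebras --- type~I being preserved under tensoring with a commutative $C^*$-algebra and under crossed products by compact groups --- hence satisfy the universal coefficient theorem, so that a $KK$-class inducing a $K$-theory isomorphism between them is automatically a $KK$-equivalence.

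The reductions through Theorems~\ref{thm-GD} and~\ref{thm-MN} are formal, and the M\"obius-inversion argument is elementary once it is set up; so I expect the main obstacle to be the bookkeeping in the middle step --- arranging the exhaustion $(\mathcal V_n)$ to be simultaneously finite, intersection-closed and $K$-invariant, checking that $\Phi_A$ and $\E$ restrict so that genuinely $x=\varinjlim x_n$, and confirming that the induced map on $KK^K_*(\CC,C_0(\Om,A))$ at each stage is governed precisely by the incidence matrix $Z$ (with coefficients in $A$) and not by some more complicated datum. Once this is in place, the independence hypothesis of Definition~\ref{def-independence} is exactly what makes the relevant transition matrix invertible, which is the conceptual core of the theorem.
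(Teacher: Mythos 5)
Your proposal is correct and follows essentially the same route as the paper: reduce via the Going-Down principle (and Meyer--Nest plus the UCT in the type~I case) to finite/compact subgroups, exhaust by finite invariant multiplicatively closed families, and observe that independence makes the transition (incidence) matrix between $\{e_i\}$ and its orthogonalized atoms unipotent, hence invertible over $\ZZ$. The only step you state as a fact that the paper must actually prove is that a $\ZZ$-invertible integer matrix commuting with the permutation actions yields an invertible class in $KK^K$ --- since the equivariant UCT map is not an isomorphism in general, this requires the explicit product-compatible section $\Gamma\mapsto x_\Gamma$ of Lemma \ref{lem-finiteUCT} --- but that is exactly the lemma the paper supplies.
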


The above theorem has originally been shown in \cite{CEL2}*{\S 3}, extending an earlier result given in 
\cite{CEL1}. Before we present some of the crucial ideas of the proof, we would like to discuss a bit  why
this result might be useful for explicit $K$-theory calculations. The main reason is due to the 
relatively easy structure of crossed products by groups acting on {\em discrete} spaces $I$.
If such action is given (as in the situation of our theorem) and if $A$ is any other $G$-$C^*$-algebra,
we obtain a $G$-equivariant direct sum decomposition 
$$C_0(I,A)\cong \oplus_{[i]\in G\backslash I} C_0(G\cdot i)\otimes A,$$
in which $G\cdot i=\{g\cdot i: g\in G\}$ denotes the $G$-orbit of the representative $i$ of the class $[i]\in G\backslash I$.
Let $G_i:=\{g\in G: g\cdot i=i\}$ denote the stabiliser of $i$ in $G$. Then $G_i$ is open in $G$ 
and we have a $G$-equivariant bijection
$$G/G_i\stackrel{\cong}{\to} G\cdot i; g G_i\mapsto g\cdot i.$$
Moreover, by Green's imprimitivity theorem (\cite{CroPro}*{Theorem 2.6.4}; see also  \cite{CroPro}*{Remark 2.6.9}),
there are natural Morita equivalences
$$C_0(G/G_i, A)\rtimes_rG\cong A\rtimes_r G_i.$$
Putting things together, we therefore get
$$C_0(I,A)\rtimes_rG\cong \oplus_{[i]\in G\backslash I} C_0(G/G_i,A)\rtimes_rG\sim_M \oplus_{[i]\in G\backslash I} A\rtimes_r G_i.$$
Since Morita equivalences are $KK$-equivalences, we get
$$K_*(C_0(I,A)\rtimes_rG)\cong \oplus_{[i]\in G\backslash I}K_*(A\rtimes_rG_i).$$
Thus

\begin{corollary}\label{cor-Om}
Suppose that $G,\Om, A$ and $\{e_i:i\in I\}$ are as in Theorem \ref{thm-Om}. Then there is an isomorphism
$$K_*( C_0(\Om,A)\rtimes_rG)\cong \oplus_{[i]\in G\backslash I}K_*(A\rtimes_rG_i).$$
In particular, if $A=\CC$, there is an isomorphism 
$$K_*( C_0(\Om)\rtimes_rG)\cong \oplus_{[i]\in G\backslash I}K_*(C_r^*(G_i)).$$
If $G$ satisfies  strong BC and $A$ is type I, the isomorphism is induced by a $KK$-equivalence
between $C_0(\Om,A)\rtimes_rG$ and $\oplus_{[i]\in G\backslash I} A\rtimes_rG_i$.
\end{corollary}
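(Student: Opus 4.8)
The plan is to deduce Corollary~\ref{cor-Om} by combining Theorem~\ref{thm-Om} with the computation of $K_*(C_0(I,A)\rtimes_rG)$ already carried out in the discussion immediately preceding the corollary. Recall from that discussion that the $G$-equivariant decomposition $C_0(I,A)\cong\oplus_{[i]\in G\backslash I}C_0(G\cdot i)\otimes A$, together with the $G$-homeomorphism $G\cdot i\cong G/G_i$ and Green's imprimitivity theorem, produces a Morita equivalence $C_0(I,A)\rtimes_rG\sim_M\oplus_{[i]\in G\backslash I}A\rtimes_rG_i$ and hence, since $K_*$ is a Morita invariant and commutes with countable direct sums, an isomorphism $K_*(C_0(I,A)\rtimes_rG)\cong\oplus_{[i]\in G\backslash I}K_*(A\rtimes_rG_i)$. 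Here the index set $I$, and therefore $G\backslash I$, is countable, because $\V\subseteq{\mathcal U}_c(\Om)$ and ${\mathcal U}_c(\Om)$ is countable by second countability of $\Om$; this is what makes the direct sums legitimate. On the other hand, the first assertion of Theorem~\ref{thm-Om} gives an isomorphism $K_*(C_0(I,A)\rtimes_rG)\cong K_*(C_0(\Om,A)\rtimes_rG)$ induced by $J_G(x)$. Composing the two isomorphisms yields $K_*(C_0(\Om,A)\rtimes_rG)\cong\oplus_{[i]\in G\backslash I}K_*(A\rtimes_rG_i)$, which is the first claim. Specialising to $A=\CC$ and using $\CC\rtimes_rG_i=C_r^*(G_i)$ gives $K_*(C_0(\Om)\rtimes_rG)\cong\oplus_{[i]\in G\backslash I}K_*(C_r^*(G_i))$.

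For the final statement, assume that $G$ satisfies the strong Baum--Connes conjecture and that $A$ is type~I. Then the second assertion of Theorem~\ref{thm-Om} upgrades the previous statement: $J_G(x)$ is now a $KK$-equivalence between $C_0(I,A)\rtimes_rG$ and $C_0(\Om,A)\rtimes_rG$. It remains to note that the Morita equivalence $C_0(I,A)\rtimes_rG\sim_M\oplus_{[i]\in G\backslash I}A\rtimes_rG_i$ recalled above is itself a $KK$-equivalence by Lemma~\ref{lem-Morita-KK}; the only point to check is that a countable orthogonal direct sum of the Green imprimitivity bimodules implementing the equivalences $C_0(G/G_i,A)\rtimes_rG\sim_M A\rtimes_rG_i$ is an equivalence bimodule between the direct sums, which is elementary because the summands live in mutually orthogonal ideals of $C_0(I,A)\rtimes_rG=\oplus_{[i]}C_0(G/G_i,A)\rtimes_rG$. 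Composing this $KK$-equivalence with the one coming from Theorem~\ref{thm-Om} (using composability of $KK$-equivalences) gives the asserted $KK$-equivalence between $C_0(\Om,A)\rtimes_rG$ and $\oplus_{[i]\in G\backslash I}A\rtimes_rG_i$, and it clearly induces the $K$-theory isomorphism obtained above.

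The corollary is therefore an essentially formal consequence of Theorem~\ref{thm-Om} together with the structure theory of reduced crossed products by actions on discrete spaces; no new hard input is needed. The only steps requiring any care are the bookkeeping ones: that the relevant direct sums are countable and that $K$-theory (via continuity) and Morita/$KK$-equivalences behave well with respect to countable orthogonal direct sums. I expect none of these to present a real obstacle, so the ``hard part'' here is simply being careful to state the direct-sum conventions precisely rather than proving anything genuinely new.
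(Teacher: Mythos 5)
Your proposal is correct and follows essentially the same route as the paper, which likewise obtains the corollary by composing the isomorphism of Theorem \ref{thm-Om} with the Morita/Green imprimitivity decomposition $C_0(I,A)\rtimes_rG\sim_M\oplus_{[i]\in G\backslash I}A\rtimes_rG_i$ worked out in the discussion immediately preceding the statement. Your extra remarks on countability of $I$ and on the direct sum of imprimitivity bimodules are just the bookkeeping the paper leaves implicit.
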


In many interesting examples coming from the theory of $C^*$-semigroup algebras and crossed products 
of semigroups by automorphic actions 
of  semigroups $P\subseteq G$, the stabilisers for the action of $G$ on $I$ have very easy structure, so that
the $K$-theory groups of the crossed products $A\rtimes_rG$ are computable. 
This is in particular true in the case  $A=\CC$.  The applications of Theorem \ref{thm-Om} to $C^*$-semigroup algebras 
is discussed in more detail in  \cite{SgpC}*{Section 5.10}) and  \cite{AlgAct}*{Section 6.5}.

\begin{example}\label{ex-lamp}
To illustrate the usefulness of our approach we consider the group algebra of the  lamplighter group $\ZZ/2 \wr \ZZ$
which is the semi-direct product $\big(\oplus_{\ZZ} \ZZ/2\big)\rtimes \ZZ$, where the action is given 
 via translation of the summation index. Since the dual group of $\oplus_\ZZ \ZZ/2$ is equal to the 
direct product $\Om:=\prod_{\ZZ} \{1,-1\}=\{1,-1\}^{\ZZ}$ the 
group algebra $C^*( \ZZ/2\wr\ZZ)$ is isomorphic 
to $C(\Om)\rtimes \ZZ$. Moreover, by Example \ref{ex-base} there exists a regular basis $\V=\{V_F: F\subseteq \ZZ\;\text{finite}\}$ 
for the compact open subsets of $\Om$
consisting of the cylinder sets $V_F=\{(\eps_n)_n\in \Om: \eps_n=1  \;\forall n\in F\}$ attached to the finite subsets $F\subseteq \ZZ$. 
This basis is clearly $\ZZ$-invariant, hence our theorem applies to the corresponding regular  basis $\{e_F=\one_{V_F}: F\subseteq \ZZ\;\text{finite}\}$
of $C(\Om)$. 
Let $\mathcal F=\{F\subseteq \ZZ: F\;\text{finite}\}$ denote the index set of this basis and let $\mathcal F^*=\mathcal F\smallsetminus \{\emptyset\}$. 
The action of $\ZZ$ on $\mathcal F$ fixes $\emptyset$ and acts freely on $\mathcal F^*$. Hence, our theorem
gives 
\begin{align*}
K_*(C^*( \ZZ/2\wr\ZZ))=K_*(C(\Om)\rtimes \ZZ)&\cong K_*(C_0(\mathcal F)\rtimes \ZZ)\\
&\cong   
K_*(C^*(\ZZ))\oplus\left( \oplus_{[F]\in \ZZ\backslash \mathcal F^*} K_*(\CC)\right).
\end{align*}
Since $C^*(\ZZ)\cong C(\TT)$ and $K_0(C(\TT))=\ZZ=K_1(C(\TT))$ we get
$$K_0(C^*( \ZZ/2\wr\ZZ)\cong \oplus_{[F]\in G\setminus \mathcal F}\ZZ\quad\text{and}\quad K_1(C^*( \ZZ/2\wr\ZZ))=\ZZ.$$
Of course, the result can easily be extended to more general wreath products $\ZZ/2\wr G=\oplus_{G} \ZZ/2\rtimes G$, 
where $G$ is a countable discrete group which satisfies appropriate versions of the Baum-Connes conjecture.
\end{example}

\begin{proof}[Proof of Theorem \ref{thm-Om}]
For the sake of presentation, let us assume that 
$G$ is countable discrete (which is the case in most applications). 
Since the (strong) Baum-Connes conjecture is invariant with respect to  $KK^G$-equivalent actions, and 
since $G$-equivariant Morita equivalences are $KK^G$-equivalences, it suffices to proof that 
the descent
$$\Phi_A\rtimes_rG: C_0(I,A)\rtimes_r G\to \big(C_0(\Om,A)\otimes \K(\ell^2(I))\big)\rtimes_rG$$
of the homomorphism $\Phi_A$ induces an isomorphism in $K$-theory (resp.~a $KK$-equivalence in 
case where $G$ satisfies  strong BC and $A$ is type I). To see that this is the case we want to exploit the Going-Down principle
of the previous section, i.e., we need to show that  for every finite subgroup $F\subseteq G$, the 
map 
\begin{equation}\label{eq-finite}
\Phi_A\rtimes F: C_0(I,A)\rtimes F\to \big(C_0(\Om,A)\otimes \K(\ell^2(I))\big)\rtimes F
\end{equation}
induces an isomorphism in $K$-theory. Note that if $A$ is type I, the same is true 
for $\big(C_0(\Om,A)\otimes \K(\ell^2(I))\big)\rtimes F$ by  \cite{CroPro}*{Corollary 2.8.21} and therefore 
it follows from the Universal-Coefficient-Theorem of $KK$-Theory (e.g., see \cite{Bla86}*{Chapter 23}) that 
$\Phi_A\rtimes F$ being an isomorphism already implies that it is a $KK$-equivalence.
Thus, in this situation, the stronger result that $\Phi_A\rtimes_rG$ is a $KK$-equivalence will 
then follow from the Meyer-Nest Theorem \ref{thm-MN}.

Using the Green-Julg theorem, the map $\Phi_A\rtimes F$ of (\ref{eq-finite}) being an isomorphism 
 is equivalent to 
$$(\Phi_A)_*: K_*^F( C_0(I,A))\to K_*^F\big(C_0(\Om,A)\otimes \K(\ell^2(I))\big)$$
being an isomorphism.

So in what follows let us fix 
a finite subgroup $F$ of $G$. Let $J\subseteq I$ be any finite $F$-invariant subset such that $\{e_i: i\in J\}$ 
is closed under multiplication (up to $0$). Then  $D_J:=\spn\{e_i:i\in J\}$ is a finite dimensional 
commutative $C^*$-subalgeba of $C_0(\Om)$ of dimension $\dim(D_J)=|J|$. Consider the map
\begin{equation}\label{eq-J}\Phi_J: C_0(J)\to D_J\otimes \K(\ell^2(J)); \; \Phi_J(\delta_i)=e_i\otimes d_i.
\end{equation}
We want to show that $\Phi_J$ is invertible in $KK^F\big(C_0(J), D_J\otimes \K(\ell^2(J))\big)\cong KK^F(C_0(J), D_J)$.
If this happens to be true, then $\Phi_{A,J}:=\Phi_J\otimes\id_A: C_0(J,A)\to D_J\otimes A\otimes \K(\ell^2(J))$
will be $KK^F$-invertible as well, and the desired
result then follows from the following commutative diagram
$$
\begin{CD}
K_*^F(C_0(J,A)  ) @> (\Phi_{A,J})_*>\cong > K_*^F\big( D_J\otimes A \otimes \K(\ell^2(J))\big)\\
@V\iota_* VV   @VV\iota_*V\\
\lim_{J} K_*^F(C_0(J,A)  ) @>\lim_J (\Phi_{A,J})_*>\cong > \lim_JK_*^F\big( D_J\otimes A \otimes \K(\ell^2(J))\big)\\
@V\cong VV      @VV\cong V\\
K_*^F(C_0(I,A)  ) @> (\Phi_{A})_*>\cong > K_*^F\big( C_0(\Om)\otimes A \otimes \K(\ell^2(I))\big)
\end{CD}
$$

The $KK^F$-invertibility of $\Phi_J$ in (\ref{eq-J}) will be a consequence of a
UCT-type result for finite dimensional $F$-algebras, which we are now going to explain:
Suppose that $C$ and $D$ are commutative finite dimensional $F$-algebras
with $\dim(C)=n, \dim(D)=m$ (in our application, $C$ will be $C_0(J)$, $D=D_J$, and $n=m=|J|$). 
Let $\{c_1,\ldots, c_n\}$ and $\{d_1,\ldots, d_m\}$ be choices of pairwise orthogonal 
projections, which then form a basis of $C$ and $D$, respectively.
Then we have isomorphisms $\ZZ^n\cong K_0(C)$ and $\ZZ^m\cong K_0(D)$
sending the $j$th unit vector $e_j$ to $[c_j]$ (resp.~$[d_j]$). 
If we ignore the $F$-action, the UCT-theorem for $KK$ implies that
\begin{equation}\label{eq-finiteKK}
KK_0(C,D)\cong \Hom(K_0(C), K_0(D))\cong M(m\times n,\ZZ),
\end{equation}
where the first isomorphism is given by sending $x\in KK(C,D)$ to the map $\cdot\otimes_Cx:K_0(C)\to K_0(D)$
and the second map is given via the above identifications of $K_0(C)\cong \ZZ^n$ and  $K_0(D)\cong \ZZ^m$.
Suppose now that $C$ and $D$ are $F$-algebras such that $F$ acts via permutations of the basis elements 
in  $\{c_1,\ldots, c_n\}$ and $\{d_1,\ldots, d_m\}$, respectively. Note that the actions of $F$ on $C$ and $D$
are determined by two homomorphisms $\tau: F\to S_n$ and $\sigma:F\to S_m$ such that $g\cdot c_i=c_{\tau_g(i)}$ and 
$g\cdot d_j=d_{\sigma_g(j)}$ for all $i,j$. 
Then the equivariant version of (\ref{eq-finiteKK}) does not give an isomorphism in general, but we get a homomorphism
\begin{equation}\label{eq-finiteKK1}
\Psi_{C,D}: KK_0^F(C,D)\to  \Hom_F(K_0(C), K_0(D))\cong M_F(m\times n,\ZZ):x\mapsto \Gamma_x
\end{equation}
where $\Hom_F(K_0(C), K_0(D))$ denotes the $F$-equivariant homomorphisms (with $F$ acting on 
the basis elements $[c_i]$ and $[d_i]$ of $K_0(C)$ and $K_0(D)$, repectively) and $M_F(m\times n, \ZZ)$ 
denotes the set of all $m\times n$-matrices  $\Gamma=(\gamma_{ij})$ over $\ZZ$ which satisfy 
\begin{equation}\label{eq-gamma}
\gamma_{ij}=\gamma_{\tau_g(i),\sigma_g(j)}\quad\forall g\in F.\
\end{equation}
We need  to construct a section  $M_F(m\times n,\ZZ)\to KK^F(C,D); \Gamma\mapsto x_{\Gamma}$ for $\Psi_{C,D}$ 
which is compatible 
with taking Kasparov-products. For this let $\Gamma=(\gamma_{ij})\in M_F(m\times n,\ZZ)$ be given. 
Let $ \E_{ij}=\CC^{|\gamma_{ij}|}\otimes \CC d_i$ viewed as a Hilbert $D$-module 
in the canonical way. 
Let 
$\varphi_{ij}: C\to \K(\E_{ij})$ be the $*$-homomorphism such that $\varphi_{ij}(c_j)=\one_{\E_{ij}}$ and $\varphi_{ij}(c_k)=0$
for all $k\neq i$.  Let 
$$\E^+_{\Gamma}=\oplus_{\gamma_{ij}>0}\E_{ij}\quad \text{and}\quad \varphi^+=\oplus_{\gamma_{ij}>0}\varphi_{ij}:C\to \K(\E^+),$$
and, similarly,
$$ \E^-_{\Gamma}=\oplus_{\gamma_{ij}<0} \E_{ij}\quad \text{and}\quad \varphi^-=\oplus_{\gamma_{ij}<0}\varphi_{ij}:C\to \K(\E^-).$$
Because of (\ref{eq-gamma}) there are canonical actions of $F$ on $\E^+,\E^-$ such that $g\cdot \E_{ij}=\E_{\tau_g(i)\sigma_g(j)}$ 
for all $i,j$ and such that $(\E^+_{\Gamma},\varphi^+)$ and $(\E^-_{\Gamma},\varphi^-)$ become $F$-equivariant $C-D$ correspondences.
Finally, let $\E_{\Gamma}=\E^+_{\Gamma}\oplus \E^-_{\Gamma}$ with $\ZZ/2$-grading given by the matrix $\bmtr 1&0\\0&-1\emtr$ 
and let $\varphi=\bmtr \varphi^+&0\\ 0&\varphi^-\emtr$. Since $\varphi$ takes value in $\K(\E)$, we get a class 
$$x_\Gamma:=[\E_{\Gamma},\varphi, 0]\in KK^F(C,D).$$
The proof of the following lemma is left as an exercise for the reader (or see \cite{CEL2}*{Lemma A.2}):

\begin{lemma}\label{lem-finiteUCT}
Suppose that $B,C,D$ are finite dimensional commutative $F$-algebras such that $\{b_1,\ldots, b_k\}$, $\{c_1,\ldots, c_n\}$, and 
$\{d_1,\ldots, d_m\}$ are $F$-invariant bases consisting of orthogonal projections in $B,C,D$, respectively.
Then, for all matrices $\Lambda\in M_F(n\times k,\ZZ)$ and $\Gamma\in M_F(m\times n,\ZZ)$ we get
$$x_\Lambda\otimes_C x_{\Gamma}=x_{\Gamma\cdot\Lambda}\in KK^F(B,D).$$
In particular, if $n=\dim(C)=\dim(D)$ and $\Gamma\in M_F(n\times n,\ZZ)$ is invertible over $\ZZ$, then 
$x_\Gamma\in KK^F(C,D)$ is invertible as well.
\end{lemma}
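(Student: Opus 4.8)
The plan is to establish the multiplicativity formula $x_\Lambda \otimes_C x_\Gamma = x_{\Gamma\cdot\Lambda}$ directly from the explicit description of the Kasparov cycles involved, and then derive the invertibility statement as an immediate corollary. First I would fix the setup: we have $F$-invariant orthogonal-projection bases $\{b_1,\dots,b_k\}$, $\{c_1,\dots,c_n\}$, $\{d_1,\dots,d_m\}$ of $B,C,D$, with $F$ permuting each basis via homomorphisms $F\to S_k$, $F\to S_n$, $F\to S_m$. Given $\Lambda=(\lambda_{jl})\in M_F(n\times k,\ZZ)$ and $\Gamma=(\gamma_{ij})\in M_F(m\times n,\ZZ)$, the cycles $x_\Lambda=[\E_\Lambda,\varphi_\Lambda,0]\in KK^F(B,C)$ and $x_\Gamma=[\E_\Gamma,\varphi_\Gamma,0]\in KK^F(C,D)$ are built from the modules $\E_{jl}=\CC^{|\lambda_{jl}|}\otimes\CC c_j$ and $\E_{ij}=\CC^{|\gamma_{ij}|}\otimes \CC d_i$ respectively, with the grading separating positive and negative entries. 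Since the operators are all $0$ and the left actions land in the compacts, by Remark \ref{rem-homotop}(b) the Kasparov product $x_\Lambda\otimes_C x_\Gamma$ is represented simply by $[\E_\Lambda\otimes_C\E_\Gamma, \varphi_\Lambda\otimes 1, 0]$ with the diagonal $F$-action and diagonal grading.

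The heart of the proof is then a bookkeeping computation of the internal tensor product $\E_\Lambda\otimes_C\E_\Gamma$. The key point is that $\varphi_{jl}(c_p)=\one$ if $p=j$ and $0$ otherwise, so in the tensor product over $C$ the component $\E_{jl}\otimes_C\E_{ip}$ is nonzero only when $p=j$, in which case $c_j$ acts as the identity on $\E_{jl}$ and $\E_{jl}\otimes_C\E_{ij}\cong \CC^{|\lambda_{jl}|}\otimes\CC^{|\gamma_{ij}|}\otimes\CC d_i \cong \CC^{|\lambda_{jl}\gamma_{ij}|}\otimes\CC d_i$ as Hilbert $D$-modules (using that $c_j\cdot d_i=d_i$ after identifying via $\varphi_{ij}$, so the $C$-balancing is trivial on these pieces). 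Summing over $j$, the total multiplicity of $\CC d_i$-type summands with left action factoring through $c_l$ is $\sum_{j=1}^n \lambda_{jl}\gamma_{ij}$, which is exactly the $(i,l)$-entry of $\Gamma\cdot\Lambda$. I would then check that the $\ZZ/2$-grading on $\E_\Lambda\otimes_C\E_\Gamma$ — the tensor of the two sign gradings — matches the sign of $\sum_j\lambda_{jl}\gamma_{ij}$ after cancelling oppositely-graded summands of equal rank; this cancellation of a graded-degenerate summand (one isomorphic to its opposite, hence homotopic to zero by Remark \ref{rem-degenerate}) is precisely what turns the naive count into the signed entry of $\Gamma\Lambda$. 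Finally I would verify that the $F$-action on $\E_\Lambda\otimes_C\E_\Gamma$, namely $g\cdot(\E_{jl}\otimes\E_{ij}) = \E_{\sigma_g(j)\tau_g(l)}\otimes\E_{\rho_g(i)\sigma_g(j)}$, is carried by the isomorphism above to the canonical $F$-action on $\E_{\Gamma\Lambda}$ that defines $x_{\Gamma\Lambda}$; this is where relation (\ref{eq-gamma}) for both $\Lambda$ and $\Gamma$ is used to see the permuted indices match up. Assembling these identifications gives an $F$-equivariant isomorphism of Kasparov cycles $(\E_\Lambda\otimes_C\E_\Gamma,\varphi_\Lambda\otimes 1,0)\cong (\E_{\Gamma\Lambda},\varphi_{\Gamma\Lambda},0)$ modulo a graded-degenerate direct summand, hence $x_\Lambda\otimes_C x_\Gamma = x_{\Gamma\Lambda}$ in $KK^F(B,D)$.

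For the last assertion, suppose $n=\dim C=\dim D$ and $\Gamma\in M_F(n\times n,\ZZ)$ is invertible over $\ZZ$ with inverse $\Gamma^{-1}$. One must first observe that $\Gamma^{-1}$ again satisfies the compatibility (\ref{eq-gamma}) with respect to the appropriate permutation representations — this follows because the condition $\gamma_{ij}=\gamma_{\tau_g(i),\sigma_g(j)}$ says $\Gamma$ intertwines the permutation matrices of the two $F$-actions, so its inverse intertwines them in the other direction, i.e. $\Gamma^{-1}\in M_F(n\times n,\ZZ)$ (with the roles of the two bases interchanged). Then $x_{\Gamma^{-1}}\in KK^F(D,C)$ is defined, and applying the multiplicativity formula twice gives $x_\Gamma\otimes_D x_{\Gamma^{-1}}=x_{\Gamma^{-1}\Gamma}=x_{I_n}$ and $x_{\Gamma^{-1}}\otimes_C x_\Gamma = x_{I_n}$, where $I_n$ is the identity matrix. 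It remains to identify $x_{I_n}$ with the unit $1_C\in KK^F(C,C)$ (resp.\ $1_D$): when $\Gamma=I_n$, each $\E_{ii}=\CC c_i$ with $\varphi_{ii}(c_i)=\one$, all entries positive, so $\E_{I_n}=\oplus_i\CC c_i\cong C$ as a Hilbert $C$-module with the obvious action and trivial grading, giving exactly the cycle $(C,\mathrm{id}_C,0)$ representing $1_C$ from Theorem \ref{thm-product}. Hence $x_\Gamma$ is invertible in $KK^F(C,D)$ with inverse $x_{\Gamma^{-1}}$.

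I expect the main obstacle to be the careful sign/grading bookkeeping in the tensor product: making precise which summands of $\E_\Lambda^+\otimes\E_\Gamma^+ \oplus \E_\Lambda^-\otimes\E_\Gamma^- $ (even part) versus $\E_\Lambda^+\otimes\E_\Gamma^- \oplus \E_\Lambda^-\otimes\E_\Gamma^+$ (odd part) survive after discarding a maximal degenerate summand, and confirming this matches $\mathrm{sign}(\sum_j\lambda_{jl}\gamma_{ij})$ together with $|\sum_j\lambda_{jl}\gamma_{ij}|$ copies. The $F$-equivariance is a second, more routine layer but still requires threading (\ref{eq-gamma}) through all the index permutations consistently. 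Everything else — the reduction via Remark \ref{rem-homotop}(b), the identity $x_{I_n}=1_C$, and the formal deduction of invertibility — is straightforward.
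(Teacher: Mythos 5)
Your argument is correct in substance and is essentially the intended one: the paper leaves this lemma as an exercise (pointing to \cite{CEL2}*{Lemma A.2}), and that proof proceeds by exactly your computation — reduce the Kasparov product to the internal tensor product via Remark \ref{rem-homotop}(b), observe that $\E_{jl}\otimes_C\E_{ip}$ vanishes unless $p=j$ and otherwise contributes $|\lambda_{jl}\gamma_{ij}|$ copies of $\CC d_i$ with $b_l$ acting as the identity, match the resulting signed multiplicities $\sum_j\gamma_{ij}\lambda_{jl}$ with the entries of $\Gamma\Lambda$, check equivariance via (\ref{eq-gamma}), identify $x_{I_n}$ with $1_C=[C,\id_C,0]$, and note that $\Gamma^{-1}$ again intertwines the permutation representations.

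One justification should be repaired. The ``balanced'' summand you discard — $r$ copies of $\CC d_i$ in degree $0$ plus $r$ copies in degree $1$, with $b_l$ acting as the identity and operator $0$ — is \emph{not} degenerate in the sense of Definition \ref{def-KK-cycle}, since $(T^2-1)\varphi(b)=-\varphi(b)\neq 0$; so Remark \ref{rem-degenerate} does not apply. Moreover, the observation that this cycle is isomorphic to its own opposite only shows that its class $y$ satisfies $y=-y$, i.e.\ $2y=0$, not $y=0$. The correct (and equally short) argument is that this summand is literally the direct sum of the cycle $(\F,\varphi,0)$ with its additive inverse $(\F^{\op},\varphi,0)$ from Theorem \ref{thm-KKgroup}, hence represents $0$ in $KK^F(B,D)$; alternatively, it is operator homotopic, via $T_t=\bmtr 0&t\\ t&0\emtr$ along an $F$-equivariant identification of the two halves, to a genuinely degenerate cycle at $t=1$. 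With that fix, and a one-line remark that the splitting-off of this summand can be chosen $F$-equivariantly (the positive and negative multiplicities are constant along $F$-orbits of the index pairs $(i,l)$), the proof is complete.
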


Let us come back to the class $\Phi_J: C_0(J)\to D_J\otimes \K(\ell^2(J))$ which sends $\delta_i$ to $e_i\otimes d_i$, where 
$d_i$ denotes the orthogonal projection onto $\CC\delta_i$. The following lemma gives the crucial point of how independence
of the family $\{e_i:i\in I\}$ of the basis elements of $C_0(\Om)$ enters the picture: 

\begin{lemma}[{cf \cite{CEL2}*{Lemma 3.8}}]\label{lem-finite-independent}
Let $D$ be a commutative $C^*$-algebra generated by a multplicatively closed (up to $0$) and independent finite set
of projections $\{e_i:i\in J\}$. For each  $i\in J$ let $e'_i:=e_i-\vee_{e_j<e_i}e_j$. Then 
$\{e'_i:i\in J\}$ is a family of non-zero pairwise orthogonal projections spanning $D$. 
Moreover, the transition matrix $\Gamma=(\gamma_{ij})$ determined by the equation 
$$e_j=\sum_{i\in J} \gamma_{ij} e'_i$$ is unipotent and therefore invertible over $\ZZ$. Its entries are either $0$ or $1$.
\end{lemma}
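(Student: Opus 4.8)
The plan is to argue in the Gelfand picture. Since $D$ is commutative and finitely generated by projections it is finite dimensional, so its spectrum $\Om:=\Spec(D)$ is a finite set and $D\cong C(\Om)$; under this identification $e_i=\one_{V_i}$ for subsets $V_i\subseteq\Om$. The hypothesis that $\{e_i:i\in J\}$ is multiplicatively closed up to $0$ says that $V_i\cap V_j$ is either empty or again one of the $V_k$, and the independence hypothesis (Definition \ref{def-independence}) says that an equality $V_{i_0}=\bigcup_{i\in F}V_i$ for a finite $F$ forces $i_0\in F$. Because the $*$-algebra generated by the $e_i$ is just their linear span (closedness under multiplication, plus finite-dimensionality to close it up), the $e_i$ span $D$, and Lemma \ref{lem-independence} makes them a basis, so $|J|=\dim D=|\Om|$. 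Finally, setting $W_i:=\bigcup_{j:\,V_j\subsetneq V_i}V_j$, Lemma \ref{lem-supp} gives $\vee_{e_j<e_i}e_j=\one_{W_i}\le e_i$, so $e'_i=e_i-\vee_{e_j<e_i}e_j=\one_{V_i\setminus W_i}$ is indeed a projection, with the convention $\vee_\emptyset=0$ (i.e.\ $e'_i=e_i$ if no $e_j$ is strictly below $e_i$).

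Next I would verify the three claims about $\{e'_i\}$ in order. First, $e'_i\ne 0$: if $V_i=W_i$ then $V_i=\bigcup_{j\in F}V_j$ with $F=\{j:V_j\subsetneq V_i\}\not\ni i$, contradicting independence. Second, the $e'_i$ are pairwise orthogonal: if some $\om$ lay in $(V_i\setminus W_i)\cap(V_j\setminus W_j)$ then $V_i\cap V_j\ne\emptyset$, hence $V_i\cap V_j=V_k$ for some $k$; since $V_k\subseteq V_i$ and $\om\in V_k\setminus W_i$ rules out $V_k\subsetneq V_i$, we get $V_k=V_i$, i.e.\ $V_i\subseteq V_j$, and symmetrically $V_j\subseteq V_i$, so $e_i=e_j$ and $i=j$ by linear independence. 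Third, I claim $e_j=\sum_{i:\,V_i\subseteq V_j}e'_i$; as the $e'_i$ have pairwise disjoint supports, this reduces to the set equality $V_j=\bigsqcup_{i:\,V_i\subseteq V_j}(V_i\setminus W_i)$, whose only non-obvious inclusion $V_j\subseteq\bigsqcup(\cdots)$ is proved by taking, for a given $\om\in V_j$, the intersection of all $V_i$ containing $\om$ — this is $V_k$ for some $k$ by closedness under intersection — and checking that $\om\in V_k\setminus W_k$ while $V_k\subseteq V_j$. This identity says $\gamma_{ij}\in\{0,1\}$ with $\gamma_{ij}=1$ iff $V_i\subseteq V_j$; in particular $\Gamma$ is invertible over $\QQ$, so $\{e'_i\}$ spans $D$, and being nonzero and pairwise orthogonal it is a basis, which settles the first assertion of the lemma.

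For the unipotence claim I would pick a total order on $J$ refining the partial order $i\preceq j\Leftrightarrow V_i\subseteq V_j$ (this is a genuine partial order, not merely a preorder, because $V_i\subseteq V_j\subseteq V_i$ forces $i=j$ by linear independence). With respect to this order $\gamma_{ij}=1$ implies $i\preceq j$, so $\Gamma$ is upper triangular, and $\gamma_{ii}=1$, so $\Gamma=\mathrm{id}+N$ with $N$ strictly upper triangular, hence $N$ nilpotent. Therefore $\Gamma$ is unipotent, $\det\Gamma=1$, and $\Gamma^{-1}=\sum_{m\ge 0}(-N)^m$ has integer entries, so $\Gamma$ is invertible over $\ZZ$; its entries being $0$ or $1$ was already recorded in the previous step.

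The main obstacle is the third step, the computation of the transition matrix, and specifically the inclusion $V_j\subseteq\bigsqcup_{V_i\subseteq V_j}(V_i\setminus W_i)$: this is the only point where closedness under intersection enters essentially, namely to produce through each point $\om$ a smallest $V_k$ among those containing $\om$. Everything else is bookkeeping, though one should take some care with the degenerate cases (no $V_j$ strictly below $V_i$, empty intersections) and with the "up to $0$" clauses in the hypotheses.
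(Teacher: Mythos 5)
Your proof is correct and follows essentially the same route as the paper's: the key points --- $e'_i\neq 0$ by independence, pairwise orthogonality of the $e'_i$ via multiplicative closure, $\gamma_{ij}=1$ exactly when $e_i\leq e_j$, and upper triangularity of $\Gamma$ with respect to a total order refining the containment order --- all coincide with the published argument. The only difference is cosmetic: you work in the Gelfand picture with the supports $V_i$, while the paper manipulates the projections directly.
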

\begin{proof} Independence implies that $e'_i\neq 0$ for all $i\in J$. If $i\neq j$, then we have either $e_i<e_j$, in which case 
$e'_je_i=e_je_i-\vee_{e_k<e_j}e_ke_i=0$ or $e_ie_j<e_i$, in which case $e'_ie_j=e_ie_j-\vee_{e_k<e_i}e_ke_j=0$.
Either case implies $e'_ie'_j=0$. Since $\dim(D)=|J|$, it follows that $D=\spn\{e'_i:i\in I\}$.

If $e'_i\leq e_j$, then $e'_i\leq e_ie_j\leq e_i$ by definition of $e'_i$. This shows that $\gamma_{ij}=1$ if 
$e_i\leq e_j$ and $\gamma_{ij}=0$ otherwise. Thus, if we choose an ordering $\{i_1,\ldots, i_n\}$ of $J$ such that $e_i\leq e_j\Rightarrow i\leq j$,
the matrix $\Gamma$ is upper triangular with $1$s on the diagonal, hence unipotent. Thus, $\one-\Gamma$ is nilpotent 
of order $n=|J|$ and $\Gamma$ is invertible with inverse $\Gamma^{-1}=\sum_{k=0}^n(1-\Gamma)^k$.
\end{proof}

To finish the proof of Theorem \ref{thm-Om} we observe that the class $[\Phi_J]\in KK^F(C_0(J), D_J\otimes \K(\ell^2(J)))\cong KK^F(C_0(J), D_J)$
coincides with the class $x_\Gamma=[\E_\Gamma, \varphi, 0]$ as constructed in the above lemma 
with respect to the basis $\{\delta_i: i\in J\}$ of $C_0(J)$ and the basis $\{e'_i:i\in J\}$ of $D_J$. 
A combination of Lemma \ref{lem-finite-independent} with Lemma \ref{lem-finiteUCT} then 
implies that $[\Phi_J]$ is invertible. 
Indeed, since $\gamma_{ij}=0$ or $1$, it follows that  $\E_\Gamma=\E_{\Gamma}^+=\oplus_{i\in J} \big(\oplus_{j\in J, \gamma_{ij}=1} \CC e'_i\big)$ 
embeds as a direct summand into $D_J\otimes \ell^2(J)$ such that $\Phi_J: C_0(J)\to D_J\otimes\K(\ell^2(J))\cong \K(D_J\otimes \ell^2(J))$
decomposes as $\varphi\oplus 0$.
\end{proof}

As remarked before,  the main applications for Theorem \ref{thm-Om} are given in case of computing the $K$-theory 
of  reduced semigroup algebras $C_{\lambda}^*(P)$, where $e\in P\subseteq G$ is a sub-semigroup of the countable group $G$.
In case where $P\subseteq G$ satisfies a certain Toeplitz condition (which is discussed in detail in \cite{SgpC}), there 
exists a totally disconnected $G$-space $\Om_{P\subseteq G}$ such that $C_{\lambda}^*(P)$ can be realised as a full corner in 
the crossed product $C_0(\Om_{P\subseteq G})\rtimes_rG$, hence $K_*(C_{\lambda}^*(P))\cong K_*(C_0(\Om_{P\subseteq G})\rtimes_rG)$.
Now, the existence of a $G$-invariant regular basis for $C_0(\Om_{P\subseteq G})$ will follow from a certain 
independence condition for the inclusion $P\subseteq G$, which, somewhat surprisingly, is satisfied in a large number of 
interesting cases. Again, we refer to the  \cite{SgpC} and \cite{AlgAct} for more details on this.

Unfortunately,  a $G$-invariant regular basis 
$\{e_i:i\in I\}$ for $C_0(\Om)$, as required for the proof of Theorem \ref{thm-Om}, does not exist
in general. In fact, we have the following result, which excludes a large number of interesting cases
from our theory:

\begin{proposition}[{\cite{CEL2}*{Proposition 3.18}}]\label{prop-nobasis}
Let $G$ be a countable discrete group which acts minimally on the totally disconnected locally compact space
$\Om$, i.e. for every non-empty open subset $U$ of $\Om$, we have  
$$\Om=\cup_{g\in G} gU.$$
Suppose further that there exists a non-zero $G$-invariant Borel measure $\mu$ on  $\Om$ (which holds if $G$ is amenable and  $\Om$ is compact). 
Then $\Om$  has a $G$-invariant regular basis for the compact open sets if and only if  $\Om$ is discrete.
\end{proposition}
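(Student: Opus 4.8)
The plan is to prove both implications. One direction is essentially trivial: if $\Om$ is discrete then the singletons $\{\omega\}$, $\omega\in\Om$, form a $G$-invariant regular basis for the compact open sets (it is $G$-invariant because $G$ permutes the points, it generates $\mathcal U_c(\Om)$ since every compact open set is a finite union of points, and $\{\{\omega\}\}\cup\{\emptyset\}$ is trivially closed under finite intersections). So the content is the forward direction: assuming minimality and the existence of a non-zero $G$-invariant Borel measure $\mu$, a $G$-invariant regular basis forces $\Om$ to be discrete.

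So suppose $\mathcal V=\{V_i:i\in I\}$ is a $G$-invariant regular basis. The key observation I would exploit is the independence condition in Definition~\ref{def-independence} together with the $G$-invariance of $\mu$. First I would arrange, by enlarging $\mathcal V$ to its closure under finite intersections (still $G$-invariant, still a regular basis), that $\mathcal V\cup\{\emptyset\}$ is closed under finite intersections. Now fix any nonempty $V\in\mathcal V$ of \emph{minimal} $\mu$-measure among the (countably many) elements of $\mathcal V$; such a $V$ exists because $\mu(V)>0$ for each $V\in\mathcal V$ (else $V$, being a nonempty compact open set, would be a nonempty open set of measure zero, and by minimality $\Om=\bigcup_{g\in G}gV$ would have measure zero, contradicting $\mu\neq 0$) and the set of values $\{\mu(V):V\in\mathcal V\}$ is a countable subset of $(0,\infty)$, so I would instead argue more carefully: choose $V$ with $\mu(V)$ equal to the infimum $m:=\inf\{\mu(W):W\in\mathcal V\}$ if the infimum is attained, and if not, derive a contradiction directly. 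Actually the cleanest route: by $G$-invariance of $\mathcal V$ and of $\mu$, the value $\mu(V)$ depends only on the $G$-orbit of $V$, and one shows the orbit space $G\backslash I$ indexes a partition-type structure. The point I want is that a minimal-measure basis element $V$ must be a single point.

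Here is the mechanism. Let $V\in\mathcal V$ be nonempty with $\mu(V)$ minimal (I will need to show this is legitimate, e.g. by passing to the $\sigma$-algebra generated by $\mathcal V$ and using that the measure takes a minimal positive value on the basis, invoking minimality of the action to show no element can have measure $0$, and a limiting/tightness argument if the infimum is not attained — \textbf{this is the step I expect to be the main obstacle}, since in general infima of measures of basis elements need not be attained, and I would need to use minimality of the $G$-action to propagate a hypothetical decreasing sequence of basis elements to a contradiction with local compactness or with the regular-basis axioms). Granting such a minimal $V$: suppose $V$ is not a singleton, so there are distinct points $x\neq y$ in $V$. Since $\mathcal V$ generates $\mathcal U_c(\Om)$ and hence separates points, there is $W\in\mathcal U_c(\Om)$, built from $\mathcal V$ by finite unions, intersections and differences, with $x\in W$, $y\notin W$, $W\subseteq V$; tracking through the generating operations and using minimality of $\mu(V)$ one forces $V\cap W'$ for some basis element $W'$ to have measure strictly between $0$ and $\mu(V)$, contradicting minimality — unless $V$ is already a point. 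Hence every minimal-measure $V\in\mathcal V$ is a singleton; its $G$-orbit consists of singletons; by minimality of the action this orbit is dense; and since a dense set of isolated points in a locally compact Hausdorff space forces the whole space to be discrete (an isolated point's orbit under a homeomorphism group, if dense, shows every point is isolated), we conclude $\Om$ is discrete. I would then assemble these pieces, being careful in the measure-theoretic step to replace the naive "minimal measure" argument by the correct compactness/minimality argument, and cite \cite{CEL2}*{Proposition 3.18} for the full details of that delicate point.
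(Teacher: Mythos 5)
The survey itself gives no proof of this proposition --- it is quoted verbatim from \cite{CEL2} --- so your attempt can only be measured against a correct argument, and there it falls short at exactly the point you flag yourself. The ``if'' direction and the overall strategy for ``only if'' (a basis element of minimal measure must be a singleton, hence an isolated point, hence $\Om$ is discrete by minimality) are fine: indeed, if $V\in\mathcal V$ attains $\inf\{\mu(W):W\in\mathcal V\}$, then for every $W\in\mathcal V$ meeting $V$ the set $W\cap V$ lies in $\mathcal V$ (intersection--closedness) and satisfies $\mu(W\cap V)\geq\mu(V)$, so $V\setminus W$ is an open null set, hence empty (every nonempty open set has positive measure by minimality and countability of $G$); thus every element of $\mathcal V$, and therefore every set in the algebra it generates --- that is, every compact open set --- either contains $V$ or misses it, and since compact open sets separate points, $V$ is a singleton. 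This is a cleaner route to your ``measure strictly between $0$ and $\mu(V)$'' step, which as written never exhibits the required $W'$. But the genuine gap is the case where the infimum is \emph{not} attained (in particular $\inf\mu=0$), which you explicitly leave open and propose to outsource to \cite{CEL2}. That case is the heart of the matter: note that independence --- the defining feature of a regular basis --- appears nowhere in your argument, and without it the forward implication is simply false (the translation action of $\ZZ$ on the $2$-adic odometer is minimal with invariant Haar measure, and the cylinder sets form a $\ZZ$-invariant, intersection-closed, generating --- but not independent --- family on a non-discrete space).

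Here is how the non-attained case closes, and where independence and minimality combine. Fix $V_0\in\mathcal V$. Since the infimum is not attained, there is $V\in\mathcal V$ with $\mu(V)<\mu(V_0)$, so the open $G$-invariant set $\bigcup\{W\in\mathcal V:\mu(W)<\mu(V_0)\}$ is nonempty; by minimality it is all of $\Om$. Hence every $x\in V_0$ lies in some $V_x\in\mathcal V$ with $\mu(V_x)<\mu(V_0)$, and then $V_x\cap V_0\in\mathcal V$ is a \emph{proper} subset of $V_0$ containing $x$. Compactness of $V_0$ yields finitely many proper subsets $W_1,\dots,W_n\in\mathcal V$ of $V_0$ with $V_0=W_1\cup\dots\cup W_n$, contradicting independence. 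Combined with the attained case above, this completes the forward direction. Two further small corrections: your topological closing step ``a dense set of isolated points forces the space to be discrete'' is false in general (consider $\{0\}\cup\{1/n:n\geq 1\}$); what you need is that an isolated point has open orbit, and by minimality every orbit is dense, hence meets and therefore equals this open orbit, so $\Om$ is a single orbit of isolated points. Finally, ``enlarging $\mathcal V$ to its closure under finite intersections'' is unnecessary (Definition~\ref{def-regular-basis} already requires this) and would in general be dangerous, since adding sets can destroy independence.
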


\bibliography{references}

\end{document}